\newtheorem{thm}{Theorem}[section]
\newtheorem*{IntroComb}{Theorem A}
\newtheorem*{IntroComb'}{Theorem A$^\prime$}
\newtheorem*{IntroDecomp}{Theorem B}
\newtheorem*{IntroInvLim}{Theorem C}
\newtheorem*{IntroComplete}{Theorem D}
\newtheorem*{IntroBowdComb}{Theorem E}
\newtheorem{prop}[thm]{Proposition}
\newtheorem{lem}[thm]{Lemma}
\newtheorem{cor}[thm]{Corollary}
\theoremstyle{definition}
\newtheorem{defn}[thm]{Definition}
\newtheorem{exmp}[thm]{Example}
\newtheorem{setup}[thm]{Setup}
\newtheorem{rem}[thm]{Remark}
\renewcommand{\bar}[1]{\overline{#1}}
\newcommand{\boundary}{\partial}
\renewcommand{\emptyset}{\varnothing}
\newcommand{\field}[1]{\mathbb{#1}}
\newcommand{\Z}{\field{Z}}
\newcommand{\N}{\field{N}}
\newcommand{\C}{\mathcal{C}}
\newcommand{\F}{\mathcal{F}}
\renewcommand{\P}{\mathcal{P}}
\newcommand{\inclusion}{\hookrightarrow}
\DeclareMathOperator{\Comp}{Comp}
\DeclareMathOperator{\Image}{Im}
\DeclareMathOperator{\Stab}{Stab}
\DeclareMathOperator{\diam}{diam}
\DeclareMathOperator{\Branch}{Branch}
\DeclareMathOperator{\valence}{Valence}
\DeclareMathOperator{\vertices}{Vert}
\DeclareMathOperator{\edges}{Edge}
\DeclareMathOperator{\graph}{\text{Graph}}
\DeclareMathOperator{\neck}{\text{Neck}}
\newcommand{\Swiatkowski}{{\'{S}}wi{\k{a}}tkowski}
\newcommand{\showcomments}{no}
\newsavebox{\commentbox}
\begin{document}

\title[Parabolic cut pairs in Bowditch boundaries]{Parabolic cut pairs in boundaries of relatively hyperbolic groups}

\author[Kushlam Srivastava]{Kushlam Srivastava}
\address{Department of Mathematical Sciences\\
University of Wisconsin--Milwaukee\\
PO Box 413\\
Milwaukee, WI 53211\\
USA}
\email{kushlam@uwm.edu}

\begin{abstract}

Parabolic cut pairs in the boundaries of relatively hyperbolic group are a new and previously unexplored phenomenon.
In this paper, we give a way to create examples of relatively hyperbolic groups with parabolic cut pairs on their boundary via a combination theorem, which states that a group $G$, splitting as a graph of relatively hyperbolic groups with certain conditions, is relatively hyperbolic with inseparable parabolic cut pairs on the boundary $\boundary(G,\P)$. We also prove that all relatively hyperbolic groups with inseparable parabolic cut pairs in their boundaries arise via this combination theorem.

\Swiatkowski~gives a topological description of combining boundaries of vertex groups. Unfortunately, his method cannot be applied for fundamental reasons in this setting. We instead give two explicit topological descriptions of the boundary in terms of boundaries of the vertex groups.
 
\end{abstract}
\keywords{}

\subjclass[2020]{%
20F67 
20E08} 

\date{\today}

\maketitle

\setcounter{tocdepth}{2}

\section{Introduction}
\label{sec:Introduction}

Given a hyperbolic group $G$, we can learn a lot about it by exploring its Gromov boundary $\boundary G$. This boundary $\boundary G$ is a quasi-isometry invariant and topological properties of this boundary have been explored in detail. Importantly, due to Stalling's theorem $\boundary G$ is disconnected if and only if $G$ splits over a finite group. Swarup \cite{Swarup_cutpoints} proves that if $\boundary G$ is non-empty and connected then $\boundary G$ has no cut points.
Moreover, Bowditch \cite{Bowditch_localcutpoints} shows that for one ended $G$, cut pairs and local cut points on $\boundary G$ are directly related to JSJ splittings of $G$ over $2$--ended subgroups.

Similar to hyperbolic groups, Gromov \cite{essays} suggests the notion of a group $G$ to be hyperbolic relative to a collection of \emph{peripheral subgroups} $\P$ and Bowditch \cite{BowditchRelHyp} defines the boundary $\boundary(G,\P)$ of a relatively hyperbolic group generalizing the Gromov boundary $\boundary G$.
Points fixed by a peripheral subgroup under the action of $G$ on $\boundary(G,\P)$ are \emph{parabolic}. Bowditch \cite{Bowditch_local} and Dasgupta--Hruska \cite{DasguptaHruska_LC} prove that connected Bowditch boundaries are locally connected and cut points on them are parabolic.

Haulmark--Hruska \cite{HaulmarkHruska_JSJ} show that inseparable loxodromic cut pairs on the Bowditch boundary are directly related to JSJ splittings over $2$--ended subgroups. (A cut pair is \emph{inseparable} if it is not separated by any other cut pair and it is loxodromic if it is stabilized by a maximal virtually cyclic subgroup.) Thus it is natural to conjecture that like in the case of hyperbolic groups, inseparable cut pairs on the boundary of a relatively hyperbolic group are always related to splittings over $2$--ended subgroups. To this end, Haulmark \cite{HaulmarkRelHyp} tells us that inseparable cut pairs on the boundary are related to splittings over $2$--ended subgroups as long as the boundary does not have inseparable cut pairs consisting of parabolic points. Moreover, the boundary $\boundary(G,\mathcal{P})$ contains no inseparable parabolic cut pairs when the group $G$ is one ended \cite{HW}. However, as discovered by Hruska--Walsh \cite{HW}, it turns out that there are relatively hyperbolic groups with boundaries containing inseparable parabolic cut pairs.
The main theorems of this document are a combination theorem which generates boundaries with inseparable parabolic cut pairs and a decomposition theorem which states that inseparable parabolic cut pairs always arise via the before mentioned combination theorem. We note that these inseparable cut pairs are not captured by the JSJ splitting described by Haulmark--Hruska \cite{HaulmarkHruska_JSJ}.

\begin{IntroComb}[Creating Parabolic cut pairs]
\label{IntroComb}
Consider a finitely generated group $G$ acting without inversions and with finite quotient on a bipartite tree $T$ on two colors, red and white, such that the following holds.
\begin{enumerate}
    \item The red vertex stabilizers are finite.
    \item The white vertex stabilizers are relatively hyperbolic with connected boundaries without cut points.
    \item There is a ``signature'' (see Definition~\ref{signature}) for the action of $G$ on $T$. 
\end{enumerate}
Then the signature defines a parabolic forest (see Definition~\ref{forest}) $F$ on which $G$ acts on. Let $\P$ be the collection of stabilizers of connected components, called \emph{parabolic trees}, of $F$. Then $G$ is relatively hyperbolic to $\P$ and the Bowditch boundary $\boundary(G,\P)$ has inseparable parabolic cut pairs.
\end{IntroComb}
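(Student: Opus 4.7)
The plan is to argue first that $(G, \P)$ is relatively hyperbolic via a combination theorem, and then to extract inseparable parabolic cut pairs from an explicit description of $\boundary(G, \P)$ built from the boundaries of the white vertex groups glued along parabolic points according to the forest $F$.

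Starting from the hypotheses, the action of $G$ on $T$ gives a Bass--Serre decomposition of $G$ as the fundamental group of a finite graph of groups with finite red-vertex and edge groups, and relatively hyperbolic white-vertex groups. The signature picks out, at each white vertex $v$, a finite family of parabolic subgroups of $\Stab_G(v)$, together with identifications of these across the edges of $T$ that assemble the parabolic forest $F$. I would upgrade the peripheral structure of each white vertex group to include the signature's parabolic subgroups (a standard move that preserves relative hyperbolicity) and then apply a combination theorem for relatively hyperbolic groups splitting over parabolic subgroups, in the style of Dahmani or Bigdely--Wise. Because the edge stabilizers are finite and hence, after enlargement, lie inside parabolic subgroups on both sides, the combination hypotheses are satisfied, and the output is that $G$ is relatively hyperbolic with peripheral family exactly $\P$, the stabilizers of the components of $F$.

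For the cut pairs themselves, each parabolic tree $\tau$ contributes a unique parabolic point $p_\tau \in \boundary(G, \P)$. I would show that whenever two parabolic trees $\tau, \tau'$ meet a common white vertex $v$ in the manner dictated by the signature, the pair $\{p_\tau, p_{\tau'}\}$ is a cut pair of $\boundary(G, \P)$. The mechanism is that $\boundary(G,\P)$ decomposes as a tree of boundaries of white vertex groups, glued together at parabolic points according to the edges of $T$; removing $p_\tau$ and $p_{\tau'}$ severs $\boundary\Stab_G(v)$ into pieces that sit in otherwise disjoint branches of this tree of boundaries, producing a global disconnection. Even though $\boundary\Stab_G(v)$ has no cut points, it may be disconnected by a suitable pair of its parabolic points, and the signature is precisely a coherent choice of such pairs.

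The main obstacle, and the genuine technical content, is the inseparability claim: that no cut pair of $\boundary(G, \P)$ lies strictly between $p_\tau$ and $p_{\tau'}$. The strategy is to argue that any separating cut pair must be visible inside a single white-vertex boundary sitting along the path in $T$ joining $\tau$ and $\tau'$, and then to use the cut-point-free hypothesis on white-vertex boundaries, together with the compatibility conditions built into the signature, to exclude such an intermediate pair. This step genuinely requires the explicit topological model of $\boundary(G,\P)$ promised in Theorem~B of the abstract, since one must trace how cut pairs of the global boundary are forced to come from cut pairs of vertex boundaries. The relative hyperbolicity assertion of the theorem is thus largely formal once the combination theorem is invoked, while the cut-pair existence and inseparability assertions are the substantive new content and unavoidably rely on the tree-of-boundaries description of $\boundary(G,\P)$ developed elsewhere in the paper.
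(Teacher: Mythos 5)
Your proposal's high-level picture is right --- build the boundary as a tree of vertex-group boundaries glued along parabolic cut pairs, then read off the inseparable cut pairs --- but the step where you invoke a ready-made combination theorem and claim the output is ``relatively hyperbolic with peripheral family exactly $\P$'' is the genuine gap. First, a small point: there is nothing to ``upgrade.'' The signature $s_e$ maps $\Lambda_w$ into $\Lambda_v$, which \emph{already is} the peripheral index set of $G_v$; it identifies existing parabolics rather than designating new ones. More seriously, the $2$--admissible situation is not a standard amalgamation along a parabolic subgroup. The finite red group $G_w$ is simultaneously identified with finite subgroups sitting inside a \emph{pair} of parabolics on each neighboring white side, and distinct edges are permitted to have overlapping images inside a single $\Lambda_v$ (Definition~\ref{signature}(\ref{signature: intersection})). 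Off-the-shelf combination theorems for splittings over finite or parabolic subgroups (Dahmani, Osin, Bigdely--Wise Theorem~1.4) produce a peripheral family whose members are conjugates of individual vertex-group parabolics; but $\P$ here consists of stabilizers of \emph{parabolic trees}, i.e.\ groups that may amalgamate many vertex-group parabolics from different $v$'s over the finite red groups. Establishing relative hyperbolicity with respect to this coarser, assembled peripheral family is precisely the new content. The paper's own remark that Theorem~\ref{combination} ``generalizes a theorem of Bigdely--Wise \dots\ but the proof is substantially more involved'' signals this, and the proof proceeds by directly constructing the fine hyperbolic graph $\bar{K}$ out of the reservoir and junction graphs, collapsing pipe edges, deleting neck edges, and then proving hyperbolicity via the Kapovich--Rafi criterion (Lemma~\ref{kbarhyp}) and fineness by an induction on circuit length using partitions and concatenations (Lemmas~\ref{nonatm}--\ref{kbarfine}). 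Your argument would need a substitute for all of this; a citation to Dahmani-style results does not supply it.

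On the cut-pair side your mechanism is essentially correct --- removing the two points of an $M_w$ disconnects the branches of the tree of boundaries, and no intermediate cut pair can separate them because each $M_w$ lies inside a constituent space without cut points. But this also cannot be waved through: the paper cannot even apply \Swiatkowski's inverse-limit model because the edge spaces overlap inside constituent spaces (Remark~\ref{\Swiatkowski rem}), so the entire completion/inverse-limit machinery of Sections~\ref{sec:completion}--\ref{sec:kettlebell}, together with the homeomorphism of Theorem~\ref{boundarytreesys}, has to be built first. You flag this dependence, which is good, but the burden of proof for ``cut pairs of the global boundary are forced to come from cut pairs of vertex boundaries'' is exactly what Proposition~\ref{insepstr} and Corollary~\ref{bowditchconn} carry, and it does not follow formally from the existence of a tree-of-boundaries picture.
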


For a signature to exist, each edge stabilizer embeds into the intersection of two peripheral subgroups of the vertex stabilizer the edge is incident on.
The signature is a structure that keeps track of the two inclusions and that carries the instructions to glue boundaries of the vertex stabilizers inside $\boundary(G,\P)$. We also note that the connectivity conditions on the boundaries of the white vertex stabilizers can be relaxed to get a stronger theorem, Theorem~A$^\prime$,
which replaces the requirement that the white vertex stabilizers have connected boundaries without cut points with the weaker requirement that $\boundary(G,\P)$ is connected without cut points.

\begin{IntroComb'}
Consider a finitely generated group $G$ acting without inversions and with finite quotient on a bipartite tree $T$ on two colors, red and white, such that the following holds.
\begin{enumerate}
    \item The red vertex stabilizers are finite.
    \item The white vertex stabilizers are relatively hyperbolic.
    \item There is a ``signature'' (see Definition~\ref{signature}) for the action of $G$ on $T$. 
\end{enumerate}
Then the signature defines a parabolic forest $F$ (see Definition~\ref{forest}) on which $G$ acts on. Let $\P$ be the collection of stabilizers of connected components, called \emph{parabolic trees}, of $F$. Then $G$ is relatively hyperbolic to $\P$ and the Bowditch boundary $\boundary(G,\P)$ has inseparable parabolic cut pairs.
\end{IntroComb'}

We now give a decomposition theorem, which states that groups with inseparable parabolic cut pairs always arise via Theorem~A$^\prime$.

\begin{IntroDecomp}[Characterizing parabolic cut pairs]
\label{IntroDecomp}
Consider a finitely generated group $G$ hyperbolic relative to a collection $\P$ such that the boundary $\boundary(G,\P)$ is connected without cut points and has inseparable parabolic cut pairs. Then $G$ acts without inversions and with finite quotient on a bipartite tree $T$ on two colors, red and white, such that the following holds.
\begin{enumerate}
    \item The red vertex stabilizers are finite.
    \item The white vertex stabilizers are relatively hyperbolic.
    \item There is a ``signature'' (see Definition~\ref{signature}) for the action of $G$ on $T$. 
\end{enumerate}
Moreover, $\P$ arises from applying Theorem~A$^\prime$ to the above setup.
\end{IntroDecomp}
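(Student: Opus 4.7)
The plan is to reverse-engineer the structure of Theorem A$^\prime$ starting from the given boundary data. First, I would organize the parabolic points using the cut-pair structure: declare two parabolic points equivalent when they are joined by a chain $p = p_0, p_1, \dots, p_n = q$ in which each consecutive $\{p_{i-1}, p_i\}$ is an inseparable parabolic cut pair. The equivalence classes, together with their linking inseparable parabolic cut pairs, assemble into a $G$-invariant parabolic forest $F$ in the sense of Definition~\ref{forest}. A standard almost-malnormality argument for peripheral subgroups, combined with the fact that any parabolic point lies in a unique equivalence class, should identify the component stabilizers with conjugates of the original peripherals in $\P$.

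Next I would build the bipartite $G$-tree $T$ by packaging all $G$-orbits of inseparable parabolic cut pairs into a pretree in the sense of Bowditch, whose elements are (a) parabolic points lying in some inseparable parabolic cut pair and (b) closures of connected components of the complement in $\boundary(G,\P)$ of the full $G$-orbit of such cut pairs. The hypotheses that $\boundary(G,\P)$ is connected without cut points and that the cut pairs are inseparable should let me verify Bowditch's pretree axioms and extract a simplicial bipartite $G$-tree $T$ with red vertices of type (a) and white vertices of type (b), acted on by $G$ without inversions and cocompactly. Red vertex stabilizers are automatically finite, since the stabilizer of an inseparable parabolic cut pair is contained in the intersection of two distinct peripheral subgroups. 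For a white vertex $w$, the associated piece $X_w \subseteq \boundary(G,\P)$ carries a convergence action of the stabilizer $G_w$, and I would invoke Yaman's characterization of geometrically finite convergence actions—using the local connectedness results of Bowditch and Dasgupta--Hruska to control limit set topology—to show $G_w$ is hyperbolic relative to the family of intersections of $G_w$ with peripherals of $G$ fixing parabolic points in $X_w$. The signature is then read directly off $T$: each edge from a red vertex $r$ to a white vertex $w$ records which peripheral of $G_w$ contains the finite group $G_r$, and the two edges at $r$ encode the pair of peripherals being glued.

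To close the loop, I would compare the parabolic forest produced by applying Theorem A$^\prime$ to this signature with the forest $F$ defined in the first step: both assemble the same peripheral subgroups of white vertex stabilizers along the same finite identifications, so their components agree and $\P$ is recovered as the collection of component stabilizers. The main obstacle I anticipate is the relative hyperbolicity of the white vertex stabilizers in the intended peripheral structure; this requires ensuring that cutting along inseparable parabolic cut pairs introduces no new parabolic behavior and no spurious cut points or cut pairs in the pieces $X_w$, which I expect to hinge on a careful interaction between Yaman's criterion, the inseparability hypothesis, and the Haulmark--Hruska/Dasgupta--Hruska analysis of local cut structure.
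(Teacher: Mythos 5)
Your overall strategy --- decompose along the inseparable parabolic cut pairs, build a bipartite $G$--tree, identify vertex stabilizers as relatively hyperbolic, read off a signature, and recover $\P$ by comparing parabolic forests --- matches the paper's plan, but one step contains a slip and another relies on a tool that does not cover all the cases.

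The slip is in your choice of red vertices as ``parabolic points lying in some inseparable parabolic cut pair.'' The stabilizer of a single parabolic point $p$ is the full peripheral subgroup $\Stab(p)$, which is infinite whenever $\boundary(G,\P)$ is connected; so this choice makes the red stabilizers infinite. Your own finiteness justification (``the stabilizer of an inseparable parabolic cut pair is contained in the intersection of two distinct peripheral subgroups'') is a statement about the two-element cut pair, not about one of its points. The paper takes red vertices to be the inseparable parabolic cut pairs themselves and white vertices to be the stars of the inseparable cut pair tree of Definition~\ref{cutpairtree}, which is already known to be a simplicial $G$--tree (Papasoglu--Swenson, Hruska--Walsh, Guralnik); this avoids setting up a pretree and re-verifying its axioms.

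The genuine gap is in the relative hyperbolicity of the white vertex stabilizers. You propose applying Yaman's dynamical characterization to the piece carried by a white vertex. Yaman's theorem requires the compactum to be perfect, but in this setting the piece $B_v$ may have isolated points or even be finite: for $G = \Z/3\Z * \Z/3\Z * \Z/3\Z$ with the relatively hyperbolic structure discussed in Example~\ref{CombExample}, each white vertex stabilizer is $\Z/3\Z$ and each $B_v$ is a discrete three-point set. You would also need to account for finite peripheral subgroups appearing with multiplicity, which is why the paper uses an indexed family rather than a set of peripherals. The paper's route sidesteps all of this combinatorially: the cut-pair stabilizers are finite hence relatively quasiconvex, so by Haulmark--Hruska the vertex stabilizers of the splitting are relatively quasiconvex, hence relatively hyperbolic with the induced peripheral structure by a theorem of Hruska, and finitely many $G_v$--conjugacy classes of finite peripherals can be adjoined by Osin. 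This directly produces the indexed family $\P_v$ needed for $2$--admissibility. Your opening move (chaining parabolic points through cut pairs into a forest and matching component stabilizers with conjugates of members of $\P$) is the right idea, but it needs the intermediate characterization of which parabolic points lie in which piece $B_v$ --- that is where the paper's Lemmas~\ref{gvinfint} and~\ref{idealnotpara} do the real work.
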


Kapovich--Kleiner asked the question of which spaces arise as boundaries of groups \cite{KapovichKleiner}. We explore this question by describing the boundary $\boundary(G,\P)$ in Theorem~A$^\prime$ in terms of boundaries of the vertex stabilizers
using two independent topological constructions described in Theorem~C and Theorem~D below.
The question is explored by \Swiatkowski\ \cite{Swiatkowski20} in the context of Gromov boundaries by constructing an inverse limit space called the limit of a tree system of spaces which is made by gluing vertex spaces in the pattern of a tree along spaces called edge spaces, and then compactifying.

One important condition in \Swiatkowski's construction is that it can only be applied in the case when the images of the adjacent edge spaces inside each vertex space are pairwise disjoint. While this condition may seem innocuous, it turns out to be critical in \Swiatkowski's construction (see Remark~\ref{\Swiatkowski rem}).
In this document, we modify \Swiatkowski's construction by allowing a vertex space to contain edge spaces that intersect with the restriction that edge spaces are made up of pairs of points. This small modification requires a surprisingly substantial amount of work to implement.
In Section~\ref{sec:kettlebell}, we construct the limit space of this tree system as an inverse limit.

\begin{IntroInvLim}
Given a tree system of cut pairs $\Theta$, we can construct an inverse limit $M_{\Theta}$ which is a compact space into which all vertex and edge spaces of $\Theta$ embed.
\end{IntroInvLim}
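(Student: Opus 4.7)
The plan is to realize $M_\Theta$ as an inverse limit over an exhausting sequence of finite subtrees of the underlying tree $T$. Fix a basepoint and let $T_0 \subset T_1 \subset \cdots$ be finite connected subtrees with $\bigcup_n T_n = T$. For each $n$ I first build a compact approximation $M_n$ by forming the pushout (in topological spaces) of the finite diagram consisting of the vertex spaces $X_v$ for $v \in T_n$, glued along the edge spaces $X_e$ for $e \in T_n$ via the given inclusions. Since each $X_v$ is compact Hausdorff and each $X_e$ is a two-point set, each $M_n$ is compact Hausdorff; an induction on the tree $T_n$ shows that the canonical maps $X_v \hookrightarrow M_n$ and $X_e \hookrightarrow M_n$ are embeddings, because along a tree the only identifications made are those dictated by the edge spaces themselves.

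Next I define bonding maps $\pi_n \colon M_{n+1} \to M_n$. For each edge $e \in T_{n+1}$ that leaves $T_n$, attached to $v \in T_n$ with edge space $X_e = \{a_e, b_e\} \subset X_v$, the portion of $M_{n+1}$ lying beyond $e$ must be collapsed into $X_e$. Using the tree-system data, the far subtree is split into two pieces, one sent to $a_e$ and one sent to $b_e$; this recipe glues together over all terminal edges of $T_n$. Continuity of $\pi_n$ follows from the pushout description of $M_n$, and compatibility across shared points of $X_v$ is built into the structure of a tree system of cut pairs. The limit $M_\Theta := \varprojlim (M_n, \pi_n)$ is then compact Hausdorff, and a vertex or edge space of $T_n$ embeds into $M_\Theta$ via the coherent thread of its copies inside $M_m$ for all $m \geq n$.

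The main obstacle, and the reason \Swiatkowski's construction does not apply directly, is the possibility that two edges $e_1, e_2$ incident to a common vertex $v$ have edge spaces $X_{e_1}, X_{e_2}$ sharing a point $p \in X_v$. Then a sequence of points lying in the far subtrees of $e_1$ and of $e_2$ that both approach $p$ in $M_n$ must have \emph{compatible} lifts in $M_\Theta$ rather than splitting into two artificially distinct threads over $p$. Formalizing this requires a careful bookkeeping of which branches exit through each point of each edge space, propagated inductively through $T$, so that the bonding maps descend to well-defined quotients, the resulting inverse limit is genuinely Hausdorff, and the natural maps $X_v \to M_\Theta$ stay injective. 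I expect this combinatorial bookkeeping, rather than any single topological step, to be the technically delicate part of the construction.
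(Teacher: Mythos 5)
The central step of your plan fails: you propose to define the bonding map $\pi_n$ by collapsing each far branch onto its two-point edge space $\{a_e, b_e\}$, but this map cannot be continuous in general. If the branch $M_{S'}$ beyond $e$ is connected — as it is whenever the constituent spaces are continua, which is precisely the situation relevant to Bowditch boundaries — then there is no continuous surjection from the connected branch onto a discrete two-point set. Even if one allows disconnected constituent spaces, the required ``split into two pieces'' of the branch need not exist and, when it does, need not be canonical or compatible across consecutive levels of the exhaustion. The appeal to the pushout description for continuity does not rescue this: the target restriction of the map on the branch is to a two-point discrete space, and that is the step that breaks. The difficulty you flag (edge spaces sharing a point in a common vertex space) is genuine, but it is secondary to this one.

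The paper avoids the obstacle by changing the factor spaces, not just the bookkeeping: rather than taking the factor at a finite subtree $F$ to be the glued space $M_F$, it takes the \emph{kettlebell space} $M_F^*$ (Definition~\ref{kettlebell}), obtained from $M_F$ by attaching, for each peripheral cut pair $\{a,b\}\in\C_F$, a ``peripheral arc'' $[0, d(a,b)]$ with endpoints identified to $a$ and $b$. The bonding map from $M_{F'}^*$ to $M_F^*$ then collapses the newly added branch not onto $\{a,b\}$ but onto the peripheral arc joining $a$ to $b$, using the $1$--Lipschitz Urysohn-type function of Lemma~\ref{distdec}; this is continuous on connected branches because the target is an arc, not a two-point space. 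The inverse limit of these kettlebell spaces is then shown (Theorem~\ref{completion is inverse limit}) to coincide with the completion $\bar M_T$. In short: the missing ingredient is the replacement of collapsed branches by intervals, which is exactly the modification to \Swiatkowski's collapse-to-a-point recipe that the paper develops.
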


Suppose $\Theta$ is a tree system of cut pairs (see Definition~\ref{treesys}) with underlying tree $T$. In Section~\ref{sec:completion} we construct a space $M_T$ by gluing the vertex spaces of $\Theta$. We then construct a totally bounded metric for $M_T$ which we compactify by taking its completion.

\begin{IntroComplete}
Given a tree system of cut pairs $\Theta$ with underlying tree $T$, the completion $\bar{M}_T$ of $M_T$ is compact and all vertex and edge spaces of $\Theta$ embed isometrically into $\bar{M}_T$. Moreover, $\bar{M}_T$ is homeomorphic to the inverse limit $M_{\Theta}$ as described in Theorem~C.
\end{IntroComplete}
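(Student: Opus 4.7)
The plan splits according to the three assertions: total boundedness and hence compactness of $\bar{M}_T$, the isometric embedding statement, and the identification with $M_\Theta$.

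For compactness, I would verify that the metric constructed on $M_T$ is totally bounded; together with the fact that a completion is by definition complete, this gives compactness. Fix a basepoint vertex $v_0 \in T$. The gluing metric should be arranged so that the diameter of each vertex space $X_v$, measured inside $M_T$, decays summably (or at least to zero) with the tree-distance from $v_0$ to $v$. Then given $\epsilon > 0$, only finitely many vertices $v$ have diameter in $M_T$ exceeding $\epsilon/2$; on each of these one picks a finite $\epsilon/2$-net using compactness of $X_v$, while every remaining vertex space lies within an $\epsilon$-ball around its tree-ward attaching cut pair by the triangle inequality. The union is a finite $\epsilon$-net for $M_T$, so $\bar{M}_T$ is complete and totally bounded, hence compact.

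For the isometric embeddings, the construction of the metric on $M_T$ must be such that on each vertex space $X_v$ the induced metric from $M_T$ coincides with the original metric on $X_v$. Granting this design property, the inclusions $X_v \hookrightarrow M_T \hookrightarrow \bar{M}_T$ are isometric. Since every edge space of $\Theta$ is a pair of points sitting inside a vertex space, its embedding is automatic.

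The principal work is the homeomorphism $\bar{M}_T \cong M_\Theta$. For each finite subtree $T' \subset T$ I would define a $1$-Lipschitz ``collapse'' map $\pi_{T'} \colon M_T \to M_{T'}$ that is the identity on every vertex space indexed by a vertex of $T'$ and sends each vertex space beyond $T'$ to its attaching cut pair in $T'$. Because these $\pi_{T'}$ are compatible with the bonding maps of the inverse system from Theorem~C, they extend by uniform continuity to $\bar{M}_T$ and assemble into a continuous map $\phi \colon \bar{M}_T \to M_\Theta$. Since $\bar{M}_T$ is compact and $M_\Theta$ is Hausdorff, it suffices to prove $\phi$ is bijective. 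Surjectivity comes from realizing any coherent thread in $M_\Theta$ as a Cauchy sequence in $M_T$ obtained by lifting through the maps $\pi_{T'}$ as $T'$ exhausts $T$, and using compactness of the vertex spaces to produce a limit in $\bar{M}_T$.

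The main obstacle is injectivity. Two distinct points of $\bar{M}_T$ need to be separated by some $\pi_{T'}$, and the novel feature distinguishing this construction from \Swiatkowski's --- that edge spaces within a single vertex space are allowed to intersect --- means that Cauchy sequences escaping along a ray in $T$ can exhibit nontrivial branching near the cut pairs. The plan is to argue that the diameter-shrinkage built into the metric forces any two Cauchy sequences with distinct images under some $\pi_{T'}$ to remain at positive distance, while two Cauchy sequences that agree under every $\pi_{T'}$ either stabilize inside a common vertex space (where the original metric decides) or eventually enter a common shrinking tail, in which case they converge to the same limit. Establishing this careful case analysis is where the tree-system-of-cut-pairs formalism should do the real work.
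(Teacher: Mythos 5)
Your treatment of compactness and of the isometric embeddings is essentially the paper's: choose a compatible ``shrinking'' metric on the vertex spaces (Proposition~\ref{compatibleshrink}) so that $\diam(M_v)$ decays with the tree-distance from a basepoint, and use total boundedness plus completeness. One refinement you'd need: diameter decay with tree-distance alone does not yield a finite $\epsilon$-net, because vertices in $V$ can have infinite valence; you also have to invoke the nullity of the peripheral collections $\C_v$ (this is what Lemma~\ref{Cknull} and the inductive compactness of the $M_{T_k}$ in Lemma~\ref{Tkcpt} supply). That is a fixable detail.

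The real gap is in the homeomorphism with $M_{\Theta}$. You propose collapse maps $\pi_{T'}\colon M_T\to M_{T'}$ that send each branch beyond $T'$ ``to its attaching cut pair.'' There is no continuous map from a connected branch onto a two-point set, so the only workable interpretation is to collapse each outgoing branch to a \emph{single} point of $M_{T'}$ (or to the image of its peripheral pair after identification). But that is exactly \Swiatkowski's construction, and the paper's Remark~\ref{\Swiatkowski rem} exhibits a tree system of cut pairs for which that construction yields a \emph{singleton} inverse limit: once edge spaces inside a constituent space are allowed to intersect, the collapsing chains together and annihilates the vertex spaces. Your later remark about ``nontrivial branching near the cut pairs'' is a symptom of this, but your $\pi_{T'}$ have already thrown away the information that would separate the corresponding Cauchy sequences, so the injectivity argument you sketch cannot close the case where two escaping sequences agree under every $\pi_{T'}$ yet converge to distinct limits in $\bar{M}_T$.

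The inverse system of Theorem~C does not have the total spaces $M_{T'}$ as factor spaces. It uses the \emph{kettlebell spaces} $M_{T'}^*$ of Definition~\ref{kettlebell}: for each peripheral pair $\{a,b\}\in\C_{T'}$ one attaches a closed interval $[0,d(a,b)]$ to the pair, and the bonding maps (Definitions~\ref{bondingaug} and~\ref{bondinggen}) collapse each new constituent space onto the corresponding \emph{arc} via the $1$--Lipschitz Urysohn function of Lemma~\ref{distdec}, not onto the pair. The arcs are what keep the factor maps from losing the escaping direction, and the points of $M_{\Theta}\setminus\gamma(M_T)$ are precisely the threads that live in the \emph{interiors} of these peripheral arcs (Lemmas~\ref{boundgam} and~\ref{interiorthreads}), corresponding to the non-redundant ends $\boundary_0T$. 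Your plan needs to be rewritten with the factor spaces $M_F^*$ and the arc-collapsing bonding maps; once that is in place, the scheme you describe --- a $1$--Lipschitz $\gamma_F$ on $M_T$, uniform continuity extending it to $\bar{M}_T$, and continuous-bijection-between-compact-Hausdorff-spaces --- is exactly how Theorem~\ref{completion is inverse limit} concludes.
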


We now give a topological description of the boundary $\boundary(G,\P)$ obtained in Theorem~A$^\prime$ via the spaces described in Theorem~C and Theorem~D.

\begin{IntroBowdComb}
Assuming the setup of Theorem~A$^\prime$, define a tree system of cut pairs $\Theta$ over $T$ with the vertex spaces being Bowditch boundaries of the vertex groups. Then the Bowditch boundary $\boundary(G,\P)$ is homeomorphic to the inverse limit $M_{\Theta}$ as described in Theorem~C and the completion $\bar{M}_T$ as described in Theorem~D.
\end{IntroBowdComb}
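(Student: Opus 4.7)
The plan is to reduce to showing $\boundary(G,\P)$ is homeomorphic to $M_\Theta$, since Theorem~D already gives $M_\Theta \cong \bar{M}_T$. I would construct an explicit continuous bijection $\phi \colon M_\Theta \to \boundary(G,\P)$ and conclude by compactness of $M_\Theta$ (Theorem~C) together with Hausdorffness of $\boundary(G,\P)$.

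First I would verify that $\Theta$ is a well-formed tree system of cut pairs in the sense of Definition~\ref{treesys}. The white vertex groups $G_v$ are relatively hyperbolic by hypothesis, and the combination argument in Theorem~A$^\prime$ shows that each $G_v$ embeds as a relatively quasiconvex subgroup of $G$, yielding a $G_v$-equivariant topological embedding $\iota_v \colon \boundary(G_v, \P_v) \hookrightarrow \boundary(G, \P)$ whose image is the limit set of $G_v$. Each edge space is a pair of parabolic points prescribed by the signature, and the two inclusions encoded by the signature (Definition~\ref{signature}) guarantee that whenever an edge $e$ meets a white vertex $v$, the edge space inside $\boundary(G_v, \P_v)$ maps via $\iota_v$ to the fixed pair of $\Stab(e)$ acting on $\boundary(G,\P)$. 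Hence $\phi$ is well-defined on the union of vertex boundaries after identifying edge spaces.

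Points of $M_\Theta$ outside this union correspond, via the inverse-limit construction of Theorem~C, to sequences of vertices $(v_n)$ in $T$ escaping to infinity. For any suitable bounded choice of points $x_n \in \boundary(G_{v_n}, \P_{v_n})$, the sequence $\iota_{v_n}(x_n)$ converges in $\boundary(G,\P)$: if $(v_n)$ eventually lies in a parabolic tree $\tau$ of the forest $F$, the limit is the parabolic fixed point of $\Stab(\tau) \in \P$; otherwise it is a conical limit point of $G$. Independence of the limit from the basepoint choices follows from the fact that edge stabilizers embed into peripheral subgroups of both adjacent vertex groups, together with the standard convergence dynamics of $G$ acting on $T$ away from parabolic trees.

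Continuity of $\phi$ then follows from matching the inverse-limit neighborhood basis of $M_\Theta$, given by finite subtrees of $T$ together with vertex-space neighborhoods, with neighborhoods in $\boundary(G,\P)$ described by deep half-spaces in the coned-off Cayley graph. Injectivity uses that distinct vertex boundaries meet only along the prescribed edge spaces and that distinct tree-ends give distinct parabolic or conical limit points. Surjectivity partitions $\boundary(G,\P)$ into parabolic fixed points of $\P$ (visible as ends of parabolic trees in $F$), parabolic fixed points of peripheral subgroups of vertex groups (visible inside some $\iota_v(\boundary(G_v,\P_v))$), and conical limit points of $G$ (tracked by rays in the coned-off Cayley graph, either with bounded image in one vertex space or escaping an end of $T$). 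The main obstacle is matching the topology near ends that lie inside parabolic trees: because the signature allows cut pairs in a single vertex boundary to share a common parabolic point, a parabolic fixed point of some $P \in \P$ is approached simultaneously by infinitely many vertex-boundary sheets meeting there, and verifying that convergence of these sheets in the inverse-limit topology agrees with Bowditch convergence in $\boundary(G,\P)$ is precisely what the analysis of Sections~\ref{sec:kettlebell} and~\ref{sec:completion} is designed to control.
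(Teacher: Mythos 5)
Your overall strategy matches the paper's: reduce to showing $\boundary(G,\P)$ is homeomorphic to the topological model (via Theorem~D the choice of $M_\Theta$ versus $\bar{M}_T$ is cosmetic), build an explicit continuous bijection from the model to the boundary, and conclude by compactness of the source and Hausdorffness of the target. The paper builds $\Psi\colon M_T \to \Delta\bar{K}$ on the total space, shows it is continuous and injective (Lemma~\ref{Psi}), extends to $\bar{M}_T$ via a Bourbaki extension theorem (Theorem~\ref{bourbaki}, Lemma~\ref{Psi ext}), and checks bijectivity (Theorem~\ref{boundarytreesys}); the $M_\Theta$ version follows from Theorem~\ref{completion is inverse limit}. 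Your injectivity and surjectivity outline corresponds to Lemma~\ref{intersection of Delta Kv's}, Proposition~\ref{non-red to bound k bar}, and the final argument in Theorem~\ref{boundarytreesys}.

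Two things need attention. First, before any of the Section~\ref{sec:completion}/\ref{sec:kettlebell} machinery can be invoked, you must verify that $\Theta$ actually is a tree system of cut pairs, and in particular that the peripheral collection of edge spaces inside each $\boundary(G_v,\P_v)$ is a null collection (Definition~\ref{treesys}(\ref{treesys: nullity})). This is a genuine claim requiring proof --- it is Proposition~\ref{parabolicnull} in the paper --- and your proposal does not address it.

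Second, your description of the points of $M_\Theta$ outside the union of vertex boundaries is incorrect as stated. You set up a correspondence with all sequences $(v_n)$ escaping to infinity, and then split into the case where $(v_n)$ eventually lies in a parabolic tree (parabolic fixed point of $\P$) and the remaining case (conical limit point). But an end that eventually sits inside a parabolic tree is exactly a \emph{redundant} end (Definitions~\ref{redundantsig} and \ref{redundant}), and a redundant end produces no new point of $M_\Theta$: the corresponding parabolic fixed point is a vertex of $\bar{K}$ and so lies in $\Lambda_v \subset \boundary(G_v,\P_v)$, hence already inside the union. Proposition~\ref{completion} and Theorem~\ref{completion is inverse limit} establish that $M_\Theta$ outside the union is in bijection only with the \emph{non-redundant} ends $\boundary_0 T$, and all of those map to non-parabolic (conical) boundary points via Proposition~\ref{non-red to bound k bar}. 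Your surjectivity partition therefore places the parabolic fixed points of $\P$ in the wrong bucket. You correctly flag in your last paragraph that this redundancy phenomenon is the main technical subtlety that Sections~\ref{sec:completion} and \ref{sec:kettlebell} are designed to manage, but the dichotomy as written would lead you to look for points in $M_\Theta$ that do not exist.
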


\subsection{Outline of the paper}

In Section~\ref{sec:prelim} we give the basic definitions of a relative hyperbolic group and its properties along with the definition of a tree compatible metric spaces and the construction of its total space.
In Section~\ref{sec:graphcomb} we give the setup for the combination theorem, Theorem~A$^\prime$ including the definition of the signature. We then give a proof of part of Theorem~A$^\prime$ by constructing a fine hyperbolic graph.

In Section~\ref{sec:completion} we give the definition of the tree system of cut pairs and construct a total space associated to it. We then describe the completion of the total space which gives a proof of part of Theorem~D. We also describe topological properties of the completion including the structure of cut pairs.

Section~\ref{sec:kettlebell} gives a construction of an inverse system associated to a tree system of cut pairs. The factor spaces in the inverse system are kettlebell spaces constructed by attaching some line segments to the vertex spaces of the tree system. We describe the inverse limit of the inverse system which proves Theorem~C and show that it is homeomorphic to the completion described in Section~\ref{sec:completion}, which finishes the proof of Theorem~D.

Section~\ref{sec:homeo} completes the proof of Theorem~A$^\prime$ and Theorem~E. The proof involves describing the boundary $\boundary(G,\P)$ in Theorem~A$^\prime$ by constructing a tree system of cut pairs based on the setup described in Section~\ref{sec:graphcomb}.

Section~\ref{sec:decomp} gives a proof of the decomposition theorem, Theorem~B. We characterize all parabolic points on the boundary and construct the signature required to show that the relatively hyperbolic structure in the hypothesis of Theorem~B arises from Theorem~A$^\prime$.

\subsection{Acknowledgments} I would like to thank my advisor Chris Hruska for encouraging me to explore this phenomenon of parabolic cut pairs and for his invaluable feedback, guidance and comments throughout this project. I would also like to thank Genevieve Walsh and Jason Manning for helpful feedback on this document and Craig Guilbault for insightful discussions on properties on some of the spaces which arise as Bowditch boundaries.

\section{Preliminaries}
\label{sec:prelim}

This section collects background results on relatively hyperbolic groups, Peano continua and trees of spaces.

\subsection{Relatively hyperbolic groups}
A graph $K$ has vertex set $\vertices(K)$ and edge set $\edges(K)$.
A connected bipartite graph $K$ with $\vertices(K)=V\sqcup W$ is a \emph{star} if $|V|=1$. If $|W|=3$, it is a \emph{tripod}. The single element $v\in V$ is the \emph{center} of the star.

A \emph{path} consists of a sequence of vertices such that successive vertices are connected by an edge and a \emph{cycle} is a path $(x_0,x_1,\ldots, x_n)$ such that $x_0=x_n$. We regard two cycles as the same if their vertices are cyclically permuted.
A \emph{circuit} is a cycle $(x_0,x_1,\ldots, x_n)$ with distinct vertices except that $x_0=x_n$. The number of distinct vertices in a circuit is its \emph{length}.

\begin{defn}
\label{relhyp}
A graph $K$ is \emph{fine} if for all $e\in \edges(K)$ and $n>0$, the edge $e$ is contained in only finitely many circuits of length $n$.

Consider a finitely generated group $G$ with a $G$--set $\Lambda$. For each $x\in\Lambda$, let $P_x$ be its stabilizer, and let $\P=(P_x)_{x\in \Lambda}$ be an indexed family. Then the pair $(G,\P)$ is \emph{relatively hyperbolic} if $G$ acts with finite quotient on a connected, fine, hyperbolic graph $K$ such that $K$ has vertex set $\Lambda$, finite edge stabilizers, and finitely generated vertex stabilizers. Moreover, such a graph $K$ is a $(G,\P)$ graph. The subgroups in $\P$ are \emph{peripheral subgroups}.
\end{defn}

Two peripheral subgroups with distinct indices always have finite intersection.  In other definitions in the literature, $\P$ is defined as the collection of peripheral subgroups, while our definition with an indexed family allows more freedom in choosing $\P$, in particular allowing for multiple instances of the same (finite) peripheral group to appear in $\P$.
For example, consider a finite group $G$ acting trivially on a finite connected graph $K$. Then $(G,\P)$ is relatively hyperbolic where $\P=(P_x)_{x\in \vertices(K)}$ where each $P_x=G$.
Such an example would not be possible with existing definitions and is key in proving results in Section~\ref{sec:graphcomb}.

Traditionally hyperbolic groups are considered to be relatively hyperbolic with the empty indexed family $\P$. While such groups are studied in general contexts, they are not studied in this paper. As such we assume that for a relatively hyperbolic pair $(G,\P)$, the indexed family $\P$ is non-empty.

For a relatively hyperbolic pair $(G,\P)$, a subgroup $H$ of $G$ is \emph{parabolic} if it is a subgroup of a peripheral subgroup, and it is \emph{loxodromic} if it is a maximal virtually cyclic subgroup of $G$ and not parabolic.

\begin{defn}
\label{splitrelative}
Suppose $(G,\P)$ is relatively hyperbolic.  A \emph{finite splitting of $G$ relative to} $\P$ is a $G$--action on a simplicial tree $T$ without inversions such that each peripheral group fixes a vertex of $T$, and each edge stabilizer is finite.
\end{defn}

We now introduce one useful lemma due to Bowditch \cite{BowditchRelHyp}. The version stated here combines Lemmas 2.7 and 2.9 in \cite{MW}.

\begin{lem}
\label{gattach}
Let $K$ be a $(G,\P)$ graph for some relatively hyperbolic pair $(G,\P)$. If $e=(x,y)\notin \edges(K)$ add the edges $ge=(gx,gy)$ for all $g\in G$ to construct a new graph $K\cup Ge$. Then $K\cup Ge$ is also a $(G,\P)$ graph.
\end{lem}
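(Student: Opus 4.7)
The plan is to verify the five defining properties of a $(G,\P)$ graph for $K \cup Ge$, most of which follow easily but with fineness requiring real work.

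First, I would dispatch the bookkeeping. The vertex set $\Lambda$ and the vertex stabilizers (hence their finite generation) are unchanged. Connectedness is immediate since $K$ is a subgraph of $K \cup Ge$. The $G$-action on $K \cup Ge$ still has finite quotient on edges because we have added only one additional $G$-orbit. For edge stabilizers on the new orbit, the stabilizer of $ge = (gx, gy)$ equals $g(P_x \cap P_y) g^{-1}$, which is finite because $x$ and $y$ are distinct indices in $\Lambda$ and the paper has already observed that peripheral subgroups with distinct indices always have finite intersection.

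Next, I would establish hyperbolicity by showing that the inclusion $K \hookrightarrow K \cup Ge$ is a $G$-equivariant quasi-isometry of vertex sets with the graph metric. One direction, $d_{K \cup Ge} \leq d_K$, is automatic from $K$ being a subgraph. For the other, let $d := d_K(x, y)$; any new edge $ge$ can be replaced by a $K$-geodesic of length $d$ between $gx$ and $gy$, yielding $d_K \leq d \cdot d_{K \cup Ge}$. Gromov hyperbolicity is preserved under quasi-isometries of graphs, so $K \cup Ge$ is hyperbolic.

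The main obstacle is fineness: for each edge $f$ of $K \cup Ge$ and each $n > 0$, I must bound the circuits of length $n$ through $f$. My plan is to ``expand'' each such circuit $C$ by replacing every new edge in $C$ with a fixed $K$-geodesic of length $d$; when $f$ is old, this produces a closed walk in $K$ of length at most $nd$ still passing through $f$, and when $f$ is new, a closed walk through its chosen expansion $\gamma_f$. Since $K \cup Ge$ is a simple graph, $C$ is determined by its vertex sequence, and the vertices of $C$ appear as a subsequence of the vertices of this expanded walk. The key step is to combine fineness of $K$ (which bounds the circuits in $K$ of length $\leq nd$ through a fixed edge or through $\gamma_f$) with the finitely many ``expansion patterns'' for $C$ (recording which of the $n$ edges of $C$ are old and which are new) to conclude that only finitely many $C$ can arise. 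The delicate part is that balls in a fine graph need not be finite, so one cannot simply enumerate vertices within distance $nd$ of $f$ in $K$; the counting must instead exploit that every vertex of $C$ lies on a short circuit of $K \cup Ge$ through $f$, whose $K$-expansion is controlled by fineness of $K$ together with the bound $d$ on edge lengths.
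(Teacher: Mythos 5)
The paper does not prove this lemma itself --- it is cited to Bowditch, with the stated form taken from Lemmas~2.7 and~2.9 of Mart\'inez-Pedroza--Wise \cite{MW} --- so there is no in-paper argument to compare against, and I will assess your proposal on its own terms. On the routine side, your handling of the vertex set, vertex stabilizers, finite quotient on edges, and connectedness is correct; your observation that the new edge stabilizers are finite because peripherals with distinct indices have finite intersection is correct (the setwise stabilizer of $ge$ may contain $g(P_x\cap P_y)g^{-1}$ with index $2$ if there is an inversion, but this changes nothing); and the quasi-isometry argument for hyperbolicity, with multiplicative constant $d = d_K(x,y)$, is clean and complete.

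The fineness step, however, contains a genuine gap, and it is exactly where the substance of the lemma lies. Expanding the new edges of a circuit $C$ by $K$-geodesics produces a \emph{closed walk} in $K$ of length at most $nd$, not a circuit: the inserted geodesics can revisit vertices of $C$ and of each other, and fineness of $K$ --- which controls \emph{circuits} through a fixed edge --- says nothing directly about closed walks of bounded length. Indeed a fine graph can admit infinitely many such walks of length $4$ through a fixed edge, for instance near a vertex of infinite valence in an infinite star. Your closing sentence --- that every vertex of $C$ ``lies on a short circuit of $K\cup Ge$ through $f$, whose $K$-expansion is controlled by fineness of $K$'' --- is circular: the short circuit in question is $C$ itself, and the assertion that its expansion ``is controlled by fineness'' is precisely the claim under proof, not a usable premise. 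You correctly diagnose that one cannot simply enumerate a ball of radius $nd$, but you do not supply the replacement. The actual argument in the cited sources proceeds via the equivalent characterization of fineness in terms of finiteness of the set of arcs of bounded length between a fixed pair of vertices, together with a careful extraction of sub-arcs and sub-circuits from the expanded walk; this step does not reduce to a one-line appeal to fineness of $K$, and as written your proposal does not establish it.
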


\subsection{Bowditch boundaries and Peano continua}
\label{subsec: Bowditch boundaries}

Consider a fine hyperbolic graph $K$. We describe a topology on the set $\Delta K=\vertices(K)\sqcup \boundary K$. Consider $f\colon\N \rightarrow \N$ such that $f(n)\geq n$ for all $n\in \N$. An \emph{$f$--quasigeodesic arc} is an arc $\beta$ such that $\diam(\alpha)\leq f(d(x,y))$ for any subarc $\alpha$ of $\beta$ where $x,y$ are the endpoints of $\alpha$. 
For such an $f$, some finite set $A\subset \vertices(K)$ and $x\in \Delta K$, we define the sets $M_f(x,A)$ and $M'_f(x,A)$ as follows.

\begin{enumerate}
\item $M_f(x,A)$ consists of all $y\in \Delta K$ such that any $f$--quasigeodesic arc from $x$ to $y$ misses $A\setminus \{x\}$.
\item $M'_f(x,A)$ consists of all $y\in \Delta K$ such that there is an $f$--quasigeodesic arc from $x$ to $y$ missing $A\setminus \{x\}$.
\end{enumerate}

For any fixed $f$ as above, the collections $\{M_f(x,A)\}_{x,A}$ and $\{M'_f(x,A)\}_{x,A}$ form equivalent bases for a topology on the set $\Delta K$, where $x\in \Delta K$ and $A$ ranges over all finite subsets of $\vertices(K)$ \cite{BowditchRelHyp}. This topology on $\Delta K$ does not depend on the choice of $f$.
Note that a $1_{\N}$--quasigeodesic arc is a geodesic. Additionally, we write $M_{1_{\N}}(x,A)=: M(x,A)$ and $M'_{1_{\N}}(x,A)=: M'(x,A)$.

We now define the Bowditch boundary, a compact space associated with a given relatively hyperbolic pair. Consider the relatively hyperbolic pair $(G,\P)$ with a $(G,\P)$ graph $K$.
The Bowditch boundary $\boundary(G,\P)$ is then the space $\Delta K$ described above and it is independent of the choice of $K$ \cite{BowditchRelHyp}.

The boundary $\boundary(G,\P)$ is compact and metrizable \cite{BowditchRelHyp}. Moreover, $G$ acts on $\boundary(G,\P)$ via homeomorphisms. 
A point $p\in \Lambda\subset \boundary(G,\P)$ is \emph{parabolic} and $\Stab(p)$ acts properly discontinuously and cocompactly on $\boundary(G,\P)\setminus\{p\}$.

For a relatively hyperbolic pair $(G,\P)$ if $\P$ contains finite groups then the boundary $\boundary(G,\P)$ is disconnected and contains isolated points. If $\P$ does not contain finite groups then the boundary $\boundary(G,\P)$ is connected if and only if $G$ does not have a finite splitting relative to $\P$ \cite[Proposition~10.1]{BowditchRelHyp}, 
and if $\boundary(G,\P)$ is connected, it is locally connected \cite[Theorem~1.1]{DasguptaHruska_LC}. Thus $\boundary(G,\P)$ is a compact, connected, locally connected metrizable space which makes it a \emph{Peano continuum}.

Let $M$ be a Peano continuum without cut points. A closed set $S\subset M$ is said to be \emph{separating} if $M\setminus S$ is disconnected and $S$ \emph{separates} points in different components of $M\setminus S$. A point $x\in M$ is a \emph{cut point} of $M$ if $\{x\}$ is separating. A \emph{cut pair} $\{x,y\}$ is a separating set such that $x,y$ are not cut points. It is \emph{inseparable} if no other cut pair separates $x$ and $y$.
\begin{defn}
\label{cutpairtree}
Given a Peano continuum $M$ and 
a set of inseparable cut pairs $W$ on $M$, we define the \emph{inseparable cut pair tree $T_W$ dual to $W$}. 
Given $w,w_1$ and $w_2$ in ${W}$, we say $w$ is in \emph{between} $w_1$ and $w_2$ if $w$ separates at least one point of $w_1$ from at least one point of $w_2$.

A \emph{star} is a maximal subset $v\subset {W}$ such that for any $w_1$ and $w_2$ in $v$ there is no $w$ in ${W}$ in between $w_1$ and $w_2$. Let ${V}$ be the set of stars. We now define ${T}_W$ as a bipartite graph over the vertex set ${V}\sqcup {W}$ where the vertices $v\in {V}$ and $w\in {W}$ are joined by an edge if $w\in v$. The graph ${T}_W$ above is a simplicial tree due to Papasoglu--Swenson \cite[Theorem~6.6]{Papasoglu_Swenson}. A different proof of this fact was given by Hruska--Walsh \cite[Proposition~5.9]{HW} and a related result is also proved by Guralnik \cite[Theorem~3.15]{Guralnik}.
We also have that for $w,w_1,w_2\in W$, the cut pair $w$ is in between $w_1$ and $w_2$ if and only if $w$ is in on the shortest path from $w_1$ and $w_2$ in $T$.
\end{defn}

For a graph $K$, a subset $S\subset \vertices(K)$ is \emph{separating} if $K\setminus S$ is disconnected, where $K\setminus S$ is the graph $K$ with the vertices and edges involving elements in $S$ removed. For a space $X$, the collection of components of $X$ is \emph{$\Comp(X)$}.

\begin{lem}
\label{bowd disc}
Consider a relatively hyperbolic pair $(G,\P)$ with a $(G,\P)$ graph $K$ and connected boundary $\boundary(G,\P)$. Suppose $S\subset \vertices(K)$ is a finite separating set for $K$. Then $S\subset \boundary(G,\P)$ is a separating set for $\boundary(G,\P)$. 
\end{lem}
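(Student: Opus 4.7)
The plan is to exhibit two non-empty disjoint open sets whose union is $\boundary(G,\P) \setminus S = \Delta K \setminus S$, thereby disconnecting it. Since $S$ separates $K$, partition $\vertices(K) \setminus S = V_1 \sqcup V_2$, where $V_1$ is the vertex set of one component of $K \setminus S$ and $V_2$ is the union of the vertex sets of the remaining components; in particular both are non-empty and no edge of $K$ joins $V_1$ to $V_2$. Define
\[
U_i \,=\, \bigcup_{x \in V_i} M'(x,S), \qquad i = 1, 2.
\]
Each $U_i$ is open in $\Delta K$ as a union of basic opens, contains the non-empty set $V_i$ (since $x \in M'(x,S)$), and satisfies $U_i \cap S = \emptyset$ because any arc from $x \notin S$ that terminates at a vertex $y \in S$ meets $S \setminus \{x\} = S$ at its endpoint. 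To see $U_1 \cup U_2 = \Delta K \setminus S$, note the vertices outside $S$ are covered by construction; for $z \in \boundary K$, pick any vertex $v \notin S$ and a geodesic ray from $v$ to $z$, and since $S$ is finite, the ray meets $S$ only finitely often, so a sufficiently far tail is a geodesic arc to $z$ from a vertex in $V_1 \cup V_2$ missing $S$, placing $z$ in $U_1 \cup U_2$.

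The main remaining step, and the principal obstacle, is to verify $U_1 \cap U_2 = \emptyset$. Suppose $z \in M'(x_1,S) \cap M'(x_2,S)$ with $x_i \in V_i$. When $z \in \vertices(K)$, a geodesic from $x_i$ to $z$ missing $S$ lies in $K \setminus S$ and joins $x_i$ to $z$ within a single component of $K \setminus S$; hence $x_1$ and $x_2$ share a component, contradicting the construction of $V_1, V_2$.

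When $z \in \boundary K$, we have geodesic rays $\gamma_1, \gamma_2$ from $x_1, x_2$ to $z$ missing $S$, so each $\gamma_i$ lies in the component containing $x_i$. Let $\delta$ be a hyperbolicity constant for $K$. Because $\gamma_1$ and $\gamma_2$ converge to the same boundary point, the standard asymptoticity of rays in $\delta$-hyperbolic graphs produces a constant $D$ and an integer $N$ such that for every $t \geq N$ some vertex of $\gamma_2$ lies within graph-distance $D$ of $\gamma_1(t)$. A geodesic of length at most $D$ from $\gamma_1(t)$ to such a vertex runs between the two components of $K \setminus S$ and therefore passes through some $s_t \in S$, yielding $d(\gamma_1(t), s_t) \leq D$. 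As $S$ is finite, the pigeonhole principle supplies a single $s \in S$ with $d(\gamma_1(t), s) \leq D$ for infinitely many $t$. But the triangle inequality gives $d(\gamma_1(t), s) \geq t - d(x_1, s)$, which exceeds $D$ for all sufficiently large $t$, the contradiction. This establishes disjointness and completes the separation of $\Delta K \setminus S$.
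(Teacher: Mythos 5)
Your proof is correct and follows essentially the same strategy as the paper: lift the component decomposition of $K \setminus S$ to a decomposition of $\Delta K \setminus S$ into open sets built from the basic neighborhoods $M'(\cdot, S)$ (the paper uses $M(\cdot,S)$ and keeps one open set per component, but the idea is identical). The one substantive difference is that you spell out why the lifted sets are disjoint on boundary points — the fellow-traveling argument using $\delta$-hyperbolicity and finiteness of $S$ — which the paper asserts without proof when it states that $\{C_{\Delta}\}$ forms a partition.
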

\begin{proof}
Let $C\in \Comp(K\setminus S)$. Let $C_{\Delta}\subset \Delta K=\boundary(G,\P)$ be the set containing $C$ and all points in $\boundary K$ which have representative geodesic rays in $K$ intersecting infinitely many points in $C$. Since for all $x\in C_{\Delta}$ we have $M(x,S)\subset C_{\Delta}$, we get that $C_{\Delta}$ is open. Moreover, as $\{C_{\Delta}\}_{C\in \Comp(K\setminus S)}$ forms a partition in $\boundary(G,\P)$, we get that $S$ is a separating set in $\boundary(G,\P)$.
\end{proof}

\subsection{Trees of metrizable spaces}
\begin{defn}
\label{treeofcompatiblemetricspaces}
A \emph{tree of metrizable spaces} $\mathcal{X}$ over a tree $T$ consists of the following data. 
\begin{enumerate}
    \item To each $v\in \vertices(T)$, we associate a non-empty metrizable space $X_v$ called a \emph{vertex space}.
    \item To each $e=(v,v')\in \edges(T)$, we associate a non-empty compact metrizable space $X_e$ called an \emph{edge space}, along with two embeddings $\psi_e^{v}\colon X_e\rightarrow X_v$ and $\psi_e^{v'}\colon X_e\rightarrow X_{v'}$.
\end{enumerate}

If vertex spaces are compact, $X$ is a \emph{tree of compact metrizable spaces}.

Consider an indexed family of metrics $\mathcal{D}=\{d_v\}_{v\in \vertices(T)}\bigcup \{d_e\}_{e\in \edges(T)}$ such that $d_v$ is a metric on $X_v$ for all $v\in \vertices(T)$ and $d_e$ is a metric on $X_e$ for all $e\in \edges(T)$. We say that $\mathcal{D}$ is \emph{compatible} on $\mathcal{X}$ if for all edges $e=(v,v')\in \edges(T)$ the embeddings $\psi_e^v$ and $\psi_e^{v'}$ are isometries with respect to the metric spaces $(X_v,d_v), (X_{v'},d_{v'})$ and $(X_e,d_e)$.
\end{defn}

Define the space $X_{\sqcup}:=\bigsqcup_{v\in \vertices(T)}X_v$. 
Consider $X_{\sqcup}$ along with a compatible family $\mathcal{D}$. We now define a metric $d_{\sqcup}\colon X_{\sqcup}\times X_{\sqcup}\rightarrow [0,\infty]$ as 
$$d_{\sqcup}(x,y)=\begin{cases}
    d_v(x,y) & \text{if there is some $v\in \vertices(T)$ with $x,y\in X_v$},\\
    \infty & \text{otherwise}.
\end{cases}$$

Consider $x,y\in X_{\sqcup}$. A \emph{linking chain} from $x$ to $y$ is a finite sequence $\alpha=(p_1,q_1,p_2,q_2,\ldots,p_m,q_m)$ in $X_{\sqcup}$ such that $p_1=x$, $q_m=y$ and $q_i\sim p_{i+1}$ for $i=1,2,\ldots,m-1$. For such an $\alpha$, define $d_{\sqcup}(\alpha)=\sum_{i=1}^md_{\sqcup}(p_i,q_i)$.

A linking chain $\alpha=(p_1,q_1,p_2,q_2,\ldots,p_m,q_m)$ between $x$ and $y$ is \emph{efficient} if $p_i,q_i\in X_{v_i}$ for $i=1,2,\ldots,k$, such that $(v_1,v_2,\ldots, v_{k-1}, v_k)$ is the shortest path between $v_1$ and $v_k$ in $T$.
If $\alpha$ is efficient then $d_{\sqcup}(\alpha)$ is always finite. Additionally, since the edge and vertex spaces are non-empty, there is always an efficient linking chain between $x$ and $y$.

Define the equivalence relation $\sim$ generated by $\psi_e^{v}(x)\sim \psi_e^{v'}(x)$ for all $e=(v,v')\in \edges(T)$ and $x\in X_e$. Define the \emph{quotient pseudometric} as
$$d_{\sim}(x,y)=\inf\{d_{\sqcup}(\alpha)\mid \alpha \text{ is a linking chain between $x$ and $y$}\}$$
for $x,y\in X_{\sqcup}$. 
Note that if $x\sim y$, then $d_{\sim}(x,y)=0$. To see this consider the linking chain $\alpha=(x,x,y,y)$ between $x$ and $y$. As $d_{\sqcup}(\alpha)=0$ we get $d_{\sim}(x,y)=0$.
The \emph{total space} of $\mathcal{X}$ is the space $X_T=X_{\sqcup}/{\sim}$. Let $d\colon X_T\times X_T\rightarrow[0,\infty)$ be induced by $d_{\sim}$.

\begin{prop}
\label{semimeteff}
For distinct $x,y\in X_{\sqcup}$ there is always an efficient linking chain $\alpha_0$ between $x$ and $y$ such that $d_{\sim}(x,y)=d_{\sqcup}(\alpha_0)$. Moreover, $d$ is a metric on $X_T$.
\end{prop}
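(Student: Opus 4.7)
The plan is to first prove existence of an efficient realizer $\alpha_0$ of $d_{\sim}(x,y)$, and then deduce that $d$ is a metric. The remaining metric axioms for $d$ are routine: symmetry follows by reversing chains, the triangle inequality by concatenating chains at a common midpoint $z$ (with $z \sim z$ supplying the link), and nonnegativity is immediate from the definition.

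To produce $\alpha_0$, I would first reduce to efficient chains. Given a finite-length linking chain $\alpha = (p_1, q_1, \ldots, p_m, q_m)$, each pair $(p_i, q_i)$ must lie in a single vertex space $X_{v_i}$, yielding a walk $v_1, v_2, \ldots, v_m$ in $T$. By expanding $\alpha$ with zero-length pairs $(z, z)$ realizing each identification $q_i \sim p_{i+1}$ one generating relation at a time, I may assume consecutive $v_i$'s are adjacent in $T$. Whenever $v_i = v_{i+2}$ (immediate backtracking at $v_{i+1}$), compatibility of metrics across the unique edge $e$ between $v_i$ and $v_{i+1}$ yields
\[
d_{v_i}(q_i, p_{i+2}) = d_e(a, b) = d_{v_{i+1}}(p_{i+1}, q_{i+1}),
\]
where $a, b \in X_e$ are the preimages of $q_i, p_{i+2}$ under $\psi_e^{v_i}$. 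The triangle inequality in $X_{v_i}$ then lets me replace the three-pair segment through $v_{i+1}$ by a single pair in $X_{v_i}$ without increasing $d_{\sqcup}$-length. Iterating eliminates all immediate backtracking, and since reduced walks in a tree are geodesics, the result is an efficient chain of length at most $d_{\sqcup}(\alpha)$.

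Next I realize the infimum over efficient chains by compactness. Fix $x \in X_{v_1}$ and $y \in X_{v_k}$ with geodesic $(v_1, \ldots, v_k)$ in $T$ and intermediate edges $e_1, \ldots, e_{k-1}$. Every efficient chain from $x$ to $y$ is uniquely determined by a tuple $(a_1, \ldots, a_{k-1}) \in \prod_i X_{e_i}$ via $q_i = \psi_{e_i}^{v_i}(a_i)$ and $p_{i+1} = \psi_{e_i}^{v_{i+1}}(a_i)$, with $p_1 = x$ and $q_k = y$. The length $\sum_i d_{v_i}(p_i, q_i)$ is continuous in this tuple, and the parameter space is compact as a finite product of compact edge spaces, so the infimum is attained at some $\alpha_0$. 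For definiteness of $d$, suppose $d_{\sim}(x,y) = 0$: the corresponding $\alpha_0$ satisfies $d_{v_i}(p_i, q_i) = 0$ for every $i$, forcing each $p_i = q_i$, and the linking relations then give $x = p_1 \sim q_1 = p_2 \sim \cdots \sim q_k = y$ in $X_{\sqcup}$, so $x$ and $y$ represent the same class in $X_T$.

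The main obstacle I foresee is the shortcutting step, specifically verifying that each replacement both strictly shortens the vertex walk and preserves the linking conditions at the boundary of the spliced segment, so that the iteration terminates in a genuinely efficient chain without introducing new backtracking elsewhere.
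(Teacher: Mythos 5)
Your proof is correct and follows essentially the same approach as the paper: reduce an arbitrary finite-length chain to one whose vertex walk moves by single edges, eliminate backtracking to get an efficient chain of no greater length, realize the infimum over efficient chains by compactness of the edge spaces, and read off definiteness from the realizer. A few small remarks. Where you eliminate only immediate backtracking $v_i = v_{i+2}$ via metric compatibility, the paper collapses any repeated vertex $v_i = v_j$ in one step; your version is actually a bit more careful, since that is where the compatibility of $\mathcal{D}$ across the unique edge genuinely enters, and in a tree the absence of immediate backtracking already forces the walk to be the geodesic. Your compactness step (parametrize efficient chains by a point of $\prod_i X_{e_i}$ and apply the extreme value theorem to the continuous length functional) is a clean reformulation of the paper's passage to convergent subsequences. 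Finally, the worry you raise about the iteration is unfounded: each splice strictly reduces the number of pairs, the link conditions at either end of the spliced segment are untouched (the new pair $(p_i, q_{i+2})$ still links to $q_{i-1}$ on the left and $p_{i+3}$ on the right), and termination plus tree geometry give a geodesic walk. So no gap.
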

\begin{proof}
Let $\alpha=(p_1,q_1,\ldots,p_k,q_k)$ be a linking chain from $x$ to $y$ such that $d_{\sqcup}(\alpha)<\infty$.
Then for $j=1,2,\dots, k$ there is some $v_j$ such that $p_j,q_j\in X_{v_j}$.

Consider $j$ such that $v_j$ and $v_{j+1}$ are separated by more than one edge in $T$. But since $q_j\sim p_{j+1}$ there is a sequence $q_j=z_0\sim z_1\sim z_2\sim \cdots\sim z_{\ell}=p_{j+1}$ such that $z_i$ and $z_{i+1}$ lie in adjacent vertex spaces. Then we can replace $\ldots,q_j,p_{j+1},\ldots$ by $\ldots, q_j=z_0, z_1,z_1, z_2,z_2.\ldots, z_{\ell-1}, z_{\ell-1}, z_{\ell}=p_{j+1},\ldots$ in $\alpha$ and $d_{\sqcup}(\alpha)$ would remain unchanged. So in the definition of $d_\sim$ it suffices to consider linking chains such that that $v_j$ and $v_{j+1}$ are separated by at most one edge in $T$ for all $j$.

Now suppose there is $i<j$ such that $v_i=v_{j}$. Then we note that we can replace $\ldots, p_i,q_i,\dots, p_{j},q_{j},\ldots$ by $\ldots, p_i,q_{j},\ldots$ in $\alpha'$ and by the triangle inequality $d_{\sqcup}(\alpha)$ would not increase. So we may further assume that all $v_j$'s are distinct and there is no backtracking \emph{i.e.}, $\alpha$ is efficient.

Assume we have a sequence $\alpha_n$ of efficient linking chains such that $d_{\sqcup}(\alpha_n)\rightarrow d_{\sim}(x,y)$. Let $\alpha_n=(p^n_1,q^n_1,\ldots,p^n_k,q^n_k)$. As the edge spaces are compact, by passing to a subsequence we can assume that $p^n_i\rightarrow p^0_i$ and $q^n_i\rightarrow q^0_i$ in $X_{v_i}$. Let $\alpha_0=(p^0_1,q^0_1,\ldots,p^0_k,q^0_k)$ to get $d_{\sim}(x,y)=d_{\sqcup}(\alpha_0)$, as needed. It follows that $d_{\sim}(x,y)=0$ if and only if $x\sim y$, proving that $d$ is a metric on $X_T$.
\end{proof}

Note that each vertex space and edge space embeds isometrically into $(X_T,d)$ via the quotient map. We also note that $d$ is maximal among all metrics on $X$ with the property that inclusions of all vertex spaces into $X_T$ are isometric embeddings.
Additionally, given a tree of spaces $\mathcal{X}$ over $T$ as above, for some subtree $S\subset T$ we can define a subtree of spaces $\mathcal{X}_S$ over $S$. Then, the total space $X_S$ isometrically embeds into $X_T$. 

For some subtree $S\subset T$, define $N_S$ as the set of edges $e\in \edges(T)\setminus \edges(S)$ adjacent to a vertex in $S$. Additionally, let $\mathcal{C}_S:=\{\psi_e^v(X_e)\mid e=(v,v')\in N_S, v\in \vertices(S) \}$ be the collection of images in $X_S$ of edge spaces in $N_S$. We then have the following result.

\begin{lem}
\label{neighborcpt}
Consider a tree of compact metrizable spaces $\mathcal{X}$ over the tree $T$ along with a compatible family of metrics $\mathcal{D}$. For a subtree $S\subset T$ such that $S\cup N_S=T$, if $X_S$ is compact, and $\{X_v\mid v\in \vertices(T\setminus S)\}$ forms a null collection in $X_T$, then $X_T$ is compact as well.
\end{lem}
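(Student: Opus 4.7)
The plan is to prove sequential compactness of $X_T$, which is equivalent to compactness since $(X_T,d)$ is a metric space by Proposition~\ref{semimeteff}. Given a sequence $(x_n)$ in $X_T$, each $x_n$ lies in some vertex space $X_{v_n}$ (choosing a representative in $X_\sqcup$ before quotienting). I would split into three cases, discarding unused subsequences at each stage.

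First, if infinitely many $x_n$ lie in $X_S$, compactness of $X_S$ yields a convergent subsequence, and the isometric embedding $X_S \hookrightarrow X_T$ gives convergence in $X_T$. Otherwise, after passing to a subsequence, we may assume every $v_n \in \vertices(T \setminus S)$. Second, if some single vertex $v \in \vertices(T \setminus S)$ contains infinitely many $x_n$, compactness of $X_v$ produces a convergent subsequence. Otherwise, after passing to a further subsequence, we may assume the $v_n$ are pairwise distinct.

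The remaining case is the heart of the argument. Since $\{X_v \mid v \in \vertices(T \setminus S)\}$ is a null collection in $X_T$, the diameters $\diam(X_{v_n})$ tend to $0$. The hypothesis $S \cup N_S = T$ ensures that each $v_n$ is adjacent in $T$ to some $u_n \in \vertices(S)$ via an edge $e_n$. Since edge spaces are non-empty, we may pick $z_n \in X_{e_n}$ and set $y_n := \psi_{e_n}^{u_n}(z_n) \in X_S$. Because $\psi_{e_n}^{v_n}(z_n)$ is identified with $y_n$ in $X_T$, we have
\[
d(x_n, y_n) \leq \diam(X_{v_n}) \longrightarrow 0.
\]
Compactness of $X_S$ yields a subsequence $y_{n_k} \to y \in X_S$, and the triangle inequality then gives $x_{n_k} \to y$ in $X_T$.

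The main conceptual obstacle is recognizing that the two hypotheses interact in exactly the right way: the null condition controls the ``vertical'' distance from $x_n$ to $X_S$, while $S \cup N_S = T$ produces a canonical anchor point $y_n \in X_S$ close to $x_n$ via a shared edge space. No further hypothesis on how the off-$S$ vertex spaces interact with one another is needed, because the argument routes every off-$S$ point directly to the compact core $X_S$ in a single step.
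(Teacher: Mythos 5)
Your proof is correct and follows essentially the same route as the paper's: after reducing to the case where the $v_n$ are distinct vertices of $T\setminus S$, pick an anchor $a_n \in X_{v_n}\cap X_S$, pass to a convergent subsequence of $(a_n)$ in the compact space $X_S$, and use $\diam(X_{v_n})\to 0$ (from nullity) to conclude that the $x_n$ converge to the same limit. Your write-up just makes explicit the details the paper leaves implicit, namely that $S\cup N_S = T$ is what guarantees such an anchor exists and that it arises from the shared edge space.
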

\begin{proof}
For a sequence $(x_n)$ in $X_T$, we show that it has a convergent subsequence. Since $X_S$ is compact and each $X_v$ is compact, it suffices to assume that $x_n\in X_{v_n}$ for all $n$ such that the $v_n$'s are distinct and in $\vertices(T\setminus S)$. 
Choose some $a_n\in X_{v_n}\cap X_S$. Then since $(a_n)$ is a sequence in $X_S$, by passing to a subsequence we can assume that $a_n$ is convergent in $X_s$. Since $\diam(X_{v_n})\rightarrow 0$, we get that $x_n$ converges to the same limit as $a_n$.
\end{proof}

\section{Combining fine hyperbolic graphs}
\label{sec:graphcomb}
In this section we provide a combination theorem for relatively hyperbolic groups which generates parabolic cut sets (Theorem~\ref{combination}). 
In Section~\ref{sec:decomp} 
we focus on decomposing Bowditch boundaries along parabolic cut pairs and use the theorem in this section to glue back the boundaries. We note that Theorem~\ref{combination} generalizes a theorem of Bigdely--Wise \cite[Theorem~1.4]{BW} but the proof of Theorem~\ref{combination} is substantially more involved. 

\begin{defn}
\label{2admissible}
The action of a finitely generated group $G$ on a countable tree $T$ is said be \emph{admissible} if the following holds.
\begin{enumerate}
    \item \label{2admissible: finite quotient}
    $G$ acts on $T$ without inversions and with a finite quotient.
    \item \label{2admissible: T bipartite and valence 2}
    $T$ is bipartite over the vertex set $\vertices(T)={V}\sqcup {W}$ such that every vertex in $\vertices(T)$ has valence at least $2$.
    \item \label{2admissible: Lambda's}
    For each ${u}\in \vertices(T)$, the vertex stabilizer is $G_{{u}}$ and we have a $G_u$--set $\Lambda_u$.
    \item \label{2admissible: equivariance}
    For each $u\in \vertices(T)$ and $g\in G$, the sets $\Lambda_u$ and $\Lambda_{gu}$ are equivariantly equivalent with respect to $g$--conjugation. For $u\in \vertices(T)$, if $\P_u=\{\Stab_{G_u}(x)\}_{x\in \Lambda_u}$ then $(G_u,\P_u)$ is relatively hyperbolic.
    \item \label{2admissible: Gw finite}
    For each $w\in W$, we have $1\leq |\Lambda_w|<\infty$.
    If $|\Lambda_w|>1$, we have $|G_w|<\infty$.
    \label{admissible: degree}
\end{enumerate}
Moreover, the action is \emph{$n$--admissible} if $|\Lambda_w|=n$ for all $w\in W$ in ($\ref{admissible: degree}$). 
\end{defn}

We are mainly interested in $2$--admissible actions as they give rise to cut pairs. In particular, we aim to exploit the ambiguity explored in Technical Remark~1.2 in \cite{BW}. We define the signature of $G$ which encodes how we are gluing the peripheral structures of the vertex groups.
\begin{defn}
\label{signature}
A \emph{signature} $\mathcal{S}=\{s_{{e}}\}_{{e}\in \edges(T)}$ of an admissible action of $G$ on $T$ is an indexed family of injections satisfying the following conditions.
\begin{enumerate}
    \item For each $e=(v,w)\in \edges(T)$, the injection $s_e\colon  \Lambda_w\rightarrow \Lambda_v$ is
    such that for all $g\in G_v\cap G_w$ and $x\in \Lambda_w$, we have $g s_e(x)=s_e(gx)$. \label{signature: injection}
    \item For distinct edges $e,e'$, we have $\left| \Image(s_{e})\cap \Image(s_{e'})\right|\leq 1$.\label{signature: intersection}
    \item The indexed family $\mathcal{S}$ is $G$--equivariant.\label{signature: equivaraint}
\end{enumerate}
\end{defn}

Given a signature $\mathcal{S}$, fix any $v\in V$. A \emph{neck pair} in $\Lambda_v$ is a pair of points $(s_e(x),s_e(y))\in \Lambda_v\times\Lambda_v$ where $e=(v,w)$ incident on $v$ and $x,y\in \Lambda_w$. Let $\neck(v)$ be the collection of all such neck pairs in $\Lambda_v$.

\begin{lem}
\label{neckexistence}
For any fixed $v\in V$ there is a $(G_v,\P_v)$ graph $K_v$ such that for any neck pair $(x,y)\in \neck(v)$, there is an edge in between $x$ and $y$ in $K_v$.
\end{lem}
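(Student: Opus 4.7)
My plan is to start from any $(G_v,\P_v)$ graph, which exists because $(G_v,\P_v)$ is relatively hyperbolic by Definition~\ref{2admissible}(\ref{2admissible: equivariance}), and then iteratively attach finitely many $G_v$-orbits of edges corresponding to neck pairs, invoking Lemma~\ref{gattach} at each step to preserve the $(G_v,\P_v)$ graph structure. The only nontrivial content is that this process terminates, which reduces to showing that $\neck(v)$ splits into finitely many $G_v$-orbits.

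To count these orbits, I would first observe that the $G_v$-action on the edges of $T$ incident to $v$ has finitely many orbits, since $G$ acts on $T$ with finite quotient by Definition~\ref{2admissible}(\ref{2admissible: finite quotient}). Fix representatives $e_1=(v,w_1),\ldots,e_k=(v,w_k)$ of these orbits. Each $\Lambda_{w_i}$ is finite by Definition~\ref{2admissible}(\ref{2admissible: Gw finite}), so the collection of pairs $\bigl(s_{e_i}(x),s_{e_i}(y)\bigr)$ with $x\neq y$ in $\Lambda_{w_i}$ is finite (note that $s_{e_i}$ is injective, so distinct arguments give distinct image points). Using the $G$-equivariance of the signature from Definition~\ref{signature}(\ref{signature: equivaraint}), a neck pair in $\Lambda_v$ coming from an arbitrary edge $g e_i$ at $v$ has the form $\bigl(s_{g e_i}(x'), s_{g e_i}(y')\bigr) = g\cdot\bigl(s_{e_i}(g^{-1}x'),s_{e_i}(g^{-1}y')\bigr)$, so it is the $g$-translate of a neck pair coming from $e_i$. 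Hence every neck pair in $\Lambda_v$ lies in the $G_v$-orbit of one of finitely many representatives, proving the required finiteness.

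With that in place, I pick a $(G_v,\P_v)$ graph $K_v^0$ from Definition~\ref{relhyp}, enumerate the finitely many $G_v$-orbits of neck pairs whose edges are not already present, and for each orbit representative $(x,y)$ apply Lemma~\ref{gattach} with $e=(x,y)$ to the current graph. After finitely many iterations, I obtain a graph $K_v$ which is still a $(G_v,\P_v)$ graph by Lemma~\ref{gattach}, and which by construction contains an edge between the two points of every neck pair. The main obstacle in writing this up cleanly is the orbit-counting argument, in particular tracking how $G_v$-equivariance of $\neck(v)$ is deduced from the full $G$-equivariance of the signature; once that is sorted out, the remainder is a direct invocation of Bowditch's edge-attaching lemma.
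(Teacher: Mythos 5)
Your proof is correct and follows the same route the paper takes: show that $G_v$ acts on $\neck(v)$ with finitely many orbits (via the finite quotient of $T$, finiteness of $\Lambda_w$, and $G$--equivariance of the signature), then apply Lemma~\ref{gattach} finitely many times starting from an arbitrary $(G_v,\P_v)$ graph. The paper states the orbit-finiteness in a single sentence, whereas you unpack the equivariance calculation $s_{ge_i}(x') = g\,s_{e_i}(g^{-1}x')$ for $g\in G_v$; that is the same idea, just made explicit.
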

\begin{proof}
Since $v$ has finitely many orbits of edges incident on $v$ in $T$ and since the signature $\mathcal{S}$ is $G$--equivariant, $G_v$ acts on $\neck(v)$ with a finite quotient. Now, let $L_v$ be a $(G_v,\P_v)$ graph. Applying Lemma~\ref{gattach} finitely many times to $L_v$ we get the required graph $K_v$.
\end{proof}

For each fixed $v$, the graph $K_v$ in Lemma~\ref{neckexistence} is a \emph{reservoir} and for $e=(v,w)$ incident on $v$ and distinct $x,y\in \Lambda_w$, the \emph{neck edge} $n_e$ is the edge between $s_e(x)$ and $s_e(y)$ in $K_v$. 
For each $w\in W$, the \emph{junction edge} $j_w$ is the single edge in the $(G_w,\P_w)$ graph $K_w$.

\begin{defn}
\label{forest}
Consider a signature $\mathcal{S}$ of an admissible action of $G$ on $T$. The \emph{parabolic forest} $F$ for $\mathcal{S}$ is a graph consisting of the vertices $(u,x)$ for all $u\in \vertices(T)$ and $x\in \Lambda_u$. There are edges between $(w,x)$ and $(v,s_e(x))$ for all $e=(v,w)\in \edges(T)$ and $x\in \Lambda_w$.
Components of $F$ are \emph{parabolic trees}.
\end{defn}

We are now ready to state the main theorem for this section.

\begin{thm}
\label{combination}
Consider $G$ acting on $T$ with a $2$--admissible action. Let $\mathcal{S}=(s_e)_{e\in \edges(T)}$ be a signature for this action. Then the following hold.
\begin{enumerate}
\item 
\label{combination: Grelhyp}
Let $\Lambda=\sqcup_{u\in \vertices(T)} \Lambda_u$. Define an equivalence relation $\equiv$ on $\Lambda$ generated by $x\equiv s_e(x)$ for all $e=(v,w)\in \edges(T)$ and $x\in \Lambda_w$. Then $\bar{\Lambda}=\Lambda/{\equiv}$ is a $G$--set. Also define $\P=(\Stab(x))_{x\in \bar{\Lambda}}$. Then there is a graph $\bar{K}$ with $\vertices(\bar{K})=\bar{\Lambda}$ such that
the pair $(G,\P)$ is relatively hyperbolic and $\bar{K}$ is a $(G,\P)$ graph.
\item
\label{combination: Kbar}
For all $u\in V\sqcup W=\vertices(T)$, the subgraph $K_u\subset \bar{K}$ is convex.
\item 
\label{combination: paraforest}
Let $F$ be the parabolic forest for $\mathcal{S}$ and let $\Lambda_F$ be the collection of parabolic trees in $F$. Then there is a $G$--equivariant bijection between $\bar{\Lambda}$ and $\Lambda_F$ and $\P$ is the indexed collection of stabilizers of parabolic trees in $F$.
\item 
\label{combination: separating set}
If the Bowditch boundary $\boundary(G,\P)$ is connected, then for each $w\in W$, we have that $\Lambda_w$ is a separating set in $\bar{K}$ and in $\boundary(G,\P)$.
\end{enumerate}
\end{thm}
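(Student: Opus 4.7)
The plan is to construct $\bar K$ explicitly as the quotient of a disjoint union of reservoirs by the gluings prescribed by the signature, verify that this quotient realizes the parabolic forest, establish convexity and fineness of the reservoirs inside $\bar K$, and finally exploit the tree structure of $T$ to read off the separation claim.

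For part~(\ref{combination: paraforest}) and the vertex set of $\bar K$, take the disjoint union $\bigsqcup_{u \in \vertices(T)} K_u$ of the reservoirs from Lemma~\ref{neckexistence} together with the junction graphs $K_w$, and identify $x \in \Lambda_w \subset \vertices(K_w)$ with $s_e(x) \in \Lambda_v \subset \vertices(K_v)$ whenever $e=(v,w) \in \edges(T)$ and $x \in \Lambda_w$. The induced equivalence on $\bigsqcup_u \Lambda_u$ is precisely $\equiv$. Since the forest $F$ has this same vertex set and edges precisely between $(w,x)$ and $(v, s_e(x))$, two vertices are $\equiv$-equivalent exactly when they lie in the same parabolic tree. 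This yields the $G$-equivariant bijection $\bar{\Lambda} \leftrightarrow \Lambda_F$, and by construction $\Stab_G(\bar{x})$ coincides with the setwise stabilizer of the corresponding parabolic tree, which settles part~(\ref{combination: paraforest}). For part~(\ref{combination: Kbar}), every edge of $\bar K$ lies in a unique reservoir, since identifications only involve vertices in $\Lambda_u$, so any edge-path in $\bar K$ decomposes into maximal sub-paths in individual reservoirs tracing a walk in $T$. A Bass--Serre normal-form argument, using the neck edges from Lemma~\ref{neckexistence} to short-circuit excursions that enter and leave a neighboring reservoir through a common neck pair, shows that any path joining two points of $K_u$ can be replaced by a path lying entirely in $K_u$ of no greater length, so $K_u$ is convex.

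The main technical content, and the principal obstacle, is part~(\ref{combination: Grelhyp}): showing $\bar K$ is a fine hyperbolic $(G,\P)$-graph. Connectedness follows from connectedness of $T$ and of each $K_u$; finite quotient, finite edge stabilizers, and finitely generated vertex stabilizers follow from admissibility together with the corresponding properties of each $K_u$ as a $(G_u, \P_u)$-graph, using that $G_w$ is finite throughout the $2$-admissible case. Fineness and hyperbolicity are the delicate points. Following the strategy of Bigdely--Wise~\cite{BW} but adapting to the signature-controlled gluings, I would verify fineness locally at each vertex: a circuit of bounded length through a gluing vertex either remains within a single reservoir, where fineness is given, or crosses through neck and junction edges into neighboring reservoirs. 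Condition~(\ref{signature: intersection}) bounds the branching of circuits at each crossing, so only finitely many circuits of a given length exist. Hyperbolicity is obtained by a thin-triangles argument compatible with the tree structure: a geodesic triangle in $\bar K$ decomposes along $T$ into pieces inside individual reservoirs, each uniformly $\delta_u$-hyperbolic, glued through finite-diameter necks that serve as thin bridges, yielding a uniform thinness constant for $\bar K$. I expect this adaptation of the combination machinery to account for the bulk of the work in the full proof.

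Finally, for part~(\ref{combination: separating set}), assume $\boundary(G,\P)$ is connected and fix $w \in W$. By $2$-admissibility, $\Lambda_w=\{a_w, b_w\}$, and by part~(\ref{combination: paraforest}) these correspond to vertices $\bar{a}_w, \bar{b}_w$ of $\bar K$. The only vertices of $F$ with first coordinate $w$ are $(w,a_w)$ and $(w,b_w)$, so no parabolic tree other than those of $\bar{a}_w, \bar{b}_w$ crosses $w$. Since every edge of $\bar K$ lies in a unique reservoir $K_u$, removing $\{\bar{a}_w,\bar{b}_w\}$ partitions the remaining vertices of $\bar K$ along the components of $T \setminus \{w\}$, of which there are at least two by admissibility condition~(\ref{2admissible: T bipartite and valence 2}). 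Hence $\Lambda_w$ separates $\bar K$, and Lemma~\ref{bowd disc} promotes this to a separation of $\boundary(G,\P)$.
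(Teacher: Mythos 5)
Your high-level plan aligns with the paper's, and parts (3) and (4) are essentially the argument the paper uses: for (3) you get the bijection to parabolic trees exactly from the definition of $\equiv$ and $F$, and for (4) you deduce separation of $\boundary(G,\P)$ from separation of $\bar{K}$ via Lemma~\ref{bowd disc}. However, your treatment of part~(\ref{combination: Grelhyp}) misses the key construction the paper relies on, and your fineness sketch is not correct as stated.

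The paper does not work with $\bar{K}$ directly. It first builds an intermediate graph $K$ that keeps the tree structure literal: it takes the disjoint union of reservoirs $K_v$ and junction graphs $K_w$, and then for each edge $e=(v,w)$ of $T$ and each $x\in\Lambda_w$ it inserts a \emph{pipe edge} joining $x\in K_w$ to $s_e(x)\in K_v$. This $K$ is a genuine graph-of-graphs over $T$, and its hyperbolicity (Lemma~\ref{khyp}) is a trivial instance of a known combination theorem. The paper then passes from $K$ to $\bar{K}$ by collapsing pipe edges and removing neck edges, and invokes the Kapovich--Rafi criterion (Proposition~\ref{kaporafi}), which requires exactly the intermediate $K$ and the observation (Lemma~\ref{parastronglyconv}) that $\rho$-fibers are geodesically convex via pipe edges. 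You cannot apply a Kapovich--Rafi-style descent, nor a clean Bestvina--Feighn-style thinness argument, to $\bar{K}$ alone, because the Bass--Serre structure is already collapsed there: the edge spaces have been identified to single vertices, so ``decomposing a geodesic triangle along $T$'' is no longer a well-posed operation in $\bar K$. Your proposal omits the pipe-edge construction entirely, and with it the mechanism that makes both the hyperbolicity and convexity arguments precise.

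Your fineness argument also has a real gap. You claim that ``condition~(\ref{signature: intersection}) bounds the branching of circuits at each crossing, so only finitely many circuits of a given length exist.'' But $\bar{K}$ is a fine graph, not a locally finite one: a single vertex of $\Lambda_v$ may lie on infinitely many neck pairs and hence on infinitely many junction edges. Bounding ``branching at each crossing'' is simply false, and even if it were true it would not by itself limit the number of circuits through a fixed edge. The paper's argument (Lemmas~\ref{nonatm} and \ref{kbarfine}) is structurally different: it defines concatenation and partition of circuits, shows every non-atomic circuit splits as a concatenation of two strictly shorter circuits along a junction edge, and runs an induction on circuit length, with the base case given by fineness of the reservoirs. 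That inductive decomposition is the idea your sketch is missing, and I do not see how to repair the local-branching argument without it. Finally, your justification of convexity in part~(\ref{combination: Kbar}) by a ``Bass--Serre normal-form argument'' again implicitly presupposes the intermediate graph-of-graphs structure that your construction never introduces; in the paper this convexity is essentially read off from the construction of $\bar K$ as a quotient of $K$.
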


\begin{proof}
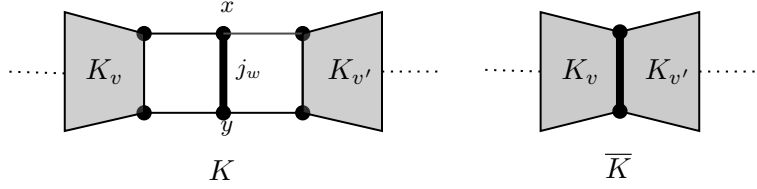
\begin{figure}
    \centering

\tikzset{every picture/.style={line width=0.75pt}} 

\begin{tikzpicture}[x=0.75pt,y=0.75pt,yscale=-1,xscale=1]

\draw [line width=3]    (160,40.83) -- (160,80.83) ;
\draw    (120,40.83) -- (120,80.83) ;
\draw [shift={(120,80.83)}, rotate = 90] [color={rgb, 255:red, 0; green, 0; blue, 0 }  ][fill={rgb, 255:red, 0; green, 0; blue, 0 }  ][line width=0.75]      (0, 0) circle [x radius= 3.35, y radius= 3.35]   ;
\draw [shift={(120,40.83)}, rotate = 90] [color={rgb, 255:red, 0; green, 0; blue, 0 }  ][fill={rgb, 255:red, 0; green, 0; blue, 0 }  ][line width=0.75]      (0, 0) circle [x radius= 3.35, y radius= 3.35]   ;
\draw    (200,40.83) -- (200,80.83) ;
\draw [shift={(200,80.83)}, rotate = 90] [color={rgb, 255:red, 0; green, 0; blue, 0 }  ][fill={rgb, 255:red, 0; green, 0; blue, 0 }  ][line width=0.75]      (0, 0) circle [x radius= 3.35, y radius= 3.35]   ;
\draw [shift={(200,40.83)}, rotate = 90] [color={rgb, 255:red, 0; green, 0; blue, 0 }  ][fill={rgb, 255:red, 0; green, 0; blue, 0 }  ][line width=0.75]      (0, 0) circle [x radius= 3.35, y radius= 3.35]   ;
\draw [color={rgb, 255:red, 0; green, 0; blue, 0 }  ,draw opacity=1 ]   (120,40.83) -- (160,40.83) ;
\draw [shift={(160,40.83)}, rotate = 0] [color={rgb, 255:red, 0; green, 0; blue, 0 }  ,draw opacity=1 ][fill={rgb, 255:red, 0; green, 0; blue, 0 }  ,fill opacity=1 ][line width=0.75]      (0, 0) circle [x radius= 3.35, y radius= 3.35]   ;
\draw [color={rgb, 255:red, 0; green, 0; blue, 0 }  ,draw opacity=1 ]   (120,80.83) -- (160,80.83) ;
\draw [shift={(160,80.83)}, rotate = 0] [color={rgb, 255:red, 0; green, 0; blue, 0 }  ,draw opacity=1 ][fill={rgb, 255:red, 0; green, 0; blue, 0 }  ,fill opacity=1 ][line width=0.75]      (0, 0) circle [x radius= 3.35, y radius= 3.35]   ;
\draw [color={rgb, 255:red, 74; green, 74; blue, 74 }  ,draw opacity=1 ]   (160,40.83) -- (200,40.83) ;
\draw [color={rgb, 255:red, 0; green, 0; blue, 0 }  ,draw opacity=1 ]   (160,80.83) -- (200,80.83) ;
\draw  [dash pattern={on 0.84pt off 2.51pt}]  (80,60.92) -- (50,60) ;
\draw  [dash pattern={on 0.84pt off 2.51pt}]  (240,60) -- (270,60) ;
\draw  [dash pattern={on 0.84pt off 2.51pt}]  (320,60) -- (290,59.08) ;
\draw  [dash pattern={on 0.84pt off 2.51pt}]  (400,60) -- (430,60) ;
\draw  [fill={rgb, 255:red, 128; green, 128; blue, 128 }  ,fill opacity=0.38 ] (120,40.83) -- (80,30) -- (80,90) -- (80,90) -- (120,80.83) -- cycle ;
\draw  [fill={rgb, 255:red, 128; green, 128; blue, 128 }  ,fill opacity=0.38 ] (240,30) -- (200,40.83) -- (200,80.83) -- (200,80.83) -- (240,90) -- cycle ;
\draw  [fill={rgb, 255:red, 155; green, 155; blue, 155 }  ,fill opacity=0.52 ] (400,30) -- (400,90) -- (360,80) -- (320,90) -- (320,30) -- (360,40) -- cycle ;
\draw [line width=3]    (360,40) -- (360,80) ;
\draw    (360,40) -- (360,80) ;
\draw [shift={(360,80)}, rotate = 90] [color={rgb, 255:red, 0; green, 0; blue, 0 }  ][fill={rgb, 255:red, 0; green, 0; blue, 0 }  ][line width=0.75]      (0, 0) circle [x radius= 3.35, y radius= 3.35]   ;
\draw [shift={(360,40)}, rotate = 90] [color={rgb, 255:red, 0; green, 0; blue, 0 }  ][fill={rgb, 255:red, 0; green, 0; blue, 0 }  ][line width=0.75]      (0, 0) circle [x radius= 3.35, y radius= 3.35]   ;

\draw (89,52.4) node [anchor=north west][inner sep=0.75pt]    {$K_{v}$};
\draw (211,52.4) node [anchor=north west][inner sep=0.75pt]    {$K_{v}{}_{'}$};
\draw (165,52.4) node [anchor=north west][inner sep=0.75pt]  [font=\footnotesize]  {$j_{w}$};
\draw (157,83.23) node [anchor=north west][inner sep=0.75pt]  [font=\footnotesize]  {$y$};
\draw (157,23.23) node [anchor=north west][inner sep=0.75pt]  [font=\footnotesize]  {$x$};
\draw (151,103.4) node [anchor=north west][inner sep=0.75pt]  [font=\normalsize]  {$K$};
\draw (329,52.4) node [anchor=north west][inner sep=0.75pt]    {$K_{v}$};
\draw (371,52.4) node [anchor=north west][inner sep=0.75pt]    {$K_{v}{}_{'}$};
\draw (351,99.4) node [anchor=north west][inner sep=0.75pt]  [font=\normalsize]  {$\overline{K}$};

\end{tikzpicture}

    \caption{The graph $K$ is constructed by adding pipe edges between a neighboring reservoir and junction. Then collapsing the pipe edges, we get $\bar{K}$.}
    \label{fig:reservoirsattach}
\end{figure}
We plan to construct $\bar{K}$ by first constructing a graph $K$ as follows.
\begin{enumerate}[label=(\roman*)]
    \item For each $v\in \vertices(T)$, add all vertices and edges of the reservoir graph $K_v$ as defined in Lemma \ref{neckexistence}.
    \item For each $w\in \vertices(T)$, add the $(G_w.\P_w)$ graph $K_w$ consisting of a single junction edge.
    \item For an edge $e=(v,w)\in \edges(T)$ and for each $x\in \Lambda_w$ add an edge $p_e(x)$ between the $x$ in $\Lambda_w$ and $s_e(x)$ in $\Lambda_v$. These $p_e$'s are \emph{pipe edges}. 
\end{enumerate}

A junction edge and a neck edge are \emph{neighboring} if their vertices are connected by a pair of complimentary pipe edges. Note that Definition~\ref{signature} ensures that a neck edge has a unique neighboring junction edge. Two neck edges are \emph{neighboring} 
if they share their unique neighboring junction edge.
 
Since $K_u$ is connected for all $u\in \vertices(T)$ and since $T$ is connected, we also have that $K$ is a connected graph along with an induced $G$--action.
Additionally, we can define a simplicial surjective map $\pi\colon K\rightarrow T$ such that for all $u\in \vertices(T)$, we have $\pi(K_u)=\{u\}$
and for all $e=(v,w)\in \edges(T)$ and $x\in \Lambda_w$, we have $\pi(p_e(x))=e$.

Observe that $\vertices(K)=\Lambda$ as described in the statement. Moreover, for $x,y\in \Lambda$ we have that $x\equiv y$ if and only if $x$ and $y$ are connected by a path only consisting of pipe edges in $K$. 

We now define the simplicial graph $\bar{K}$ with $\vertices(K)=\bar{\Lambda}$. We add an edge between $[x]_{\equiv}$ and $[y]_{\equiv}$ in $\bar{K}$ if and only if there is an edge between $x$ and $y$ in $K$ which is not a pipe edge or a neck edge (as shown in Figure~\ref{fig:reservoirsattach}). In other words, we obtain $\bar{K}$ from $K$ by collapsing pipe edges and removing neck edges. There are no duplicate edges because of Definition~\ref{signature}(\ref{signature: intersection}) which ensures that the resulting graph is simplicial. We note that $G$ acts on $K$ and $\bar{K}$ with finitely many $G$--orbits of edges. It follows that $\bar{K}$ is fine and hyperbolic from Lemma~\ref{kbarhyp} and Lemma~\ref{kbarfine} below. Defining $\bar{\Lambda}=\vertices(\bar{K})$ we get (\ref{combination: Grelhyp}) and from the construction of $\bar{K}$ we get (\ref{combination: Kbar}).

We now define a natural $G$--equivariant simplicial surjective map $\rho \colon K\rightarrow \bar{K}$ by defining $\rho(x)=[x]_{\equiv}$ for all $x\in \vertices(K)=\Lambda$. By abuse of notation, for $u\in \vertices(T)$, whenever we write $\Lambda_u\subset \bar{\Lambda}$ we mean the image of $\Lambda_u$ under the map $\rho$.
Images of junction edges under $\rho$ are junction edges in $\bar{K}$. Note that we also have a $G$--equivariant embedding $\iota \colon F\inclusion K$. Then we note that for the composition $\rho \circ \iota\colon F\rightarrow \bar{K}$ we get that that any parabolic tree maps to a single vertex in $\bar{K}$ and the pre-image of a vertex in $\bar{K}$ is a parabolic tree in $F$. Since $\rho \circ \iota$ is $G$--equivariant, we get (\ref{combination: paraforest}).
Since $\Lambda_w$ is separating set in $\bar{K}$ by construction, (\ref{combination: separating set}) follows from Lemma~\ref{bowd disc}.
\end{proof}

\begin{rem}
\label{combinationgeneralization}
Note that Theorem~3.5 holds even if the action of $G$ on $T$ is any admissible action with a similar proof. Moreover, if we restrict to just a $1$--admissible action then we get Theorem~1.4 from \cite{BW}.
\end{rem}

In order to complete the proof of Theorem~\ref{combination}, we first observe that $K$ is hyperbolic. The hyperbolicity of $K$ is a trivial case of \cite[Theorem~2.62]{kapovichsarkar}, which is a generalization of the Bestvina--Feighn combination theorem. However, the hyperbolicity of $K$ can be easily proved independent of the cited theorem which we leave as an exercise to the reader.

\begin{lem}
\label{khyp}
The graph $K$ in the proof of Theorem~\ref{combination} is hyperbolic. \qed
\end{lem}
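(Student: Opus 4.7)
The plan is to exploit the tree-like structure of $K$ coming from the simplicial projection $\pi\colon K\rightarrow T$, together with the fact that each vertex subgraph $K_u$ is hyperbolic with bounded-diameter identifications with neighbors. As preliminary input, each $K_v$ for $v\in V$ is a $(G_v,\P_v)$ graph by Lemma~\ref{neckexistence}, hence connected, fine, and hyperbolic by Definition~\ref{relhyp}; each $K_w$ for $w\in W$ is a single junction edge. Since $G$ acts on $K$ with finitely many orbits of edges and vertices, a uniform hyperbolicity constant $\delta_0$ works for all vertex subgraphs.

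The first step is to establish that each $K_u\subset K$ is an isometrically embedded convex subgraph. Given a path $\alpha$ in $K$ with endpoints in $K_u$ that leaves $K_u$, the projection $\pi\circ\alpha$ is a closed walk at $u$ in $T$ and so traverses each edge it uses an even number of times. By induction on the number of maximal excursions, it suffices to handle an innermost one: suppose $\alpha$ exits $K_u$ via a pipe edge $p_1$ to an adjacent vertex space $K_{u'}$, remains in $K_{u'}$, and returns via a pipe edge $p_2$. If $u\in V$ and $u'\in W$, the two landing points in $K_u$ are joined by a neck edge in the reservoir; if $u\in W$ and $u'\in V$, they are the two ends of the junction edge. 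Either way the two landing points are at distance $1$ in $K_u$, while the excursion has length at least $2$, so the replacement yields a shorter path lying in $K_u$.

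The main step is to prove uniform $\delta$-thinness of geodesic triangles in $K$. Given a triangle, $\pi$ sends it to a (possibly degenerate) tripod in $T$ with median vertex $u_0$. Each side of the triangle decomposes into maximal segments lying in a single vertex space, joined by pipe edges. Along a branch of the tripod shared by two sides, each traversal of an edge of $T$ is realized by one of at most two pipe edges, whose landing points in any given vertex space lie at distance at most $1$ apart. Hence two sides fellow-travel uniformly along every shared branch, and the three sides reduce to three geodesic segments in $K_{u_0}$ whose endpoints are determined up to uniformly bounded error. Applying $\delta_0$-thinness in $K_{u_0}$ and absorbing the constants coming from the bounded fellow-traveling gives the required thin-triangles bound.

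The main obstacle will be controlling the fellow-traveling of two sides along a shared branch of the tripod, since they may transit between successive vertex spaces through distinct pipe edges. The neck edges inserted by Lemma~\ref{neckexistence} together with the junction edges are exactly what rule out unbounded drift: alternative landing points in any vertex space always sit at distance at most $1$ apart, so the error accumulated along a branch is absorbed into a single additive constant rather than growing with the length. A cleaner alternative, as the text indicates, is to observe that $K$ is a tree of uniformly hyperbolic graphs with finite edge spaces, for which the flaring hypothesis of \cite[Theorem~2.62]{kapovichsarkar} is trivially satisfied, and invoke that theorem directly.
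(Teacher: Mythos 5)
Your proposal is correct and is aligned with the paper, which does not write out a direct argument but instead cites \cite[Theorem~2.62]{kapovichsarkar} and notes that a direct proof is possible as an exercise; your second paragraph explicitly gives that citation route, and your main argument supplies the exercise. A couple of small remarks on the direct argument: in the convexity step, ``the two landing points are at distance $1$'' should read ``at distance at most $1$'' (they coincide if the excursion backtracks along the same pipe edge), and the innermost excursion is generally based at some vertex space $K_{u''}$ deepest in the closed walk $\pi\circ\alpha$, not necessarily at the original $K_u$ — you seem to reuse the symbol $u$ for both, but the argument is unaffected since the replacement strictly shortens the path either way. The thin-triangles step is the standard tree-of-hyperbolic-spaces argument and works here precisely because the pipe edges give uniformly bounded (in fact, point-sized) transition sets, with the neck and junction edges ensuring entry/exit discrepancies of at most $1$; since $G$ acts with finite quotient, a single $\delta_0$ serves all vertex subgraphs, so all the constants are uniform.
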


Before proving that $\bar{K}$ is hyperbolic, we look at some properties of the map $\rho$.
The following result is due to Kapovich--Rafi \cite[Prop.~2.5]{KapovichRafi14}.

\begin{prop}
\label{kaporafi}
Consider a surjective Lipschitz map $\phi\colon X\rightarrow Y$ between connected graphs where $X$ is hyperbolic. Suppose there is $M>0$ such that
for any $x,x'\in \vertices(X)$ with $d_Y(\phi(x),\phi(x'))\leq 1$ and for any geodesic $\alpha$ in $X$ joining $x$ and $x'$, we have $\diam_Y(\phi(\alpha))\leq M$. Then $Y$ is hyperbolic.
\end{prop}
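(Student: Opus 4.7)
The plan is to show that geodesic triangles in $Y$ are uniformly slim by lifting each triangle to $X$, exploiting hyperbolicity there, and projecting back via $\phi$. Fix a geodesic triangle in $Y$ with vertices $y_1,y_2,y_3$, sides $\beta_{ij}$, and a vertex $p\in\beta_{12}$; the target is a uniform bound $d_Y(p,\beta_{13}\cup\beta_{23})\leq\delta'$ with $\delta'=\delta'(L,M,\delta)$, where $\delta$ is the hyperbolicity constant of $X$ and $L$ is the Lipschitz constant of $\phi$.

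The first step, using surjectivity of $\phi$, is to lift the four distinguished vertices to $x_1,x_2,x_3,\tilde p\in X$ and, for each geodesic side $\beta_{ij}$, to construct a lifted path $\bar{\beta}_{ij}\subset X$. To do this, I would pick a preimage $\tilde y_k\in\phi^{-1}(y_k)$ of each vertex $y_k$ on $\beta_{ij}$ and concatenate $X$-geodesics $\sigma_k$ from $\tilde y_k$ to $\tilde y_{k+1}$, arranging $\tilde p$ as a chosen preimage along $\bar{\beta}_{12}$. Since $\phi(\tilde y_k)$ and $\phi(\tilde y_{k+1})$ are adjacent in $Y$, the hypothesis applies to each $\sigma_k$ and yields $\diam_Y\phi(\sigma_k)\leq M$, so $\phi(\bar{\beta}_{ij})$ lies in the $M$-neighborhood of $\beta_{ij}$ and contains every vertex of $\beta_{ij}$.

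I would then replace each $\bar{\beta}_{ij}$ by an actual $X$-geodesic $\gamma_{ij}$ from $x_i$ to $x_j$, producing a $\delta$-slim geodesic triangle in $X$. The central technical claim is that $\phi(\gamma_{ij})$ also stays within a uniformly bounded $Y$-Hausdorff distance $D=D(L,M,\delta)$ of $\beta_{ij}$. Granted this, standard thin-triangle chasing in the hyperbolic space $X$ applied to the configuration $x_1,x_2,x_3,\tilde p$ produces a point $\tilde q\in\gamma_{13}\cup\gamma_{23}$ whose $\phi$-image is uniformly close to $p=\phi(\tilde p)$; the Hausdorff bound then places $\phi(\tilde q)$ near $\beta_{13}\cup\beta_{23}$, completing the estimate.

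The main obstacle is the central claim that $\phi(\gamma_{ij})$ is $Y$-close to $\beta_{ij}$. It does not follow formally from the Morse lemma applied to $\bar{\beta}_{ij}$, because $\bar{\beta}_{ij}$ is not known to be a quasi-geodesic in $X$: the distances $d_X(\tilde y_k,\tilde y_{k+1})$ can a priori be unbounded even though each $\phi(\sigma_k)$ has diameter at most $M$. The resolution I envision is to argue intrinsically along $\gamma_{ij}$, subdividing at vertices whose $\phi$-images are pairwise within $Y$-distance $1$ and applying the hypothesis to each subsegment to bound its $\phi$-diameter by $M$; combining these local bounds with a loop-collapsing procedure in $Y$ (using that coincidences $\phi(z_i)=\phi(z_j)$ along $\gamma_{ij}$ force the intervening subsegment to have $\phi$-image of diameter $\leq M$) exhibits $\phi(\gamma_{ij})$ as an $M$-close approximation of an efficient edge-path from $y_i$ to $y_j$ in $Y$, which must then Hausdorff-approximate the $Y$-geodesic $\beta_{ij}$ up to a constant depending only on $L$ and $M$.
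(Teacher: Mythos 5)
The paper does not prove this statement; it is quoted directly from Kapovich--Rafi and used as a black box, so there is no ``paper proof'' to compare against line-by-line. Evaluating your proposal on its own terms: your strategy of proving $\delta'$-slim triangles in $Y$ directly is genuinely different from the Kapovich--Rafi argument, which instead verifies a guessing-geodesics/Bowditch-type hyperbolicity criterion for the family of paths $\{\phi(\gamma):\gamma\text{ an }X\text{-geodesic}\}$. That criterion only requires (i) the bounded-diameter property for nearby endpoints, which is your hypothesis verbatim, and (ii) coarse thinness of triangles formed by these paths, which follows from thinness in $X$ plus the Lipschitz bound. Crucially, it never needs the comparison between $\phi(\gamma_{ij})$ and the actual $Y$-geodesic $\beta_{ij}$; that comparison is an output of the criterion, not an input.

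That comparison is exactly where your argument has a real gap, and you are right to flag it as the central claim, but the proposed resolution does not close it. The ``loop-collapsing'' step shows that $\phi(\gamma_{ij})$ has backtracking bounded by $M$: whenever two of its points come within $Y$-distance $1$, the intervening arc has diameter at most $M$. But a path with uniformly bounded backtracking in a graph that is not already known to be hyperbolic need not be Hausdorff-close to a geodesic between its endpoints. (In a space with Euclidean-like geometry, a long circular arc from $y_i$ to $y_j$ has no backtracking at all yet lies arbitrarily far from the straight geodesic.) The inference ``efficient edge-path $\Rightarrow$ Hausdorff-approximates $\beta_{ij}$ up to a constant depending only on $L$ and $M$'' is therefore false without already invoking hyperbolicity of $Y$, which is what you are trying to prove. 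In fact, once you know that $\phi(\gamma)$ is uniformly Hausdorff-close to the corresponding $Y$-geodesic for every $X$-geodesic $\gamma$, hyperbolicity of $Y$ follows immediately (thin triangles in $X$ push forward), so your ``central technical claim'' is essentially equivalent to the proposition itself rather than a stepping stone towards it. To repair the argument you would need to prove the comparison by a genuine bootstrap (e.g.\ an induction on $d_Y$ as in proofs of the Masur--Minsky/Bowditch criteria), or simply route through one of those criteria as Kapovich--Rafi do, rather than relying on bounded backtracking alone.
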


\begin{lem}
\label{parastronglyconv}
For distinct vertices $x_1,x_2$ in $K$ with $\rho (x_1)=\rho (x_2)$, there is a unique geodesic between $x_1$ and $x_2$ consisting only of pipe edges.
\end{lem}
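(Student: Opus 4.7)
The plan rests on two observations: the projection $\pi\colon K\to T$ provides a length lower bound for paths in $K$, while the injectivity of the signature maps $s_e$ enforces rigidity of pipe-edge sequences.

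First, I would establish that $d_K(x_1,x_2)\geq d_T\bigl(\pi(x_1),\pi(x_2)\bigr)$. Any path in $K$ projects under $\pi$ to a walk in $T$ whose length equals the number of pipe edges along the path: non-pipe edges lie inside some $K_u$ and collapse to a single vertex, while each pipe edge is sent to an edge of $T$. Since the projected walk joins $\pi(x_1)$ to $\pi(x_2)$, it has length at least $d_T\bigl(\pi(x_1),\pi(x_2)\bigr)$, yielding the bound.

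Second, since $\rho(x_1)=\rho(x_2)$ means $x_1\equiv x_2$, some path of pipe edges from $x_1$ to $x_2$ already exists by the definition of $\equiv$. I would then show that any backtrack can be removed: if three consecutive vertices $z_{i-1}, z_i, z_{i+1}$ of such a path satisfy $\pi(z_{i-1})=\pi(z_{i+1})$, then since $T$ is a tree the two incident edges at $\pi(z_i)$ coincide, so both pipe edges correspond to the same $e=(v,w)\in\edges(T)$. Depending on whether $\pi(z_i)\in W$ or $V$, either both pipe edges are $p_e(z_i)$, or both take the form $p_e(x)$ with $s_e(x)=z_i$; injectivity of $s_e$ in Definition~\ref{signature}(\ref{signature: injection}) forces $z_{i-1}=z_{i+1}$, so the subpath $z_{i-1}, z_i, z_{i+1}$ collapses. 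Iterating produces a non-backtracking path of pipe edges whose projection is the unique $T$-geodesic from $\pi(x_1)$ to $\pi(x_2)$. Its length equals $d_T\bigl(\pi(x_1),\pi(x_2)\bigr)$, matching the lower bound, so it is a geodesic in $K$.

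For uniqueness, let $\gamma$ be any geodesic in $K$ from $x_1$ to $x_2$. By the previous paragraphs, its length equals $d_T\bigl(\pi(x_1),\pi(x_2)\bigr)$; the saturation of the lower bound forces every edge of $\gamma$ to be a pipe edge and its projection to be the unique $T$-geodesic $u_0,u_1,\dots,u_k$. Then at each step from $y_i\in K_{u_i}$ to $y_{i+1}\in K_{u_{i+1}}$ the pipe edge traversed is either $p_e(y_i)$ (when $u_i\in W$, so $y_{i+1}=s_e(y_i)$) or the unique $p_e(x)$ with $s_e(x)=y_i$ (when $u_i\in V$), the latter once more invoking injectivity of $s_e$. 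Hence each $y_{i+1}$ is determined by $y_i$, and $\gamma$ is unique. The most delicate point is the backtrack-elimination step, which combines the tree property of $T$ with injectivity of $s_e$; beyond a careful case analysis for $\pi(z_i)\in V$ versus $W$, I anticipate no significant obstacle.
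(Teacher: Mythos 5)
Your proof is correct and follows essentially the same approach as the paper's: both use the projection $\pi\colon K\to T$ to bound path lengths by the number of pipe edges, and both exploit that a pipe edge out of a given vertex of $K$ toward a fixed adjacent vertex of $T$ is unique (via injectivity of $s_e$). You make explicit two steps the paper treats tersely — the backtrack-elimination argument needed to see that $\pi(\gamma)$ is actually the $T$-geodesic, and the inductive step-by-step determination of the geodesic used for uniqueness — but these are the same ideas implicit in the paper's argument, so this is a matter of exposition rather than a different route.
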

\begin{proof}
Note that there is a path $\gamma$ without backtracking between $x_1$ and $x_2$ in $K$ consisting only of pipe edges. Let $n$ be the length of $\gamma$. Suppose $x_i\in K_{u_i}$ for $i=1,2$. Then $\pi(\gamma)$ is the shortest path between $u_1$ and $u_2$ in $T$ and it consists of $n$ edges. Let $\beta$ be any geodesic between $x_1$ and $x_2$. Then $\pi(\beta)$ is the shortest path between $u_1$ and $u_2$, which implies $\pi(\beta)=\pi(\gamma)$. Since only pipe edges survive under $\pi$, we must have that $\pi(\beta)$ contains at least $n$ pipe edges, which implies $\beta=\gamma$.
\end{proof}

\begin{lem}
\label{kbarhyp}
The graph $\bar{K}$ in the proof of Theorem~\ref{combination} is hyperbolic.
\end{lem}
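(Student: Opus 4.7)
The plan is to apply the Kapovich--Rafi criterion (Proposition~\ref{kaporafi}) to the surjective simplicial map $\rho\colon K\rightarrow\bar{K}$. Since $K$ is hyperbolic by Lemma~\ref{khyp} and $\rho$ is simplicial and surjective (hence $1$--Lipschitz on vertex distances), it suffices to produce a uniform constant $M$ such that for every pair $x,x'\in\vertices(K)$ with $d_{\bar{K}}(\rho(x),\rho(x'))\leq 1$ and every $K$--geodesic $\alpha$ from $x$ to $x'$, we have $\diam_{\bar{K}}(\rho(\alpha))\leq M$.

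The case $\rho(x)=\rho(x')$ is immediate from Lemma~\ref{parastronglyconv}: the unique $K$--geodesic from $x$ to $x'$ lies inside a single parabolic tree and consists only of pipe edges, each of which $\rho$ collapses to a point. Hence $\diam_{\bar{K}}(\rho(\alpha))=0$.

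For the remaining case $\rho(x)\neq\rho(x')$ adjacent in $\bar{K}$, let $(y,y')$ be a non-pipe, non-neck edge of $K$ realizing this edge, with $\rho(y)=\rho(x)$ and $\rho(y')=\rho(x')$. I would build a comparison path $\gamma$ by concatenating the pipe-edge geodesic from $x$ to $y$ inside the parabolic tree of $x$, the edge $(y,y')$, and the pipe-edge geodesic from $y'$ to $x'$ inside the parabolic tree of $x'$; then $\rho(\gamma)$ is a single edge of $\bar{K}$ and $|\alpha|\leq|\gamma|$. By the convexity of parabolic trees in $K$ (a direct consequence of Lemma~\ref{parastronglyconv}), the geodesic $\alpha$ meets each parabolic tree in one connected subpath, so it factors as an alternating concatenation of pipe-edge arcs through a sequence of distinct parabolic trees $T_{c_1},T_{c_2},\ldots,T_{c_k}$ (with $c_1=\rho(x)$, $c_k=\rho(x')$) joined by single non-pipe edges. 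Consequently $\rho(\alpha)\subseteq\{c_1,\ldots,c_k\}$ and $\diam_{\bar{K}}(\rho(\alpha))\leq k-1$.

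The heart of the proof is a uniform bound on $k$. I would project via $\pi\colon K\rightarrow T$ and argue that $\pi(\alpha)$ is non-backtracking in $T$, so the number of non-pipe edges of $\alpha$ equals $|\alpha|-d_T(\pi(x),\pi(x'))$. Combining $|\alpha|\leq|\gamma|$ with the signature condition $\abs{\Image(s_e)\cap\Image(s_{e'})}\leq 1$, the structure of the neck/junction edges, and the finite-quotient $G$-action on $T$ should yield the required bound. The main obstacle is that parabolic trees may have unbounded $K$-diameter, so neither $|\alpha|$ nor $|\gamma|$ is uniformly bounded; the delicate step is to rule out the possibility that $\alpha$ takes a long chain of small parabolic-tree detours between $T_{c_1}$ and $T_{c_k}$, which must be excluded using the signature constraints together with the hyperbolicity of $K$ to forbid the corresponding short-cuts in the fine structure.
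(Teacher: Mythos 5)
Your framework matches the paper's exactly: invoke the Kapovich--Rafi criterion (Proposition~\ref{kaporafi}) for $\rho\colon K\rightarrow \bar{K}$ with $K$ hyperbolic by Lemma~\ref{khyp}, and dispose of the case $\rho(x)=\rho(x')$ by Lemma~\ref{parastronglyconv}. But your treatment of the second case, where $\rho(x)$ and $\rho(x')$ are adjacent in $\bar{K}$, has a genuine gap. You correctly set up the decomposition of $\alpha$ into pipe arcs joined by non-pipe edges and introduce a comparison path $\gamma$ through a lift of $\bar{e}$, but then declare the bound on the number $k$ of parabolic trees visited to be ``the delicate step,'' invoking the signature condition, the finite quotient, and the hyperbolicity of $K$ to exclude ``long chains of detours.'' None of those is the missing ingredient, and the conclusion is not merely a uniform bound on $k$: it is $\diam_{\bar{K}}(\rho(\alpha))\le 1$.

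The observation you are missing is that the lift $(y,y')$ of $\bar{e}$ can always be chosen inside $K_c$, where $c$ is the centre of the tripod in $T$ spanned by $\pi(x)$, $\pi(x')$, and any vertex $u_0$ of $T$ carrying some lift of $\bar{e}$. Since each parabolic tree projects injectively under $\pi$ onto a subtree of $T$, both $T_{\rho(x)}$ and $T_{\rho(x')}$ pass through $c$; if $c\in W$ the junction edge $j_c$ lifts $\bar{e}$, and if $c\in V$ with $c\ne u_0$ both parabolic trees also pass through the adjacent $W$-vertex $w$ on $[c,u_0]$, so the pair they determine in $\Lambda_c$ is a neck pair and the neck edge $n_{(c,w)}$ (whose $\rho$-image is again $\bar{e}$) is a lift. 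Crucially $c$ lies on the $T$-geodesic $[\pi(x),\pi(x')]$, so with this choice $|\gamma|=d_T(\pi(x),\pi(x'))+1$. On the other hand, exactly as in the proof of Lemma~\ref{parastronglyconv}, the length of $\pi(\alpha)$ equals the number of pipe edges in $\alpha$, and $\pi(\alpha)$ is a walk from $\pi(x)$ to $\pi(x')$, so $\alpha$ contains at least $d_T(\pi(x),\pi(x'))$ pipe edges plus at least one non-pipe edge, giving $|\alpha|\ge d_T(\pi(x),\pi(x'))+1$. Equality forces $\alpha$ to be a pipe arc, a single non-pipe edge, and another pipe arc; the two pipe arcs lie in $T_{\rho(x)}$ and $T_{\rho(x')}$, so the middle edge lifts $\bar{e}$ and $\rho(\alpha)=\bar{e}$. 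There is no chain of detours to rule out, the parabolic-tree diameter in $K$ is irrelevant because the pipe arcs are counted via $\pi$, and the hyperbolicity of $K$ plays no further role in this step.
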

\begin{proof}
By Proposition \ref{kaporafi} and Lemma \ref{khyp} above we only need to prove the following. For any $x,x'\in \vertices(K)$ and any geodesic $\alpha$ in $K$ joining $x$ and $x'$ such that either $\rho (x)=\rho (x')$ or $\rho (x)$ and $\rho (x')$ are joined by an edge in $\bar{K}$, we have $\diam(\rho (\alpha))\leq 1$. If $\rho (x)=\rho (x')$, from Lemma~\ref{parastronglyconv} we have that $\alpha$ consists only of pipe edges. Since $\rho$ collapses pipe edges, we have that $\rho (\alpha)=\rho (x)=\rho (x')$.

Otherwise, let $\bar{e}$ be the edge joining $\rho (x)$ and $\rho (x')$ in $\bar{K}$. Thus, by Lemma~\ref{parastronglyconv} we have that $\alpha$ is a (possibly empty) path of pipe edges followed by a lift of $\bar{e}$ and then followed by another (possibly empty) path of pipe edges.
Since $\rho$ collapses pipe edges, $\rho(\alpha)=\bar{e}$ and thus $\diam(\rho(\alpha))=1$.
\end{proof}

Now we prove that $\bar{K}$ is fine. Consider a circuit $c=(x_0,x_1,x_2,\ldots x_{n})$ in $\bar{K}$.
If for some $i,j$ with $1<j-i<n-1$ the vertices $x_i$ and $x_j$ are joined by an edge $e$ in $\bar{K}$, we can form smaller circuits $c_1=(x_0,x_1,\ldots,x_i,x_j\ldots,x_{n})$ and $c_2=(x_i,x_{i+1},\ldots, x_j,x_i)$ which are \emph{partitions} of $c$ along $e$.

Conversely, if there are two circuits $c=(x_1, x_2, \ldots, x_i,x_{i+1},\ldots x_n)$ and $c'=(x'_1,x'_2,\ldots, x'_j,x'_{j+1},\ldots, x'_m)$ such that $x_i=x'_{j+1}$ and $x_{i+1}=x'_{j}$ in $\bar{K}$ then we can form the \emph{concatenation} of $c_1$ and $c_2$ along $e=(x_i,x_{i+1})=(x'_{j+1},x'_j)$ as a bigger circuit
$$c=(x_0,x_1,\ldots,x_{i-1},x_i, x'_{j+2},\ldots, x'_m,x'_1,\ldots, x'_{j},x_{i},\ldots,x_n).$$
We note that concatenations and partitions are mutually inverse processes as seen in Figure~\ref{fig:partconcat}.

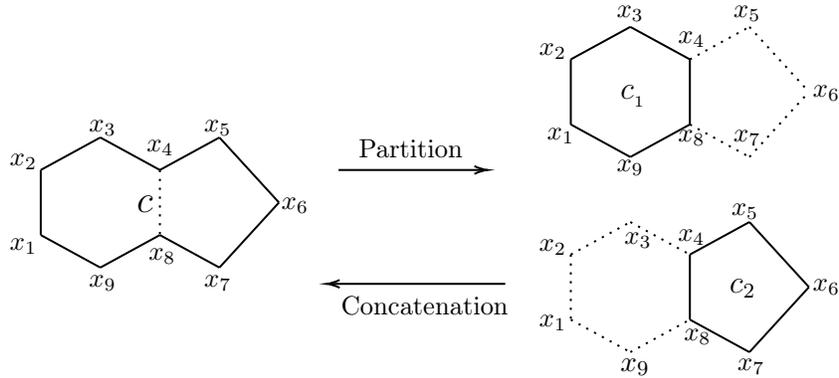
\begin{figure}
    \centering

\tikzset{every picture/.style={line width=0.75pt}} 

\begin{tikzpicture}[x=0.75pt,y=0.75pt,yscale=-1,xscale=1]

\draw  [dash pattern={on 0.84pt off 2.51pt}]  (91.09,91.44) -- (91.09,124.28) ;
\draw    (181.19,91.44) -- (254.28,91.44) ;
\draw [shift={(256.28,91.44)}, rotate = 180] [color={rgb, 255:red, 0; green, 0; blue, 0 }  ][line width=0.75]    (6.56,-1.97) .. controls (4.17,-0.84) and (1.99,-0.18) .. (0,0) .. controls (1.99,0.18) and (4.17,0.84) .. (6.56,1.97)   ;
\draw    (31.02,91.44) -- (31.02,124.28) ;
\draw    (31.02,91.44) -- (61.06,75.03) ;
\draw    (31.02,124.28) -- (61.06,140.7) ;
\draw    (61.06,140.7) -- (91.09,124.28) ;
\draw    (61.06,75.03) -- (91.09,91.44) ;
\draw    (91.09,91.44) -- (121.13,75.03) ;
\draw    (91.09,124.28) -- (121.13,140.7) ;
\draw    (121.13,140.7) -- (151.16,107.86) ;
\draw    (121.13,75.03) -- (151.16,107.86) ;
\draw    (358.4,35.62) -- (358.4,68.46) ;
\draw    (298.33,35.62) -- (298.33,68.46) ;
\draw    (298.33,35.62) -- (328.36,19.21) ;
\draw    (298.33,68.46) -- (328.36,84.88) ;
\draw    (328.36,84.88) -- (358.4,68.46) ;
\draw    (328.36,19.21) -- (358.4,35.62) ;
\draw  [dash pattern={on 0.84pt off 2.51pt}]  (358.4,35.62) -- (388.43,19.21) ;
\draw  [dash pattern={on 0.84pt off 2.51pt}]  (358.4,68.46) -- (388.43,84.88) ;
\draw  [dash pattern={on 0.84pt off 2.51pt}]  (388.43,84.88) -- (418.46,52.04) ;
\draw  [dash pattern={on 0.84pt off 2.51pt}]  (388.43,19.21) -- (418.46,52.04) ;
\draw    (358.4,134.13) -- (358.4,166.97) ;
\draw  [dash pattern={on 0.84pt off 2.51pt}]  (298.33,134.13) -- (298.33,166.97) ;
\draw  [dash pattern={on 0.84pt off 2.51pt}]  (298.33,134.13) -- (328.36,117.71) ;
\draw  [dash pattern={on 0.84pt off 2.51pt}]  (298.33,166.97) -- (328.36,183.38) ;
\draw  [dash pattern={on 0.84pt off 2.51pt}]  (328.36,183.38) -- (358.4,166.97) ;
\draw  [dash pattern={on 0.84pt off 2.51pt}]  (328.36,117.71) -- (358.4,134.13) ;
\draw    (358.4,134.13) -- (388.43,117.71) ;
\draw    (358.4,166.97) -- (388.43,183.38) ;
\draw    (388.43,183.38) -- (418.46,150.55) ;
\draw    (388.43,117.71) -- (418.46,150.55) ;
\draw    (265.29,148.91) -- (177.19,148.91) ;
\draw [shift={(175.19,148.91)}, rotate = 360] [color={rgb, 255:red, 0; green, 0; blue, 0 }  ][line width=0.75]    (6.56,-1.97) .. controls (4.17,-0.84) and (1.99,-0.18) .. (0,0) .. controls (1.99,0.18) and (4.17,0.84) .. (6.56,1.97)   ;

\draw (14,124.47) node [anchor=north west][inner sep=0.75pt]  [font=\small]  {$x_{1}$};
\draw (14,81.75) node [anchor=north west][inner sep=0.75pt]  [font=\small]  {$x_{2}$};
\draw (54,65) node [anchor=north west][inner sep=0.75pt]  [font=\small]  {$x_{3}$};
\draw (83,75) node [anchor=north west][inner sep=0.75pt]  [font=\small]  {$x_{4}$};
\draw (112,65) node [anchor=north west][inner sep=0.75pt]  [font=\small]  {$x_{5}$};
\draw (151,104) node [anchor=north west][inner sep=0.75pt]  [font=\small]  {$x_{6}$};
\draw (112,143) node [anchor=north west][inner sep=0.75pt]  [font=\small]  {$x_{7}$};
\draw (84,129.43) node [anchor=north west][inner sep=0.75pt]  [font=\small]  {$x_{8}$};
\draw (54,143) node [anchor=north west][inner sep=0.75pt]  [font=\small]  {$x_{9}$};
\draw (190,74.3) node [anchor=north west][inner sep=0.75pt]  [font=\small] [align=left] {Partition};
\draw (285,68.64) node [anchor=north west][inner sep=0.75pt]  [font=\small]  {$x_{1}$};
\draw (281,26.13) node [anchor=north west][inner sep=0.75pt]  [font=\small]  {$x_{2}$};
\draw (320,7) node [anchor=north west][inner sep=0.75pt]  [font=\small]  {$x_{3}$};
\draw (351,20) node [anchor=north west][inner sep=0.75pt]  [font=\small]  {$x_{4}$};
\draw (379,7) node [anchor=north west][inner sep=0.75pt]  [font=\small]  {$x_{5}$};
\draw (419,47) node [anchor=north west][inner sep=0.75pt]  [font=\small]  {$x_{6}$};
\draw (379,70) node [anchor=north west][inner sep=0.75pt]  [font=\small]  {$x_{7}$};
\draw (351,70) node [anchor=north west][inner sep=0.75pt]  [font=\small]  {$x_{8}$};
\draw (320,85) node [anchor=north west][inner sep=0.75pt]  [font=\small]  {$x_{9}$};
\draw (281,163.34) node [anchor=north west][inner sep=0.75pt]  [font=\small]  {$x_{1}$};
\draw (281,125) node [anchor=north west][inner sep=0.75pt]  [font=\small]  {$x_{2}$};
\draw (324,121.48) node [anchor=north west][inner sep=0.75pt]  [font=\small]  {$x_{3}$};
\draw (351,120) node [anchor=north west][inner sep=0.75pt]  [font=\small]  {$x_{4}$};
\draw (377.93,107) node [anchor=north west][inner sep=0.75pt]  [font=\small]  {$x_{5}$};
\draw (419,145) node [anchor=north west][inner sep=0.75pt]  [font=\small]  {$x_{6}$};
\draw (381,185) node [anchor=north west][inner sep=0.75pt]  [font=\small]  {$x_{7}$};
\draw (354,170) node [anchor=north west][inner sep=0.75pt]  [font=\small]  {$x_{8}$};
\draw (322,185) node [anchor=north west][inner sep=0.75pt]  [font=\normalsize]  {$x_{9}$};
\draw (78,103) node [anchor=north west][inner sep=0.75pt]  [font=\Large]  {$c$};
\draw (322,47.07) node [anchor=north west][inner sep=0.75pt]  [font=\large]  {$c_{_{1}}$};
\draw (377,145) node [anchor=north west][inner sep=0.75pt]  [font=\large]  {$c_{2}$};
\draw (181,153.76) node [anchor=north west][inner sep=0.75pt]  [font=\small] [align=left] {Concatenation};

\end{tikzpicture}

\caption{The solid lines in the figure represent part of a circuit and the dashed lines represent other edges in the graph. Going from left to right $c$ is partitioned into two circuits $c_1$ and $c_2$ while going from right to left $c_1$ and $c_2$ are concatenated to form $c$.}
\label{fig:partconcat}
\end{figure}

\begin{lem}
\label{nonatm}
A circuit $c$ is \emph{atomic} if there is some $v\in V$ such that $c\subset K_v$ in $\bar{K}$.
Given a non-atomic circuit $c$ in $\bar{K}$, there are circuits $c_1$ and $c_2$ in $\bar{K}$ such that $c$ is the concatenation of $c_1$ and $c_2$.
\end{lem}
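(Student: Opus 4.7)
The plan is to find a chord in the non-atomic circuit $c=(x_0,x_1,\ldots,x_{n-1})$, that is, an edge of $\bar{K}$ between two cyclically non-adjacent vertices of $c$. Once such a chord is produced, partitioning $c$ along it yields the required decomposition $c_1, c_2$, and the chord I will exhibit is always a junction edge $j_{w^*}$ for a suitable $w^*\in W$.

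I would first set up a dictionary via part~(\ref{combination: paraforest}) of Theorem~\ref{combination}: each $\bar{x}\in\vertices(\bar{K})$ determines a subtree $T_{\bar{x}}\subset T$, obtained by projecting its parabolic tree under the natural map $F\to T$. The key facts I would record are: (i)~$\bar{x}\in K_u$ if and only if $u\in T_{\bar{x}}$; (ii)~each edge of $\bar{K}$ has a well-defined \emph{type} $u\in V\sqcup W$, namely the index of the reservoir or junction it lives in, and $u$ belongs to the subtrees of both of its endpoints; and (iii)~every leaf of $T_{\bar{x}}$ lies in $V$, because every $(w,y)\in F$ has degree at least $2$ in $F$ (each $T$-neighbor of $w$ contributes an $F$-edge, and every vertex of $T$ has valence $\ge 2$ by the admissibility hypothesis). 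Consequently adjacent vertices of $c$ always have intersecting subtrees, and $c$ is atomic exactly when $\bigcap_i T_{x_i}$ contains a $V$-vertex.

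Helly's theorem for subtrees of a tree then provides a cyclically non-adjacent pair $x_i, x_j$ with $T_{x_i}\cap T_{x_j}=\emptyset$. Indeed, if every pair of $T_{x_k}$'s intersected, the common intersection would be a non-empty subtree of $T$; a $V$-vertex in it would make $c$ atomic, contradicting the hypothesis, while otherwise the intersection would lie in $W$ and, because $W$ is independent in the bipartite tree $T$, would reduce to a single $w$, forcing every $x_k\in\Lambda_w$. This is impossible since $|\Lambda_w|=2$ by $2$-admissibility and $n\ge 3$. By fact~(iii), the boundaries of $T_{x_i}$ and $T_{x_j}$ closest to each other lie in $V$, so the $T$-bridge between them has $V$-endpoints and hence, by bipartite parity, even length at least $2$; in particular, its interior contains a $W$-vertex $w^*$, which does not lie in $T_{x_i}\cup T_{x_j}$.

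I would conclude by classifying each $x_k$ as \emph{on} if $T_{x_k}\ni w^*$ (equivalently $x_k\in\Lambda_{w^*}$) and \emph{off} otherwise; off-vertices have their subtrees contained in a single component of $T\setminus\{w^*\}$. Both $x_i$ and $x_j$ are off but sit in different components of $T\setminus\{w^*\}$, and two off-vertices in distinct components cannot share a $T$-vertex; hence each of the two cyclic arcs of $c$ joining $x_i$ to $x_j$ must contain at least one on-vertex. Picking one such on-vertex $x_{k_1}$ from one arc and $x_{k_2}$ from the other gives two distinct elements of $\Lambda_{w^*}$. Since $|\Lambda_{w^*}|=2$, these two points exhaust $\Lambda_{w^*}$, so the junction edge $j_{w^*}$ joins them in $\bar{K}$. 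Because $k_1$ and $k_2$ lie on opposite arcs, they are cyclically non-adjacent in $c$, so $j_{w^*}$ is the desired chord. The point I expect to be most delicate is fact~(iii) together with the bipartite-parity bridge analysis, since this is where $2$-admissibility and the signature's identification rules play their decisive role in ruling out short bridges that would otherwise obstruct the chord.
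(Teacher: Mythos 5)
Your proof is correct, and it follows a genuinely different route from the paper's. Both arguments ultimately exhibit a junction edge $j_w$ as a chord of $c$ and then invoke the partition operation, but the way the relevant $w$ is located is quite different. The paper lifts $c$ to a circuit $c'$ of minimal length in $K$, projects to $T$ via $\pi$, and uses minimality to find two reservoirs hit by $c'$ that are adjacent to a common $w\in W$; the junction $j_w$ is then the chord. You instead work intrinsically in $\bar K$ by attaching to each vertex $\bar x$ the subtree $T_{\bar x}\subset T$ obtained by projecting its parabolic tree (well-defined as a subtree since each parabolic tree injects into $T$), apply Helly's theorem for subtrees of a tree to rule out pairwise-intersecting families, and analyze the bridge between two disjoint $T_{x_i}, T_{x_j}$ to extract $w^*$. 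Your approach makes the role of $2$-admissibility ($|\Lambda_w|=2$) and the valence-$\geq 2$ hypothesis fully explicit (the former kills the all-$W$ Helly intersection case and forces the two on-vertices to exhaust $\Lambda_{w^*}$; the latter is exactly your fact~(iii) that leaves of $T_{\bar x}$ lie in $V$, which forces bridge endpoints into $V$ and hence gives a $W$-vertex in the bridge interior), whereas the paper's proof leaves these ingredients implicit in the structure of $K$.

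One small point to tighten: the direction ``a $V$-vertex in $\bigcap_i T_{x_i}$ makes $c$ atomic'' is asserted as a consequence of facts~(i)--(iii), but it does not quite follow from them alone. Facts~(i) and~(ii) give that all vertices $x_i$ lie in $K_v$ and that each edge $e_k$ of $c$ has \emph{some} type $u_k$ with $u_k\in T_{x_k}\cap T_{x_{k+1}}$, but $u_k$ need not equal $v$, so it is not immediate that $e_k$ lies in $K_v\subset\bar K$. The cleanest repair is to invoke Theorem~\ref{combination}(\ref{combination: Kbar}): $K_v$ is a convex subgraph of $\bar K$, and since each edge $e_k$ is the geodesic between its endpoints $x_k,x_{k+1}\in K_v$, it lies in $K_v$, so $c\subset K_v$. (Alternatively one can argue directly that whenever both endpoints lie in $\Lambda_v$ and the edge's type is some $u\neq v$, the pair of representatives in $\Lambda_v$ forms a neck pair for the $W$-neighbor of $v$ on the path toward $u$, so a neck edge exists in $K_v$ and its $\rho$-image is the required edge.) Also, fact~(ii) as stated — that an edge has ``a well-defined type'' — is slightly too strong, since the image of a junction edge $j_w$ coincides in $\bar K$ with the image of the corresponding neck edge in each adjacent $K_v$; you should phrase it as ``at least one $u$ for which the edge lies in $K_u$,'' which is all the argument uses.
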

\begin{proof}
Let $c'$ in $K$ be a circuit of minimal length such that $\rho(c')=c$.
(We consider the image of a path under a simplicial map to be a path by removing consecutive repeating vertices.)
Let
$$V_c:=\{v\in V\mid K_v\cap c'=\emptyset\}.$$
Note that $|V_c|\geq 2$ as otherwise $c$ would be atomic. Then there are $v_1,v_2\in V_c$ and $w\in W$ such that $v_i$ and $w$ are joined via an edge $e_i$ in $T$ for $i=1,2$. Recall that $j_w$ is the single edge in $K_w\subset K$. Then we can form partitions $c_1,c_2$ of $c$ along $\rho(j_w)$. Clearly, $c$ is the concatenation of $c_1$ and $c_2$.
\end{proof}

\begin{lem}
\label{kbarfine}
The graph $\bar{K}$ in the proof of Theorem~\ref{combination} is fine.
\end{lem}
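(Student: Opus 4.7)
The plan is to prove fineness of $\bar{K}$ by strong induction on the circuit length $n$, using Lemma~\ref{nonatm} to recursively decompose non-atomic circuits and using the fineness of each reservoir $K_v$ as a $(G_v,\P_v)$ graph to handle atomic circuits.

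Since $\bar{K}$ is simplicial, there are no circuits of length less than $3$. For the base case $n=3$, observe that if a non-atomic circuit of length $n$ is partitioned by Lemma~\ref{nonatm} along a junction edge, the resulting circuits $c_1,c_2$ satisfy $|c_1|+|c_2|=n+2$ with each $|c_i|\ge 3$. Thus a $3$--circuit must be atomic, i.e., contained in some reservoir $K_v$ for $v\in V$. This forces $\bar{e}$ to be a (non-neck) edge of $K_v$; since $K_v$ is a $(G_v,\P_v)$ graph and therefore fine, only finitely many triangles of $K_v$ contain $\bar{e}$.

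For the inductive step, assume that for every $m<n$ and every edge of $\bar{K}$, only finitely many circuits of length $m$ contain that edge. Fix an edge $\bar{e}$ and a circuit $c$ of length $n$ through $\bar{e}$. If $c$ is atomic, then $c\subset K_v$ for some $v\in V$, so $\bar{e}$ is a reservoir edge of $K_v$, and fineness of $K_v$ bounds the count. If $c$ is non-atomic, apply Lemma~\ref{nonatm} to express $c$ as a concatenation of two circuits $c_1$ and $c_2$ along a junction edge $\bar{e}'=\rho(j_w)$, with $|c_i|<n$ for $i=1,2$. From the explicit concatenation formula the edges of $c$ are partitioned between $c_1$ and $c_2$, so $\bar{e}$ lies in exactly one of them, say $c_1$. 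The inductive hypothesis applied to $\bar{e}$ yields finitely many candidates for $c_1$; each such $c_1$ has finitely many edges, hence finitely many candidates for the junction edge $\bar{e}'\in c_1$; and for each choice of $\bar{e}'$, the inductive hypothesis applied to $\bar{e}'$ yields finitely many candidates for $c_2$. Summing over these finite choices, only finitely many circuits of length $n$ through $\bar{e}$ are non-atomic.

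The main obstacle is ensuring that the recursive decomposition terminates at a finite base case. This is precisely what the choice of each reservoir $K_v$ as a $(G_v,\P_v)$ graph in Lemma~\ref{neckexistence} provides: the relatively hyperbolic hypothesis on the vertex stabilizers is translated into fineness of the reservoirs, which feeds the inductive step at every level.
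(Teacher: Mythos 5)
Your proof is correct and follows essentially the same route as the paper: induction on circuit length, handling atomic circuits via fineness of the reservoirs $K_v$ and decomposing non-atomic circuits via Lemma~\ref{nonatm} as a concatenation of two shorter circuits sharing a junction edge, then applying the inductive hypothesis twice (once at $\bar{e}$ to bound $c_1$, once at the junction edge to bound $c_2$). The only cosmetic difference is that the paper counts the sets $C_{\bar K}(e,n)$ of all circuits of length \emph{at most} $n$ and also notes explicitly that an edge may lie in finitely many reservoirs rather than exactly one; neither affects the substance of the argument.
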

\begin{proof}
Given an edge $e\in \edges(\bar{K})$ and $n>0$, let $C_{\bar{K}}(e,n)$ denote the collection of all circuits in $\bar{K}$ of length at most $n$ containing $e$.
We prove by induction that $|C_{\bar{K}}(e,n)|<\infty$ for any $n$ and for any edge $e$ in $\bar{K}$, which proves that $\bar{K}$ is fine.
Since $e$ is contained in finitely many reservoirs and since each reservoir subgraph is fine, there are finitely many atomic circuits in $C_{\bar{K}}(e,n)$.
By Lemma~\ref{nonatm}, any non-atomic circuit of length $n$ containing $e$ is made by concatenating smaller circuits. Such a circuit is made by concatenating a circuit in $C_{\bar{K}}(e,n-1)$ with another circuit of length at most $n-1$. By the induction hypothesis, $|C_{\bar{K}}(e,n-1)|<\infty$. Thus there are finitely many edges contained in circuits of $C_{\bar{K}}(e,n-1)$. Moreover for any edge $e'$ in a circuit of $C_{\bar{K}}(e,n-1)$, the set $C_{\bar{K}}(e',n-1)$ is finite. Thus, there are finitely many possible concatenations with circuits in $C_{\bar{K}}(e,n-1)$ to make a non-atomic circuit in $C_{\bar{K}}(e,n)$ which proves $|C_{\bar{K}}(e,n)|<\infty$.
\end{proof}

We now explore properties of the boundary $\boundary(G,\P)=\Delta\bar{K}$. Let $d$ be the metric on the graph $\bar{K}$.

\begin{lem}
\label{intersection of Delta Kv's}
Define the space $\Delta_{\sqcup}=\sqcup_{u\in \vertices(T)}\Delta K_u$. Then $\Lambda\subset \Delta_{\sqcup}$ where ${\Lambda}=\sqcup_{u\in \vertices(T)}\Lambda_u$. Extend the equivalence relation $\equiv$ as in Theorem~\ref{combination}(\ref{combination: Grelhyp}) from ${\Lambda}$ to $\Delta_{\sqcup}$.
Then the natural map $\sigma\colon \Delta_{\sqcup}/{\equiv} \to \Delta \bar{K}$ is injective. Moreover, $\Delta K_v$ is a subspace in $\Delta\bar{K}$ for each $v\in V$.
\end{lem}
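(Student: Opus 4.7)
The plan is to prove injectivity of $\sigma$ case by case, dividing points of $\Delta_{\sqcup}/{\equiv}$ according to vertex versus ideal boundary location, and then derive the subspace claim from the convexity of $K_v$ in $\bar{K}$ established by Theorem~\ref{combination}(\ref{combination: Kbar}).

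Note first that $\sigma$ sends each class $[x]_{\equiv}$ with $x \in \Lambda$ to the corresponding vertex of $\bar{K}$ (by the definition of $\bar{\Lambda}$ in Theorem~\ref{combination}(\ref{combination: Grelhyp})), and sends each $\xi \in \boundary K_u$ to the point of $\boundary \bar{K}$ represented by any $K_u$-ray for $\xi$, which is automatically a $\bar{K}$-geodesic ray by convexity of $K_u$. Vertex classes thus land in $\vertices(\bar{K})$ and ideal classes land in $\boundary \bar{K}$, and these are disjoint in $\Delta \bar{K}$, so injectivity reduces to the ideal--ideal case. Under the $2$--admissible hypothesis $|G_w| < \infty$ for every $w \in W$, so $K_w$ is finite and $\boundary K_w = \emptyset$; hence only $u \in V$ contributes ideal points.

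For $\xi_1, \xi_2 \in \boundary K_u$ with the same $u \in V$, convexity gives that $d_{\bar{K}}$ restricted to $K_u$ equals $d_{K_u}$; consequently the Gromov products of representative rays coincide in $K_u$ and $\bar{K}$, forcing $\xi_1 = \xi_2$ whenever $\sigma(\xi_1) = \sigma(\xi_2)$. For $\xi_1 \in \boundary K_{v_1}$ and $\xi_2 \in \boundary K_{v_2}$ with $v_1 \neq v_2$ in $V$, the unique $T$-path from $v_1$ to $v_2$ passes through some $w \in W$; by the construction of $\bar{K}$, the finite set $\Lambda_w \subset \vertices(\bar{K})$ separates $\bar{K}$ so that $K_{v_1}$ and $K_{v_2}$ lie (up to $\Lambda_w$ itself) in distinct components of $\bar{K} \setminus \Lambda_w$. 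Any representative ray $\alpha_i$ for $\xi_i$ eventually stays in its component at unbounded distance from $\Lambda_w$, so $d_{\bar{K}}(\alpha_1(n), \alpha_2(n)) \to \infty$, contradicting that $\alpha_1, \alpha_2$ are asymptotic in the hyperbolic graph $\bar{K}$. The main obstacle is this last estimate: one must justify that a geodesic ray in $K_{v_i} \subset \bar{K}$ eventually leaves every finite subset of $\vertices(K_{v_i})$, which follows since geodesics do not repeat vertices and $\Lambda_w$ is finite.

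For the subspace claim, fix $v \in V$. The inclusion $\Delta K_v \to \Delta \bar{K}$ is injective by the previous paragraphs. For the topology, it suffices to observe that for each $x \in \Delta K_v$ and each finite $A \subset \vertices(\bar{K})$, the basic neighborhood in $\Delta \bar{K}$ satisfies $M(x, A) \cap \Delta K_v = M(x, A \cap \vertices(K_v))$, where the right-hand side is the basic neighborhood computed in $\Delta K_v$. This follows from convexity: every geodesic arc in $\bar{K}$ between two points of $\Delta K_v$ lies inside $K_v$, so its intersection with $A$ equals its intersection with $A \cap \vertices(K_v)$. Hence each subspace basic open set is already a $\Delta K_v$ basic open set, and conversely every $\Delta K_v$ basic open set arises as such an intersection (taking $A \subset \vertices(K_v)$), completing the proof.
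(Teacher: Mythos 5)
Your proposal is correct and follows essentially the same strategy as the paper's proof: reduce injectivity to the ideal--ideal case via the vertex/boundary dichotomy, handle the same-vertex case using convexity (so $\Delta K_v$ embeds as a subspace), and handle distinct $v_1, v_2$ by picking a separating $\Lambda_w$ on the $T$-path and using the fact that every path between the two boundary points crosses it. The only cosmetic difference is that the paper bounds the Gromov product $(x_m, x'_n)_z$ directly while you argue via divergence of pointwise distances along the rays, which is an equivalent formulation of non-asymptoticity.
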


\begin{proof}
Theorem~\ref{combination}(\ref{combination: Kbar}) gives us that $K_u$ is a convex subgraph in $\bar{K}$ for all $u\in \vertices(T)$. Therefore, $\Delta K_u$ is a subspace in $\Delta\bar{K}$ for all $u\in \vertices(T)$. We then have an obvious map $\sigma_{\cup}\colon\Delta_{\sqcup}\to  \Delta\bar{K}$. By Theorem~\ref{combination}(\ref{combination: Grelhyp}), the induced map $\sigma\colon\Delta_{\sqcup}/{\equiv}\to \Delta\bar{K}$ is well defined. 

To prove that $\sigma$ is injective we only need to prove that $\boundary K_v\cap \boundary K_{v'}=\emptyset$ for distinct $v,v'\in V$. The result then follows from Theorem~\ref{combination}(\ref{combination: Grelhyp}).
Consider some $x\in \boundary K_v$ and $x'\in \boundary K_{v'}$ in $\boundary\bar{K}$ along with sequences $(x_n)$ in $\Lambda_v$ and $(x'_n)\in \Lambda_{v'}$ converging to $x$ and $x'$ respectively in the visual topology. We show that $x\neq x'$.
Choose $w$ to be the closest vertex to $v'$ among the ones neighboring $v$.
Pick $z\in \Lambda_w$. Then since every path from $x_m$ to $x'_n$ has to go through $\Lambda_w$, it is straightforward to conclude that $(x_m,x_n)_z$ is bounded for all $m,n$,
which gives us that $x\neq x'$.
\end{proof}

For the rest of this section, we explore what other points are present in $\Delta\bar{K}$ apart from the points in $\Delta K_v$'s discussed above.

\begin{defn}
\label{redundantsig}
Let $\bar{\Lambda}$ be as in Theorem~\ref{combination}.
Consider an end $x\in \boundary T$. A sequence $(p_n)_{n\in \N}$ in $\bar{\Lambda}$ is an \emph{associated sequence for $x$} if for some representative ray $(v_1,w_1, v_2,w_2,\ldots)$ of $x$ where $v_i\in V$ and $w_i\in W$ we have $p_n\in {\Lambda}_{w_n}$ for all $n\in \N$.
We say an end $x$ is \emph{redundant} if there exists a point $p\in \bar{\Lambda}$, some $N>0$ and an associated sequence $(p_n)$ such that for all $n>N$, we have $p_n=p$.
Additionally, $p$ is the \emph{redundant point} associated with $x$.
\end{defn}

The set of redundant ends is $\boundary_R T$ and the set of \emph{non-redundant} ends is $\boundary_0 T$. We now show that the non-redundant ends are precisely the ends we need to add to make the boundary.

\begin{lem}
\label{redundant diverging}
Consider $x\in \boundary_0T$ with a representative ray $(v_1,w_1, v_2,w_2,\ldots)$ for $x$ and an associated sequence $(p_n)$ for $x$ such that $p_n\in \Lambda_{w_n}$. Then for some $z\in\Lambda_{v_n}$, we have that $d(z,p_n)\rightarrow \infty$.
\end{lem}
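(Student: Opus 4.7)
The plan is to argue by contradiction using the tree structure on $T$ and the pigeonhole principle; interpret $z$ as a fixed vertex of $\bar K$ with $v_1\in \pi(P_z)$ (e.g.\ any $z\in\Lambda_{v_1}$). Suppose $d_{\bar K}(z,p_n)\not\to\infty$; after passing to a subsequence assume $d_{\bar K}(z,p_n)\le R$ for every $n$, and fix a geodesic $\gamma_n$ in $\bar K$ from $z$ to $p_n$ of length at most $R$. Each vertex of $\gamma_n$ corresponds to a parabolic tree in $F$ by Theorem~\ref{combination}(\ref{combination: paraforest}), whose support under the canonical projection $\pi\colon F\to T$ is a subtree of $T$. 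The structural observation is that every edge of $\bar K$ lies either inside a reservoir $K_v$ or is a junction edge $j_w$, so two parabolic trees joined by an edge of $\bar K$ have supports both containing the corresponding vertex of $T$. Consequently the union of the supports along $\gamma_n$ is a connected subtree of $T$ containing $v_1$ and $w_n$, hence contains the full geodesic $[v_1,w_n]_T$; in particular, for every $i\le n$ some vertex of $\gamma_n$ has support meeting $\{w_i\}$. Pigeonhole on the at most $R+1$ vertices of $\gamma_n$ then produces a single vertex $U_n$ with $\lvert\pi(U_n)\cap\{w_1,\dots,w_n\}\rvert \ge n/(R+1)$.

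Since $\bar K$ has only finitely many $G$-orbits of vertices, after a further subsequence I may assume $U_n=g_n U^{*}$ for a fixed $U^{*}\in\bar\Lambda$ and elements $g_n\in G$. Writing $S^{*}:=\pi(U^{*})$, the translates $g_n S^{*}=\pi(g_n U^{*})$ are subtrees of $T$ whose intersections with $\{w_i\}$ have size going to infinity; being subtrees, they must contain arbitrarily long consecutive segments of the ray. If $S^{*}$ is finite this is immediate contradiction, since $\lvert g_n S^{*}\rvert=\lvert S^{*}\rvert$. If $S^{*}$ is infinite, a compactness/limit argument combined with the finitely-many-orbit action of $G$ on vertices and edges of $T$ produces an element $g\in G$ for which $gS^{*}$ contains a cofinite tail of the ray representing $x$. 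The parabolic tree $gU^{*}$ then lies in $\Lambda_{w_i}$ for all sufficiently large $i$, so the constant sequence $\tilde p_n := gU^{*}$ is an associated sequence for $x$ which is eventually constant, contradicting the hypothesis that $x\in\boundary_0 T$ is non-redundant.

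The hardest step is the extraction of $g$ in the infinite-support case: since $T$ need not be locally finite, balls of bounded radius in $T$ may contain infinitely many vertices, and the ends of $S^{*}$ need not a priori correspond to the end $x\in\boundary T$. I expect to handle this through a K\"onig-style diagonal argument on a carefully chosen locally finite subtree of $T$ (for example, the convex hull of a fixed base vertex of $S^{*}$ together with enough of the $g_n^{-1}$-images of the ray), combined with the observation that non-redundancy is equivalent to the statement that no element of $\bar\Lambda$ has support meeting infinitely many $w_i$. This equivalence converts the "growing intersection" conclusion of the pigeonhole step into the existence of a single offending translate, which is exactly what is needed to produce the eventually-constant associated sequence and close the contradiction.
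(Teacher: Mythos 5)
Your plan takes a genuinely different route from the paper, and the difference matters: the paper's argument is a two-line monotonicity observation, while your approach builds a machinery of parabolic-tree supports, pigeonhole, and orbit extraction that ultimately runs into a gap you yourself flag but do not close. The paper simply notes that, by the cut-pair structure of $\bar K$, every path from $z$ to $\Lambda_{w_m}$ must pass through $\Lambda_{w_n}$ whenever $n<m$, so the integer sequence $d(z,\Lambda_{w_n})$ is non-decreasing; if it is bounded it is eventually constant, and eventual constancy forces a single vertex $p$ lying in $\Lambda_{w_n}$ for all large $n$, i.e.\ redundancy of $x$. Nothing beyond the separating property of the $\Lambda_{w_n}$ and the fact that a bounded non-decreasing sequence of non-negative integers stabilizes is used.

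The structural observations in your steps through the finite-support case are correct: adjacent vertices of $\bar K$ do indeed correspond to parabolic trees with overlapping supports, the union of supports along $\gamma_n$ does cover $[v_1,w_n]_T$, the pigeonhole on at most $R+1$ vertices does produce a $U_n$ whose support hits $\gtrsim n/(R+1)$ of the $w_i$, and the finitely-many-orbits reduction and the immediate contradiction when $\lvert S^*\rvert<\infty$ are fine. The genuine gap is step~9. The interval $I_n := \{\,i : w_i\in g_n S^*\,\}$ is long, but nothing in what you have proved prevents both endpoints of $I_n$ from drifting to infinity with $n$; in that situation no single $g_nS^*$ covers a cofinite tail, and since $G$ is discrete the sequence $(g_n)$ has no limit to pass to. Your intended K\"onig-style argument also has to cope with $T$ not being locally finite, so the space of ``subtrees anchored near a base vertex'' is not compact in any useful sense. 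The information you are missing is exactly the monotonicity the paper exploits: once you know that some $U_n$ lies in $\Lambda_{w_i}$ with $d(z,U_n)\le R$ for $i$ arbitrarily large, the non-decreasing sequence $d(z,\Lambda_{w_n})$ is at most $R$ for all $n$, which rules out drift and hands you the paper's conclusion immediately. As written, without that observation, step~9 does not go through, and the proposal is incomplete.
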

\begin{proof}
Since $d(z,\Lambda_{w_n})\leq d(z,p_n)$ for each $n$ we only need to prove that $d(z,\Lambda_{w_n})\rightarrow \infty$. Note that by construction of $\bar{K}$, for $n<m$ every path from $z$ to $\Lambda_{w_m}$ goes through $\Lambda_{w_n}$. Therefore, for $n<m$ we have that $d(z,\Lambda_{w_n})\leq d(z,\Lambda_{w_m})$. Thus we have that $d(z,\Lambda_{w_n})$ is a non-decreasing sequence in $\N$. It then follows that $d(z,\Lambda_{w_n})$ either goes to infinity or is eventually constant.
Assume for contradiction that $d(z,\Lambda_{w_n})$ is eventually constant. Then there is $N>0$ and $p\in \bar{\Lambda}$ such that $p\in \Lambda_{w_n}$ for all $n>N$. However this is a contradiction as this implies that $x$ is redundant.
\end{proof}

\begin{lem}
\label{assocseq in bdry}
Consider $x\in \boundary_0T$ and an associated sequence $(p_n)$ for $x$. Then $(p_n)$ converges to a point  $y\in \boundary\bar{K}$ under the visual topology.
Moreover, any other associated sequence for $x$ converges to $y$.
\end{lem}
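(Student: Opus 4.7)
The plan is to show that the sequence $(p_n)$ is a Gromov sequence in the hyperbolic graph $\bar K$ (see Lemma~\ref{kbarhyp}); Gromov-sequence convergence in a hyperbolic graph then yields a unique limit $y\in\boundary\bar K$, and the visual topology on $\boundary\bar K\subset\Delta\bar K$ agrees with the Gromov-boundary topology by the setup in Section~\ref{subsec: Bowditch boundaries}. So it suffices to establish $(p_n,p_m)_o\to\infty$ for a fixed basepoint.

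Fix $o\in\vertices(\bar K)$ and set $\ell_n:=d(o,\Lambda_{w_n})$. Applying Lemma~\ref{redundant diverging} to any $z\in\Lambda_{v_1}$ and shifting by the triangle inequality with the finite constant $d(o,z)$ gives $\ell_n\to\infty$. The structural input I use is that each $\Lambda_{w_n}$ is a separating set in $\bar K$ consisting of the two endpoints of the junction edge $j_{w_n}$, so $\diam(\Lambda_{w_n})\le 1$; this separating property is built into the construction of $\bar K$ in Theorem~\ref{combination} (and is precisely what is invoked in the proof of part~(\ref{combination: separating set})). For $n<m$, the basepoint $o$ and the point $p_m\in\Lambda_{w_m}$ lie on opposite sides of $\Lambda_{w_n}$, because $w_n$ lies strictly between $v_1$ and $w_m$ on the ray, so every path from $o$ to $p_m$ must cross $\Lambda_{w_n}$. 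Using the elementary inequality $\min\{a+\alpha,b+\beta\}\ge\min\{a,b\}+\min\{\alpha,\beta\}$ to combine the two possible crossing points, I get $d(o,p_m)\ge\ell_n+d(\Lambda_{w_n},p_m)$, while $d(p_n,p_m)\le 1+d(\Lambda_{w_n},p_m)$ since $p_n\in\Lambda_{w_n}$ and $\diam(\Lambda_{w_n})\le 1$. Combined with $d(o,p_n)\ge\ell_n$, these give
$$(p_n,p_m)_o \;\ge\; \ell_n-\tfrac{1}{2},$$
which diverges as $\min(n,m)\to\infty$. Hence $(p_n)$ converges in $\bar K$ to a unique point $y\in\boundary\bar K$; the limit is in $\boundary\bar K$ rather than in $\vertices(\bar K)$ because the Gromov-product divergence forces $d(o,p_n)\to\infty$.

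For the \emph{moreover}, let $(p'_n)$ be another associated sequence for $x$ from a representative ray $(v'_i,w'_i)$. Since both rays represent the same end of $T$, there exist $N$ and an integer $k$ with $w'_n=w_{n+k}$ for all $n\ge N$. Applying the same separating-set estimate, but with $p_n$ on one side and $p'_m$ on the other, I obtain $(p_n,p'_m)_o\ge\ell_n-\tfrac{1}{2}$ whenever $\Lambda_{w_n}$ separates $o$ from $p'_m$ (which holds for all $n$ sufficiently smaller than $m+k$), and symmetrically with the roles reversed. This makes the interleaved sequence $p_1,p'_1,p_2,p'_2,\dots$ a Gromov sequence in $\bar K$, which therefore has a single limit; so $(p'_n)$ converges to the same $y$. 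The main point needing care is the separating-set estimate itself, and specifically the accounting at the two endpoints of $\Lambda_{w_n}$; everything else is routine once the unconditional separating property of $\Lambda_{w_n}$ in $\bar K$ is extracted from the construction.
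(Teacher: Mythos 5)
Your proof is correct and follows essentially the same route as the paper's: choose a basepoint, observe that every path from that basepoint to $p_m$ (for $m\geq n$) must cross the diameter-one separating set $\Lambda_{w_n}$, and combine this with Lemma~\ref{redundant diverging} to force the Gromov product $(p_n,p_m)_o$ to diverge. You spell out the separating-set estimate and the treatment of the \emph{moreover} (different representative rays eventually coincide, then interleave) more explicitly than the paper, but the underlying argument is the same.
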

\begin{proof}
There is a representative ray $(v_1,w_1, v_2,w_2,\ldots)$ in $T$ for $x$ such that $p_n\in \Lambda_{w_n}$. Choose $z\in \Lambda_{v_1}$. Since any path from $p_m$ to $z$ is within distance $1$ of $p_n$ when $m\geq n$ (and vice versa when $n\geq m$), using Lemma~\ref{redundant diverging} we have
$$(p_m,p_n)_z=\frac{1}{2}\left[d(p_m,z)+d(p_n,z)-d(p_m,p_n)\right]\to \infty\text{ as }m,n\to \infty.$$
Also observe that this limit is independent of the choice of $p_n$ in $\Lambda_{w_n}$ as $\diam(\Lambda_{w_n})=1$. This proves the second claim.
\end{proof}

\begin{lem}
\label{same comp not eq}
Consider $x\in \boundary_0T$ and an associated sequence $(p_n)$ for $x$ with a representative $(v_1,w_1, v_2,w_2,\ldots)$ in $T$ for $x$ such that $p_n\in \Lambda_{w_n}$ for all $n$. Also consider another sequence $(x_n)$ in $\bar{\Lambda}$.
Now suppose for some $z\in \Lambda_{v_1}$ and $N>0$, we have that $z$ and $x_n$ lie in the same component of $\bar{K}\setminus \Lambda_{w_N}$ for all $n$. Then $(x_n)$ and $(p_n)$ do not converge to the same point in $\boundary\bar{K}$.
\end{lem}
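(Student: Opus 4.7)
The plan is to show that the Gromov product $(x_n,p_m)_z$ stays uniformly bounded as $n,m\to\infty$. Since convergence of two sequences to a common point in the visual topology on $\boundary\bar{K}$ requires the mutual Gromov product to diverge, such a bound immediately precludes $(x_n)$ and $(p_n)$ from having a common limit.

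The first step is to upgrade the hypothesis into a separation statement that also involves the $p_m$'s. The vertex set $\Lambda_{w_N}$ is separating in $\bar{K}$ by the tree-of-spaces structure of $\bar{K}$ (from the proof of Theorem~\ref{combination}), independently of whether the boundary is connected. For every $m>N$, the vertex $w_N$ lies on the unique path from $v_1$ to $w_m$ in $T$, so $p_m$ and $z$ lie in different components of $\bar{K}\setminus \Lambda_{w_N}$. By hypothesis $z$ and $x_n$ lie in the same component, so $x_n$ and $p_m$ lie in different components of $\bar{K}\setminus\Lambda_{w_N}$ for every $n$ and every $m>N$.

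Next I exploit this separation to bound the Gromov product. Any geodesic from $x_n$ to $p_m$ must pass through some vertex $q_{n,m}\in\Lambda_{w_N}$ (chosen as the minimizer), so
$$d(x_n,p_m)=d(x_n,q_{n,m})+d(q_{n,m},p_m).$$
Combined with the triangle inequality $d(z,p_m)\le d(z,q_{n,m})+d(q_{n,m},p_m)$, the term $d(q_{n,m},p_m)$ cancels and we obtain
$$(x_n,p_m)_z \le \tfrac{1}{2}\bigl[d(z,x_n)+d(z,q_{n,m})-d(x_n,q_{n,m})\bigr]=(x_n,q_{n,m})_z \le d(z,q_{n,m}).$$
Since $\Lambda_{w_N}$ is finite (of size $2$ under $2$--admissibility), the right-hand side is bounded by the constant $C:=\max_{q\in\Lambda_{w_N}}d(z,q)$, independently of $n$ and of $m>N$.

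Finally, if both $(x_n)$ and $(p_n)$ converged to the same $y\in\boundary\bar{K}$, then by definition of the visual topology $(x_n,p_n)_z\to\infty$, contradicting the uniform bound $C$. I do not expect a substantial obstacle; the entire argument hinges on the cancellation $d(x_n,p_m)-d(q_{n,m},p_m)=d(x_n,q_{n,m})$, which is forced by the cut-set structure of $\Lambda_{w_N}$ in $\bar{K}$.
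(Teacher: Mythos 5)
Your proposal is correct and takes essentially the same route as the paper: use the separation of $x_n$ from $p_m$ (for $m>N$) by the finite set $\Lambda_{w_N}$ to get $d(x_n,p_m)=d(x_n,q)+d(q,p_m)$ for some $q\in\Lambda_{w_N}$, then cancel via the triangle inequality to bound $(x_n,p_m)_z\le d(z,q)$, which is uniformly bounded since $\Lambda_{w_N}$ is finite. The only differences are cosmetic --- you carry out the cancellation slightly differently and spell out a couple of steps (why $p_m$ and $z$ are separated for $m>N$, and why $|\Lambda_{w_N}|<\infty$ gives a uniform bound) that the paper leaves implicit.
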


\begin{proof}
Consider $m>N$. Then $p_m$ and $x_n$ lie in different components of $\bar{K}\setminus \Lambda_{w_N}$. Then there is $q_N\in \Lambda_{w_N}$ such that $d(x_n,p_m)=d(x_n,q_N)+d(q_N,p_m)$. From the triangle inequality we get that 
$$d(z,x_n)\leq d(z,q_N)+d(q_N,x_n),\text{ and }d(z,p_m)\leq d(z,q_N)+d(q_N,p_m).$$
Thus we have that $(x_n,p_m)_z\leq d(z,q_N)$. Therefore $(x_n,p_m)_z$ is bounded for all $n$ and $m>N$, which implies $(x_n)$ is not equivalent to $(p_n)$.
\end{proof}

\begin{prop}
\label{non-red to bound k bar}
There is an injective map $\boundary \psi\colon \boundary_0T\rightarrow \boundary\bar{K}\setminus \Image(\sigma)$ defined as $\boundary\psi(x)=\lim_{n\rightarrow \infty}p_n$ where $(p_n)$ is an associated sequence for $x$ and $\sigma$ is as defined in Lemma~\ref{intersection of Delta Kv's}.
\end{prop}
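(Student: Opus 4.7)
The plan is to verify three claims about $\boundary\psi$: (a) it is well defined on $\boundary_0 T$, meaning every associated sequence for $x \in \boundary_0 T$ converges in $\Delta\bar K$ with a limit depending only on $x$; (b) the limit lies in $\boundary\bar K \setminus \Image(\sigma)$; and (c) the map is injective. Part (a) is exactly the content of Lemma~\ref{assocseq in bdry}, which in addition identifies the limit as a point of $\boundary\bar K$ rather than a vertex of $\bar K$. Because $\Image(\sigma) \cap \boundary\bar K = \bigcup_{v\in V}\boundary K_v$ (the contribution of $W$-vertices is just vertices, since $K_w$ is a single edge for $w \in W$), claim (b) reduces to showing $\boundary\psi(x) \notin \boundary K_v$ for every $v \in V$.

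For (b), given $v \in V$, I first locate an index $N$ such that $K_v$ and $K_{v_1}$ lie in the same component of $\bar K \setminus \Lambda_{w_N}$. In the tree $T$ this amounts to finding $w_N$ on the ray $(v_1,w_1,v_2,\ldots)$ with $v$ on the $v_1$-side of $w_N$: if $v = v_n$ take $N = n$, and otherwise take $N$ slightly past the index at which the tree-path from $v_1$ to $v$ branches off the ray. Fix such an $N$ and pick any sequence $(y_k) \subset \Lambda_v$ converging in $\Delta K_v$ to a given boundary point $y \in \boundary K_v$. Since $K_v$ embeds into $\bar K$ as a convex subgraph by Theorem~\ref{combination}(\ref{combination: Kbar}), and since $y$ is not a vertex, $(y_k)$ cannot take any particular value infinitely often (a constant subsequence would force $y$ to equal that vertex), so $y_k \notin \Lambda_{w_N}$ for all sufficiently large $k$, and consequently $y_k$ lies in the same component of $\bar K \setminus \Lambda_{w_N}$ as any chosen $z \in \Lambda_{v_1}$. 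Lemma~\ref{same comp not eq} then forces $y \neq \boundary\psi(x)$.

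For injectivity in (c), given distinct $x \neq x'$ in $\boundary_0 T$, choose representative rays $r$ and $r'$ with a common initial $V$-vertex $v_1$, and let $N$ be the smallest index such that $w_N \in r$ but $w_N \notin r'$; such an $N$ exists because the rays eventually diverge. Then $r'$ lies entirely in the component of $T \setminus \{w_N\}$ containing $v_1$, so every $p'_m$ in any associated sequence for $x'$ has $w'_m$ on the $v_1$-side of $w_N$. By Lemma~\ref{redundant diverging}, $d(z,p'_m) \to \infty$, so the $p'_m$ eventually avoid the finite set $\Lambda_{w_N}$ and lie in the component of $\bar K \setminus \Lambda_{w_N}$ containing $z \in \Lambda_{v_1}$. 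Lemma~\ref{same comp not eq}, applied to $x$'s ray together with the sequence $(p'_m)$, then gives $\boundary\psi(x) \neq \boundary\psi(x')$.

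The principal technical hurdle is the careful bookkeeping that translates the tree structure of $T$ into a description of the components of $\bar K \setminus \Lambda_{w_N}$. The essential point, immediate from the construction of $\bar K$ in Theorem~\ref{combination}, is that any path in $\bar K$ between reservoirs lying on opposite sides of $w_N$ in $T$ must pass through $\Lambda_{w_N}$, since the only edges of $\bar K$ joining $K_{v_N}$ and $K_{v_{N+1}}$ arise from the collapsed junction $K_{w_N}$, whose vertex set is precisely $\Lambda_{w_N}$. Once this tree-to-graph dictionary is firmly in place, the three claims reduce to direct applications of Lemmas~\ref{assocseq in bdry}, \ref{redundant diverging}, and~\ref{same comp not eq}.
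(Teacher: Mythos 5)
Your proof is correct and follows essentially the same strategy as the paper: establish well-definedness via Lemma~\ref{assocseq in bdry}, then reduce both disjointness from $\Image(\sigma)$ and injectivity to applications of Lemma~\ref{same comp not eq}, with Lemma~\ref{redundant diverging} supplying the escape-to-infinity needed in the tail. The only cosmetic difference is that in part (b) the paper sidesteps the bookkeeping of locating $w_N$ by re-choosing the representative ray for $x$ to start at $v$ (so $v = v_1$), whereas you keep a fixed ray and search for a suitable separating index; the underlying mechanism is identical.
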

\begin{proof}
Lemma~\ref{assocseq in bdry} gives us that $\boundary\psi$ is well defined. Now we prove $\boundary\psi(x)\notin\Image(\sigma)$ for $x\in \boundary_0T$. Consider $y\in \boundary K_v\subset \boundary\bar{K}$ for some $v\in V$ and a sequence $(y_n)$ in $\Lambda_v$ such that $y_n$ converges to $y$ in the visual topology. Then there is a representative ray $(v_1,w_1, v_2,w_2,\ldots)$ for $x$ such that $v_1=v$. Consider an associated sequence $(p_n)$ for $x$ such that $p_n\in \Lambda_{w_n}$. Lemma~\ref{same comp not eq} implies that $(p_n)$ and $(y_n)$ do not converge to the same point which proves $\boundary\psi(x)\neq y$.

Consider representative rays $(v_1,w_1, v_2,w_2,\ldots)$ and $(v'_1,w'_1, v'_2,w'_2,\ldots)$ for distinct $x$ and $x'$ in $\boundary_0T$ respectively such that $v_1=v'_1$ but $v_2\neq v'_2$. Consider associated sequences $(p_n)$ and $(p'_n)$ for $x$ and $x'$ respectively such that $p_n\in \Lambda_{w_n}$ and $p'_n\in \Lambda_{w'_n}$.
Since $w_2\neq w'_2$, by Lemma~\ref{same comp not eq} we get that $(p_n)$ and $(p'_n)$ do not converge to the same point which proves that $\boundary\psi$ is injective.
\end{proof}

\begin{lem}
\label{geodesics intersect lambda w}
Consider a geodesic ray $\alpha$ in $\bar{K}$ connecting $z\in \bar{\Lambda}$ to $\boundary\psi(x)$ for some $x\in \boundary_0T$. Also consider a representative ray $(v_1,w_1,v_2,w_2,\ldots)$ for $x$ such that $z\in \Lambda_{v_1}$. Then $\alpha$ intersects $\Lambda_{w_n}$ for all $n$.
\end{lem}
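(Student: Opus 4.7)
The plan is to argue by contradiction using Lemma~\ref{same comp not eq}, which already packages the key fact that a sequence confined to one side of $\Lambda_{w_N}$ cannot converge to the boundary point $\boundary\psi(x)$. So suppose for some fixed $n$ the geodesic ray $\alpha$ does not intersect $\Lambda_{w_n}$. Write $\alpha$ as a sequence of vertices $z=q_0,q_1,q_2,\ldots$ in $\bar{K}$; by definition of $\boundary\psi$ together with the visual topology on $\Delta \bar{K}$, we have $q_i\to \boundary\psi(x)$.

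Next I would identify the component of $\bar{K}\setminus \Lambda_{w_n}$ containing $z$. Recall that $\bar{K}$ is built by gluing the reservoirs $K_{v}$ to the junction edges $K_{w}$, and every path in $\bar{K}$ between $\Lambda_{v_1}$ and $\Lambda_{v_{n+1}}$ must pass through $\Lambda_{w_n}$; this follows from Theorem~\ref{combination}(\ref{combination: separating set}) (or directly from the projection $\pi\colon K\to T$). Consequently, since $\alpha$ is a path starting at $z\in \Lambda_{v_1}$ and avoids $\Lambda_{w_n}$, every vertex $q_i$ lies in the same component of $\bar{K}\setminus \Lambda_{w_n}$ as $z$.

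Now pick any associated sequence $(p_m)$ for $x$ with $p_m\in \Lambda_{w_m}$, so that $(p_m)$ converges to $\boundary\psi(x)$ by Lemma~\ref{assocseq in bdry}. For $m>n$, the point $p_m$ lies on the far side of $\Lambda_{w_n}$ from $z$, so the hypothesis of Lemma~\ref{same comp not eq} is satisfied with the sequence $(q_i)$ in place of $(x_n)$ and with $N=n$. That lemma then tells us $(q_i)$ and $(p_m)$ cannot converge to the same point of $\boundary \bar{K}$, contradicting $q_i\to \boundary\psi(x)=\lim_m p_m$.

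The only subtle point is making sure that the conclusion $q_i \to \boundary\psi(x)$ really is the hypothesis needed to apply Lemma~\ref{same comp not eq}, since that lemma is stated in terms of two sequences that ``do not converge to the same point.'' Since we know by Lemma~\ref{assocseq in bdry} that $(p_m)$ converges to $\boundary\psi(x)$, and since $q_i\to \boundary\psi(x)$ by construction of the geodesic ray $\alpha$, the two sequences must converge to the same point in $\boundary \bar{K}$, which is exactly what Lemma~\ref{same comp not eq} forbids under the same-component hypothesis. I expect no serious obstacle beyond this bookkeeping; the heavy lifting has already been done in Lemma~\ref{same comp not eq}.
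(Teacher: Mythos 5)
Your proposal is correct and follows essentially the same route as the paper: assume for contradiction that $\alpha$ misses some $\Lambda_{w_n}$, observe that the connectedness of $\alpha$ forces all its vertices to stay in the component of $\bar{K}\setminus \Lambda_{w_n}$ containing $z$, and then invoke Lemma~\ref{same comp not eq} against the fact that the vertices of $\alpha$ converge to $\boundary\psi(x) = \lim_m p_m$. The paper's proof is terser but identical in substance, and your extra explanation of why the vertices of $\alpha$ stay in one component is harmless.
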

\begin{proof}
Consider a sequence $(p_n)$ such that $p_n\in \Lambda_{w_n}$ for all $n$. Then $(p_n)$ is an associated sequence for $x$. Assume for contradiction there is $N>0$ such that $\alpha$ does not intersect $\Lambda_{w_N}$. Then $z$ and $\alpha(n)$ are in the same component of $\bar{K}\setminus \Lambda_{w_N}$ for all $n$. Lemma~\ref{same comp not eq} then gives the required contradiction.
\end{proof}

\begin{lem}
\label{neighborhoods of non-redundant ends}
Consider $x\in \Image(\boundary\psi)$, and a neighborhood $M(x,A)$ in $\Delta\bar{K}$. Then there is $w\in W$ such that any geodesic ray from $a$ to $x$ intersects $\Lambda_w$ for any $a\in A$. It then also follows that $M(x,\Lambda_w)\subset M(x,A)$. 
\end{lem}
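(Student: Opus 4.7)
The plan is to use the correspondence between $\Image(\boundary\psi)$ and $\boundary_0 T$ together with Lemma~\ref{geodesics intersect lambda w} to control geodesic rays from each element of $A$ to $x$ via representative rays of a single end of $T$. Write $x=\boundary\psi(x_T)$ for the unique $x_T\in \boundary_0 T$. For each $a\in A\subset \bar{\Lambda}$, I would first produce a vertex $v^a\in V$ with $a\in \Lambda_{v^a}$. Such $v^a$ exists: if a representative of the equivalence class $a$ lies in some $\Lambda_w$ with $w\in W$, then since $w$ has valence at least $2$ in $T$ there is an edge $e=(v,w)\in \edges(T)$ with $v\in V$, and the signature map $s_e$ gives an equivalent representative $s_e(a)\in \Lambda_v$. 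Let $\xi_a=(v^a=v^a_1,w^a_1,v^a_2,w^a_2,\ldots)$ be the unique geodesic ray in $T$ starting at $v^a$ that represents $x_T$.

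Since $T$ is a tree, any two rays representing $x_T$ share a common infinite tail, and because $A$ is finite, there exists $N$ such that all rays $\xi_a$ agree from position $N$ onwards. Choose $N$ large enough that the common vertex at position $N$ lies in $W$, and set $w$ to be this common vertex. Then $w=w^a_{n(a)}$ appears in each $\xi_a$. By Lemma~\ref{geodesics intersect lambda w} applied with $z=a$ and representative ray $\xi_a$, any geodesic ray from $a$ to $x$ intersects $\Lambda_{w^a_n}$ for every $n$; in particular, it intersects $\Lambda_w$. This proves the first assertion.

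For the inclusion $M(x,\Lambda_w)\subset M(x,A)$, suppose for contradiction $y\in M(x,\Lambda_w)$ but $y\notin M(x,A)$. Then some geodesic arc $\beta$ from $x$ to $y$ meets some $a\in A\setminus\{x\}$; note that $a\neq x$ automatically since $a\in \bar{\Lambda}$ while $x\in \boundary \bar{K}$. The sub-arc of $\beta$ from $a$ to $x$ is a geodesic ray from a vertex to $x$, so by the first part it intersects $\Lambda_w$. Hence $\beta$ intersects $\Lambda_w\setminus\{x\}=\Lambda_w$, contradicting $y\in M(x,\Lambda_w)$.

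The main bookkeeping step is the choice of a single $w\in W$ that works for all of $A$ simultaneously; this is clean because $T$ is a tree and finitely many rays to the same end share a common tail, and because the vertices of $\bar{K}$ always admit representatives in vertex sets indexed by $V$. Once $w$ is fixed, both conclusions reduce directly to Lemma~\ref{geodesics intersect lambda w}.
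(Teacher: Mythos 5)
Your proof is correct and takes essentially the same approach as the paper: both choose $w$ from the common tail of the representative rays in $T$ based at each $v^a$, and then invoke Lemma~\ref{geodesics intersect lambda w}; you also supply the short argument for $M(x,\Lambda_w)\subset M(x,A)$ (restricting $\beta$ to the sub-arc from $a$ to $x$) that the paper leaves implicit, and you justify the existence of $v^a\in V$ with $a\in\Lambda_{v^a}$ via the signature map. The only small imprecision is the phrase ``agree from position $N$ onwards,'' since the rays $\xi_a$ need not be synchronized by index; saying they share a common infinite tail and picking any $W$-vertex in it suffices and matches the paper's $r=\bigcap_a r_a^y$.
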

\begin{proof}
Let $y=(\boundary\psi)^{-1}(x)\in \boundary_0T$. For each $a\in A$, let $r_a^y=(v^a_1,w^a_1, v^a_2,w^a_2,\ldots)$ be a representative geodesic ray for $y$ such that $a\in \Lambda_{v^a_1}$. Since $A$ is finite, and $r_a^y$'s all point to $y$, we must have that $r=\bigcap_{a\in A}r_a^y$ is a geodesic ray representing $y$. 
Let $r=(v_1,w_1,v_2,\ldots)$ and $w=w_1$. Let $\alpha_a$ be the geodesic from $a$ to $x$. Then by Lemma~\ref{geodesics intersect lambda w} we get that $\alpha_a$ intersects $\Lambda_w$.
\end{proof}
\section{Joining metric compacta along cut pairs as a completion}
\label{sec:completion}
In this section, we aim to construct a compactification of a metric space obtained by gluing compact metrizable vertex spaces along cut pairs in the pattern of a bipartite tree $T$. First, we show that compatible shrinking metrics exist on the vertex spaces (Proposition~\ref{compatibleshrink}). Then we show that the space $M_T$ formed by gluing these vertex spaces is totally bounded (Proposition~\ref{completioncpt}). We then also describe its completion $\bar{M}_T$ as a compactum containing $M_T$ along with some ends of the tree $T$ (Proposition~\ref{completion}).
\begin{defn}
\label{treesys}
A \emph{tree system $\Theta$ of cut pairs} consists of the following data. 
\begin{enumerate}
    \item A countable bipartite tree $T$ with $\vertices(T)=V\sqcup W$ such that for all $w\in W$, we have $2\leq \valence(w)<\infty$. \
    \item \emph{Vertex Spaces}: To each $v\in V$, we associate a compact metrizable space $M_v$, called a \emph{constituent space}, and to each $w\in W$, we associate a discrete two point space $M_w$, called a cut pair space.
    \label{treesys: vertex}
    \item \emph{Edge Spaces}: To each edge $e=(v,w)\in \edges(T)$ where $v\in V, w\in W$, we associate an injection $i_e \colon M_w\rightarrow M_v$. For distinct $e,e'$, we have $|\Image(i_e)\cap \Image(i_{e'})|\leq 1$\label{treesys: edge}
    \item \emph{Edge Spaces form a null collection}: For each $v\in V$, define the \emph{peripheral collection} in $M_v$ as
    $$\C_v:=\{i_e(M_w) \mid e \text{ is an edge in $T$ incident on $v$}\}.$$
    For each $v\in V$, we have that $\mathcal{C}_v$ is a null collection in $M_v$.\label{treesys: nullity}
\end{enumerate}
\end{defn}

Note that given $\Theta$ as in Definition~\ref{treesys}, we can define a tree of compact metrizable spaces $\mathcal{M}$ over $T$ as in Definition~\ref{treeofcompatiblemetricspaces} as follows.
\begin{enumerate}
    \item For each $u\in \vertices(S)$ we let the vertex space be $M_u$.
    \item For each $e=(v,w)\in \edges(S)$, we let the edge space be $M_e=M_w$ and we let $\psi_e^w=id_{M_w}$ and $\psi_e^v=i_e$.
\end{enumerate}

We want to construct a metric space based
on a tree system $\Theta$ by picking carefully chosen metrics on the vertex spaces. 
In order to build this space we glue the constituent spaces along the cut pair spaces, in a natural way which extends the chosen metrics. We note that while the metrics on the vertex spaces above are arbitrary, we want to choose metrics so that they behave well with each other.

For some subtree $S\subset T$, consider the indexed family
$$\mathcal{D}_S=\{d_u\}_{u\in \vertices(S)}\bigcup \{d_e\}_{e\in \edges(S)}$$
such that $d_u$ is a metric on $M_u$ for all $u\in \vertices(S)$ and for an edge $e=(v,w)\in \edges(S)$, we have $d_e=d_w$. Note that we emit the edge metrics when writing out indexed families of metrics.

For $\Theta$ as above and for any subtree $S\subset T$ we can also define a \emph{subtree system of cut pairs} $\Theta_S$ with underlying tree $S$ and corresponding vertex and edge spaces from $\Theta$. 
Recall that if $\mathcal{D}_S$ is compatible as defined in Definition~\ref{treeofcompatiblemetricspaces}, we have the total metric space $(M_S,d_s)$ with the quotient metric $d_S$. 

\begin{defn}
\label{shrinking}
Given a subtree $S\subset T$, for a subtree system $\Theta_S$, the {compatible} collection $\mathcal{D_S}=\{d_u\}_{u\in \vertices(S)}$ is \emph{shrinking} with respect to some $v_0\in \vertices(S)\cap V$ if for any $v\in \vertices(S)\cap V$ at a distance $2k$ from $v_0$ in the tree $T$, we have that $\diam(M_v)\leq 2^{-k}$ in the total space $(M_S,d_S)$.
Moreover, we also say that $d_S$ is \emph{shrinking} on $M_S$.
\end{defn}
We now find metrics which are both compatible and shrinking.
\begin{prop}
\label{compatibleshrink}
Given a tree system $\Theta$ of cut pairs and some $v_0\in V$, there exists a compatible indexed family of metrics on the tree system $\Theta$ which is shrinking with respect to $v_0$.
\end{prop}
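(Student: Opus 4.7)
The plan is to prescribe the two-point metrics $d_w$ on each $M_w$ first, with values shrinking geometrically along $T$, and then construct each $d_v$ so as to realize these prescribed distances on the peripheral pairs of $M_v$ while staying within the target diameter. Explicitly, for $w \in W$ at distance $2k+1$ from $v_0$ in $T$, I set $d_w(a,b) = r_w$ with $r_w \leq 2^{-(k+1)}$. Then for each $v \in V$ at distance $2k$ from $v_0$, every incident edge $e = (v,w)$ corresponds to a peripheral pair $i_e(M_w) \subset M_v$ with prescribed distance $r_w \leq 2^{-k}$: either $w$ lies on the $v_0$-side, in which case $w$ is at distance $2k-1$ and $r_w \leq 2^{-k}$, or $w$ lies on the far side, in which case $r_w \leq 2^{-(k+1)} \leq 2^{-k}$. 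Thus the problem reduces to constructing, on each $M_v$, a metric metrizing the given topology with diameter $\leq 2^{-k}$ and realizing the prescribed distances $r_w$ on each peripheral pair incident on $v$.

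The main technical step is a metric-realization lemma: given a compact metrizable space $X$, a null collection $\{C_n = \{a_n, b_n\}\}_{n \in \N}$ of two-point subsets with $|C_n \cap C_m| \leq 1$ for $n \neq m$, a bound $D > 0$, and positive reals $r_n \leq D$ with $r_n \to 0$, there is a metric $d$ on $X$ metrizing the given topology with $\diam(X,d) \leq D$ and $d(a_n, b_n) = r_n$ for every $n$. To prove it, I would fix any admissible metric $\mu$ on $X$ rescaled so that $\mu \leq D$. Using the null collection property of $\{C_n\}$ together with the bound on pairwise intersections, I would inductively choose neighborhoods $U_n \supseteq C_n$ whose diameters form a null sequence and which are mutually disjoint except at shared vertices of pairs. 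Within each $U_n$ I would reshape $\mu$ along arcs joining $a_n$ to $b_n$ via an infimum-over-chains construction similar in spirit to the total-space metric of Section~\ref{sec:prelim}, so that the new distance from $a_n$ to $b_n$ is exactly $r_n$. Because the modifications live inside a null family of neighborhoods, the resulting $d$ still metrizes the topology of $X$, is dominated by $\mu$ so $\diam(X,d) \leq D$, and has the prescribed pair distances.

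Applying this lemma to each $M_v$ with the prescribed constants $r_w$ for incident edges produces the desired $d_v$, and the family $\mathcal{D} = \{d_v\}_{v \in V} \cup \{d_w\}_{w \in W}$ is then compatible by construction and shrinking with respect to $v_0$. The main obstacle I anticipate is the realization lemma itself: matching exact prescribed distances simultaneously on a null collection of pairs that may share points requires delicate local surgery on the base metric that neither alters the topology, overshoots the target diameter, nor fails to realize the prescribed distances exactly once shortcuts through other local modifications are taken into account.
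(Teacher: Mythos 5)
Your strategy is genuinely different from the paper's. You propose to prescribe all the edge-space distances $r_w$ globally in advance and then, for each $v\in V$, realize the entire prescribed family of distances on the peripheral pairs of $M_v$ at once. The paper instead proceeds inductively along $T_k$, and its key tool, Lemma~\ref{halfer}, prescribes only \emph{one} distance at a time (making it the full diameter of $M_v$) while merely \emph{halving} the diameters of all the other peripheral pairs, so that the yet-to-be-glued constraints are always achievable at the next stage. This distinction is not cosmetic: it is exactly what makes the paper's argument go through and exactly where your argument breaks.

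The central gap is that your metric-realization lemma is false as stated. You allow distinct peripheral pairs to share a point, and the triangle inequality can then make arbitrary prescribed values $r_n\to 0$ unrealizable. Concretely, take $X=\{a,b,c\}\cup\{x_n : n\geq 1\}$ with $x_n\to a$, and let $C_1=\{a,b\}$, $C_2=\{b,c\}$, $C_3=\{c,a\}$, $C_{n+3}=\{a,x_n\}$. This is a null collection with pairwise intersections of size at most one, so it satisfies your hypotheses; prescribing $r_1=D$, $r_2=r_3=D/4$, and $r_{n+3}\to 0$ meets your conditions, yet no metric can have $d(a,b)=D$ while $d(a,c)+d(c,b)=D/2$. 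Such triangle configurations are permitted by Definition~\ref{treesys}(\ref{treesys: edge}) and by Definition~\ref{signature}(\ref{signature: intersection}), so this is not an artifact of a pathological abstract setting but can occur in $\C_v$ for a tree system of cut pairs. To rescue your approach you would need to (a) impose consistency constraints on the $r_w$'s (triangle inequalities along every chain of overlapping pairs, plus compatibility with the topology of every adjacent $M_v$), (b) prove that such a consistent choice with the required decay exists, and (c) prove the realization lemma under these hypotheses --- and even then, the "local surgery" you sketch for (c) is delicate when infinitely many pairs share a vertex, because the modifications are not supported on disjoint sets. None of this is addressed. The paper's Lemma~\ref{halfer} sidesteps the entire consistency problem because a single prescribed distance is trivially consistent, and the $K/2$ bound on the remaining pairs is a diameter bound rather than an exact value, so nothing constrains the next inductive step beyond what it can actually deliver.
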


We need the following lemmas to prove the above, the first of which describes a distance non-increasing Urysohn function on a metric space. 
\begin{lem}
\label{distdec}
    Given two distinct points $p,q$ in a metric space $(X,d)$ consider the function $u\colon X \rightarrow [0, d(p,q)]$ defined as 
    $$u(x)=\displaystyle\frac{d(x,p)}{d(x,p)+d(x,q)}d(p,q).$$
    Then, $u$ is $1$--Lipschitz.
\end{lem}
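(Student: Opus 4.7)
The plan is to verify directly that $|u(x) - u(y)| \leq d(x,y)$ for arbitrary $x, y \in X$. To streamline the calculation, I would introduce shorthand $a = d(x,p)$, $b = d(x,q)$, $a' = d(y,p)$, $b' = d(y,q)$, and $D = d(p,q)$. First I would note that since $p \neq q$ the triangle inequality gives $a + b \geq D > 0$ (and similarly $a'+b' \geq D > 0$), so both $u(x)$ and $u(y)$ are well-defined, and the target inequality is obvious when $x = y$, so I assume $x \neq y$.

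Next I would put $u(x)$ and $u(y)$ over a common denominator to obtain
\[ u(x) - u(y) = \frac{D\,(ab' - a'b)}{(a+b)(a'+b')}. \]
The main step is to control the numerator. Writing $ab' - a'b = a(b'-b) + b(a-a')$ and applying the triangle inequalities $|a - a'| \leq d(x,y)$ and $|b - b'| \leq d(x,y)$ yields $|ab' - a'b| \leq (a+b)\,d(x,y)$. The symmetric grouping $ab' - a'b = b'(a-a') + a'(b'-b)$ gives $|ab' - a'b| \leq (a'+b')\,d(x,y)$, so
\[ |u(x) - u(y)| \leq \frac{D \cdot d(x,y)}{\max(a+b,\, a'+b')}. \]

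Finally I would invoke the triangle inequality once more: since $a + b \geq D$ and $a' + b' \geq D$, we have $\max(a+b,\, a'+b') \geq D$, and the desired bound $|u(x) - u(y)| \leq d(x,y)$ follows. There is no genuine obstacle; this is essentially a computational exercise. The only mild subtlety worth noting is the use of \emph{both} symmetric decompositions of $ab' - a'b$ to obtain the $\max$ in the denominator, which is what lets the final triangle inequality cancel $D$ cleanly.
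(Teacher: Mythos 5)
Your proof is correct, and it takes a genuinely tidier route than the paper's. The paper proves the bound by first assuming WLOG that $d(y,p)+d(y,q)\leq d(x,p)+d(x,q)$ and then splitting into the two cases $u(y)\leq u(x)$ and $u(y)\geq u(x)$, handling the second case by passing to the auxiliary function $v(x)=d(p,q)-u(x)$ and appealing to the first. Your approach skips both the WLOG and the case split: putting $u(x)-u(y)$ over a common denominator and using the two symmetric decompositions of $ab'-a'b$ gives the bound $|u(x)-u(y)|\leq D\,d(x,y)/\max(a+b,a'+b')$ in one pass, after which the single triangle inequality $\max(a+b,a'+b')\geq D$ finishes it. The same two ingredients appear in both proofs ($|d(x,p)-d(y,p)|\leq d(x,y)$ and $d(z,p)+d(z,q)\geq d(p,q)$), but your organization is symmetric from the start and avoids the auxiliary function entirely, which is a small but real simplification.
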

\begin{proof}
Consider some $x,y\in X$. We now prove that $\left|u(x)-u(y)\right|\leq d(x,y)$. Without loss of generality, we assume $d(y,p)+d(y,q)\leq d(x,p)+d(x,q)$.
We then consider the following cases.

If $u(y)\leq u(x)$ then
\begin{align*}
 0\leq u(x)-u(y) &= \left(\displaystyle\frac{d(x,p)}{d(x,p)+d(x,q)}-\displaystyle\frac{d(y,p)}{d(y,p)+d(y,q)}\right)d(p,q) \\ 
 &\leq \left(\frac{d(x,p)-d(y,p)}{d(y,p)+d(y,q)}\right)d(p,q) \leq \frac{d(x,y)}{d(p,q)}d(p,q)=d(x,y)
\end{align*}
where the last inequality results from the triangle inequality.

If $u(y)\geq u(x)$ define another function $v\colon X\rightarrow [0,d(p,q)]$ as $v(x)=d(p,q)-u(x)$. 
Then applying the previous case to $v$ we get
$$0\leq u(y)-u(x)=v(x)-v(y)\leq d(x,y).$$
Thus, we get that $\left|u(x)-u(y)\right|\leq d(x,y)$, which is what we needed.
\end{proof}

We now see how we can keep the metrics of adjacent constituent spaces compatible while controlling the diameters of the spaces. This helps us join two adjacent constituent spaces along cut pairs, while making sure that the other cut pairs do not grow too far apart.

\begin{lem}
\label{halfer}
Given a compact metrizable space $X$, a null collection of pairs $\C$, a pair $C_0:=\{a,b\}\in \C$, and $K>0$ there exists a metric $d$ on $X$ such that $\diam(X)=K$ is attained by $d(a,b)$ and $\diam(C)\leq K/2$ for all $C\in \C\setminus\C_0$.
\end{lem}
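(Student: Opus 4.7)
The plan is to construct $d$ in two stages: first preprocess an initial metric so that the pair $C_0$ is dominant in the sense that every other pair in $\C \setminus C_0$ has diameter at most $K/4$, and then use Lemma~\ref{distdec} to amplify the $a$-$b$ distance to exactly $K$ while keeping the other distances controlled.

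For the first stage, I would start with any metric $d_0$ on $X$ compatible with its topology, rescaled so that $d_0(a,b) = K$. By the null collection property, only finitely many pairs $C_1, \ldots, C_n \in \C \setminus C_0$ have $d_0$-diameter at least $K/4$. For each such $C_i = \{p_i, q_i\}$, I would modify the current metric by introducing a shortcut
\[
d'(x,y) := \min\!\bigl(d(x,y),\ d(x,p_i) + K/4 + d(y, q_i),\ d(x, q_i) + K/4 + d(y, p_i)\bigr).
\]
A standard verification shows $d'$ is a metric; it brings the $p_i$-$q_i$ distance down to $K/4$ and only shortens other distances. Iterating through $C_1, \ldots, C_n$ produces a metric $d_1$ with $d_1(p,q) \leq K/4$ for every $\{p,q\} \in \C \setminus C_0$, and (with care; see below) $d_1(a,b) \geq K/2$.

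For the second stage, apply Lemma~\ref{distdec} to $(X, d_1)$ with the points $a, b$ to obtain a $1$-Lipschitz function $u\colon X \to [0, d_1(a,b)]$ with $u(a) = 0$ and $u(b) = d_1(a,b)$. Define the final metric
\[
d(x,y) := \max\!\bigl(\lambda\, |u(x)-u(y)|,\ \mu\, d_1(x,y)\bigr),
\]
with $\lambda := K/d_1(a,b)$ and $\mu := K/\bigl(2\diam(X, d_1)\bigr)$. This is a metric because $\mu > 0$ and $d_1$ is a metric, and a direct computation gives $d(a,b) = K = \diam(X,d)$. For any $\{p,q\} \in \C \setminus C_0$, both $\lambda |u(p) - u(q)| \leq \lambda d_1(p,q) \leq 2 \cdot (K/4) = K/2$ (using $d_1(a,b) \geq K/2$) and $\mu d_1(p,q) \leq K/2$, so $\diam(\{p,q\}) \leq K/2$ as required.

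The main obstacle is the first stage: ensuring that the successive shortcut modifications do not cumulatively shrink $d(a,b)$ below $K/2$. Each shortcut can introduce a shorter $a$-to-$b$ route through $C_i$ of length $d(a, p_i) + K/4 + d(q_i, b)$, which in the worst case (when some $C_i$ lies near the $a$-$b$ geodesic) may be substantially smaller than $K$. Handling this will likely require either a preliminary adjustment separating each exceptional pair $C_i$ by a definite amount from fixed neighborhoods of $a$ and $b$, or a more careful accounting of the cumulative effect using the null property, so that only shortcuts ``far'' from the $a$-$b$ axis are applied while those ``close'' to it are deferred to the Lipschitz construction in the second stage.
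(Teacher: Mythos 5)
Your proposal correctly identifies the heart of the difficulty, but it stops exactly where the proof becomes non\-trivial: the shortcut modifications in Stage~1 can indeed collapse $d(a,b)$ uncontrollably. This is a genuine gap, not a technicality. For instance, if $\{p_i,q_i\}$ and $\{p_j,q_j\}$ are two exceptional pairs with $p_i$ near $a$, $q_i$ near $p_j$, and $q_j$ near $b$, then successive shortcuts chain together into a short $a$--$b$ route, and nothing in the null hypothesis prevents arbitrarily many of the finitely many large pairs from lining up along the ``$a$--$b$ axis'' in this way. Your Stage~2 computation is fine as conditional reasoning, but without a working Stage~1 the argument does not prove the lemma.

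The paper takes a different and cleaner route that sidesteps the cumulative-shrinkage problem entirely. Rather than modifying distances in $X$, it uses the $1$--Lipschitz Urysohn function $u$ of Lemma~\ref{distdec} (with $u(a)=0$, $u(b)=1$ after normalizing $K=1$) and post-composes with a ``squishing'' function $f_1\colon[0,1]\to[0,1]$ that collapses a subinterval $[\ell_1,\ell_2]\ni 1/2$ to the single point $1/2$, where $[\ell_1,\ell_2]$ is chosen to contain $u(x)$ for every $x$ belonging to one of the finitely many pairs of large diameter (excluding $a,b$ themselves). This kills the $f$-diameter of the exceptional pairs in one stroke, while the $1$--Lipschitz property of $u$ keeps the $f$-diameter of the remaining pairs at most $1/2$. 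The final metric is then $d(x,y)=\max\bigl\{d'(x,y)/(2\diam_{d'}X),\,|f(x)-f(y)|\bigr\}$, realized as the metric on the graph of $f$ in $X\times[0,1]$; since $f(a)=0$ and $f(b)=1$, the diameter $1$ is attained at $(a,b)$ regardless of anything else. The key structural advantage over your approach is that the interference between exceptional pairs is handled globally by a single collapse of an interval, so there is no cumulative-shrinkage issue to control.

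One small caution if you pursue the paper's route: the claim that $\diam(f(C))\le 1/2$ for the small pairs $C$ does depend on choosing $f_1$ sensibly (e.g.\ piecewise linear on $[0,\ell_1]$ and $[\ell_2,1]$); an arbitrary continuous $f_1$ with the stated image properties can stretch a short $u$-interval straddling the collapsed region past $1/2$. This is easy to arrange, but worth noting when you fill in details.
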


\begin{proof}
Without loss of generality, assume $K=1$. Choose a metric $d'$ on $X$, such that $d'(a,b)=1$.
By nullity, there is some finite collection $\mathcal{A}\subset \C$ such that for all $C\in \C\setminus\mathcal{A}$ we have $\diam_{d'}(C)\leq 1/2$.

Define $u\colon X\rightarrow [0,1]$ as $u(x)={d'(x,a)}/({d'(x,a)+d'(x,b)})$. Then by Lemma~\ref{distdec},  
$\diam(u(C))\leq 1/2$ for all  $C\in \C\setminus\mathcal{A}$. 
Define $A:=\bigcup_{C\in \mathcal{A}}C\setminus C_0$,
$$\ell_1:=\min(u(A)\cup \{1/2\})\quad \text{and}\quad \ell_2:=\max(u(A)\cup \{1/2\}).$$

Let $f_1\colon[0,1]\rightarrow [0,1]$ be any continuous function such that
$$f_1([0,\ell_1])=[0,1/2],\quad f_1([\ell_1,\ell_2])=\{1/2\}, \quad \text{and} \quad f_1([\ell
_2,1])=[1/2,1].$$

Define $f=f_1\circ u\colon X\rightarrow[0,1]$. Then $f(a)=0, f(b)=1$ and $\diam(f(C))\leq 1/2$ for all $C\in \C\setminus C_0$. Now define the metric $d$ on $X\times [0,1]$ as
$$d((x_1,t_1),(x_2,t_2))=\max\Bigl\{\frac{d'(x_1,x_2)}{2\diam_{d'}(X)}, \left|t_1-t_2\right|\Bigl\}.$$
We now observe that $\graph(f)\subset X\times[0,1]$ is homeomorphic to $X$ and thus we can inherit the metric $d$ on $X$ from $X\times [0,1]$.
It is straightforward to check that this $d$ satisfies the required properties.
\end{proof}

Let $T_0=\{v_0\}\subset T$ and for $k>0$, let $T_k$ be the subtree in $T$ consisting of vertices at a distance less than or equal to $2k$ from $v_0$.
Now we give a proof for Proposition~\ref{compatibleshrink}.
\begin{proof}[Proof of Proposition~\ref{compatibleshrink}]
Since $M_{v_0}$ is compact, we can assume by scaling the metric on $M_{v_0}$ that $\diam(M_{v_0})\leq 1/2$.
Proceeding inductively, assume we have a collection of compatible metrics $\mathcal{D}_{T_k}$ on the tree system $\Theta_{T_k}$ which are shrinking with respect to $v_0$. Also assume that for any $w\in W$ such that $w\notin T_k$ and $w$ is connected to $T_k$ via a single edge $e'=(v',w)$ for some $v'\in \vertices(T_k)$, we have that $i_e(M_w)\subset M_{v'}$ has diameter less than $2^{-(k+1)}$. (Note that this is trivially true for $k=0$.) We now extend $\mathcal{D}_{T_k}$ to a collection of compatible metrics $\mathcal{D}_{T_{k+1}}$ on $\Theta_{T_{k+1}}$ which is shrinking with respect to $v_0$.

Now consider some $v\in V$ at a distance of $2(k+1)$ from $v_0$. Then there is some $v'\in \vertices(T_k)$ at a distance of $2$ from $v$ via the edges $e=(v,w)$ and $e'=(w,v')$, Let $d'=d_{v'}$ be the metric on $M_{v'}$ from $\mathcal{D}_{T_k}$. We now construct a metric $d=d_v$ on $M_v$. Let $M_{w}=\{x,y\}$ and define $d_w$ on $M_{w}$ as $d_w(x,y)=d'(i_{e'}(x),i_{e'}(y))$. Let $d$ be a metric on $M_v$ as given by Lemma~\ref{halfer} using $a=i_{e}(x)$, $b=i_{e}(y)\in M_v$ and $K=d_w(x,y)$. Note that $K\leq 2^{-(k+1)}$ by the induction hypothesis.

Thus the collection $\mathcal{D}_{T_{k+1}}$ constructed in this fashion is compatible and shrinking from Lemma~\ref{halfer}. Additionally for any $w'\in W$ such that $w\notin T_{k+1}$ and $w'$ is connected to $T_{k+1}$ via a single edge $e''=(v,w')$ for some $v\in \vertices(T_{k+1})$, we have that $i_{e'}(M_{w'})\subset M_{v}$ has diameter less than $2^{-(k+2)}$ from Lemma~\ref{halfer}. Thus the induction hypothesis is satisfied.
\end{proof}

We now give a setup which is used for the rest of this section.

\begin{setup}
\label{compsetup}
Let $\mathcal{M}$ be a tree of compatible metric spaces over $T$ with the vertex spaces and edge spaces as described in Definition~\ref{treesys} of a tree system $\Theta$ equipped with a collection of compatible shrinking metrics $\mathcal{D}=\{d_u\}_{u\in \vertices(T)}$, which exists by Proposition~\ref{compatibleshrink}.
Define $M_{\sqcup}=\sqcup_{u\in \vertices(T)}M_u$ and the equivalence relation $\sim$ on $M_{\sqcup}$ generated by $x\sim i_{e}(x)$ for all $e=(v,w)\in \edges(T)$ and $x\in M_w$. Define the total space as $M_T=M_{\sqcup}/\sim$.
We then have that $(M_T,d)$ is a metric space with the quotient metric $d$ as described in Proposition~\ref{semimeteff}. Note that $\{d|_{M_u}\}_{u\in \vertices(T)}=\mathcal{D}$.
Similarly, for some subtree $S\subset T$ we can define the subspace $M_S\subset M_T$.
\end{setup}

Now while the space $M_T$ contains all constituent spaces glued together as described by the tree system, it is not complete in general. Let $(\bar{M}_T,d)$ be the completion of the metric space $(M_T,d)$. Our goal is to prove that $\bar{M}_T$ is compact.
We also show that $\bar{M}_T$ is made by adding elements associated to some ends in $T$ to $M_T$.
First, we study the ends of $T$.
In particular, we first define sequences in $M_T$ which are associated to an end in $\boundary T $.

\begin{defn}
\label{assocseq}
Consider an end $x\in \boundary T$. A sequence $(p_n)_{n\in \N}$ is an \emph{associated sequence} for $x$ if for some representative ray $(v_1,w_1, v_2,w_2,\ldots)$ of $x$ where $v_i\in V$ and $w_i\in W$ we have $p_n\in M_{w_n}$ for all $n\in \N$.

\end{defn}
We now prove that every associated sequence is Cauchy and that any two sequences associated with the same end in $\boundary T$ eventually stay close.

\begin{lem}
\label{assocseqprops}
     Every associated sequence is Cauchy. Moreover, given two associated sequences $(p_n)$ and $(p'_n)$ for $x\in \boundary T$, we have $d(p_n,p'_n)\rightarrow 0$.
\end{lem}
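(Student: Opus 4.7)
The plan is to handle the two claims separately, exploiting in each case the shrinking property from Definition~\ref{shrinking} together with the fact that two representative rays for the same end of a tree share a common infinite tail.

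For the Cauchy claim, fix a representative ray $r = (v_1, w_1, v_2, w_2, \ldots)$ for $x$. Since $T$ is a tree and $r$ is a geodesic ray, there is an index $N$ past which $r$ moves monotonically away from $v_0$; that is, $d_T(v_0, v_n) = d_T(v_0, v_N) + 2(n - N)$ for all $n \geq N$. The shrinking condition then forces $\diam(M_{v_n}) \leq 2^{-(n - N + k_0)}$ for a fixed constant $k_0 \geq 0$. Because $p_n$ and $p_{n+1}$ both sit inside $M_{v_{n+1}}$, identified via the edge injections $i_{(v_{n+1},w_n)}$ and $i_{(v_{n+1},w_{n+1})}$, and because each $M_v$ embeds isometrically into $M_T$ by Proposition~\ref{semimeteff}, the inequality $d(p_n, p_{n+1}) \leq \diam(M_{v_{n+1}})$ holds. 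Summing the resulting geometric tail via the triangle inequality yields the Cauchy property.

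For the second claim, let $r_1$ and $r_2$ be the representative rays for $(p_n)$ and $(p'_n)$. Since both end at the same $x \in \boundary T$, they share an infinite common tail, so there is an integer offset $c$ and an index $N$ such that $w_n^{r_1} = w_{n - c}^{r_2}$ for all sufficiently large $n$. In particular, the tree path from $w_n^{r_1}$ to $w_n^{r_2}$ lies entirely on the tail of $r_2$, has length $2|c|$, and passes through at most $|c|$ intermediate constituent vertex spaces, each at tree depth at least $2(n - |c| - N)$ from $v_0$. By the shrinking property these constituent spaces have diameters bounded by $2^{-(n - |c| - N + k_0)}$. Concatenating through them produces an efficient linking chain from $p_n$ to $p'_n$ of total length at most $(|c| + 1) \cdot 2^{-(n - |c| - N + k_0)}$, which tends to $0$.

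The main point where a little care is required is the alignment step for the two rays in the second claim: checking that the offset $c$ between $r_1$ and $r_2$ is a well-defined integer for large $n$ (which uses that $T$ is a tree and both rays alternate in the same bipartite pattern), and confirming that the efficient linking chain from $p_n$ to $p'_n$ picks up only $O(|c|)$ nontrivial contributions from constituent spaces whose depths in $T$ tend to infinity with $n$. Once that is in hand, both statements reduce to summing the geometric series of diameters guaranteed by the shrinking metric and invoking the isometric embedding of each vertex space into $M_T$.
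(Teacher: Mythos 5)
Your proof is correct and takes essentially the same approach as the paper: both rest on the shrinking property, the efficient realization of the quotient metric from Proposition~\ref{semimeteff}, and a geometric-series tail bound. For the Cauchy half the paper bounds $\diam(M_F)$ for the subtree $F$ spanned by $(v_n,\ldots,v_m)$, while you telescope over consecutive steps $d(p_j,p_{j+1})\leq \diam(M_{v_{j+1}})$; these are the same estimate. The notable difference is in the second claim: the paper says ``passing to a subsequence and re-indexing'' to align $(p_n)$ and $(p'_n)$ inside common vertex spaces, which, taken literally, only shows a subsequence of $d(p_n,p'_n)$ tends to zero and implicitly invokes the already-proved Cauchy property to upgrade to the full sequence. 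You instead keep the original indexing, track the fixed offset $c$ between the two rays, and bound $d(p_n,p'_n)$ directly by an efficient linking chain through the $|c|$ constituent spaces between $w_n^{r_1}$ and $w_n^{r_2}$, each of whose diameters tends to zero. That is slightly longer but more self-contained, and it also makes the bookkeeping of the basepoint visible via your constant $k_0=d_T(v_0,v_N)/2$, which the paper suppresses by tacitly normalizing the ray to start at $v_0$. Both are valid; yours is the more careful version.
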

\begin{proof}

Consider $x\in \boundary T$. Then we can represent $x$ by a ray $(v_0,w_0,v_1,v_2,\ldots)$ such that $v_i\in V$ and $w_i\in W$. Then consider an associated sequence $(p_n)_{n\in \N}$ such that $p_n\in M_{w_n}$. We now show that $(p_n)$ is a Cauchy sequence.

Given $\epsilon>0$, choose numbers $n<m$ such that $2^{-n+1}<\epsilon$. Define a subtree $F$ consisting of the path $(v_n, w_n, \ldots, w_{m-1},v_m)$. Then since $d$ is shrinking on $M_T$ and since $d$ is realized by an efficient linking chain by Proposition~\ref{semimeteff},
$$\diam(M_F)\leq 2^{-n}+2^{-n-1}+\cdots + 2^{-m}\leq 2^{-n+1}<\epsilon$$
in $M_T$. As $p_n,p_m\in M_F$, we get $d(p_n,p_m)<\epsilon$ which proves $(p_n)$ is Cauchy.

For the second assertion, passing to a subsequence and re-indexing we can assume there is a ray $(v_0,w_0,v_1,w_1,\ldots)$ with $p_n,p'_n\in M_{v_n}$.
Given $\epsilon>0$, choose $N$ such that $2^{-N}<\epsilon$. Then for all $n>N$ since $p_n,p'_n\in M_{v_n}$ and $\diam(M_{v_n})\leq 2^{-n}<\epsilon$, we have $d(p_n,p'_n)<\epsilon$ proving $d(p_n,p'_n)\rightarrow 0$.
\end{proof}

\begin{defn}
\label{redundant}
For a tree system $\Theta$ on cut pairs, we say an end $x$ is \emph{redundant} if there exists a point $p\in M_T$, some $N>0$ and an associated sequence $(p_n)$ such that for all $n>N$, we have $p_n=p$.
Additionally, $p$ is the \emph{redundant point} associated with $x$.
The set of redundant ends is $\boundary_R T$ and the set of non-redundant ends is $\boundary_0 T$.
\end{defn}
Note that by Lemma~\ref{assocseqprops} for some end $x\in \boundary T$ and any associated sequence $(p_n)_{n\in \N}$ we have that 
$\lim_{n\rightarrow \infty}p_n=p$ where $p$ is the associated redundant point. This also proves that the redundant point is unique.
Also note that this definition is very similar to Definition~\ref{redundantsig} which was for a redundant end with respect to a signature for a group splitting. 

Recall for a subtree $S\subset T$ that $N_S$ is the set of edges $e\in \edges(T)\setminus \edges(S)$ incident on a vertex in $S$.
\begin{defn}
\label{pericoll}
Given a tree system $\Theta$ of cut pairs over a tree $T$, for some subtree $S\subset T$ we define its \emph{peripheral collection} $\C_S=\{C_e\}_{e\in N_S}$ of two point subsets in $M_S$ as follows.
Consider some $e=(v,w)\in N_S$. 
If $w\in \vertices(S)$ but $v\notin \vertices(S)$
then define $C_e:=M_w$, and
if $v\in \vertices(S)$ but $w\notin \vertices(S)$ 
then define $C_e:=i_e(M_w)$.
\end{defn}

We now show that the non-redundant ends are in bijection with $\bar{M}_T\setminus M_T$.
First, we explore properties of the completion we explore some properties of the space $M_T$.

\begin{lem}
\label{Cknull}
The collection $\C_{T_k}$ is null in $M_{T_k}$.
\end{lem}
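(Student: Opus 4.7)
I will argue by induction on $k$. The base case $k=0$ is immediate: since $T_0=\{v_0\}$ has no edges, $N_{T_0}$ is exactly the set of edges incident on $v_0$, and by Definition~\ref{pericoll} we have $\C_{T_0}=\{i_e(M_w)\mid e=(v_0,w)\}=\C_{v_0}$, which is null in $M_{v_0}=M_{T_0}$ by Definition~\ref{treesys}(\ref{treesys: nullity}).

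For the inductive step, assume $\C_{T_k}$ is null in $M_{T_k}$ and fix $\epsilon>0$. Using bipartiteness together with $T_{k+1}$ being the ball of radius $2(k+1)$ around $v_0$, I first observe that every edge of $N_{T_{k+1}}$ has the form $e=(v,w)$ with $v\in V$ at distance exactly $2(k+1)$ and $w\in W$ at distance $2k+3$ (any $w\in W\cap T_{k+1}$ sits at odd distance $\leq 2k+1$, forcing all its $V$-neighbors into $T_{k+1}$). Each such $v$ has a unique ``parent'' $W$-neighbor $w'$ at distance $2k+1$, and $w'$ is adjacent to some $v'\in V\cap T_k$ at distance $2k$ through an edge $e'=(v',w')\in N_{T_k}$, yielding $C_{e'}\in\C_{T_k}$.

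The engine of the induction is a contraction estimate extracted from the construction in Proposition~\ref{compatibleshrink}. When $d_v$ was built via Lemma~\ref{halfer} with distinguished pair $C_0=i_{(v,w')}(M_{w'})$ and parameter $K=d_{w'}(x,y)$, compatibility of $\mathcal{D}$ across the edge $e'$ gives $K=\diam(C_{e'})$, and Lemma~\ref{halfer} guarantees $\diam(C)\leq K/2$ for every $C\in\C_v\setminus\{C_0\}$. Since every $e\in N_{T_{k+1}}$ incident on $v$ corresponds to such a $C=C_e$, I obtain the key inequality
\[
\diam(C_e)\leq \tfrac{1}{2}\diam(C_{e'}).
\]
Therefore $\diam(C_e)>\epsilon$ forces $\diam(C_{e'})>2\epsilon$. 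The inductive hypothesis bounds the number of such $e'\in N_{T_k}$ to a finite list $e'_1,\dots,e'_N$; for each $e'_i=(v'_i,w'_i)$, the finite valence of $w'_i\in W$ (Definition~\ref{treesys}(1)) bounds the number of candidate $V$-vertices $v$ at distance $2(k+1)$; and for each such $v$, nullity of $\C_v$ in $M_v$ bounds the number of edges $e\in N_{T_{k+1}}$ at $v$ with $\diam(C_e)>\epsilon$. Summing these finite contributions yields finitely many $C_e\in \C_{T_{k+1}}$ of diameter $>\epsilon$, completing the induction.

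The main subtlety I expect is ensuring that $\C_v$ remains null in $(M_v,d_v)$ after the re-metrization by Lemma~\ref{halfer}; this is legitimate because nullity of a collection of subsets in a compact metric space is a topological invariant, and the construction in Lemma~\ref{halfer} produces a metric inducing the original topology on $M_v$. Once this is granted, the argument is essentially a bookkeeping exercise built around the half-diameter contraction of Lemma~\ref{halfer}.
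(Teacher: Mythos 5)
Your proof is correct and follows essentially the same inductive strategy as the paper: reduce nullity of $\C_{T_{k+1}}$ to (i) nullity of $\C_{T_k}$ via the halving estimate from Lemma~\ref{halfer}, (ii) finite valence of the intermediate $W$-vertices, and (iii) nullity of the new $\C_v$'s. One small difference is that you explicitly invoke the strict halving $\diam(C_e)\leq\tfrac12\diam(C_{e'})$, whereas the paper only needs the weaker observation $\diam(C_e)\leq\diam(M_v)=\diam(C_{e'})$; either suffices. You also flag, and correctly resolve, a point the paper passes over silently: nullity of $\C_v$ in $M_v$ is stated for the original metrizable topology, not the re-metrization $d_v$ produced by Lemma~\ref{halfer}, so one must observe that on a compact metrizable space nullity is a topological invariant (uniform equivalence of any two metrics). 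This is a genuine improvement in rigor, though the route is the same.
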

\begin{proof}
We prove the lemma using induction. For $k=0$, since $M_{T_0}=M_{v_0}$, the statement follows from Definition~\ref{treesys}(\ref{treesys: nullity}).
Assume the statement is true for $k-1$. We now prove it for $k$.
Recall that $M_{T_k}$ is obtained from $M_{T_{k-1}}$ by gluing $M_v$'s along corresponding pairs in $\C_{T_{k-1}}$, for all $v$ at a distance $2k$ from $v_0$ in $T$.
Consider some $\epsilon>0$. Since $\mathcal{C}_{T_{k-1}}$ is null in $M_{T_{k-1}}$, we have a finite subset $A\subset \mathcal{C}_{T_{k-1}}$, such that elements in $\mathcal{C}_{T_{k-1}}\setminus A$ have diameter less than $\epsilon$. By our choice of compatible metrics in the proof of Proposition~\ref{compatibleshrink}, the $M_v$'s glued to pairs in $\mathcal{C}_{T_k}\setminus A$ have diameter less than $\epsilon$. 

Let $M_{v_1},M_{v_2},\ldots,M_{v_n}$ be the spaces glued to the pairs from $A$ in $M_{T_k}$. Then $\mathcal{C}_A:=\cup_{i=1}^n \mathcal{C}_{v_k}$ contains all the pairs in $\C_{T_k}$ with diameter more than $\epsilon$. Since $\C_A$ is a finite union of null collections,  only finitely many of them have diameter more than $\epsilon$.    
\end{proof}
Applying Lemma~\ref{neighborcpt} with an induction argument, we get the following. 
\begin{lem}
\label{Tkcpt}
$M_{T_k}$ is compact for all $k\geq 0$. \qed 
\end{lem}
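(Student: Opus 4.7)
The argument is by induction on $k$. The base case $k = 0$ is immediate, since $M_{T_0} = M_{v_0}$ is compact by Definition~\ref{treesys}(\ref{treesys: vertex}).

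For the inductive step, I would apply Lemma~\ref{neighborcpt} to the subtree system $\Theta_{T_k}$ (treating $T_k$ as the ambient tree for that invocation). The natural choice of subtree $S \subset T_k$ is $T_{k-1}$ together with every $w \in W$ lying at distance exactly $2k - 1$ from $v_0$. With this choice, the edges of $T_k$ outside $S$ are precisely those running from these newly-added $W$-leaves to $V$-vertices at distance $2k$, so $S \cup N_S = T_k$ and $\vertices(T_k \setminus S)$ is exactly $\bigl\{v \in V \bigm| d(v, v_0) = 2k \bigr\}$. Because each added $W$-leaf $w$ has a single neighbor $v' \in T_{k-1}$ inside $S$, the two-point space $M_w$ is identified under $\sim$ with the pair $i_{(v', w)}(M_w) \subset M_{v'}$; consequently $M_S$ coincides with $M_{T_{k-1}}$ as a subset of $M_T$, and is compact by the inductive hypothesis.

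It remains to verify that $\bigl\{M_v \bigm| v \in V,\ d(v, v_0) = 2k\bigr\}$ forms a null collection in $M_{T_k}$. For such a $v$, with adjacent $w \in W$ at distance $2k - 1$ and further neighbor $v' \in T_{k-1}$, the construction of compatible shrinking metrics in Proposition~\ref{compatibleshrink} via Lemma~\ref{halfer} forces $\diam(M_v)$ to equal $\diam_{M_{v'}}\bigl(i_{(v', w)}(M_w)\bigr)$, the diameter of the corresponding pair in $\C_{T_{k-1}}$. Since Lemma~\ref{Cknull} asserts that $\C_{T_{k-1}}$ is null in $M_{T_{k-1}}$, and since the finite valence of each $w \in W$ guarantees that only finitely many of these $V$-vertices sit above any one pair of $\C_{T_{k-1}}$, the nullity passes from the pairs to the constituent spaces. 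Lemma~\ref{neighborcpt} then yields that $M_{T_k}$ is compact, completing the induction.

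The only step demanding genuine care is the choice of $S$: taking $S = T_{k-1}$ itself would leave the $V$-vertices at distance $2k$ unreachable under $S \cup N_S$, so the enlargement by $W$-leaves at distance $2k-1$ is essential. Beyond that, every ingredient is already in place from Proposition~\ref{compatibleshrink}, Lemma~\ref{halfer}, and Lemma~\ref{Cknull}, and the proof is a direct assembly.
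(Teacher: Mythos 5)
Your proof is correct and fills in exactly the details that the paper leaves implicit behind its one-line citation of Lemma~\ref{neighborcpt} with induction: the choice of $S$ as $T_{k-1}$ augmented by the $W$-leaves at distance $2k-1$ (so that $S\cup N_S=T_k$ while $M_S$ still coincides with $M_{T_{k-1}}$), the identification $\diam(M_v)=\diam(i_{e'}(M_w))$ forced by Lemma~\ref{halfer} in the construction of Proposition~\ref{compatibleshrink}, and the transfer of nullity from $\C_{T_{k-1}}$ (Lemma~\ref{Cknull}) to the distance-$2k$ constituent spaces via the finite valence of each $w$ are all accurate. This is the same route the paper intends, just spelled out.
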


\begin{prop}
\label{completioncpt}
The metric space $(M_T,d_T)$ is totally bounded. Moreover, the completion $\bar{M}_T$ is compact.
\end{prop}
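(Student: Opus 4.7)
The plan is to prove total boundedness directly from the shrinking property and the compactness of the finite sub-spaces $M_{T_k}$, and then deduce compactness of the completion from the standard fact that a complete totally bounded metric space is compact.

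The first step is to show that for every $\epsilon > 0$ there exists $k$ such that $M_T \subseteq \nbd{M_{T_k}}{\epsilon}$. Choose $k$ large enough that a tail of the geometric series $\sum_{m > k} 2^{-m}$ is less than $\epsilon/2$. For an arbitrary $x \in M_T$, let $u \in \vertices(T)$ with $x \in M_u$. If $u \in \vertices(T_k)$ we are done; otherwise the unique geodesic in $T$ from $u$ to its nearest vertex $u_0 \in \vertices(T_k)$ passes through a sequence of vertex spaces $M_u = M_{u_j}, M_{u_{j-1}}, \ldots, M_{u_1}, M_{u_0}$. Using the edge embeddings to pass from each $M_{u_i}$ to $M_{u_{i-1}}$, one constructs an efficient linking chain from $x$ to a point $y_0 \in M_{u_0} \subseteq M_{T_k}$; by Proposition~\ref{semimeteff} the distance $d(x, y_0)$ is at most the sum of the internal contributions $\diam(M_{u_i})$ for $i = 1, \ldots, j$.

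The next step is to apply the shrinking hypothesis: vertices $v \in V$ at tree-distance $2m$ from $v_0$ satisfy $\diam(M_v) \leq 2^{-m}$, and by the construction from Lemma~\ref{halfer} used in the proof of Proposition~\ref{compatibleshrink} the cut pair spaces at tree-distance $2m+1$ similarly satisfy $\diam(M_w) \leq 2^{-m}$ (inherited isometrically from their images inside adjacent constituent spaces). Summing the bounds along the path from $u$ down to $u_0$ gives
\[
d(x, y_0) \;\leq\; \sum_{m > k} \diam(M_{v_{2m}}) + \sum_{m \geq k} \diam(M_{w_{2m+1}}) \;\leq\; C \cdot 2^{-k}
\]
for a fixed absolute constant $C$. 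Taking $k$ large enough makes this less than $\epsilon$, proving $M_T \subseteq \nbd{M_{T_k}}{\epsilon}$.

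Now total boundedness follows: by Lemma~\ref{Tkcpt} the space $M_{T_k}$ is compact, so it admits a finite $\epsilon$-net $F$, and the inclusion $M_T \subseteq \nbd{M_{T_k}}{\epsilon} \subseteq \nbd{F}{2\epsilon}$ yields a finite $2\epsilon$-net for $M_T$. The main obstacle here is the careful diameter accounting along the tree geodesic, which is where the choice of shrinking compatible metrics from Proposition~\ref{compatibleshrink} is essential; once that estimate is in hand the rest is formal.

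Finally, for compactness of the completion: total boundedness is inherited by the completion (a finite $\epsilon$-net for $M_T$ remains a finite $\epsilon$-net for $\bar{M}_T$ since $M_T$ is dense), and $\bar{M}_T$ is complete by definition. A complete totally bounded metric space is compact, so $\bar{M}_T$ is compact, completing the proof.
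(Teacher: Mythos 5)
Your proof is correct and follows essentially the same route as the paper: use the shrinking metric and Proposition~\ref{semimeteff} to show that every point of $M_T$ lies within $O(2^{-k})$ of the compact subspace $M_{T_k}$, pull a finite net off $M_{T_k}$, and then conclude compactness of $\bar{M}_T$ from the standard fact that a complete totally bounded metric space is compact. The only cosmetic difference is that you bound $\diam(M_w)$ separately (where the paper simply notes $M_w \subseteq M_v$ implies $\diam(M_w) \le \diam(M_v)$), and you state the $\epsilon$-net step as $M_T \subseteq \nbd{M_{T_k}}{\epsilon} \subseteq \nbd{F}{2\epsilon}$ rather than picking the nearest point $a \in M_{T_k}$; neither affects correctness.
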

\begin{proof}
Note that we only need to prove that $(M_T,d_T)$ is totally bounded as the completion of a totally bounded space is compact. For $\epsilon>0$ there is $k>0$ such that $2^{-k+2}<\epsilon$. Consider the open cover $\{B_{\epsilon/2}(x)\mid x\in M_{T_k}\}$ of $M_{T_k}$. As $M_{T_k}$ is compact by Lemma~\ref{Tkcpt}, there is a finite subcover $\{B_{\epsilon/2}(x_i)\}_{i=1}^n$ for some $x_1,x_2,\ldots x_n\in M_{T_k}$.

We now prove that the collection $\{B_{\epsilon}(x_i)\}_{i=1}^n$ covers $M_T$.
Consider $x\in M_T\setminus M_{T_k}$. Let $a\in M_{T_k}$ be the closest point in $M_{T_k}$ to $x$. Then since $d$ is shrinking on $M_T$ and since $d$ is realized by an efficient linking chain by Proposition~\ref{semimeteff}, we have
$$d(a, x)\leq 2^{-k}+2^{-k-1}+\cdots =2^{-k+1}<\epsilon/2$$
in $M_T$. 
As $\{B_{\epsilon/2}(x_i)\}_{i=1}^n$ covers $M_T$ there is some $i$ such that $d(x_i, a)<\epsilon/2$. Thus, we get that $d(a,x)<\epsilon$.
\end{proof}

Now consider a subtree $S\subset T$. Since $M_S\subset M_T\subset \bar{M}_T$, as $\bar{M}_T$ is complete, $\bar{M}_S$ canonically embeds as a subspace in $\bar{M}_T$.
We now describe the completion $\bar{M}_T$ as a set and show that completing the space only adds the non-redundant ends. Before proceeding, we define the collection of branches glued to a space $M_S$ for some $M_T$.
\begin{defn}
\label{branch}
Consider a subtree $S\subset T$ and a connected component $S'$ of
$T\setminus S$ then $M_{S'}$ is a \emph{branch} glued to $M_S$ in $M_T$. The collection of branches glued to $M_S$ is $\Branch(S)$.
\end{defn}
We note that the set $\Branch(S)$ is in bijection with $N_S$.

\begin{lem}
\label{boundalpha}
There is a unique map $\boundary\alpha\colon \boundary T\rightarrow \bar{M}_T$ such that for every $x\in \boundary T$ and every associated sequence $(p_n)$ for $x$, we have that $p_n\rightarrow \boundary\alpha(x)$. Moreover, if $x$ is redundant then we have $\boundary\alpha(x)=p$ where $p$ is the redundant point associated with $x$ and if $x$ is non-redundant then $\boundary\alpha(x)\in \bar{M}_T\setminus M_T$.
\end{lem}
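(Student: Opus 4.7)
The plan is to define $\boundary\alpha(x)$ as the common limit in $\bar{M}_T$ of all associated sequences for $x$, and then to verify the two claims separately on redundant and non-redundant ends.

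For existence and uniqueness I will invoke Lemma~\ref{assocseqprops}: every associated sequence $(p_n)$ for $x\in \boundary T$ is Cauchy in $(M_T,d)$ and hence converges in $\bar M_T$, and any two associated sequences for the same $x$ are asymptotic to one another and so converge to the same limit. Setting $\boundary\alpha(x)$ equal to this common limit produces the required map, and any map with the stated convergence property must agree with it, so uniqueness is automatic. For the redundant case, if $x$ is redundant with redundant point $p$, then Definition~\ref{redundant} supplies an associated sequence eventually constant equal to $p$, so $\boundary\alpha(x)=p\in M_T$ drops out immediately.

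The main work, and the principal obstacle, is to show $\boundary\alpha(x)\notin M_T$ whenever $x\in \boundary_0 T$. I will argue by contradiction: assume $q:=\boundary\alpha(x)\in M_T$, fix a representative ray $r=(v_1,w_1,v_2,w_2,\ldots)$ for $x$ together with an associated sequence $p_n\in M_{w_n}$ with $p_n\to q$, and consider the set $T_q:=\{u\in \vertices(T):q\in M_u\}$. A short chase of the generating relation $y\sim i_e(y)$ shows that $T_q$ is a connected subtree of $T$, so $T_q\cap r$ is a connected subpath of $r$. I then split into two subcases: either $T_q$ contains a cofinite tail of $r$, or there is some $k_1\geq 1$ past which $r$ never returns to $T_q$.

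In the first subcase, $q\in M_{w_n}$ for all large $n$, so $q$ is a redundant point associated with $x$, contradicting $x\in \boundary_0 T$. In the second subcase I will pick $k_1$ just past the exit of $r$ from $T_q$, so that $w_{k_1}\notin T_q$ and every shortest path in $T$ from a vertex of $T_q$ to $w_n$ passes through $w_{k_1}$ for all $n\geq k_1$. Since $w_{k_1}\notin T_q$, the point $q$ is neither of the two points of the image $M_{w_{k_1}}\subset M_T$, whence $\epsilon:=d(q,M_{w_{k_1}})>0$. For each such $n$, Proposition~\ref{semimeteff} will realize $d(q,p_n)$ by an efficient linking chain whose sequence of vertex spaces traces the shortest path in $T$ from $T_q$ to $w_n$; this chain is forced to visit $M_{w_{k_1}}$, and splitting the chain at that visit and applying the triangle inequality will give $d(q,p_n)\geq \epsilon$, contradicting $p_n\to q$. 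The delicate point is precisely this last step: ensuring that every \emph{efficient} chain realizing the metric truly traverses the tree-separating pair $M_{w_{k_1}}$, which is where the bipartite structure of $T$ and the careful choice of $k_1$ relative to the subtree $T_q$ come together.
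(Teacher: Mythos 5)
Your proposal is correct. For existence, uniqueness, and the redundant case you invoke Lemma~\ref{assocseqprops} exactly as the paper does. For the non-redundant case you take a genuinely different and more explicit route: where the paper re-bases the ray at $v_y$ and simply asserts that $d(p_n, y)$ is strictly increasing, you instead extract the connected subtree $T_q$ of vertices whose vertex spaces contain the putative limit $q$, choose a $W$-vertex $w_{k_1}$ on $r$ that tree-separates $T_q$ from the tail of $r$, and bound $d(q, p_n)\geq d(q, M_{w_{k_1}}) = \epsilon > 0$ uniformly in $n$. Both arguments rest on Proposition~\ref{semimeteff}'s realization of $d$ by efficient linking chains, but your localization to a single separating cut pair at positive distance from $q$ is cleaner: the paper's strict monotonicity claim requires some care to justify (a safer statement is that $d(y, M_{w_n})$ is non-decreasing and must therefore vanish identically if $p_n\to y$, forcing redundancy), whereas your bound falls out directly once the separation is established. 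Two small points you should nail down in case (b). First, if $T_q\cap r=\emptyset$ there is no ``exit of $r$ from $T_q$''; take $w_{k_1}$ to be the first $W$-vertex of $r$ at or beyond the vertex of $r$ closest to $T_q$, and the separation claim still holds. Second, an efficient chain realizing $d(q, p_n)$ runs from some $u_1\in T_q$ to some $u_m\in T_{p_n}$, and it is possible that $u_m$ lies on the $T_q$-side of $w_{k_1}$, in which case the chain itself never crosses $M_{w_{k_1}}$; but then $T_{p_n}$, being connected and containing both $u_m$ and $w_n$, must contain $w_{k_1}$, so $p_n\in M_{w_{k_1}}$ and $d(q,p_n)\geq\epsilon$ holds immediately anyway.
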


\begin{proof}
Let $p_n$ be an associated sequence for $x$. Define $\boundary \alpha(x)=\lim_{n\rightarrow \infty}p_n$ in $\bar{M}_T$. The first assertion then follows from Lemma~\ref{assocseqprops}. For the second assertion, if $x$ is redundant then the result follows from Lemma~\ref{assocseqprops} and Definition~\ref{redundant}. If $x$ is non-redundant, then for any $y\in M_T$ such that $y\in M_{v_y}$ for some $v_y\in V$ we can choose the associated sequence $(p_n)$ such that $p_1\in M_{v_y}$. Then $p_n\in M_{v_n}$ such that $v_n$ is at a distance of $2(n-1)$ from $v_y$ in $T$. As $d$ is the quotient metric, $d(p_n,y)$ is strictly increasing which implies that $p_{n}$ does not converge to $y$. Therefore, $\boundary\alpha(x)\in \bar{M}_T\setminus M_T$.
\end{proof}

\begin{prop}
\label{completion}
Consider the map $\alpha\colon M_T\sqcup \boundary T\rightarrow \bar{M}_T$ defined as the identity on $M_T$ and $\boundary\alpha$ on $\boundary T$, then we have that $\alpha$ is onto and $\alpha$ is a bijection when restricted to $M_T\sqcup \boundary_0 T$.
\end{prop}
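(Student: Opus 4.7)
The plan is to establish surjectivity and injectivity via one central sub-lemma: any Cauchy sequence $(q_n)$ in $M_T$ converging to a point $z \in \bar{M}_T \setminus M_T$ must, for every $k$, eventually lie in a single branch $B_k \in \Branch(T_k)$.

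To prove this sub-lemma, observe that $M_{T_k}$ is compact (Lemma~\ref{Tkcpt}), hence closed in $\bar{M}_T$. Since $z \notin M_T$ implies $d(z,M_{T_k})>0$, the sequence $(q_n)$ is eventually in $M_T \setminus M_{T_k}$, i.e., in some branch. If the branches containing $q_n$ are not eventually constant, pass to a subsequence where the branches $B_n$ are pairwise distinct with attaching pairs $C_n \subset M_{T_k}$. Any efficient linking chain (Proposition~\ref{semimeteff}) from $q_n$ to $q_m$ must transit through both $C_n$ and $C_m$, giving $d(q_n,q_m) \geq d(q_n,C_n) + d(q_m,C_m)$. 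Combined with $(q_n)$ being Cauchy, this forces $d(q_n,C_n) \to 0$; choosing projections $p_n \in C_n$ with $d(q_n,p_n) \to 0$ produces a Cauchy sequence in $M_{T_k}$ also converging to $z$, forcing $z \in M_{T_k} \subset M_T$, a contradiction.

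For surjectivity, given $z \in \bar{M}_T \setminus M_T$ and a Cauchy sequence $(q_n) \to z$, the nested branches $B_0 \supset B_1 \supset \cdots$ single out a ray in $T$ and thus an end $x \in \boundary T$. Picking $p_k$ in the attaching pair $C_k$ of $B_k$ gives an associated sequence for $x$; the shrinking property bounds $\diam(B_k \cup C_k) \leq 2^{-k+1}$, so $d(q_n,p_k)$ is small for large $n$, whence $p_k \to z$ and $\boundary\alpha(x) = z$ by Lemma~\ref{boundalpha}. Since $z \notin M_T$, the second clause of Lemma~\ref{boundalpha} gives $x \in \boundary_0 T$.

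For injectivity on $M_T \sqcup \boundary_0 T$, note that $\alpha|_{M_T}$ is the identity and $\alpha(\boundary_0 T) \subset \bar{M}_T \setminus M_T$ by Lemma~\ref{boundalpha}, so the two pieces land in disjoint sets. For distinct $x_1,x_2 \in \boundary_0 T$ with a common image $z$, interleave associated sequences $(p_n^{(1)})$ and $(p_n^{(2)})$ into a single Cauchy sequence $(q_n) \to z$; the sub-lemma then forces, at each level $k$, both $p_n^{(1)}$ and $p_n^{(2)}$ eventually into the same branch of $T_k$, which means the rays of $x_1$ and $x_2$ agree on their first $2k+1$ vertices for every $k$, hence everywhere, contradicting $x_1 \neq x_2$. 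The main obstacle is the sub-lemma, because distinct branches may have attaching pairs sharing a point, so a priori the distance between $C_n$ and $C_m$ can vanish; the delicate step is to extract the Cauchy property of the projection sequence $(p_n)$ directly from the Cauchy property of $(q_n)$, which lets the compactness of $M_{T_k}$ deliver the desired contradiction.
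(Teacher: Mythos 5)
Your proof is correct in its essentials and takes a genuinely different route from the paper. The paper's surjectivity argument proceeds by a three-way case split: infinitely many terms in some $M_{T_k}$; the branch set $B_k$ infinite for some $k$; or $B_k$ finite for all $k$. In the second case the paper appeals to nullity of $\C_{T_k}$ together with the shrinking property to conclude the limit lies in $M_{T_k}$. You instead consolidate everything into a single sub-lemma, and your proof of that sub-lemma avoids nullity altogether: you derive the estimate $d(q_n,q_m)\geq d(q_n,C_n)+d(q_m,C_m)$ from the structure of efficient linking chains (Proposition~\ref{semimeteff}) and then use the Cauchy property directly to force $d(q_n,C_n)\to 0$. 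This is slicker and more self-contained. Your injectivity argument for distinct ends is also different: the paper estimates $d(p_n,p'_n)$ directly along diverging rays and argues it is non-decreasing and bounded away from zero, whereas you feed the interleaved sequence back into the sub-lemma to conclude the rays share every branch. The sub-lemma approach is arguably more robust, since it sidesteps the delicate issue that distinct cut pair spaces $M_{w}$ may share a point, which makes the paper's strict monotonicity claim subtle.

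One small imprecision in the sub-lemma: if the branch of $q_n$ is not eventually constant, you cannot always pass to a subsequence in which the branches are \emph{pairwise} distinct — the branch sequence could have finite range (e.g., alternate between two branches), in which case no pairwise-distinct subsequence exists. The fix is routine: extract instead a subsequence in which \emph{consecutive} terms lie in different branches (always possible when not eventually constant); the inequality $d(q_{n_j},q_{n_{j+1}})\geq d(q_{n_j},C_{n_j})+d(q_{n_{j+1}},C_{n_{j+1}})$ together with the Cauchy property then still gives $d(q_{n_j},C_{n_j})\to 0$, and the rest of the argument goes through unchanged. With that adjustment the argument is complete.
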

\begin{proof}
We first show that $\alpha$ is onto. Consider $x'\in \bar{M}_T$ with a sequence $(x_n)_{n\in \N}$ in $M_T$ converging to $x'$ in $\bar{M}_T$. If infinitely many $x_n$ lie in some $M_{T_k}$, then since $M_{T_k}$ is compact by Lemma~\ref{Tkcpt}, we have that $(x_n)$ converges to some $x\in M_{T_k}\subset M_T$ and thus $\alpha(x)=x=x'$.

Now by passing to a subsequence, we assume that $x_k\in M_T\setminus M_{T_k}$ for all $k$. For every $k$, let $B_k$ be the set of branches in $\Branch(T_k)$ which intersect the sequence $(x_n)_{n=k}^{\infty}$. Consider the case when there exists $k$ such that the set $B_k$ is infinite. For some fixed $k$, by passing to a subsequence we can assume that for all $n\geq k$, no two $x_n$'s lie in the same branch of $\Branch(T_k)$. Let $M_n\in B_k$ be such that $x_n\in M_n$ and let $M_n$ be glued to the pair $\{a_n,b_n\}\in \C_{T_k}$.

Let $d_n=d(a_n,b_n)$. Since $\C_{T_k}$ is null in $M_{T_k}$ we have $d_n\rightarrow 0$. As $M_{T_k}$ is compact, by passing to a subsequence $a_n\rightarrow a$ for some $a\in M_{T_k}$. Then
$$d(a_n,x_n)\leq d_n+d_n/2+d_n/2^2+\cdots=2d_n\rightarrow 0.$$
Therefore, $x_n\rightarrow a$. Thus we have $\alpha(a)=a=x'$.

Now we assume that for all $k$ the set $B_k$ is finite. By passing to a subsequence, we can assume $B_k$ consists of one branch $M_k$ for all $k$.
Let $M_k$ be glued to the pair $\{a_k,b_k\}$ in $M_{v_k}\subset M_{T_k}$ along the edge $e_k=(v_k,w_k)$ in $T$.
Now consider $x=(v_0,w_0,v_1,w_1, \ldots)\in \boundary T$. Let $(p_k)_{k\in \N}$ be an associated sequence for $x$. Then since $d$ is shrinking on $M_T$ and since by Proposition~\ref{semimeteff}, $d$ is realized by an efficient linking chain we have
$$d(p_k,x_k)\leq 2^{-(k+1)}+2^{-(k+2)}+\cdots = 2^{-k}.$$
Thus $d(p_k,x_k)\rightarrow 0$ and since $x_k\rightarrow x'$ we have $p_k\rightarrow x'$ as well. Then we have $\alpha(x)=x'$.

We now prove that restricting $\alpha$ to $M_T\bigsqcup \boundary_0 T$, we have that $\alpha$ is bijective. We note that it is surjective as elements in $\boundary_R T$ map to $M_T$ by Lemma~\ref{boundalpha}. We prove that this restriction is injective. Consider distinct $x,x'\in \bar{M}_T$. The case when $x,x'\in M_T$ is straightforward. 
Now consider $x\in \boundary_0 T$ and $x'\in M_T$. Since $\alpha(x)\in \bar{M}_T\setminus M_T$ above, $\alpha(x)\neq \alpha(x')=x'$.

Now consider distinct $x,x'\in \boundary_0 T$. Consider the ray $(v_1,w_1,v_2,w_2,\ldots)$ representing $x$ and the ray  $(v'_1,w'_1,v'_2,w'_2,\ldots)$ representing $x'$ such that $v_1=v'_1$ but $v_n\neq v'_n$ for $n>1$. Then consider the associated sequences $(p_n)$ and $(p'_n)$ such that $p_n\in M_{w_n}$ and $p'_n\in M_{w'_n}$. Then since $v_n$ and $v'_n$ are $4(n-1)$ distance apart in $T$ and as $d$ is the maximal metric, $d(p_n,p'_n)$ is strictly increasing.
Thus, the sequences $(p_n)$ and $(p'_n)$ converge to different points in $\bar{M}_T$ which gives us that $\alpha(x)\neq \alpha(x')$.
\end{proof}

We now explore some connectivity properties of the space $\bar{M}_T$.

\begin{lem}
\label{T_k is connected without cut pairs}
Consider a tree system $\Theta$ on cut pairs such that the constituent spaces are connected without cut point and cut pairs. Also consider $k\geq 0$ and a subset $A\subset M_{T_k}$ such that $|A|\leq 2$ and $A\neq M_w$ for any $w\in W$. Then the subspace $M_{T_k}\setminus A$ is connected.
\end{lem}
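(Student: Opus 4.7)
The plan is to prove the statement by induction on $k$. The base case $k=0$ is essentially the hypothesis: $M_{T_0}=M_{v_0}$ is a single constituent space, and since $M_{v_0}$ is connected with no cut points and no cut pairs, removing any set $A$ with $\abs{A}\leq 2$ leaves $M_{v_0}\setminus A$ connected (the condition $A\neq M_w$ is not yet needed).

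For the inductive step, recall that $M_{T_k}$ is obtained from $M_{T_{k-1}}$ by attaching, for each $v\in V$ at distance $2k$ from $v_0$, the constituent space $M_v$ along the two-point image of $M_w$ in $M_{v'}$, where $w$ is the intermediate vertex at distance $2k-1$ and $v'\in \vertices(T_{k-1})$ is at distance $2k-2$. Let $A'=A\cap M_{T_{k-1}}$. Because $\abs{A}\leq 2=\abs{M_w}$, the equality $A'=M_w$ would force $A=M_w$, contradicting the hypothesis; hence $A'$ satisfies the hypothesis at level $k-1$, so by induction $M_{T_{k-1}}\setminus A'$ is connected.

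Next I would show that every newly attached piece $M_v\setminus A$ is connected and meets $M_{T_{k-1}}\setminus A$. Set $A_v=A\cap M_v$; the same cardinality argument shows $A_v$ cannot equal any pair in the peripheral collection $\C_v$, and in particular $A_v$ is not equal to the gluing pair $M_w\subset M_v$. Since the constituent space $M_v$ has no cut pair and no cut point, removing at most two points keeps $M_v\setminus A_v$ connected. Moreover, because $A\neq M_w$, at least one of the two points of $M_w$ survives in both $M_v\setminus A$ and $M_{T_{k-1}}\setminus A$, so these two connected sets lie in a common component of $M_{T_k}\setminus A$. Taking the union over all $v$ at distance $2k$ from $v_0$ yields that $M_{T_k}\setminus A$ is connected.

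The main subtlety, and likely the only place one must be careful, is bookkeeping around the gluing pairs: one has to confirm that the hypothesis $A\neq M_w$ descends both to $A'$ inside $M_{T_{k-1}}$ and to each $A_v$ inside the new constituents, and to notice that this always follows automatically from the cardinality constraint $\abs{A}\leq 2$ together with $\abs{M_w}=2$. Once that is pinned down, the argument is a clean induction using only that constituent spaces have no cut pairs.
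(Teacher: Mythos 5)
Your proof is correct and follows essentially the same approach as the paper: induction on $k$, writing $M_{T_k}\setminus A$ as the union of $M_{T_{k-1}}\setminus A$ with the newly attached pieces $M_v\setminus A$, and using the hypothesis $A\neq M_w$ to find a surviving point of each gluing pair that links $M_v\setminus A$ to $M_{T_{k-1}}\setminus A$. Your additional bookkeeping verifying that the hypothesis descends to $A\cap M_{T_{k-1}}$ and to each $A\cap M_v$ via the cardinality argument is sound and makes explicit a detail the paper leaves implicit.
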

\begin{proof}
We prove this using induction on $k$. By the hypothesis on the constituent space $M_{v_0}$, the statement holds for $k=0$. Now assume that $M_{T_{k-1}}\setminus A$ is connected. Let $V_k=\vertices(T_{k}\setminus T_{k-1})\cap V$. 
Then
$$M_{T_k}\setminus A=\left(M_{T_{k-1}}\setminus A\right)\bigcup\Biggl(\bigcup_{v\in V_k} M_v\setminus A\Biggr).$$
By the hypothesis on constituent spaces, $M_v\setminus A$ is connected for all $v\in V_k$. Moreover, for any $v\in V_k$ there is $w\in W$ such that $M_v\cap M_{T_{k-1}}=M_w$. Since $A\neq M_w$, we have that $M_{T_{k-1}}\setminus A$ intersects $M_v\setminus A$. The result then follows because $M_v\setminus A$ is connected for all $v\in V_k$ and $M_{T_{k-1}}\setminus A$ is connected by the induction hypothesis.
\end{proof}

\begin{prop}
\label{insepstr}
Consider a tree system $\Theta$ on cut pairs with underlying tree $T$, such that the underlying constituent spaces are connected without cut points and cut pairs. Let $M_T$ be the corresponding total space and let $\bar{M}_T$ be its completion. Then the following hold.
\begin{enumerate}
    \item \label{insepstr:no cut points} $\bar{M}_T$ is connected without cut points.
    \item \label{insepstr: insep cutpair}The cut pairs on $\bar{M}_T$ are precisely the subspaces $M_w$ for $w\in W$. Moreover, they are all inseparable.
    \item \label{insepstr: tree sys comp}
    For every $w\in W$, there is a bijection $\omega_w\colon \Comp(T\setminus w)\rightarrow \Comp(\bar{M_T}\setminus w)$ defined as $\omega_w(S)=\bar{M}_S\setminus w$.
    Moreover, for $w'\in W$ and $S\in \Comp(T\setminus w)$, we have that $w'\in S$ if and only if $M_{w'}\setminus M_{w}\subset \bar{M}_S$.
\end{enumerate}
\end{prop}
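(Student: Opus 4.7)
I will prove the three parts in the order (1), (3), (2), with (2) following directly from the first two.

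For (1), the approach combines Lemma~\ref{T_k is connected without cut pairs} with a density argument. Given $A\subset \bar{M}_T$ with $|A|\leq 2$ and $A\neq M_w$ for any $w\in W$, I first claim $M_T\setminus A$ is connected. Any two points $x,y\in M_T\setminus A$ lie in a common $M_{T_k}$ for $k$ large (since $M_T=\bigcup_k M_{T_k}$), and Lemma~\ref{T_k is connected without cut pairs} supplies a connected set $M_{T_k}\setminus A\subset M_T\setminus A$ containing them. Since $A$ is finite (hence closed) and $M_T$ is dense in $\bar{M}_T$, $M_T\setminus A$ is dense in $\bar{M}_T\setminus A$; a connected dense subset forces the ambient space to be connected, so $\bar{M}_T\setminus A$ is connected. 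Taking $|A|=1$ gives no cut points.

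For (3), fix $w\in W$; I show the sets $\bar{M}_S\setminus M_w$ for $S\in \Comp(T\setminus w)$ form the component decomposition of $\bar{M}_T\setminus M_w$ (a finite decomposition since $\valence(w)<\infty$). For pairwise disjointness, I claim $\bar{M}_S\cap \bar{M}_{S'}\subseteq M_w$ for $S\neq S'$: within $M_T$, any point of the intersection has representatives on both sides, and the chain of identifications joining them must traverse the unique separating vertex $w$ in $T$, placing the point in $M_w$; for limit points in $\bar{M}_T\setminus M_T$, these correspond via $\alpha$ (Proposition~\ref{completion}) to non-redundant ends whose ray eventually lies in a unique component of $T\setminus w$. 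A Cauchy sequence $(y_n)\subset M_{S'}$ converging to such a limit along the $S$-direction would, by Proposition~\ref{semimeteff}, have its efficient linking chains to associated sequences $(p_n)\subset M_S$ pass through $M_w$, forcing $d(y_n, M_w)\to 0$; by compactness of $M_w$, a subsequence converges to a point of $M_w$, contradicting the limit being outside $M_w$. Covering $\bar{M}_T\setminus M_w$ is straightforward: a point in $M_T\setminus M_w$ is represented in a vertex space $M_u$ with $u\neq w$, and $u$ lies in a unique component; a non-redundant end has its ray eventually in a unique component. Connectedness of each $\bar{M}_S\setminus M_w$ is obtained by applying Lemma~\ref{T_k is connected without cut pairs} to the subtree system $\Theta_S$ with $A = M_w$ (valid because $w\notin S$, so $M_w$ is not one of the cut-pair vertex spaces of $\Theta_S$, even though $M_w=i_{e}(M_w)\subset M_{v_i}$ for the unique $v_i\in S$ adjacent to $w$), then using density. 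Each $\bar{M}_S\setminus M_w$ is closed in $\bar{M}_T\setminus M_w$ since $\bar{M}_S$ is closed in $\bar{M}_T$, and open because its complement is the finite union $\bigcup_{S'\neq S}(\bar{M}_{S'}\setminus M_w)$ of closed sets — hence clopen, and thus a component. The moreover statement follows: if $w'\in S$ then $M_{w'}\subset M_S\subset \bar{M}_S$; conversely, $|M_{w'}\cap M_w|\leq 1$ by Definition~\ref{treesys}(\ref{treesys: edge}) ensures $M_{w'}\setminus M_w$ is nonempty, a point of which lies in $\bar{M}_S\cap M_T=M_S$, and tracing back through the identifications shows $w'\in S$.

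Part (2) is then a direct consequence: (1) rules out any pair $A\neq M_w$ from being a cut pair; (3) shows each $M_w$ is a cut pair since $\bar{M}_T\setminus M_w$ has at least $\valence(w)\geq 2$ components, and neither point of $M_w$ is a cut point by (1). For inseparability, given distinct $w,w'\in W$, pick any $v\in V$ adjacent to $w$; both points of $M_w$ lie in $M_v$ via $i_e$. Then $M_v\setminus M_{w'}$ is connected because removing at most two points (whether $M_{w'}\cap M_v$ has $0$, $1$, or $2$ points) from the constituent space $M_v$ — which has no cut points and no cut pairs — leaves a connected set. This connected set contains both points of $M_w$ and sits in a single component of $\bar{M}_T\setminus M_{w'}$, so $M_{w'}$ does not separate $M_w$.

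The main obstacle is the pairwise disjointness in (3), specifically handling limit points in $\bar{M}_T\setminus M_T$ that might spuriously appear in multiple $\bar{M}_S$'s. The linking-chain argument forcing any approximating sequence in $M_{S'}$ to approach $M_w$ is the crucial technical step; it relies essentially on the efficient chain representation from Proposition~\ref{semimeteff} together with compactness of $M_w$.
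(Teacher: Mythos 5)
Your proof follows essentially the same route as the paper: establish the general claim that removing any doubleton $A\neq M_w$ leaves $\bar{M}_T$ connected by combining Lemma~\ref{T_k is connected without cut pairs} with a density (closure-sandwich) argument, deduce that each $M_w$ is a cut pair from the closed decomposition $\bar{M}_T=\bigcup_S \bar{M}_S$, and prove inseparability from the absence of cut pairs in constituent spaces. You prove the parts in a different order (deriving~(2) from~(3) rather than showing the cut-pair property directly), and you fill in the disjointness $\bar{M}_S\cap\bar{M}_{S'}\subset M_w$---which the paper simply asserts---with the efficient-linking-chain argument from Proposition~\ref{semimeteff}; that extra care is correct and a welcome addition.

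There is one small gap in your inseparability step. You assert that the connected set $M_v\setminus M_{w'}$ ``contains both points of $M_w$,'' but this fails whenever $M_w\cap M_{w'}\neq\emptyset$, since a point of $M_w$ then gets deleted along with $M_{w'}$. The paper disposes of this case first by observing that if $M_{w'}$ meets $M_w$, then at most one point of $M_w$ lies in $\bar{M}_T\setminus M_{w'}$, so $M_{w'}$ cannot separate the two points of $M_w$ by the definition of separation; only then does it restrict to the disjoint case, where your argument is exactly right. You should add the same case distinction---it is a one-line fix, but as written the step does fail in the overlapping case.
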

\begin{proof}
We first prove the following statement. For any subset $A\subset \bar{M_T}$ such that $|A|\leq 2$ and $A\neq M_w$ for any $w\in W$ we have that $\bar{M}_T\setminus A$ is connected. To see that this claim is true, first observe that $M_T=\bigcup_{k}M_{T_k}$. Then we note that ${M}_T\setminus A=\bigcup_{k}(M_{T_k}\setminus A)$ which is connected by Lemma~\ref{T_k is connected without cut pairs}. Since
$$M_T\setminus A\subset \bar{M}_T\setminus A\subset \text{cl}(M_T\setminus A)$$
we get that $\bar{M}_T\setminus A$ is connected, establishing (\ref{insepstr:no cut points}). We also get that if a two point set $A\neq M_w$ for any $w\in W$, then it is not a cut pair for $\bar{M}_T$.

Consider some $w\in W$ and let $M_w$ be the corresponding edge space. We now prove that $M_w$ is a cut pair. Let $S_1,S_2,\ldots ,S_n$ be the connected components of $T\setminus\{w\}$. We then note that $\bar{M}_T=\bigcup_{i=1}^n \bar{M}_{S_i}$ and that for $i\neq j$, we have $\bar{M}_{S_i}\cap \bar{M}_{S_j}=M_w$. Since $\bar{M}_{S_i}$ is compact by Proposition~\ref{completioncpt} for all $i$, we have that $\bar{M}_S$ is closed in $\bar{M}_T$. Thus, we have that $\bar{M}_{S_i}\setminus M_w$ is closed in $\bar{M}_T\setminus M_w$ for all $i$ proving that $M_w$ is a cut pair.

To see that $M_w$ is inseparable, consider another cut pair $M_{w'}\subset \bar{M}_T$ for some $w'\in W$. We assume that $M_{w}\cap M_{w'}=\emptyset$ otherwise $M_{w'}$ cannot separate $M_w$. Then there is a $v\in V$ such that $M_w$ is contained in $M_v$. Since the constituent spaces have no cut pairs,  $M_v\setminus M_{w'}$ is a connected subset containing $M_w$ proving that $M_w$ is not separated by $M_{w'}$. This proves (\ref{insepstr: insep cutpair}).

For some $w\in W$, consider $S\in \Comp(T\setminus w)$. Then applying (\ref{insepstr:no cut points}) and (\ref{insepstr: insep cutpair}) for the subtree system $\Theta_S$, we get that $\bar{M}_S\setminus w$ is connected in $\bar{M}_T\setminus w$. It is then straightforward to verify (\ref{insepstr: tree sys comp}).
\end{proof}

We now show that $T$ encodes the structure of the inseparable cut pairs.

\begin{prop}
\label{cutpairtree iso}
Consider a tree system $\Theta$ on cut pairs with underlying tree $T$ (bipartite on $V\sqcup W$), such that the constituent spaces are connected without cut points and cut pairs. Then for $\bar{M}_T$, let $W_I$ be the set of all inseparable cut pairs and let $T_I$ be the inseparable cut pair tree dual to $W_I$. Then there is a graph isomorphism $\phi_I\colon T\rightarrow T_I$ satisfying the following.
\begin{enumerate}
    \item $\phi_I(w)=M_w$ for all $w\in W$.
    \item $\phi_I(v)=\{M_w\mid w\text{ is adjacent to } v\}$ for all $v\in V$.
\end{enumerate}

\end{prop}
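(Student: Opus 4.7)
The plan is to use Proposition~\ref{insepstr} to translate the combinatorial structure of inseparable cut pairs in $\bar{M}_T$ into the combinatorial structure of $T$. By Proposition~\ref{insepstr}(\ref{insepstr: insep cutpair}), the set $W_I$ of inseparable cut pairs of $\bar{M}_T$ is exactly $\{M_w \mid w \in W\}$, so the map $w \mapsto M_w$ is the intended bijection between $W$ and $W_I$ on the ``cut pair'' sides of the two bipartitions.

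The key step is the following translation of the ``between'' relation from Definition~\ref{cutpairtree}: for distinct $w, w_1, w_2 \in W$, the cut pair $M_w$ is between $M_{w_1}$ and $M_{w_2}$ if and only if $w$ lies on the unique $T$-path from $w_1$ to $w_2$. To show this, I would apply Proposition~\ref{insepstr}(\ref{insepstr: tree sys comp}): if $S_i \in \Comp(T\setminus w)$ is the component containing $w_i$, then $M_{w_i}\setminus M_w \subset \bar{M}_{S_i}$, so $\bar{M}_{S_1}$ and $\bar{M}_{S_2}$ witness separation in $\bar{M}_T \setminus M_w$ precisely when $w_1, w_2$ lie in different components of $T\setminus w$. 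Since $M_{w_i}\neq M_w$ as subsets of $\bar{M}_T$ (by the bijection above), each $M_{w_i}\setminus M_w$ is nonempty, providing the required points of $M_{w_i}$ to witness separation.

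With this translation, a star $\sigma \subset W_I$, viewed through the above identification as a subset of $W$, is exactly a maximal subset whose pairwise $T$-paths contain no interior $W$-vertex. In the bipartite tree $T$ with parts $V\sqcup W$, two distinct $W$-vertices satisfy this iff they share a common $V$-neighbor (so that the path has length $2$). I would then fix $w_0 \in \sigma$ and show that all other $w \in \sigma$ share a \emph{common} $V$-neighbor with $w_0$, which must coincide: if $w_1, w_2 \in \sigma\setminus\{w_0\}$ had distinct common $V$-neighbors $v_1, v_2$ with $w_0$, the $T$-path $w_1 \hbox{--}\, v_1 \hbox{--}\, w_0 \hbox{--}\, v_2 \hbox{--}\, w_2$ would contain the $W$-vertex $w_0$ in its interior, contradicting $w_1, w_2 \in \sigma$. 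By maximality, $\sigma = \{w \in W \mid w\text{ adjacent to }v\}$ for a unique $v \in V$, and conversely every such set is a star. This yields the bijection $\phi_I(v) = \{M_w \mid w\text{ adjacent to }v\}$ on the star side.

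Finally, adjacency is preserved: $v\in V$ is adjacent to $w\in W$ in $T$ iff $w\in \phi_I(v)$, which is precisely the adjacency condition defining edges of $T_I$ in Definition~\ref{cutpairtree}. The main obstacle in executing this plan is the careful proof of the ``between''-to-$T$-path translation while accounting for possibly nontrivial intersections $M_{w_i} \cap M_w$, which is resolved by extracting witnesses from $M_{w_i}\setminus M_w$ and applying the placement statement in Proposition~\ref{insepstr}(\ref{insepstr: tree sys comp}).
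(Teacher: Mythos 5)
Your proposal is correct and follows essentially the same route as the paper: establish $W_I = \{M_w\}_{w\in W}$ via Proposition~\ref{insepstr}(\ref{insepstr: insep cutpair}), translate the ``between'' relation to the $T$-path relation via Proposition~\ref{insepstr}(\ref{insepstr: tree sys comp}), and read off the star structure. You are somewhat more explicit than the paper in two places the paper leaves implicit --- verifying that $M_{w_i}\setminus M_w\neq\emptyset$ supplies the witnessing points required by Definition~\ref{cutpairtree}, and spelling out why a star is exactly the set of $W$-neighbors of a single $v\in V$ --- but the underlying argument is the same.
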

\begin{proof}
By Proposition~\ref{insepstr}(\ref{insepstr: insep cutpair}) the collection of inseparable cut pairs $W_I=\{M_w\}_{w\in W}$. Then from 
Definition~\ref{cutpairtree} we only need to prove the following. For $w_1,w_2\in W$, there is no inseparable cut pair in between $M_{w_1}$ and $M_{w_2}$ if and only if there is $v\in V$ with $w_1$ and $w_2$ adjacent to $v$ in $T$.

We first assume that there is no inseparable cut pair in between $M_{w_1}$ and $M_{w_2}.$ Assume for contradiction there is no $v\in V$ such that $w_1$ and $w_2$ are adjacent to $v$ in $T$. Then there is some $w\in W$ such that $w$ is in the path from $w_1$ to $w_2$. By Proposition~\ref{insepstr}(\ref{insepstr: tree sys comp}), there are connected components $S_1$ and $S_2$ in $T\setminus \{w\}$, such that $M_{w_1}\setminus M_w$ and $M_{w_2}\setminus M_w$ are contained in $\bar{M}_{S_1}$ and $\bar{M}_{S_2}$ respectively. Thus $M_{w_1}\setminus M_w$ and $M_{w_2}\setminus M_w$ are in different components of $\bar{M}_T\setminus M_w$ which is a contradiction.

Now we assume that there is $v$ adjacent to $w_1$ and $w_2$ in $T$.
Consider $w\in W$ such that $w\neq w_1,w_2$. Then ${w_1}$ and ${w_2}$ lie in $S$ for some connected component $S$ of $T\setminus \{w\}$.  Since $\bar{M}_S\setminus w$ is connected and contains $M_{w_1}\setminus M_w$ and $M_{w_2}\setminus M_w$, we have that $M_w$ cannot be in between $M_{w_1}$ and $M_{w_2}$.
\end{proof}
\section{Joining metric compacta along cut pairs as an inverse limit}
\label{sec:kettlebell}

In this section we consider a tree system $\Theta$ of cut pairs (as in Definition~\ref{treesys}) with underlying tree $T$ with a collection of shrinking compatible metrics $\mathcal{D}$ as in Setup~\ref{compsetup}. We then construct an inverse system $\mathcal{S}_{\Theta}$ (See Definition~\ref{invsys}.) and prove Theorem~\ref{completion is inverse limit} which states that the completion $\bar{M}_T$ of the total space $M_T$ is homeomorphic to the inverse limit $M_{\Theta}$ for $S_{\Theta}$. 

\begin{rem}
\label{\Swiatkowski rem}
\Swiatkowski\  in \cite{Swiatkowski20} defines a tree system of compact metric spaces similar to Definition~\ref{treesys} with the following differences. \Swiatkowski's definition allows for more generality by dropping the condition of certain vertex spaces to be discrete two point spaces. But, it is more restrictive as it requires edge spaces to be disjoint. In particular a tree system of cut pairs as in Definition~\ref{treesys} does not satisfy \Swiatkowski's
definition of a tree system.

If we try to implement \Swiatkowski's construction of an inverse system for a tree system of cut pairs $\Theta$ then the resulting inverse limit is in general not homeomorphic to the completion $\bar{M}_T$ which is our goal. In particular, \Swiatkowski's inverse limit may not contain an injective copy of each vertex space as illustrated by the following example.

Let $T$ be bipartite tree on $V\sqcup W$ such that each vertex has valence $3$. For each $v\in V$, the constituent space $M_v$ is a discrete space consisting of three points. In \cite{Swiatkowski20}, the factor spaces for the inverse system are constructed by collapsing the subsets in the peripheral collection $\C_F$ (See Definition~\ref{pericoll}.) in $M_F$.
In our example, after collapsing subsets in $\C_F$ for each $M_F$, each factor space consists of a single point and thus the inverse limit is a singleton. Clearly, the vertex spaces cannot inject into this singleton.
\end{rem}

For each $F\in \F$, we consider the space $M_F$ and the peripheral collection of edge spaces $\C_F$ as defined in Definition~\ref{pericoll}. We note that since $M_F$ is constructed by appropriately gluing together finitely many compact vertex spaces, it is straightforward to verify that $M_F$ is compact as well and that the collection $\C_F$ in $M_F$ is null.
We now define the factor spaces for our inverse system, the \emph{kettlebell space} $M_F^*$ for each $F\in \F$ as follows.

\begin{defn}
\label{kettlebell}
Define $T_F$ as a star graph 
with center $v_F$ and one vertex $v_C$ for each $C\in \C_F$. 
We now construct a tree of compatible metric spaces with underlying tree $T_F$. Let the vertex space $X_{v_F}$ be $M_F$ with the inherited metric $d$ from the space $(M_T,d)$ and for each $C=\{a,b\}\in \C_F$ let $X_{v_C}$ be the closed interval $[0,d(a,b)]$ which is the \emph{peripheral arc corresponding to $C$}. For an edge $e=(v_F,v_C)$ in $T_F$ where $C=\{a,b\}\in \C$, the edge space is $X_e=\{a,b\}$ with the metric $d_e(a,b)=d(a,b)$. Then $X_e$ embeds isometrically into $X_{v_F}$ by mapping $\{a,b\}$ identically inside $X_{v_F}$ and $X_e$ embeds isometrically into $X_{v_C}$ by mapping $a$ and $b$ to the endpoints of $[0,d(a,b)]$ in $X_{v_C}$.

We call the total space the \emph{kettlebell space} $M_F^*$. The quotient metric $d$ on $M_F^*$ extends the metric $d$ from $M_F$. As $M_F$ is made from gluing finitely many compact spaces, we get from Lemma \ref{neighborcpt} that $M_F^{*}$ is compact as well. 
\end{defn}

We now build an inverse system $\Theta^*$ on the directed set $(\F,\subset)$. With each $F\in \F$, we associate the kettlebell space $M_F^*$. For $F'\supset F$, we now construct bonding maps $f_{F'F} \colon M^*_{F'}\rightarrow M^*_F$. We first define a way to remove some interior peripheral arcs from kettlebell spaces.

For some subtree $F\in \F$ and some edge $e\in N_F$, let $M_F^*(e)$ be the space $M_F^*(e)$ with the interior of the peripheral arc corresponding to $e$ removed. Then $M_F^*(e)\subset M_F^*$ is a compact space.

\begin{defn}
\label{bondingaug}
If $F'=F\cup e$ for some $e=(v,w)\in N_F$ where $v\in V$ and $w\in W$ then $F'$ \emph{augments} $F$. We now define the bonding map $f_{F'F}$ in this case as illustrated in Figure~\ref{fig:kettlebell}.
If $v\in V(F)$ and $w\notin F$, it is straightforward to verify that $M_{F'}^*$ and $M_F^*$ are the same space. Then we define $f_{F'F}$ as the identity.
Now if $w\in V(F)$ and $v\notin V(F)$, we have a peripheral arc $\ell=[0, d(a,b)]$ glued to $a$ and $b$ in $M_F^*$
and $F'\setminus F$ consists of the vertex $v$. 

Now we note that $M_F^*(e)$ and $M_v^*(e)$ are closed subspaces in $M_{F'}^*$ such that $M_F^*(e)\cup M_v^*(e)=M_{F'}^*$ and $M_F^*(e)\cap M_v^*(e)=\{a,b\}$. Define $u_F(e)=u\colon M_v^*(e)\rightarrow [0, d(a,b)]=\ell\subset M_F^*$ as in Lemma~\ref{distdec}. We now define $f_{F'F}\big|_{M_F^*(e)}=id_{M_F^*(e)} $ and $f_{F'F}\big|_{M_v^*(e)}=u_F(e)$. Note that $f_{F'F}$ is well defined as $u\big|_{\{a,b\}}$ is the identity.
Thus, we get the full map $f_{F'F}$.
\end{defn}

\begin{figure}
    \centering

\tikzset{every picture/.style={line width=0.75pt}} 

\begin{tikzpicture}[x=0.75pt,y=0.75pt,yscale=-1,xscale=1]

\draw  [fill={rgb, 255:red, 155; green, 155; blue, 155 }  ,fill opacity=0.15 ] (89.47,61.05) .. controls (89.47,44.48) and (118.67,31.04) .. (154.68,31.04) .. controls (190.69,31.04) and (219.89,44.48) .. (219.89,61.05) .. controls (219.89,77.62) and (190.69,91.05) .. (154.68,91.05) .. controls (118.67,91.05) and (89.47,77.62) .. (89.47,61.05) -- cycle ;
\draw    (117.87,37.13) .. controls (119.32,-4.1) and (164.1,-0.29) .. (167,30.79) ;
\draw    (127.29,33.2) .. controls (130.19,14.3) and (159.9,1.61) .. (167,30.79) ;
\draw    (137.44,32.57) .. controls (147.58,7.32) and (164.97,19.37) .. (167,30.79) ;
\draw    (103.38,79.63) .. controls (42.15,88.04) and (57.18,66.84) .. (97.88,46.53) ;
\draw    (96.14,73.93) .. controls (58.09,74.08) and (70.22,64.94) .. (97.88,46.53) ;
\draw  [dash pattern={on 0.84pt off 2.51pt}]  (154.83,27.62) -- (163.74,27.24) ;
\draw    (146.13,30.79) .. controls (149.75,16.84) and (167.14,21.91) .. (167,30.79) ;
\draw    (210.91,46.52) .. controls (209.29,5.29) and (171.39,0.98) .. (168.16,32.06) ;
\draw    (205.11,41.45) .. controls (201.88,22.54) and (176.08,2.88) .. (168.16,32.06) ;
\draw    (199.31,38.91) .. controls (188,13.66) and (170.42,20.64) .. (168.16,32.06) ;
\draw    (189.89,35.74) .. controls (185.85,21.78) and (168,23.18) .. (168.16,32.06) ;
\draw  [dash pattern={on 0.84pt off 2.51pt}]  (171.78,30.03) -- (185.55,32.57) ;
\draw  [dash pattern={on 0.84pt off 2.51pt}]  (88.17,56.16) -- (85.27,68.22) ;
\draw  [fill={rgb, 255:red, 155; green, 155; blue, 155 }  ,fill opacity=1 ] (111.17,84.44) .. controls (78.26,94.89) and (48.13,95.03) .. (43.87,84.75) .. controls (39.61,74.47) and (62.83,57.66) .. (95.74,47.21) .. controls (69.35,57.62) and (52.17,71.74) .. (55.98,80.91) .. controls (59.78,90.08) and (83.29,91.27) .. (111.17,84.44) -- cycle ;
\draw    (235.11,59.08) -- (292.52,59.57) ;
\draw [shift={(294.52,59.59)}, rotate = 180.49] [color={rgb, 255:red, 0; green, 0; blue, 0 }  ][line width=0.75]    (10.93,-3.29) .. controls (6.95,-1.4) and (3.31,-0.3) .. (0,0) .. controls (3.31,0.3) and (6.95,1.4) .. (10.93,3.29)   ;
\draw    (120.05,86.61) .. controls (114.98,117.06) and (130.34,124.67) .. (141.06,91.05) ;
\draw    (125.84,88.51) .. controls (119.32,98.66) and (129.47,116.43) .. (141.06,91.05) ;
\draw    (130.92,89.15) .. controls (124.4,99.3) and (135.26,103.74) .. (141.06,91.05) ;
\draw    (149.62,88.51) .. controls (145.22,107.28) and (158.56,111.97) .. (167.87,91.25) ;
\draw    (154.66,89.69) .. controls (148.99,95.94) and (157.8,106.89) .. (167.87,91.25) ;
\draw    (159.06,90.08) .. controls (153.4,96.33) and (162.83,99.07) .. (167.87,91.25) ;
\draw    (170.14,90.04) .. controls (177.85,105.44) and (189.15,105.52) .. (183.37,86.94) ;
\draw    (174.4,90.46) .. controls (174.39,97.07) and (185.87,102.03) .. (183.37,86.94) ;
\draw    (177.31,89.19) .. controls (177.31,95.8) and (184.62,94.49) .. (183.37,86.94) ;
\draw    (188.25,84.91) .. controls (195.04,100.39) and (204.99,100.46) .. (199.9,81.79) ;
\draw    (192,85.33) .. controls (191.99,91.98) and (202.1,96.96) .. (199.9,81.79) ;
\draw  [dash pattern={on 0.84pt off 2.51pt}]  (204.92,85.6) -- (212.64,81.16) ;
\draw  [fill={rgb, 255:red, 155; green, 155; blue, 155 }  ,fill opacity=0.15 ] (349.58,61.05) .. controls (349.58,44.48) and (378.78,31.04) .. (414.79,31.04) .. controls (450.81,31.04) and (480,44.48) .. (480,61.05) .. controls (480,77.62) and (450.81,91.05) .. (414.79,91.05) .. controls (378.78,91.05) and (349.58,77.62) .. (349.58,61.05) -- cycle ;
\draw    (377.98,37.13) .. controls (379.43,-4.1) and (424.21,-0.29) .. (427.11,30.79) ;
\draw    (387.4,33.2) .. controls (390.3,14.3) and (420.01,1.61) .. (427.11,30.79) ;
\draw    (397.55,32.57) .. controls (407.69,7.32) and (425.08,19.37) .. (427.11,30.79) ;
\draw    (363.49,79.63) .. controls (302.26,88.04) and (317.29,66.84) .. (357.99,46.53) ;
\draw    (356.25,73.93) .. controls (318.2,74.08) and (330.33,64.94) .. (357.99,46.53) ;
\draw  [dash pattern={on 0.84pt off 2.51pt}]  (414.94,27.62) -- (423.85,27.24) ;
\draw    (406.24,30.79) .. controls (409.86,16.84) and (427.25,21.91) .. (427.11,30.79) ;
\draw    (471.02,46.52) .. controls (469.4,5.29) and (431.5,0.98) .. (428.27,32.06) ;
\draw    (465.22,41.45) .. controls (461.99,22.54) and (436.19,2.88) .. (428.27,32.06) ;
\draw    (459.42,38.91) .. controls (448.11,13.66) and (430.53,20.64) .. (428.27,32.06) ;
\draw    (450,35.74) .. controls (445.96,21.78) and (428.11,23.18) .. (428.27,32.06) ;
\draw  [dash pattern={on 0.84pt off 2.51pt}]  (431.89,30.03) -- (445.66,32.57) ;
\draw  [dash pattern={on 0.84pt off 2.51pt}]  (348.28,56.16) -- (345.38,68.22) ;
\draw    (380.16,86.61) .. controls (375.09,117.06) and (390.45,124.67) .. (401.17,91.05) ;
\draw    (385.95,88.51) .. controls (379.43,98.66) and (389.58,116.43) .. (401.17,91.05) ;
\draw    (391.03,89.15) .. controls (384.51,99.3) and (395.37,103.74) .. (401.17,91.05) ;
\draw    (409.73,88.51) .. controls (405.33,107.28) and (418.67,111.97) .. (427.98,91.25) ;
\draw    (414.77,89.69) .. controls (409.1,95.94) and (417.91,106.89) .. (427.98,91.25) ;
\draw    (419.17,90.08) .. controls (413.51,96.33) and (422.94,99.07) .. (427.98,91.25) ;
\draw    (430.25,90.04) .. controls (437.96,105.44) and (449.26,105.52) .. (443.48,86.94) ;
\draw    (434.51,90.46) .. controls (434.5,97.07) and (445.98,102.03) .. (443.48,86.94) ;
\draw    (437.42,89.19) .. controls (437.42,95.8) and (444.73,94.49) .. (443.48,86.94) ;
\draw    (448.36,84.91) .. controls (455.15,100.39) and (465.1,100.46) .. (460.01,81.79) ;
\draw    (452.11,85.33) .. controls (452.11,91.98) and (462.21,96.96) .. (460.01,81.79) ;
\draw  [dash pattern={on 0.84pt off 2.51pt}]  (465.03,85.6) -- (472.75,81.16) ;
\draw [color={rgb, 255:red, 0; green, 0; blue, 0 }  ,draw opacity=1 ][line width=1.5]    (377.12,84.33) .. controls (245.25,101.46) and (316.98,62.13) .. (356.1,46.9) ;

\draw (137.85,107.07) node [anchor=north west][inner sep=0.75pt]    {$M{_{F'}^{*}}$};
\draw (400.34,110.52) node [anchor=north west][inner sep=0.75pt]    {$M_{F}^{*}$};
\draw (247,32.4) node [anchor=north west][inner sep=0.75pt]    {$f_{F}{}_{'}{}_{F}$};

\end{tikzpicture}

    \caption{Consider $F',F\in \F$ such that $F'$ contains two vertices from $V$ and $F$ contains one vertex from $V$. 
    The figure describes the kettlebell spaces $M_{F'}^*$ and $M_F^*,$ along with the bonding map $f_{F'F}$ which collapses the dark gray vertex space in $M_{F'}^*$ to the thick peripheral arc in $M_F^*$. }
    \label{fig:kettlebell}
\end{figure}
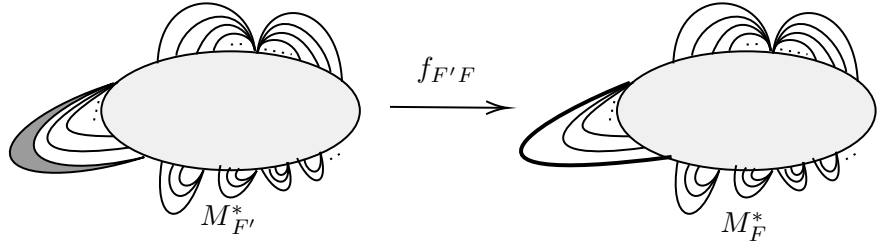

\begin{lem}
\label{bondingauglip}
If $F'$ augments $F$ then the bonding map $f_{F'F}$ in Definition~\ref{bondingaug} is $1$--Lipschitz.
\end{lem}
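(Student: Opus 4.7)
The first case of Definition~\ref{bondingaug}, where $v \in V(F)$ and $w \notin F$, is immediate since $f_{F'F}$ is the identity on $M_{F'}^* = M_F^*$. I focus on the second case, where $w \in V(F)$ and $v \notin V(F)$, and write $f = f_{F'F}$. My strategy is to push linking chains forward under $f$: given $x, y \in M_{F'}^*$ and $\epsilon > 0$, choose (by Proposition~\ref{semimeteff}) a linking chain $\alpha = (p_1, q_1, \ldots, p_k, q_k)$ from $x$ to $y$ in $M_{F'}^*$ with $d_\sqcup(\alpha) < d_{M_{F'}^*}(x, y) + \epsilon$, and show that $f(\alpha) := (f(p_1), f(q_1), \ldots, f(p_k), f(q_k))$ is a linking chain from $f(x)$ to $f(y)$ in $M_F^*$ of length at most $d_\sqcup(\alpha)$.

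The linking property of $f(\alpha)$ is automatic: the relation $q_i \sim p_{i+1}$ in the tree system of $M_{F'}^*$ means $q_i$ and $p_{i+1}$ represent the same point of $M_{F'}^*$, and since $f$ is a well-defined map on the total space we get $f(q_i) = f(p_{i+1})$ in $M_F^*$, hence $f(q_i) \sim f(p_{i+1})$ there. For the length bound, I would analyze each pair $(p_i, q_i)$ according to which vertex space of the tree system for $M_{F'}^*$ contains it. If that vertex space is supported on $F$---namely $M_u$ for some $u \in V(F)$, or a peripheral arc for an edge in $N_F \setminus \{e\}$---then $f$ acts as the identity there and the pairwise contribution is preserved. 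Otherwise the pair lies inside $M_v^*(e)$ (either in $M_v$ or on a peripheral arc attached to $v$ for some edge of $T$ other than $e$), and $f$ sends the pair into $\ell$ via $u = u_F(e)$. Lemma~\ref{distdec} applied to the metric space $M_v^*(e)$ with the distinguished pair $a, b$ tells us that $u$ is $1$-Lipschitz with respect to the metric of $M_v^*(e)$, so
\[
|u(p_i) - u(q_i)| \le d_{M_v^*(e)}(p_i, q_i) \le d_\sqcup(p_i, q_i),
\]
where the second inequality uses that the single-pair chain $(p_i, q_i)$, which already lies in one vertex space of the tree system of $M_v^*(e)$, gives an admissible linking chain in $M_v^*(e)$ of length exactly $d_\sqcup(p_i, q_i)$.

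Summing these bounds over $i$ yields $d_\sqcup(f(\alpha)) \le d_\sqcup(\alpha)$, and so $d_{M_F^*}(f(x), f(y)) \le d_\sqcup(f(\alpha)) \le d_\sqcup(\alpha) < d_{M_{F'}^*}(x, y) + \epsilon$; letting $\epsilon \to 0$ finishes the proof. The main point requiring care is the reduction of $|u(p_i) - u(q_i)|$ to the pairwise distance $d_\sqcup(p_i, q_i)$ through the metric on $M_v^*(e)$: once one observes that each constituent vertex space of the tree system of $M_v^*(e)$ embeds isometrically into $M_v^*(e)$ via the quotient construction, the inequality follows from the definition of the quotient (infimum) metric rather than any additional geometric input.
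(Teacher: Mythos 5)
Your proof is correct and lands on the same key fact as the paper's (the $1$--Lipschitzness of $u_F(e)$ supplied by Lemma~\ref{distdec}), but it is organized differently. The paper argues pointwise, splitting into three cases according to whether $x$ and $y$ lie in $M_F^*(e)$, in $M_v^*(e)$, or one in each, and in the mixed case it invokes the fact that the quotient metric forces $d(x,y)=d(x,a)+d(a,y)$ or $d(x,y)=d(x,b)+d(b,y)$ before applying the triangle inequality together with Lemma~\ref{distdec}. You instead push an arbitrary near--optimal linking chain forward through $f_{F'F}$ and bound each pairwise contribution, summing at the end. This avoids having to isolate the mixed case and extract the exact formula for $d(x,y)$, and it packages the argument in a way that would survive a more complicated gluing locus; it is a perfectly valid alternative.

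One point deserves more care than you give it. When you break a pair $(p_i,q_i)$ by ``which vertex space of the tree system for $M_{F'}^*$ contains it'' and list $M_u$ for $u\in V(F)$, $M_v$, and the peripheral arcs as the options, you are not using the tree of metrizable spaces from Definition~\ref{kettlebell}, whose vertex spaces for $M_{F'}^*$ are the single constituent $M_{F'}$ and the peripheral arcs. You are implicitly refining the center vertex $M_{F'}$ into the $M_u$'s, i.e.\ passing to a finer tree of spaces. That refinement does yield the same quotient metric (any finer chain coarsens to one in the star decomposition with the same total length and conversely), but this is an identification you are silently assuming; the paper's formulation with $M_F^*(e)\cup M_v^*(e)=M_{F'}^*$ and the observation that these meet only in $\{a,b\}$ is precisely how it avoids having to make that comparison explicit. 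If you keep the chain-pushing route, you should either state and justify that the fine and coarse decompositions produce the same metric, or work directly with the two closed pieces $M_F^*(e)$ and $M_v^*(e)$ as the paper does, noting that any efficient chain between them must pass through $a$ or $b$.
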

\begin{proof}
Consider distinct $x,y\in M_{F'}^*$. We want to prove that for any $x,y\in M_F$ we have $d(f_{F'F}(x),f_{F'F}(y))\leq d(x,y)$. Recall that $M_F^*(e)\cup M_v^*(e)=M_{F'}^*$. The case when both $x,y\in M_F^*(e)$ is obvious and the case when $x,y\in M_v^*(e)$ follows from Lemma~\ref{distdec}. Suppose $x\in M_F^*(e)$ and $y\in M_v^*(e)$. By the definition of the quotient metric in Proposition \ref{semimeteff} we either have $d(x,y)=d(x,a)+d(a,y)$ or $d(x,y)=d(x,b)+d(b,y)$ in $M_{F'}^*$. We assume the former and the other case follows similarly. Since $u$ is $1$--Lipschitz,
$$d(f_{F'F}(x),f_{F'F}(y))=d(x,u(y))\leq d(x,a)+d(u(a),u(y))\leq  d(x,a)+d(a,y)$$
which proves that $f_{F'F}$ is $1$--Lipschitz.
\end{proof}

We now define the bonding map in general.
\begin{defn}
\label{bondinggen}
Consider $F,F'\in \mathcal{F}$ such that $F\subset F'$. Then there is an \emph{augmenting chain} from $F'$ to $F$ which is a finite sequence of trees $F=F_1,F_2, \ldots, F_n=F'$ such that $F_{i+1}$ augments $F_i$ for $i=1,2\ldots,n-1$. Then define the \emph{bonding map}
$$f_{F'F}:=f_{F_2 F_1}\circ f_{F_3 F_2} \circ \cdots \circ f_{F_{n-1} F_{n-2}}\circ f_{F_n F_{n-1}}.$$
\end{defn}

Apriori, it may seem that the above definition depends on the choice of the augmenting chain but we prove that this bonding map is well defined.
\begin{lem}
\label{bondingwelldef}
Consider $F,F', F''\in \mathcal{F}$ such that $F\subset F'$. Then the bonding map $f_{F'F}$ in Definition~\ref{bondinggen} is well defined and $1$--Lipschitz. Moreover, $f_{F''F}=f_{F''F'}\circ f_{F'F}$ whenever $F\subset F'\subset F''$. 
\end{lem}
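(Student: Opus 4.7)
The $1$-Lipschitz claim is immediate from Lemma~\ref{bondingauglip} together with the fact that a composition of $1$-Lipschitz maps is $1$-Lipschitz. The transitivity $f_{F''F} = f_{F'F} \circ f_{F''F'}$ will follow from well-definedness, since the concatenation of an augmenting chain from $F$ to $F'$ with one from $F'$ to $F''$ is itself an augmenting chain from $F$ to $F''$, whose associated composition is $f_{F'F} \circ f_{F''F'}$.

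The main work lies in proving that $f_{F'F}$ is independent of the choice of augmenting chain. The plan is to first establish the following commutativity lemma: whenever $F^* \subset F' := F^* \cup \{e_1, e_2\}$ with both $F^* \cup e_1$ and $F^* \cup e_2$ subtrees of $T$, then
\begin{equation*}
f_{(F^* \cup e_1) F^*} \circ f_{F' (F^* \cup e_1)} \;=\; f_{(F^* \cup e_2) F^*} \circ f_{F' (F^* \cup e_2)}
\end{equation*}
as maps $M_{F'}^* \to M_{F^*}^*$. I will prove this by case analysis on whether each $e_i$ adds a $V$- or $W$-vertex. If either $e_i$ adds a $W$-vertex, the corresponding single-step bonding map is the identity: tree-theoretic considerations force the new $W$-vertex to have a unique $V$-neighbor already in the subtree, so the pair contributed to $\C$ is the same before and after the augmentation, the set $\C$ is unchanged, and the kettlebell space does not change. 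The commutativity identity then reduces to a tautology. In the remaining case where both $e_i$ add $V$-vertices $v_1, v_2$, the no-cycle property of $T$ forces $v_1 \neq v_2$. Then $M_{F'}^*$ splits as the union of three pieces glued along the relevant attaching pairs: the core $M_{F^*}^*$ with the interiors of $\ell_{e_1}, \ell_{e_2}$ removed, $M_{v_1}^*(e_1)$, and $M_{v_2}^*(e_2)$. Both sides of the identity act as the identity on the core, as $u_{F^*}(e_1)$ on $M_{v_1}^*(e_1)$, and as $u_{F^*}(e_2)$ on $M_{v_2}^*(e_2)$, proving commutativity.

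With commutativity in hand, well-definedness will follow by strong induction on $n := |\edges(F') \setminus \edges(F)|$. The cases $n \leq 1$ are trivial. For $n \geq 2$, suppose two augmenting chains from $F$ to $F'$ end with last edges $e_1$ and $e_2$. If $e_1 = e_2$, the inductive hypothesis applied to $F \subset F' \setminus e_1$ yields agreement. Otherwise, the new endpoints of $e_1$ and $e_2$ in their respective chains must be leaves of $F'$ (each is absent from the penultimate subtree and hence has no other edges in $F'$), so $F^* := F' \setminus \{e_1, e_2\}$ is a subtree containing $F$ with $|\edges(F^*) \setminus \edges(F)| = n - 2$. By induction, $g := f_{F^* F}$ is well-defined, and each chain's full composition can be rewritten, through an auxiliary chain factoring through $F^*$, as $g \circ f_{(F^* \cup e_j) F^*} \circ f_{F' (F^* \cup e_j)}$ for the appropriate $j \in \{1,2\}$. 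Commutativity then identifies the two expressions.

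The main obstacle is the case analysis in the commutativity lemma, particularly the verification that $W$-vertex augmentations act as identities at the kettlebell level. This hinges on treating $\C_S$ as a set (not an indexed family), which is forced by Definition~\ref{treesys}, and on the observation that all edges in $N_S$ incident to a common $W$-vertex produce the same pair once their images are identified in $M_T$.
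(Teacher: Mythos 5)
Your proof is correct and more carefully laid out than the paper's, though the two approaches share the same core idea: reduce the well-definedness question to the commutativity of two consecutive single-edge augmentations. The differences are worth noting. For the base commutativity step, the paper argues that $\Image(f_{F_1 F})$ and $\Image(f_{\tilde{F}_1 F})$ "can intersect on at most one point, on which both maps agree," which as literally written is imprecise --- both images are all of $M_F^*$ in general --- whereas you give a clean case analysis on whether each augmenting edge adds a $V$- or $W$-vertex, decomposing $M_{F'}^*$ into three pieces whose overlaps are at most single points (the correct version of what the paper gestured at). You also explicitly surface the subtle point that $\mathcal{C}_F$ must be read as a set of pairs (not an indexed family over $N_F$) for the $W$-vertex augmentation to leave the kettlebell space literally unchanged; the paper silently assumes this. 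For the general reduction, the paper observes that augmenting chains correspond to linear extensions of the partial order on $\text{cl}(F' \setminus F)$ determined by descendancy, and invokes the classical fact that two linear extensions differ by adjacent transpositions; you instead run a strong induction on $|\edges(F') \setminus \edges(F)|$, peeling off the last edges and noting that distinct last edges must be attached at distinct leaves, so $F' \setminus \{e_1,e_2\}$ is still a subtree. Both arguments are valid; yours is more elementary and self-contained, while the paper's is shorter if one grants the poset fact. As a minor remark, you correctly wrote the transitivity as $f_{F''F} = f_{F'F} \circ f_{F''F'}$; the statement as printed in the paper has the composition in the other order, which does not typecheck under the standard convention implicit in Definition~\ref{bondinggen}.
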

\begin{proof}
Since $f_{F'F}$ is a finite composition of $1$--Lipschitz maps, $f_{F'F}$ is $1$--Lipschitz. To check that the map is well defined we first consider the case when $F'$ has two more edges than $F$.
If we have two augmenting chains $F,F_1,F'$ and $F,\Tilde{F_1},F'$ between $F$ and $F'$ 
then note that $\Image(f_{F_1 F})$ and $\Image(f_{\Tilde{F_1}F})$ can intersect on at most one point, on which both maps agree. Therefore, both augmenting chains give the same definition for $f_{F'F}$.

For the general case, note that the graph $\text{cl}(F'\setminus F)$ is a forest with each tree rooted at a vertex in $F$. The descendancy in the trees determines a partial order and the choice to extend this to a total order is equivalent to choosing an augmenting chain from $F'$ to $F$. Given two total orders extending the same partial order, we can get from one to another by a finite sequence of transpositions of adjacent elements as explained in the previous paragraph.

To see that $f_{F''F}=f_{F''F'}\circ f_{F'F}$ it suffices to consider an augmenting chain from $F$ to $F''$ containing $F'$. 
\end{proof}

\begin{defn}
\label{invsys}
Define the inverse system $\mathcal{S}_{\Theta}$ as
$$\mathcal{S}_{\Theta}=(\{M_F^* \mid F\in \F\},\{f_{F'F} \mid F\subset F'\}).$$
\end{defn}

The inverse system $\mathcal{S}_{\Theta}$
is well defined since the bonding maps commute by Lemma~\ref{bondinggen}.
Define $M_{\Theta}:=\varprojlim \mathcal{S}_{\Theta}$ as the inverse limit of this system which is compact as the factor spaces are kettlebell spaces which are compact. We now construct a map from $M_T$ in Setup~\ref{compsetup} to $M_{\Theta}$.

\begin{lem}
\label{gam}
Consider some $F\in \F$. Then there is a $1$--Lipschitz map $\gamma_F\colon M_T\rightarrow M_F^*$ with the following properties.
\begin{enumerate}
    \item 
    \label{gam: gamF} For any $F'\in \F$ such that $F\subset F'$ we have $\gamma_F\big|_{M_{F'}}=f_{F'F}\big|_{M_{F'}}$.
    \item
    \label{gam: gam props}The maps $\{\gamma_F\}_{F\in \F}$ induce a map $\gamma\colon M_T\rightarrow M_{\Theta}$ that is continuous and injective.
\end{enumerate}
\end{lem}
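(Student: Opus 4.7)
The plan is to define $\gamma_F$ by stabilising the bonding maps. The key observation is that for any augmentation $F''=F'\cup e$, the bonding map $f_{F''F'}$ restricted to $M_{F'}\subset M_{F'}^*(e)\subset M_{F''}^*$ is the identity (directly from Definition~\ref{bondingaug}, in both cases listed there). Iterating along an augmenting chain and invoking Lemma~\ref{bondingwelldef} for composition, we get $f_{F''F'}|_{M_{F'}}=\mathrm{id}$ for any $F'\subset F''$ in $\mathcal{F}$. So for $x\in M_T$, choose any $F'\in \mathcal{F}$ containing $F$ with $x\in M_{F'}$ (such $F'$ exists because $x$ lies in some vertex space $M_v$) and set $\gamma_F(x):=f_{F'F}(x)$. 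If $F''\supset F'$ is another such choice, then $f_{F''F}(x)=f_{F'F}\bigl(f_{F''F'}(x)\bigr)=f_{F'F}(x)$, so $\gamma_F$ is well-defined. Property~(\ref{gam: gamF}) is then immediate by taking $F''=F'$ in the definition.

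To prove $\gamma_F$ is $1$--Lipschitz, fix $x,y\in M_T$. By Proposition~\ref{semimeteff} there is an efficient linking chain realising $d(x,y)$, and this chain lies in $M_{F'}$ for some finite subtree $F'\supset F$ containing all vertex spaces visited by the chain together with $x$ and $y$. Since $M_{F'}$ embeds isometrically into $M_T$ and into $M_{F'}^*$, we have $d_{M_{F'}^*}(x,y)=d(x,y)$. Lemma~\ref{bondingwelldef} then gives
$$d\bigl(\gamma_F(x),\gamma_F(y)\bigr)=d\bigl(f_{F'F}(x),f_{F'F}(y)\bigr)\leq d_{M_{F'}^*}(x,y)=d(x,y).$$

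For part (\ref{gam: gam props}), the family $\{\gamma_F\}$ is compatible with the bonding maps: for $F\subset F'$ and any $x\in M_T$, pick $F''\supset F'$ containing $x$; then
$$f_{F'F}\of \gamma_{F'}(x)=f_{F'F}\of f_{F''F'}(x)=f_{F''F}(x)=\gamma_F(x).$$
By the universal property of the inverse limit, there is a unique induced map $\gamma\colon M_T\to M_\Theta$ with $\pi_F\of \gamma=\gamma_F$ for every $F$. Continuity of $\gamma$ follows because each $\gamma_F$ is continuous ($1$--Lipschitz) and $M_\Theta$ carries the inverse-limit topology, so continuity into $M_\Theta$ is equivalent to continuity of all projections $\pi_F\of \gamma=\gamma_F$. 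For injectivity, suppose $x\neq y$ in $M_T$; choose $F'\in \mathcal{F}$ containing both $x$ and $y$ (in some vertex spaces) with $F'\supset F$ for some fixed $F$, and note $\gamma_{F'}(x)=f_{F'F'}(x)=x\neq y=\gamma_{F'}(y)$ using that $M_{F'}\hookrightarrow M_{F'}^*$ is an isometric (hence injective) embedding. Hence $\pi_{F'}\bigl(\gamma(x)\bigr)\neq \pi_{F'}\bigl(\gamma(y)\bigr)$, giving $\gamma(x)\neq \gamma(y)$.

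The only point requiring care is the well-definedness of $\gamma_F$, which boils down to the identity $f_{F''F'}|_{M_{F'}}=\mathrm{id}$; once this is checked from Definition~\ref{bondingaug} and extended along augmenting chains via Lemma~\ref{bondingwelldef}, the rest is routine bookkeeping with the inverse system. A later lemma (not addressed here) will be needed to upgrade $\gamma$ to a homeomorphism $\bar{M}_T\cong M_\Theta$, which is the content of Theorem~D.
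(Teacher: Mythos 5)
Your proposal is correct and follows essentially the same route as the paper: define $\gamma_F(x)=f_{F'F}(x)$ for any finite $F'\supset F$ containing $x$, check well-definedness via the fact that bonding maps restrict to the identity on the embedded copy of the smaller $M_{F'}$, obtain $1$--Lipschitzness from Lemma~\ref{bondingwelldef} together with the isometric embedding $M_{F'}\hookrightarrow M_{F'}^*$, and assemble the $\gamma_F$ into $\gamma$ by compatibility with the bonding maps. The only cosmetic difference is that you phrase well-definedness for nested choices and appeal to a common superset implicitly, whereas the paper explicitly takes $F_3\supset F_1,F_2$; both arguments are sound and identical in substance.
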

\begin{proof}
Consider $x\in M_T$. Then there is $F'\in \F$ such that $x\in M_{F'}$ and $F\subset F'$. Define $\gamma_F(x)=f_{F'F}(x)$. 
We show that this is independent of the choice of $F'$. Consider $F_1,F_2\in F$ such that $x\in M_{F_1},M_{F_2}$ and $F\subset F_1,F\subset F_2$. Then there is $F_3$ such that $F_1,F_2\subset F_3$. Using Lemma~\ref{bondingwelldef} and the fact that $f_{F_3F_1}$ is the identity on $M^*_{F_1}$, we get $f_{F_3F}(x)=f_{F_1F}(x)$. Similarly, we get $f_{F_3F}(x)=f_{F_2F}(x)$. Thus $\gamma_F$ is well-defined.

To see that $\gamma_F$ is $1$--Lipschitz, consider $x,y\in M_T$. Then there is $F'\supset F$ such that $x,y\in M_{F'}$ and thus $\gamma_F(x)=f_{F'F}(x)$ and $\gamma_F(y)=f_{F'F}(y)$. Since $f_{F'F}$ is $1$--Lipschitz by Lemma~\ref{bondingauglip} we get that $d(\gamma_F(x),\gamma_F(y))\leq d(x,y)$ which is what we wanted to prove.

To check that $\gamma$ is well defined, consider $F_1,F_2\in \F$ such that $F_1\subset F_2$. We need to prove that $f_{F_2F_1}(\gamma_{F_2}(x))=\gamma_{F_1}(x)$ for all $x\in M_T$. Consider some $F_3$ such that $F_1,F_2\subset F_3$ and $x\in M_{F_3}$. Then since $\gamma_{F_1}=f_{F_3F_1}(x)=f_{F_2F_1}(f_{F_3F_2}(x))$ and $\gamma_{F_2}=f_{F_3F_2}(x)$, we get what we wanted to prove. 

Since all $\gamma_F$'s are $1$--Lipschitz and thus continuous, we  get that $\gamma$ is also continuous. To see that $\gamma$ is injective, consider distinct $x,y\in M_T$. Then there is $F\in F$ such that $x,y\in M_F$. Since $f_{FF}=id_{M_F^*}$ we get that $\gamma_F(x)=x$ and $\gamma_F(y)=y$ which proves injectivity.
\end{proof}

Recall that an end is redundant if eventually the cut pairs corresponding to the end intersect at the same point in $M_T$. We now see that such ends do not add any extra points to the inverse limit.
We first define a map from the ends of $T$ to $M_{\Theta}$. 

\begin{lem}
\label{boundgam}
For each $F\in \F$, there is a map $\boundary\gamma_F\colon \boundary T\rightarrow M_F^*$ with the following properties.
\begin{enumerate}
\item 
\label{boundgam: deffac}
For every $x\in \boundary T$ and every associated sequence $(p_n)$ for $x$, we have that $\gamma_F(p_n)\rightarrow \boundary\gamma_F(x)$.
\item
\label{boundgam: def}The maps $\{\boundary\gamma_F\}_{F\in \F}$ induce a map $\boundary\gamma\colon\boundary T\rightarrow M_{\Theta}$ such that $\gamma(p_n)\rightarrow \boundary\gamma(x)$
for every $x\in \boundary T$ and every associated sequence $(p_n)$ for $x$.
\item 
\label{boundgam: nonredint}
For every $x\in \boundary_0T$ and every $F\in \F$, we must have that $\boundary\gamma_F(x)$ is in the interior of a peripheral arc in $M_F^*$.
\item
\label{boundgam: inj}Restricted to $\boundary_0T$, the map $\boundary\gamma$ is injective.
\end{enumerate}
\end{lem}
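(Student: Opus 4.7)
For Part~(1), given $x \in \boundary T$ and any associated sequence $(p_n)$, Lemma~\ref{assocseqprops} gives that $(p_n)$ is Cauchy in $M_T$. Since $\gamma_F$ is $1$-Lipschitz by Lemma~\ref{gam} and $M_F^*$ is compact (Definition~\ref{kettlebell}), the sequence $(\gamma_F(p_n))$ converges in $M_F^*$; I set $\boundary\gamma_F(x)$ to be this limit. Independence of the choice of associated sequence follows from the second assertion of Lemma~\ref{assocseqprops}. For Part~(2), define $\boundary\gamma(x)$ coordinate-wise by its $F$-components $\boundary\gamma_F(x)$. The inverse system compatibility $f_{F'F}(\boundary\gamma_{F'}(x)) = \boundary\gamma_F(x)$ for $F \subset F'$ follows from $f_{F'F} \circ \gamma_{F'} = \gamma_F$ in Lemma~\ref{gam}(\ref{gam: gamF}) together with continuity (indeed $1$-Lipschitz) of $f_{F'F}$ from Lemma~\ref{bondingwelldef}; coordinate-wise convergence then yields $\gamma(p_n) \to \boundary\gamma(x)$ in $M_\Theta$.

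Part~(3) is the principal obstacle. Given $x \in \boundary_0 T$ and $F \in \F$, fix a representative ray $(v_1, w_1, v_2, w_2, \ldots)$ for $x$, and let $e \in N_F$ be its unique exit edge from $F$, with cut pair $\{a, b\}$ and peripheral arc $\ell_e \subset M_F^*$. Tracing the bonding maps from Definition~\ref{bondingaug}, for all sufficiently large $n$ the point $\gamma_F(p_n)$ lies on $\ell_e$, so by closedness $\boundary\gamma_F(x) \in \ell_e$. I argue by contradiction that $\boundary\gamma_F(x) \neq a$; the case of $b$ is symmetric. The key inductive claim is that the $F_j$-coordinate of $\boundary\gamma(x)$ equals $a$ for every stage $F_j$ along an augmenting chain $F = F_0 \subset F_1 \subset \cdots$ proceeding outward along the ray: at $W$-augmentations the bonding map is the identity, while at $V$-augmentations the preimage of $a$ under $f_{F_j F_{j-1}}$ reduces to $\{a\}$ because $u_{F_{j-1}}(e_j)(z) = a$ forces $d(z, a) = 0$. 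Non-redundancy (Definition~\ref{redundant}) now precludes either $a$ or $b$ from lying in every cut pair $M_{w_n}$ of the ray, since otherwise the constant associated sequence $p_n = a$ (or $p_n = b$) would exhibit redundancy. Choose $F' = F_{2k_*}$ far enough along the chain that the pair $M_{w_{N+k_*}}$ of the exit edge $e'$ from $F'$ avoids $\{a, b\}$; then the exit arc $\ell_{e'} \subset M_{F'}^*$ is a closed set not containing $a$, yet the inductive claim forces $\gamma_{F'}(p_n) \to a$ while $\gamma_{F'}(p_n) \in \ell_{e'}$ for all large $n$, a contradiction.

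For Part~(4), given distinct $x, x' \in \boundary_0 T$, their representative rays eventually diverge, so for $F$ containing the divergence point they exit $F$ through distinct edges $e \neq e'$. Part~(3) then places $\boundary\gamma_F(x)$ in the interior of $\ell_e$ and $\boundary\gamma_F(x')$ in the interior of $\ell_{e'}$. By Definition~\ref{treesys}(\ref{treesys: edge}) two distinct peripheral arcs share at most one endpoint, so their interiors are disjoint, whence $\boundary\gamma_F(x) \neq \boundary\gamma_F(x')$ and $\boundary\gamma$ is injective on $\boundary_0 T$.
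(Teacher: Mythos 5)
Your proof is correct, and parts (1), (2), and (4) proceed essentially as in the paper (with the minor caveat that the factorization $f_{F'F}\circ\gamma_{F'}=\gamma_F$ you invoke in part~(2) is really what the well-definedness assertion in Lemma~\ref{gam}(\ref{gam: gam props}) establishes, rather than the restriction formula in Lemma~\ref{gam}(\ref{gam: gamF})).

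The genuine divergence is in part~(3). The paper argues \emph{directly}: letting $\ell_1$ denote the exit arc of $F$, it picks $k$ (from non-redundancy) so that $M_{w_n}$ avoids the endpoints $\{a,b\}$ of $\ell_1$ for $n\ge k$, observes that $\gamma_F(p_n)$ lies in the compact set $f_{F_kF}(\ell_k)$ for $n\ge k$, and notes that this compact set sits inside the \emph{interior} of $\ell_1$ (precisely because the fiber of each endpoint under the bonding maps is a singleton and $a,b\notin\ell_k$); passing to the limit then lands $\boundary\gamma_F(x)$ in that compact subset of the interior, avoiding the openness issue. You instead argue by \emph{contradiction}: you assume $\boundary\gamma_F(x)=a$, make the singleton-fiber computation $f_{F_jF_{j-1}}^{-1}(a)=\{a\}$ explicit, and propagate the hypothesis forward along the augmenting chain to force $\boundary\gamma_{F'}(x)=a$ at a stage $F'$ whose exit arc no longer contains $a$. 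Both arguments rest on the same two facts — the fibers of the arc endpoints are singletons, and non-redundancy eventually ejects $a,b$ from the cut pairs along the ray — so this is a repackaging rather than a new idea, but your version surfaces the fiber computation that the paper leaves implicit, and it sidesteps the (slightly delicate) issue of taking a limit into an open interior by working with a closed contradiction instead. One small imprecision in your write-up: the clause ``$u_{F_{j-1}}(e_j)(z)=a$ forces $d(z,a)=0$'' only parses when $a$ happens to be an endpoint of $\ell_{e_j}$; when it is not, $u^{-1}(a)=\emptyset$ and the fiber is still $\{a\}$ for the trivial reason that $a$ lies outside the arc. It is worth saying both halves. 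Your part~(4) using any $F$ containing the divergence vertex is also slightly more flexible than the paper's choice $F=\{v\}$ and more naturally handles the case where two rays first diverge at a $W$-vertex.
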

\begin{proof}
Consider an associated sequence $(p_n)$ for some $x\in \boundary T$ and some $F\in \F$. Then consider the sequence $(\gamma_F(p_n))$. Since $\gamma_F$ is $1$--Lipschitz and thus uniformly continuous by Lemma~\ref{boundgam}, we get that $(\gamma_F(p_n))$ is Cauchy. Since $M_F^*$ is compact and thus complete,
$(\gamma_F(p_n))$ is convergent. Define $\boundary\gamma_F(x)$ to be the limit of $(\gamma_F(p_n))$. By Lemma~\ref{assocseqprops} and since $\gamma_F$ is $1$--Lipschitz, we get that this definition is independent of the choice of $(p_n)$.

Now define $\boundary\gamma(x)=(\boundary\gamma_F(x))_{F\in \F}$ for all $x\in M_T$. To see that this is well defined consider $F\subset F'$ and some $x\in M_T$ along with an associated sequence $(p_n)$ for $x$. Then we have $\gamma_F(p_n)\rightarrow \boundary\gamma_F(x)$ and $\gamma_{F'}(p_n)\rightarrow \boundary\gamma_{F'}(x)$. Since $\gamma$ is well defined by Lemma~\ref{gam}, and since $f_{F'F}$ is continuous we get that $f_{F'F}(\boundary\gamma_{F'}(x))=\boundary\gamma_F(x)$ which proves that $\boundary\gamma$ is well defined. Since $\gamma_F(p_n)\to \boundary\gamma_F(x)$ for all $F\in \F$, we get (\ref{boundgam: def}).

Consider $x\in \boundary_0 T$ represented by a ray $(v_1,w_1,v_2,w_2\ldots)$ such that $v_1\in \vertices(F)\cap V$ and $v_2\notin V(F)$. Also consider an associated sequence $(p_n)$ for $x$ such that $p_n\in M_{w_n}$. Let $F'=F\cup (v_1,w_1)$. Since $M_F^*=M_{F'}^*$ even if $F\subsetneq F'$, we assume $F$ contains $e_1=(v_1,w_1)$. Let $\ell_1$ be the peripheral arc in $M_F^*$ corresponding to $e_1$. Then note that $\gamma_F(p_n)\in \ell_1$ for all $n$. Since $x$ is non-redundant there is some $k$ such that $M_{w_n}$ does not intersect the endpoints of $\ell_1$ for all $n\geq k$.

Let $e_k=(v_k,w_k)$ and $F_k=F\cup(v_1,w_1,\ldots,v_k,w_k)$. Also let $\ell_k$ be the peripheral arc in $M_{F_k}^*$ corresponding to $e_k$. Then $\gamma_{F_k}(p_n)\in \ell_k$ for all $n\geq k$. Moreover, by the assumption on $k$ we must have that $f_{F_kF}(\ell_k)$ is in the interior of the peripheral arc $\ell_1$. Since $\gamma_F=f_{F_kF}(\gamma_{F_k})$ we get that $p_n$ is in the interior of $\ell_1$ for all $n\geq k$. Since $\gamma_F(p_n)\rightarrow \boundary\gamma_F(x)$ we have that $\boundary\gamma_F(x)$ is in the interior of $\ell_1$ which proves (\ref{boundgam: nonredint}).

Now consider distinct $x,x'\in \boundary_0T$ with representative rays $(v_1,w_1,v_2,w_2,\ldots)$ and $(v'_1,w'_1,v'_2,w'_2,\ldots)$ respectively such that the rays only intersect at $v_1=v'_1=v$. Consider the single vertex subtree $v\in \F$ and $e=(v_1,w_1),e'=(v'_1,w'_1)$ incident on $v$. Let $\ell,\ell'$ be peripheral arcs in $M_v^*$ corresponding to $e,e'$ respectively. Following the argument for (\ref{boundgam: nonredint}) we get $\boundary\gamma_v(x)$ is in the interior of $\ell$ and $\boundary\gamma_v(x')$ is in the interior of $\ell'$ which proves (\ref{boundgam: inj}).
\end{proof}

We now give an alternate way to describe $\boundary\gamma_F$ for $F\in \F$.

\begin{lem}
\label{boundgam alt def}
Consider $F\in \F$ and an end $x\in \boundary T$ represented by the ray $(v_1,w_1,v_2,w_2,\ldots)$ such that $v_1\in V(F)$ but $v_2\notin V(F)$. Let $F_1=F\cup (v_1,w_1)$ and $F_n=F_{n-1}\cup (w_{n-1},v_n,w_n)$. Let $\ell_n$ be the peripheral arc in $M_{F_n}^*$ corresponding to $(v_n,w_n)$ and $A_n=f_{F_nF}(\ell_n)$. Then $\bigcap_{n\geq 1}A_n=\{\boundary\gamma_F(x)\}$.
\end{lem}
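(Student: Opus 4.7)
The plan is to establish three properties of the sequence $\{A_n\}$: it is decreasing, the diameters $\diam(A_n)$ tend to zero, and $\partial\gamma_F(x)$ lies in every $A_n$. Since a nested family of nonempty compacta with vanishing diameters has a singleton intersection, these three facts will give $\bigcap_{n \geq 1} A_n = \{\partial\gamma_F(x)\}$.

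For the membership claim, I would pick an associated sequence $(p_m)$ for $x$ with $p_m \in M_{w_m}$ and verify that $\gamma_{F_n}(p_m) \in \ell_n$ for every $m \geq n$. When $m = n$, the point $p_n \in M_{w_n}$ is one of the two endpoints of $\ell_n$ in $M_{F_n}^*$ and $\gamma_{F_n}$ acts as the identity on $M_{F_n}$. When $m > n$, the point $p_m$ lies in the vertex space $M_{v_m}$, which sits in the branch of $M_T$ attached to $M_{F_n}$ at $M_{w_n}$; unwinding $f_{F_m F_n}$ as a chain of augmentations (Definition~\ref{bondingaug}) shows that this branch is collapsed into $\ell_n$ by the Urysohn-type map from Lemma~\ref{distdec}. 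Since $\gamma_{F_n}(p_m) \to \partial\gamma_{F_n}(x)$ by Lemma~\ref{boundgam}(\ref{boundgam: deffac}) and $\ell_n$ is closed, $\partial\gamma_{F_n}(x) \in \ell_n$; since $\partial\gamma$ takes values in the inverse limit $M_\Theta$ (Lemma~\ref{boundgam}(\ref{boundgam: def})) its components are coherent under the bonding maps, so $\partial\gamma_F(x) = f_{F_n F}(\partial\gamma_{F_n}(x)) \in A_n$.

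For nesting, Lemma~\ref{bondingwelldef} yields $A_{n+1} = f_{F_n F}\bigl(f_{F_{n+1} F_n}(\ell_{n+1})\bigr)$, so it suffices to check $f_{F_{n+1}F_n}(\ell_{n+1}) \subset \ell_n$; decomposing $f_{F_{n+1}F_n}$ into augmenting steps, the arc $\ell_{n+1}$ sits entirely in the part of $M_{F_{n+1}}^*$ newly produced by adding $v_{n+1}$, and is therefore collapsed into $\ell_n$ by the Urysohn map of that augmentation step. For the diameter bound, $\ell_n$ is an interval of length $d(a_n,b_n)$ with $\{a_n,b_n\} = M_{w_n} \subset M_{v_n}$; by the shrinking property (Proposition~\ref{compatibleshrink}) and the fact that $v_n$ leaves every bounded neighborhood of $v_0$ in $T$, $\diam(M_{v_n}) \to 0$, and then $\diam(A_n) \leq \diam(\ell_n) \to 0$ by the $1$-Lipschitz property of the bonding maps. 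The main obstacle is the careful unpacking of $f_{F_{n+1}F_n}$ as an augmenting chain to justify both the nesting step and the claim that $\gamma_{F_n}$ sends the branch beyond $w_n$ into $\ell_n$; the remaining estimates are routine applications of results already proved earlier in the section.
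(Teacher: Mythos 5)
Your proof is correct and follows essentially the same route as the paper: establish that $\boundary\gamma_F(x)\in A_n$ for each $n$ and that $\diam(A_n)\to 0$, then conclude by a Cantor-type intersection argument. The one place you are more explicit than the paper is in verifying $\gamma_{F_n}(p_m)\in\ell_n$ for all $m\geq n$ (equivalently the nesting $A_m\subset A_n$ for $m\geq n$), which the paper uses implicitly when it passes from ``$\gamma_F(p_n)\in A_n$'' to ``$\boundary\gamma_F(x)\in A_n$''; your version makes that step airtight.
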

\begin{proof}
Consider the associated sequence $(p_n)$ for $x$ such that $p_n$ is one of the endpoints of $\ell_n$. Then by Lemma~\ref{gam}(\ref{gam: gamF}) we get that $\gamma_F(p_n)\in A_n$ for all $n$. Since $A_n$ is compact and thus closed for all $n$, by Lemma~\ref{boundgam}(\ref{boundgam: def}) we get that $\boundary\gamma_F(x)\in A_n$ for all $n$. Since $\diam(\ell_n)\rightarrow 0$ and since the maps $f_{F_nF}$ are $1$--Lipschitz by Lemma~\ref{bondingwelldef} we get that $\diam(A_n)\rightarrow 0$. The result then follows from the Cantor intersection theorem.
\end{proof}

The following result gives a converse of Lemma~\ref{boundgam}(\ref{boundgam: nonredint}).

\begin{lem}
\label{interiorthreads}
Consider a point $x=(x_F)_{F\in \F}\in M_{\Theta}$ such that for all $F\in \F$ we have $x_F$ is in the interior of a peripheral arc in $M_F^*$. Then there is a unique $y\in \boundary_0 T$ such that $\boundary\gamma(y)=x$.
\end{lem}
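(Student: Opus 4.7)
The plan is to construct an end $y \in \boundary_0 T$ from the coordinates $(x_F)_{F\in\F}$ and to verify $\boundary\gamma(y) = x$. For each $F \in \F$ the hypothesis singles out a unique peripheral arc of $M_F^*$ whose interior contains $x_F$, and by Definition~\ref{kettlebell} this arc is indexed by a unique cut pair $M_{w_F}$ with $w_F \in W$. Compatibility of $x$ under the bonding maps will force the family $\{w_F\}$ to record the successive segments of a single ray.

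First I would fix $v_1 \in V$, set $F_0 = \{v_1\}$, and take $w_1 := w_{F_0}$, which is adjacent to $v_1$ in $T$. Enlarging to $F_0 \cup \{w_1\}$ leaves the kettlebell and coordinate unchanged because this augmentation is the identity case of Definition~\ref{bondingaug}. To find the next two vertices I would consider $G := F_0 \cup \{w_1\} \cup \{v : v \text{ is a neighbor of } w_1\}$; since every neighbor of $w_1$ lies in $\vertices(G)$, there is no peripheral arc at $M_{w_1}$ in $M_G^*$, so $x_G$ lies in the interior of an arc attached at some black vertex of $G$. The bonding map $f_{G F_0}$ factors as a chain of case~(ii) augmentations, each the identity on the corresponding $M_{F'}^*(e)$ and a Urysohn collapse of $M_v^*(e)$ onto the arc at $M_{w_1}$. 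Peripheral arcs attached at $v_1$ in $M_G^*$ are fixed identically by this composition, so if $x_G$ lay in such an arc then $x_{F_0}$ would inhabit the same arc rather than the one at $M_{w_1}$, a contradiction. Thus $x_G$ lies in a unique arc attached to a single new neighbor $v_2$ of $w_1$, and this arc corresponds to a unique white vertex $w_2 \neq w_1$ adjacent to $v_2$. Iterating produces an infinite ray $y = (v_1, w_1, v_2, w_2, \ldots) \in \boundary T$, and an inductive version of the same argument shows that for the $F_n$ of Lemma~\ref{boundgam alt def}, $x_{F_n}$ lies in the interior of the arc $\ell_n$ at $M_{w_n}$: setting $G_n = F_n \cup \{\text{remaining neighbors of } w_n\}$, the point $x_{G_n}$ lies in the arc at $v_{n+1}$, whose image under the Urysohn collapse in $f_{G_n F_n}$ lands in the interior of $\ell_n$.

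To see that $y \in \boundary_0 T$, suppose for contradiction $y$ were redundant with redundant point $p \in M_T$. Then Lemma~\ref{boundgam}(\ref{boundgam: deffac}) gives $\boundary\gamma_F(y) = \gamma_F(p) = p \in M_F$ for every $F$ containing the vertex that supports $p$; since $M_F$ meets the peripheral arcs of $M_F^*$ only at their endpoints, this contradicts $x_F$ being in an arc interior. For the identity $\boundary\gamma(y) = x$, fix any $F \in \F$ and choose $F' \supset F$ containing $v_1$; Lemma~\ref{boundgam alt def} applied at $F'$ together with the previous paragraph yields $\boundary\gamma_{F'}(y) = x_{F'}$, and applying $f_{F' F}$ gives $\boundary\gamma_F(y) = x_F$. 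Uniqueness of $y$ is then immediate from the injectivity of $\boundary\gamma$ on $\boundary_0 T$ established in Lemma~\ref{boundgam}(\ref{boundgam: inj}). The main obstacle is the inductive step for locating $v_{n+1}$: rigorously verifying that the arc containing $x_{G_n}$ is attached to exactly one new neighbor of $w_n$ requires a careful case analysis of which subspaces of $M_{G_n}^*$ are mapped identically into $M_{F_n}^*$ and which are collapsed onto the arc at $M_{w_n}$ by the Urysohn components of the bonding map.
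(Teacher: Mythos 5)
Your proposal follows the same overall strategy as the paper: construct a ray in $T$ by tracking which peripheral arc contains $x_F$ as the subtree $F$ grows through stars around the successive $W$--vertices, then invoke Lemma~\ref{boundgam alt def} to conclude $\boundary\gamma(y)=x$, and finish with non-redundancy and Lemma~\ref{boundgam}(\ref{boundgam: inj}) for uniqueness. Your treatment of the ray-construction step is in fact more explicit than the paper's: by decomposing $f_{G_n F_n}$ into identity-on-old-arcs and Urysohn-collapse-on-new-pieces (Definition~\ref{bondingaug}), you justify that the arc containing $x_{G_n}$ must be attached at a \emph{new} neighbor of $w_n$, so the ray cannot backtrack — something the paper's proof treats as automatic. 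Using the nested subtrees $F_n, G_n$ rather than the paper's free-standing stars $F_i^0$ also feeds more directly into Lemma~\ref{boundgam alt def}.

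The one logical flaw is the ordering of the last two steps. As written, the non-redundancy argument concludes that ``this contradicts $x_F$ being in an arc interior'' — but the contradiction requires knowing $\boundary\gamma_F(y)=x_F$, which you only establish in the paragraph \emph{after} the non-redundancy check. Lemma~\ref{boundgam}(\ref{boundgam: deffac}) alone gives $\boundary\gamma_F(y)=p$, and without the identity $\boundary\gamma_F(y)=x_F$ there is nothing to contradict. The paper avoids this circularity by proving $\boundary\gamma_F(y)=x_F$ first; swapping your last two paragraphs fixes the issue. A smaller point: when applying Lemma~\ref{boundgam alt def} at $F'\supset F$, you need the representative ray for $y$ to satisfy $v_1\in V(F')$, $v_2\notin V(F')$, which may require re-indexing the ray if $F'$ already swallows several initial segments; the paper handles this with the index $m$, and your sketch should include an analogous step.
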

\begin{proof}
Fix some $F^0\in \F$. Then we construct a ray $(v_1^0,w_1^0,v_2^0,w_2^0,\ldots)$ representing $y$ such that $v_i^0\in V, w_i^0\in W$ and $v_1^0\in V(F^0),v_2^0\notin V(F^0)$. Then $x_{F^0}$ is in some peripheral arc $\ell^0_1$ in $M_{F^0}^*$. This arc corresponds to some edge $e_1^0$ neighboring $v_1^0$. We define $w_1^0\in W$ as the other vertex of $e_1^0$.
Assuming we have already defined $v_1^0,w_1^0,\ldots, v_i^0,w_i^0$, we define $v_{i+1}^0$ and $w_{i+1}^0$ as follows. Let $F_i^0$ be the finite tree consisting of $w_i^0$ along with all it's neighboring edges. Then $x_{F_i^0}$ is in some peripheral arc $\ell^0_i$ corresponding to some edge $e^0_i$. Define $v^0_{i+1}$ and $w^0_{i+1}$ as the vertices of $e^0_i$.

Now consider any arbitrary $F\in \F$. We now prove that $\boundary\gamma_F(y)=x_F$. Consider a ray $(v_1,w_1,v_2,w_2,\ldots)$ representing $y$ such that $v_1\in V(F)$ but $v_2\notin V(F)$. Then there is $m$ such that $(v_{m},w_m,v_{m+1},w_{m+1}\ldots)$ is contained in $(v_1^0,w_1^0,v_2^0,w_2^0,\ldots)$. Let $\ell_n$ and $A_n$ and $F_n$ be as in Lemma~\ref{boundgam alt def}. Then we have that $x_F=f_{F_mF}(x_{F_m})\in \ell_1$ as $x_{F_m}$ is in the interior of a peripheral arc in $M_{F_m}^*$. Moreover, $x_F=f_{F_nF}(x_{F_n})\in A_n$ for all $n\geq m$. Then by Lemma~\ref{boundgam alt def} we get that $x_F=\boundary\gamma_F(y)$.

Now note that $y$ is non-redundant, otherwise there is an associated sequence $p_n$ for $y$ with a point $p\in M_T$ and $N>0$ such that for all $n>N$, we have $p_n=p$. Then there is $F\in F$ such that $p\in M_F$. Applying Lemma~\ref{boundgam}(\ref{boundgam: deffac}) we get that $\boundary\gamma_F(y)=x_F=p$ which is a contradiction as $x_F=p$ is not in the interior of a peripheral arc. The uniqueness of $y$ then follows from Lemma~\ref{boundgam}(\ref{boundgam: inj}).
\end{proof}

In \cite{Swiatkowski20}, the inverse limit constructed is in bijection with $M_T\sqcup \boundary T$. However, since we are allowing the edge spaces to intersect in the component spaces, while the inverse limit $M_{\Theta}$ does still contain $M_T$, it is not in bijection with $M_T\sqcup \boundary T$. The inverse limit is in bijection with $M_T\sqcup \boundary_0T$ as shown by the following theorem.
Recall that by Lemma~\ref{boundalpha} and Proposition~\ref{completion}, we can identify $\bar{M}_T\setminus M_T$ with $\boundary_0T$.

\begin{thm}
\label{completion is inverse limit}
The map $\gamma\colon M_T\rightarrow M_{\Theta}$ has a unique extension $\bar{\gamma}\colon\bar{M}_T\rightarrow M_{\Theta}$ which is a homeomorphism. Moreover, $\bar{\gamma}\big|_{\boundary_0T}=\boundary\gamma\big|_{\boundary_0T}$.
\end{thm}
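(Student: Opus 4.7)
My plan is to define $\bar\gamma$ by setting $\bar\gamma|_{M_T}=\gamma$ and $\bar\gamma|_{\boundary_0 T}=\boundary\gamma|_{\boundary_0 T}$ under the identification $\bar M_T=M_T\sqcup\boundary_0 T$ from Proposition~\ref{completion}. To see this is continuous I would first extend each $\gamma_F\colon M_T\to M_F^*$ to a continuous map $\bar\gamma_F\colon\bar M_T\to M_F^*$, which exists because $\gamma_F$ is $1$--Lipschitz by Lemma~\ref{gam} and $M_F^*$ is compact hence complete. The compatibility relations $f_{F'F}\circ\gamma_{F'}=\gamma_F$ that originally assembled $\gamma$ then extend by continuity, since both sides are continuous on $\bar M_T$, agree on the dense set $M_T$, and land in a Hausdorff space; hence the $\bar\gamma_F$ induce a continuous $\bar\gamma\colon\bar M_T\to M_\Theta$, unique by density of $M_T$ and Hausdorffness of $M_\Theta$. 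For the identification with $\boundary\gamma$ on $\boundary_0 T$, given $x\in\boundary_0 T$ with associated sequence $(p_n)$, Proposition~\ref{completion} gives $p_n\to x$ in $\bar M_T$, and continuity of $\bar\gamma_F$ together with Lemma~\ref{boundgam}(\ref{boundgam: deffac}) yields $\bar\gamma_F(x)=\lim\gamma_F(p_n)=\boundary\gamma_F(x)$ in each coordinate.

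It then remains to show $\bar\gamma$ is a bijection; once this is done, compactness of $\bar M_T$ from Proposition~\ref{completioncpt} together with Hausdorffness of $M_\Theta$ automatically upgrades $\bar\gamma$ to a homeomorphism. Injectivity hinges on a dichotomy between the two pieces. For $y\in M_T$, pick $F$ with $y\in M_F$; by Lemma~\ref{gam}(\ref{gam: gamF}) (with $f_{FF}$ the identity) we have $\gamma_F(y)=y\in M_F$, which by the construction of the kettlebell space in Definition~\ref{kettlebell} lies outside the interior of every peripheral arc. On the other hand, Lemma~\ref{boundgam}(\ref{boundgam: nonredint}) places $\boundary\gamma_F(x)$ strictly inside some peripheral arc interior for every $x\in\boundary_0 T$ and every $F$. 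This separates $\bar\gamma(M_T)$ from $\bar\gamma(\boundary_0 T)$ in $M_\Theta$, and injectivity within each piece then follows from Lemma~\ref{gam}(\ref{gam: gam props}) and Lemma~\ref{boundgam}(\ref{boundgam: inj}).

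For surjectivity, let $x=(x_F)_{F\in\F}\in M_\Theta$. If $x_F$ lies in the interior of a peripheral arc of $M_F^*$ for every $F\in\F$, Lemma~\ref{interiorthreads} produces $y\in\boundary_0 T$ with $\boundary\gamma(y)=x$. Otherwise fix $F_0$ with $x_{F_0}\in M_{F_0}$; the claim is that $\gamma(x_{F_0})=x$, and I expect this to be the main obstacle. The strategy is an induction along augmenting chains starting at $F_0$ showing that $x_F=x_{F_0}$ as a point of $M_T\subset M_F^*$ for every $F\supset F_0$. The only nontrivial step is an augmentation $F=F'\cup\{e\}$ with $e=(v,w)$, $w\in F'$, $v\notin F'$: by Definition~\ref{bondingaug}, $M_F^*=M_{F'}^*(e)\cup M_v^*(e)$ meeting along $\{a,b\}$, with $f_{FF'}$ the identity on the first piece and the Urysohn function $u$ of Lemma~\ref{distdec} on the second. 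If $x_F$ lay in $M_v^*(e)\setminus\{a,b\}$ then $u(x_F)\in(0,d(a,b))$, the interior of the peripheral arc $\ell$ for $e$, which is disjoint from $M_{F'}$; this contradicts $x_{F'}=f_{FF'}(x_F)\in M_{F'}$. Hence $x_F\in M_{F'}^*(e)$ and $x_F=x_{F'}$. For arbitrary $F\in\F$, take $F''\supset F\cup F_0$, invoke the induction to obtain $x_{F''}=x_{F_0}$, and conclude $x_F=f_{F''F}(x_{F''})=\gamma_F(x_{F_0})$ by Lemma~\ref{gam}(\ref{gam: gamF}). Assembling all components gives $\gamma(x_{F_0})=x$, finishing the proof.
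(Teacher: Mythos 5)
Your proposal is correct and follows essentially the same route as the paper: extend each $\gamma_F$ by uniform continuity, assemble $\bar\gamma$ componentwise, reduce the homeomorphism claim to bijectivity via compactness and Hausdorffness, split injectivity across the $M_T$/$\boundary_0 T$ dichotomy via peripheral-arc interiors, and split surjectivity according to whether some coordinate $x_{F_0}$ lands in $M_{F_0}$. The one place you go further than the paper is the surjectivity step: the paper asserts $\gamma(x_{F_0})=x$ without argument, whereas your induction along augmenting chains (showing $x_F$ cannot fall into a peripheral-arc interior once $x_{F_0}\in M_{F_0}$) supplies a proof of that assertion, which is a welcome addition.
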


\begin{proof}
From Lemma~\ref{gam} we have the $1$--Lipschitz and thus uniformly continuous map $\gamma_F\colon M_T\rightarrow M_F^*$ for all $F\in \F$. Since $\gamma_F$ is uniformly continuous, there is a unique extension $\bar{\gamma}_F\colon \bar{M}_T\rightarrow M_{\Theta}$.
Now consider some $x\in \bar{M}_T\setminus M_T=\boundary_0T$. Then by Lemma~\ref{boundalpha}, Proposition~\ref{completion} and Lemma~\ref{boundgam} we get that $\bar{\gamma}_F\big|_{\boundary_0T}=\boundary\gamma_F\big|_{\boundary_0T}$. 
We now define $\bar{\gamma}(x)=(\bar{\gamma}_F)_{F\in \F}$ which is well defined as $\gamma$ and $\boundary\gamma$ are well defined by Lemma~\ref{gam} and Lemma~\ref{boundgam}. Since $\bar\Psi$ is a continuous map between compact Hausdorff spaces we only need to prove that it is bijective.

We first prove that $\bar{\gamma}$ is injective. Consider $x,y\in \bar{M}_T$. If $x,y\in M_T$ then we get injectivity from Lemma~\ref{gam}. If $x,y\in \boundary_0T$ then we get injectivity from Lemma~\ref{boundgam}(\ref{boundgam: inj}). If $x\in M_T$ and $y\in \boundary_0T$ then consider some $F\in \F$ such that $x\in M_F$. Then note that $\bar{\gamma}_F(y)$ is in the interior of a peripheral arc by Lemma~\ref{boundgam}(\ref{boundgam: nonredint}) while $\bar{\gamma}_F(x)=\gamma_F(x)=x$ is not.

Now we prove that $\bar{\gamma}$ is surjective. Consider some $x=(x_F)_{F\in \F}\in M_{\Theta}$. If there is $F_0\in \F$ such that $x_{F_0}\in M_{F_0}\subset M_{F_0}^*$, let $y=x_{F_0}\in M_T$. Then $\gamma(y)=x$. Otherwise we must have that $x_F$ is in the interior of a peripheral arc in $M_F^*$ for all $F\in \F$. Then from Lemma~\ref{interiorthreads} we get that there is $y\in \boundary_0T$ such that $\bar{\gamma}(y)=\boundary\gamma(y)=x$.
\end{proof}


\section{Combining Bowditch boundaries along cut pairs}
\label{sec:homeo}

In this section, we explore the topological properties of the Bowditch boundary $\boundary(G,\P)$ for the relatively hyperbolic pair $(G,\P)$ we obtained in Theorem~\ref{combination}. We define a tree system $\Theta_G$ (Definition~\ref{ThetaG}) with vertex spaces made up of boundaries of the vertex stabilizers of the tree $T$ in Theorem~\ref{combination}. We then prove that the boundary $\boundary(G,\P)$ is homeomorphic to the the completion $\bar{M}_T$ of the total space $M_T$ associated to $\Theta_G$ (Theorem~\ref{boundarytreesys}).
First we prove that the orbit of a pair of parabolic points is a null collection.

\begin{prop}
\label{parabolicnull}
Consider a relatively hyperbolic pair $(G,\P)$ and two parabolic points $a,b\in \partial(G,\P)$. Then $\mathcal{C}=\{g\{a,b\}\mid g\in G \}$ forms a null collection in $\partial(G,\P)$.
\end{prop}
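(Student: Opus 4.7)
The plan is to realize the collection $\mathcal{C}$ as a $G$-orbit of edges in a fine hyperbolic graph. Let $K$ be a $(G,\P)$ graph. Since $a$ and $b$ are parabolic, they lie in $\vertices(K)$, so Lemma~\ref{gattach} applied with the new edge $e = (a,b)$ yields a $(G,\P)$ graph $K' = K \cup Ge$, which is in particular fine and hyperbolic, and satisfies $\Delta K' = \partial(G,\P)$. Each pair $g\{a,b\} \in \mathcal{C}$ is now precisely the endpoint set of the edge $ge \in Ge$, so it suffices to show that these endpoint sets form a null collection.

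I would argue by contradiction. If $\mathcal{C}$ is not null, I fix a compatible metric on $\partial(G,\P)$ and obtain $\epsilon > 0$ and an infinite sequence of distinct $g_n e$ with $d(g_n a, g_n b) > \epsilon$. Compactness lets me pass to a subsequence with $g_n a \to x$ and $g_n b \to y$ in $\Delta K'$ for some $x \neq y$. Using Hausdorffness of $\Delta K'$ together with the basis $\{M_f(\cdot, A)\}$ for the Bowditch topology, I would select a finite $A \subset \vertices(K')$ disjoint from $\{x,y\}$ and a function $f \geq 1_{\N}$ with $M_f(x, A) \cap M_f(y, A) = \emptyset$. Metrizability of $\partial(G,\P)$, combined with $x, y \notin A$, then yields $g_n a \in M_f(x, A) \setminus A$ and $g_n b \in M_f(y, A) \setminus A$ for all sufficiently large $n$.

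The main step, and the hardest part, is to derive a contradiction from the single edge $g_n e$ bridging these two disjoint neighborhoods. My plan is to concatenate an $f$-quasigeodesic from $x$ to $g_n a$ (which avoids $A$ by the definition of $M_f(x,A)$), the length-one edge $g_n e$, and an $f$-quasigeodesic from $g_n b$ to $y$ (which similarly avoids $A$) to produce a path from $x$ to $y$ that misses $A$. Using hyperbolicity of $K'$, I expect this concatenation to be an $f'$-quasigeodesic for some $f'$ depending only on $f$; this would place $y \in M_{f'}(x, A) \cap M_{f'}(y, A)$, and since $f' \geq f$ gives $M_{f'}(\cdot, A) \subseteq M_f(\cdot, A)$, this intersection is forced to be empty, yielding the contradiction. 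The chief technical subtlety will be producing a uniform $f'$ from the concatenation, independent of $n$, which requires careful bookkeeping of how the three pieces interact at the joining vertices $g_n a$ and $g_n b$ and invokes the local-to-global quasigeodesic principle in the hyperbolic graph $K'$.
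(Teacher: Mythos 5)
Your setup through step~5 mirrors the paper's: realize the pairs as endpoints of the $G$--orbit of a single added edge via Lemma~\ref{gattach}, pass to a subsequence, and work inside the Bowditch topology. But the endgame has a genuine logical gap. Constructing one $f'$--quasigeodesic arc from $x$ to $y$ missing $A$ places $y$ in $M'_{f'}(x,A)$ (the \emph{existential} version), not in $M_{f'}(x,A)$ (which requires \emph{every} $f'$--quasigeodesic from $x$ to $y$ to miss $A$). For $f'\geq f$ these two families run in opposite directions: $M_{f'}(x,A)\subset M_f(x,A)$, but $M'_{f'}(x,A)\supset M'_f(x,A)$. So the membership you actually get lands in a \emph{larger} neighborhood of $x$ than $M'_f(x,A)$, and this does not contradict the disjointness $M_f(x,A)\cap M_f(y,A)=\emptyset$. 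Having a quasigeodesic from $x$ to $y$ missing a finite set is perfectly compatible with $x$ and $y$ being separable by smaller neighborhoods; Hausdorffness gives you no leverage here.

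There is a second, independent obstruction in step~6. The concatenation of an $f$--quasigeodesic from $x$ to $g_na$, a single edge, and an $f$--quasigeodesic from $g_nb$ to $y$ need not be a quasigeodesic with uniform constants. If $g_na$ and $g_nb$ drift far from the geodesic $[x,y]$, the two long pieces can backtrack and fail the local-to-global hypothesis; the principle you invoke requires quantitative control at the joining vertices that you have not established and that does not follow merely from $g_na\in M_f(x,A)$, $g_nb\in M_f(y,A)$. The paper avoids both difficulties: it only passes to a convergent subsequence of \emph{one} of the coordinate sequences (say $a_n\to a_\infty$), exploits that $M'_s(a_\infty,A)$ can be fitted inside a metric ball $B_{\epsilon/2}(a_\infty)$ rather than separating two points by disjoint neighborhoods, and concatenates just a \emph{single edge} with a geodesic, which is trivially an $s$--quasigeodesic for the fixed $s(k)=k+1$. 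That yields $a_i$ and $b_i$ both inside $B_{\epsilon/2}(a_\infty)$, so $d(a_i,b_i)<\epsilon$, a direct contradiction. I recommend reorganizing along these lines: pick an explicit superlinear $s$ up front, choose $A$ so that $M'_s(a_\infty,A)$ lies in a small metric ball, and produce the single-edge extension of a geodesic from $a_i$ to $a_\infty$ rather than a two-sided concatenation between two boundary points.
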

\begin{proof}
Let $K$ be a $(G,\P)$ graph. Then $a,b\in \vertices(K)$. Moreover, by Lemma~\ref{gattach} we can assume that there is an edge $e$ joining $a$ and $b$ in $K$. Let $d$ be a metric on $\partial(G,\P)$. 

Assume for contradiction that $\mathcal{C}$ is not a null collection in $\boundary(G,\P)$. Then there exist $\epsilon>0$ and a sequence $(\{g_na,g_nb\})_{n\in \N}$ of distinct members of $\mathcal{C}$ such that $d(a_n,b_n)>\epsilon$ for all $n$ where $a_n=g_n a$ and $b_n=g_n b$. Note that both $(a_n)$ and $(b_n)$ cannot be eventually constant sequences as then $(\{g_na,g_nb\})_{n\in \N}$ does not consist of distinct elements. Without loss of generality, assume that $(b_n)$ is not an eventually constant sequence.
By passing to a subsequence, assume that $a_n\rightarrow a_{\infty}$ for some $a_{\infty}\in \partial(G,\P)$. 

We now define $s\colon\N\rightarrow \N$ as $s(k)=k+1$. Then $s\geq 1_{\N}$. Moreover, there is a finite set $A\subset V(K)$ such that $M'_s(a_{\infty},A)\subset B_{\epsilon/2}(a_{\infty})$. 
Since $1_{\N}\leq s$ we have $M'(a_{\infty},A)\subset M'_s(a_{\infty},A)$.
As $A$ is finite and as $(b_n)$ is not eventually constant, there is an $i$ such that $b_i\notin A$ but $a_i\in M'(a_{\infty},A)$. Since $a_i\in M'(a_{\infty},A)$, there is a geodesic path $\gamma$ from $a_i$ to $a_{\infty}$ which misses $A$. We form an $s$--quasigeodesic path $\gamma'$ by concatenating the edge $g_ie=(a_i,b_i)$ followed by $\gamma$. Thus $b_i\in M'_s(a_{\infty},A)\subset B_{\epsilon/2}(a_{\infty})$ which proves that $d(a_i,b_i)<\epsilon$ which is the required contradiction.
\end{proof}

We now define an object for a $2$--admissible action and a signature.

\begin{defn}
\label{ThetaG}
Consider a group $G$ acting on a tree $T$ with a $2$--admissible action where $G$ and $T$ are as described in Definition~\ref{2admissible}. Then we have relatively hyperbolic pairs $(G_u,\P_u)$ and $(G_u,\P_u)$ graphs $K_u$ for each $u\in V\sqcup W=\vertices(T)$.
Let $\mathcal{S}=\{s_e\}_{e\in \edges(T)}$ be a signature for this action. Then by Theorem~\ref{combination}, we get that $(G,\P)$ is relatively hyperbolic along with a $(G,\P)$ graph $\bar{K}$. Using Proposition~\ref{parabolicnull}, we define a tree system of cut pairs
$\Theta_G$ for $\mathcal{S}$ over $T$ consisting of the following data.
\begin{enumerate}
    \item The vertex spaces $M_u=\Delta K_u$ for all $u\in \vertices(T)$.
    \item \label{ThetaG: injections} For an edge $e=(v,w)$ in $\edges(T)$, for $x\in \Lambda_{{w}}=\Delta K_w=M_w$, let $i_e(x)=s_e(x)\in \Lambda_v\subset \Delta K_v=M_v$ be injections.
\end{enumerate}
\end{defn}

\begin{lem}
\label{Psi}
Consider $\Theta_G$ as in Definition~\ref{ThetaG} and consider the metric space $(M_T,d)$ as described in Setup~\ref{compsetup} with $\Theta=\Theta_G$. We then have an injective continuous map $\Psi\colon M_T\rightarrow \Delta\bar{K}$ with the following property. For each $v\in V$ we have that $\Psi\big|_{M_v}$ is the identity on $M_v=\Delta K_v$.
\end{lem}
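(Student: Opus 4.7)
The plan is to take $\Psi$ to be the map $\sigma$ from Lemma~\ref{intersection of Delta Kv's}. First observe that $M_\sqcup = \Delta_\sqcup$ as sets, since each vertex space of $\Theta_G$ is $\Delta K_u$. Moreover, the equivalence $\sim$ on $M_\sqcup$ from Setup~\ref{compsetup} is generated by $y \sim i_e(y) = s_e(y)$ for $e=(v,w) \in \edges(T)$ and $y \in M_w = \Lambda_w$ (using Definition~\ref{ThetaG}(\ref{ThetaG: injections})), which exactly matches the generating relations of $\equiv$ in Theorem~\ref{combination}(\ref{combination: Grelhyp}). Hence $M_T = \Delta_\sqcup/{\equiv}$ as sets, and setting $\Psi = \sigma$ yields an injective map (by Lemma~\ref{intersection of Delta Kv's}) that by construction restricts to the identity on each $M_v$.

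The nontrivial work is continuity. Since $\Delta\bar{K}$ is metrizable (hence first countable) and $M_T$ is a metric space, I verify sequential continuity. Given $x_n \to x$ in $(M_T,d)$ and a finite $A \subset \vertices(\bar{K})$ with $\Psi(x) \notin A$, it suffices to show $\Psi(x_n) \in M(\Psi(x), A)$ for $n$ large, i.e., that every geodesic in $\bar{K}$ from $\Psi(x)$ to $\Psi(x_n)$ misses $A$. By Proposition~\ref{semimeteff}, take efficient linking chains $\alpha_n$ from $x_n$ to $x$ realising $d(x_n, x)$; these trace the geodesic path in $T$ between the $V$-vertices whose spaces contain $x_n$ and $x$, so $\alpha_n$ enters the vertex space $M_v$ containing $x$ through some junction parabolic vertex $y_n$ with $d_v(y_n, x) \to 0$.

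In the first regime, where infinitely many $x_n$ lie in $M_v$ itself, one works directly inside $M_v = \Delta K_v$: the metric $d|_{M_v} = d_v$ induces the topology of $\Delta K_v$ (by the construction in Lemma~\ref{halfer}), and $\Delta K_v$ embeds as a subspace of $\Delta\bar{K}$ (Lemma~\ref{intersection of Delta Kv's}, using convexity from Theorem~\ref{combination}(\ref{combination: Kbar})), so convergence transfers directly. In the second regime, the $x_n$ pass through successive vertex spaces, and the convexity of each $K_u$ in $\bar{K}$ forces every geodesic in $\bar{K}$ from $\Psi(x)$ to $\Psi(x_n)$ to follow the corresponding path in $T$, entering each successive convex subgraph only through its junction parabolic vertex shared with the previous one.

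The main obstacle is verifying that geodesics in this second regime avoid $A$. Since $A$ is finite and each $a \in A$ lies in only finitely many convex subgraphs $K_u$, for $n$ large the tree path traced by $\alpha_n$ visits no $K_u$ containing a point of $A$ except possibly the initial subgraph $K_v$. Within $K_v$, the junction $y_n$ approaches $x$ in the topology of $\Delta K_v$, so any $K_v$-geodesic from $\Psi(x)$ to $y_n$ eventually avoids the finite set $(A \cap \vertices(K_v)) \setminus \{\Psi(x)\}$; concatenating with the outward portion along the $T$-path, which meets no $K_u$ containing a point of $A$, shows that every $\bar{K}$-geodesic from $\Psi(x)$ to $\Psi(x_n)$ misses $A$. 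This gives $\Psi(x_n) \to \Psi(x)$ in $\Delta\bar{K}$, completing the sequential continuity argument.
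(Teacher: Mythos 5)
The definition of $\Psi$ is exactly the paper's (both set $\Psi = \sigma$ via the identification $M_\sqcup \cong \Delta_\sqcup$ and $\sim \leftrightarrow \equiv$), and the high-level plan for continuity is also the same (compare neighborhoods in $\Delta\bar{K}$ with metric balls in $M_T$). However, the continuity argument in your proposal has a genuine gap that is precisely the subtle point the paper is engineered to handle.

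You assert that in the ``second regime'' every geodesic in $\bar{K}$ from $\Psi(x)$ to $\Psi(x_n)$ ``enters each successive convex subgraph only through its junction parabolic vertex shared with the previous one,'' and then you let $y_n$ be the point where the efficient linking chain in $M_T$ exits $M_v$ and argue that the $K_v$-geodesic from $\Psi(x)$ to $y_n$ eventually misses $A$. But a cut pair $\Lambda_{w_n} = \{y_n, z_n\}$ has \emph{two} points, and nothing forces the $\bar{K}$-geodesic from $\Psi(x)$ to $\Psi(x_n)$ to leave $K_v$ through $y_n$: it can equally well leave through $z_n$. While the quotient metric guarantees $y_n \to x$ in $M_v$, the other point $z_n$ need not converge to $x$ at all (e.g., if $x \in \Lambda_w$ and infinitely many $w_n$ equal $w$, so that $y_n = x$ and $z_n = z$ is a fixed point distinct from $x$). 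Your argument therefore does not rule out a $\bar{K}$-geodesic from $\Psi(x)$ to $\Psi(x_n)$ that passes through a fixed $z\in A$. This is exactly what the paper confronts in its second case ($c\in C_2$): when the geodesic $\alpha$ crosses only one of $a_w, b_w$, the paper concatenates the junction edge $e=(a_w,b_w)$ to produce an $s$--quasigeodesic (with $s(k)=k+1$) through the other cut pair point, and accordingly works with the neighborhoods $M_s(\Psi(x), \cdot)$ rather than $M(\Psi(x),\cdot)$. That quasigeodesic trick is the nontrivial idea here, and it is absent from your write-up.

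A secondary concern: you claim ``each $a\in A$ lies in only finitely many convex subgraphs $K_u$.'' This is not justified. A vertex of $\bar{K}$ corresponds to a parabolic tree in $F$ (Theorem~\ref{combination}(\ref{combination: paraforest})), and such a tree can a priori be infinite, so $a$ may lie in $\Lambda_u$ for infinitely many $u$. The paper sidesteps this by comparing $d(x,y)$ against $d(x,M_{w_c})$ using the fact that $M_{w_c}$ separates $x$ from $y$ in $M_T$ and that efficient linking chains realize $d$, rather than appealing to any finiteness of the set of subgraphs containing a given vertex.
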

\begin{proof}
Let $\Delta_{\sqcup},{\equiv}$ and $\sigma$ be as in Lemma~\ref{intersection of Delta Kv's} and let $M_{\sqcup}$ be as in Setup~\ref{compsetup}. Then as $M_u=\Delta K_u$ for all $u\in \vertices(T)$, there is an obvious bijection $\eta_{\sqcup}\colon M_{\sqcup}\to \Delta_{\sqcup}$. Moreover, by the definition of $\equiv$ and $\sim$, we have $x\sim y$ if and only if $\eta_{\sqcup}(x)\equiv \eta_{\sqcup}(y)$ for all $x,y\in M_{\sqcup}$. Therefore we have a bijection $\eta\colon M_T\to \Delta_{\sqcup}/{\equiv}$. Defining $\Psi=\sigma\circ \eta$ we get the required injective map.

We now show that $\Psi$ is continuous. For some $x\in M_T$ consider a basic open set $M(\Psi(x),C)$ in $\Delta\bar{K}$. We now find an $r$ such that $\Psi(B_r(x))\subset M(\Psi(x),C)$.

Let $C_1\subset C$ be the subset consisting of $c$ such that $c$ and $\Psi(x)$ are not in the same $\Delta K_v$ in $\Delta \bar{K}$. Then for any $c\in C_1$ there is some $w_c\in W$ such that any geodesic joining $\Psi(x)$ and $c$ in $\bar{K}$ intersects $\Lambda_{w_c}$.

Define $C_2=C\setminus C_1$ and consider some $c\in C_2$. Let $V_c$ be the finite collection consisting of $v\in V_c$ such that $\Psi(x)$ and $c$ are in the same $\Delta{K}_v$ in $\Delta\bar{K}$. Let $s\colon\N\rightarrow \N$ be defined as $s(k)=k+1$. Since $\Psi\big|_{M_{v}}$ is an embedding there is $r_v$ such that $\Psi(B_{r_v}(x))\subset M_s(\Psi(x),C\cap\Lambda_{v})$ in $\Delta K_{v}$. Now define
$$r:=\min\Biggl[\{d(x,M_{w_c})\mid c\in C_1\}\cup \bigcup_{c\in C_2}\{r_v\mid v\in V_c\}\Biggr].$$
Then we claim that $\Psi(B_r(x))\subset M(\Psi(x),C)$. Suppose we have $y\in M_T$ and a geodesic $\alpha$ in $\bar{K}$ joining $\Psi(x)$ and $\Psi(y)$ which goes through some $c\in C$. We prove that $y\notin B_r(x)$.

First consider the case when $c\in C_1$. Note that the pair $M_{w_c}$ separates $x$ and $y$ in $M_T$. Then by the definition of the quotient metric $d$ in Proposition~\ref{semimeteff} there is $x_c\in M_{w_c}=\Lambda_{w_c}$ such that $d(x,y)=d(x,x_c)+d(x_c,y)$. Thus $y\notin B_r(x)$ as $d(x,x_c)\geq r$, which proves the claim in the first case.

Now consider the second case when $c\in C_2$. Let $y\in M_{v_y}$ for some $v_y\in V$. Then choose $v\in V_c$ closest to $v_y$ in $T$. If $v=v_y$ then $y\notin B_r(x)$ as $\Psi(B_{r_v}(x))\subset M_s(\Psi(x),C\cap\Lambda_{v})$ in $M_{v}$. Now assume $v\neq v_y$. Then there is some $w$ adjacent to $v$ in $T$ such that $x$ and $y$ are separated by $M_{w}=\Lambda_w=\{a_w,b_w\}$ in $M_T$. If $\Psi(B_r(x))\cap \Lambda_{w}=\emptyset$ then using $d(x,M_{w_c})<r$ we can proceed as in the above case to get $y\notin B_r(x)$ which proves the claim in the second case. We now prove that $\Psi(B_r(x))\cap \Lambda_{w}=\emptyset$ to complete the proof.

If $\alpha$ crosses both $a$ and $b$ then we have geodesics from $a_w$ and $b_w$ to $x$ which go through $C\cap \Lambda_{v}$.
Thus $\Psi(B_r(x))\cap \Lambda_{w}=\emptyset$ as $\Psi(B_{r}(x))\subset M_s(\Psi(x),C\cap\Lambda_{v})$. Now assume without loss of generality that $\alpha$ only crosses $a$. Let $\alpha_1=\alpha\cap K_v$. Then $a\notin \Psi(B_r(x))$ as $\Psi(B_{r}(x))\subset M_s(\Psi(x),C\cap\Lambda_{v})$. Let $e$ be the edge between $a$ and $b$ in $\bar{K}$. Then concatenating $e$ and $\alpha$ we get a $s$--geodesic from $b$ to $x$ which goes through $C\cap \Lambda_v$. Thus $b_w\notin \Psi(B_r(x))$ which proves the claim in the second case.
\end{proof}

The following theorem helps us to extend the map $\Psi$ to $\bar{M}_T$.
\begin{thm}[Theorem~I.8.1, \cite{Bourbaki}]
\label{bourbaki}
Consider a space $X$, a dense subset $A\subset X$, and a continuous map $\phi\colon A\rightarrow Y$ such that $Y$ is a metrizable space.
Suppose for each $x\in X$, there is $y\in Y$ satisfying the following. For each neighborhood $V\ni y$, there is a neighborhood $U\ni x$ such that $\phi(U\cap A)\subset V$. Then $\phi$ extends uniquely to a continuous map $\bar{\phi}:X\rightarrow Y$.
\end{thm}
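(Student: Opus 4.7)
The plan is to define the extension $\bar{\phi}$ pointwise using the hypothesis, verify it agrees with $\phi$ on $A$, and then establish continuity and uniqueness. For each $x \in X$, I would set $\bar{\phi}(x) := y$ where $y$ is the point whose existence is given. Well-definedness requires that such $y$ is unique, which uses Hausdorffness of $Y$ (automatic from metrizability): if two distinct $y_1, y_2$ both satisfied the condition, I would pick disjoint open neighborhoods $V_1, V_2$ of them, obtain neighborhoods $U_1, U_2$ of $x$ with $\phi(U_i \cap A) \subset V_i$, and observe that $U_1 \cap U_2 \cap A$ is nonempty by density of $A$, forcing $V_1 \cap V_2 \neq \emptyset$, a contradiction.

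For $a \in A$, I would verify $\bar{\phi}(a) = \phi(a)$ by noting that continuity of $\phi$ at $a$ directly produces the defining neighborhood property with $y = \phi(a)$, so uniqueness forces the values to coincide. The main technical step is continuity of $\bar{\phi}$ at an arbitrary $x \in X$. Given an open neighborhood $V$ of $\bar{\phi}(x)$, I would use regularity of the metric space $Y$ to shrink $V$ to an open $V_0 \ni \bar{\phi}(x)$ with $\overline{V_0} \subset V$, and then apply the hypothesis to obtain an open $U \ni x$ with $\phi(U \cap A) \subset V_0$. The claim is that $\bar{\phi}(U) \subset V$: for any $x' \in U$ and any open neighborhood $W$ of $\bar{\phi}(x')$, the hypothesis at $x'$ yields an open $U' \ni x'$ with $\phi(U' \cap A) \subset W$. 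Density of $A$ makes $(U \cap U') \cap A$ nonempty, so $W \cap V_0 \neq \emptyset$; since this holds for every neighborhood $W$ of $\bar{\phi}(x')$, we conclude $\bar{\phi}(x') \in \overline{V_0} \subset V$.

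Uniqueness of the extension follows from the standard fact that two continuous maps into a Hausdorff space agreeing on a dense subset must agree everywhere. The main obstacle I anticipate is the continuity argument, where the passage through a shrunken target neighborhood is essential: without the regularity step giving $\overline{V_0} \subset V$, the argument would only produce $\bar{\phi}(x') \in \overline{V}$, which is insufficient. The density hypothesis on $A$ is used twice in a genuinely different way than in the uniqueness of $y$, namely to transfer the neighborhood condition from $\phi$ to $\bar{\phi}$ at nearby points.
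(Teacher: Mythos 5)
The paper does not prove this result; it is cited verbatim from Bourbaki (Theorem~I.8.1) and used as a black box in the proof of Theorem~\ref{boundarytreesys}. So there is no internal proof to compare against. That said, your argument is a complete and correct proof of the cited theorem, and it is essentially the standard one. Defining $\bar{\phi}(x)$ as the unique $y$ furnished by the hypothesis, using Hausdorffness of $Y$ together with density of $A$ to get uniqueness of $y$, recovering $\bar{\phi}|_A = \phi$ from continuity of $\phi$, and then proving continuity via the regularity trick (shrinking $V$ to $V_0$ with $\overline{V_0} \subset V$, so that the a priori conclusion $\bar{\phi}(x') \in \overline{V_0}$ lands inside $V$) is exactly how this is done in practice. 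You are also right that the regularity step is genuinely necessary, and that density of $A$ enters twice in logically distinct ways. No gaps.
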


\begin{lem}
\label{Psi ext}
Let $\Psi$ be as in Lemma~\ref{Psi}.
There is an injection $\boundary\Psi \colon \bar{M}_T\setminus M_T\to \Delta \bar{K}\setminus \Image(\Psi)$ satisfying the following.
For each $x\in \bar{M}_T\setminus M_T$ and each neighborhood $V\ni \boundary\Psi(x)$, there is a neighborhood $U\ni x$ such that $\Psi(U\cap M_T)\subset V$.
\end{lem}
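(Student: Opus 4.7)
The plan is to define $\boundary\Psi$ as the composition of the bijection $\alpha^{-1}|_{\bar{M}_T\setminus M_T}\colon \bar{M}_T\setminus M_T\to \boundary_0T$ from Proposition~\ref{completion} with the injection $\boundary\psi\colon \boundary_0T\to \boundary\bar{K}\setminus \Image(\sigma)$ from Proposition~\ref{non-red to bound k bar}. Since Lemma~\ref{Psi} constructs $\Psi$ as $\sigma\circ \eta$ for a bijection $\eta$, we have $\Image(\Psi)=\Image(\sigma)$, so this composition lands in $\Delta\bar{K}\setminus \Image(\Psi)$ and is injective, as required.

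For the neighborhood condition, let $x\in \bar{M}_T\setminus M_T$, set $y=\alpha^{-1}(x)\in \boundary_0T$, and take $V\ni \boundary\psi(y)$ in $\Delta\bar{K}$. Using Lemma~\ref{neighborhoods of non-redundant ends}, I may replace $V$ by $M(\boundary\psi(y),\Lambda_w)$ for some $w$ on a representative ray $(v_1,w_1,v_2,w_2,\dots)$ for $y$; write $w=w_{n_0}$ and fix any $n>n_0$. Let $S_n\subset T$ be the union of $\{w_n\}$ with the component of $T\setminus\{w_n\}$ containing $v_{n+1}$, and let $T':=\{w_n\}\cup (T\setminus S_n)$. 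Since $\bar{M}_{T'}$ is compact and $x\in \bar{M}_{S_n}\setminus M_{w_n}$ is not in $\bar{M}_{T'}$, the distance $\delta:=d(x,\bar{M}_{T'})>0$, and I take $U:=B(x,\delta)\cap \bar{M}_T$, which satisfies $U\cap M_T\subset M_{S_n}\setminus M_{w_n}$.

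The main step, which I expect to be the key obstacle, is to verify that for every $z\in U\cap M_T$ every geodesic $\gamma$ in $\bar{K}$ from $\Psi(z)$ to $\boundary\psi(y)$ avoids $\Lambda_w$. Writing $z\in M_u$ with $u\in \vertices(S_n)\setminus \{w_n\}$, the image $\Psi(z)\in \Delta K_u$ lies strictly on the ``$S_n$-side'' of the two-vertex separator $\Lambda_{w_n}$ in $\bar{K}$---using that $K_u$ is convex in $\bar{K}$ by Theorem~\ref{combination}(\ref{combination: Kbar}) and that $z\notin M_{w_n}$ forces $\Psi(z)\notin \Lambda_{w_n}$---while $\Lambda_w$ lies on the opposite ``$T'$-side''. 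Using a ray for $y$ starting at $v_{n+1}$ together with Lemmas~\ref{geodesics intersect lambda w} and~\ref{same comp not eq}, $\boundary\psi(y)$ is likewise approached only from the $S_n$-side. Any crossings of $\gamma$ through $\Lambda_{w_n}$ must therefore come in pairs---one leaving the $S_n$-side and one returning---but the two vertices of $\Lambda_{w_n}$ are joined by the junction edge $j_{w_n}$, so their distance in $\bar{K}$ is at most $1$, whereas the subpath of $\gamma$ between two consecutive crossings has length at least $2$. This contradiction forces $\gamma$ to stay on the $S_n$-side, hence to avoid $\Lambda_w$, giving $\Psi(z)\in M(\boundary\psi(y),\Lambda_w)\subset V$.
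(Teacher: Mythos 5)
Your definition of $\boundary\Psi=\boundary\psi\circ(\boundary\alpha)^{-1}$, the identification $\Image(\Psi)=\Image(\sigma)$, and the injectivity argument coincide with the paper's. For the neighborhood condition you likewise reduce to $M(\boundary\psi(y),\Lambda_w)$ via Lemma~\ref{neighborhoods of non-redundant ends}, but then you deviate: the paper simply takes $U:=\bar{M}_{S_i}\setminus M_w$ (the open complement of $\bigcup_{j\neq i}\bar{M}_{S_j}$, where $S_i$ is the component of $T\setminus\{w\}$ containing $x$) and argues directly that a geodesic from $\Psi(z)$ to $\boundary\psi(y)$ cannot hit $A$ because it would recross $\Lambda_w$, whereas you insert a buffer separator $\Lambda_{w_n}$ with $n>n_0$, use a small metric ball for $U$, and try to show geodesics are trapped on the $S_n$-side of $\Lambda_{w_n}$.

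There is a genuine gap in the final step. The claim that ``$\Lambda_w$ lies on the opposite $T'$-side'' and the conclusion ``hence to avoid $\Lambda_w$'' require $\Lambda_w\cap\Lambda_{w_n}=\emptyset$, and this is not guaranteed for an arbitrary $n>n_0$. Indeed Definition~\ref{signature}(\ref{signature: intersection}) only forces $\bigl|\Image(s_{(v_{n_0+1},w_{n_0})})\cap\Image(s_{(v_{n_0+1},w_{n_0+1})})\bigr|\leq 1$ inside $\Lambda_{v_{n_0+1}}$, so after passing to $\bar{\Lambda}$ (where $\rho$ is injective on $\Lambda_{v_{n_0+1}}$) the two cut pairs $\Lambda_{w_{n_0}}$ and $\Lambda_{w_{n_0+1}}$ may share a point $p$. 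If $p\in\Lambda_w\cap\Lambda_{w_n}$, then $p$ lies on the separator $\Lambda_{w_n}$ itself, so your parity/length-$\geq 2$ argument places no obstruction to a geodesic from $\Psi(z)$ to $\boundary\psi(y)$ passing through $p$ while never leaving the closed $S_n$-side; yet $p\in\Lambda_w$ would put $\Psi(z)$ outside $M(\boundary\psi(y),\Lambda_w)$, and the proof fails. The fix is available and worth spelling out: since $y\in\boundary_0T$ is non-redundant, no point of $\bar{\Lambda}$ lies in $\Lambda_{w_m}$ for all large $m$ (that would give a constant associated sequence), and the set $\{m: p\in\Lambda_{w_m}\}$ is an interval because the parabolic tree of $p$ projects to a subtree of $T$; hence one can choose $n$ large enough that $\Lambda_{w_{n_0}}\cap\Lambda_{w_n}=\emptyset$, after which your argument goes through. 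As stated, ``fix any $n>n_0$'' is not sufficient.
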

\begin{proof}
Recall from Proposition~\ref{completion} that $\bar{M}_T\setminus M_T=\boundary\alpha(\boundary_0T)$. Then define $\boundary\Psi=\boundary\psi\circ (\boundary\alpha)^{-1}$ where $\boundary\psi$ and $\sigma$ is as in Proposition~\ref{non-red to bound k bar}. As $\boundary\psi$ is injective by Proposition~\ref{non-red to bound k bar}, we get that $\boundary\Psi$ is injective as well.

Let $y=\boundary\Psi(x)$. Then we can assume that $V=M(y,A)$ for some finite subset $A\subset \vertices(\bar{K})$. By Lemma~\ref{neighborhoods of non-redundant ends} there is $w\in W$ such that $M(y,\Lambda_w)\subset M(y,A)$. Let $S_1,S_2,\ldots,S_k$ be the connected components of $T\setminus\{w\}$. Choose $i$ such that $x\in \bar{M}_{S_i}$. Then since $\bigcup_{j\neq i}\bar{M}_{S_j}$ is closed in $\bar{M}_T$, we have that its complement $U:=\bar{M}_{S_i}\setminus M_w$ is open in $\bar{M}_T$. Therefore, $U:=(\bar{M}_{S_i}\setminus M_w)\ni x$. We now prove that $\Psi(U\cap M_T)\subset M(y,A)$.

Suppose there is $z\in U\cap M_T$ with a geodesic $\alpha$ from $\Psi(z)$ to $y$ in $\bar{K}$ passing through some $a\in A$. Suppose $z\in M_v$ and $a\in M_{v'}$ where $v,v'\in V$ are chosen such that $v$ and $v'$ are as close as possible in $T$. Suppose $v\in \vertices(S_i)$. (Otherwise $z\notin U$.)  Note that $v'\notin \vertices(S_i)$ by the choice of $w$ in Lemma~\ref{neighborhoods of non-redundant ends}. Therefore $\alpha$ consists of a geodesic from $\Psi(z)$ to $a$ passing through $\Lambda_w$ followed by a geodesic from $a$ to $x$ passing through $\Lambda_w$ again. However $\alpha$ being a geodesic implies $\Psi(z),a\in \Lambda_w$. But since $U\cap\Lambda_w=\emptyset$ we get $z\notin U$.
\end{proof}

We are now ready to prove that the main theorem for this section.
\begin{thm}
\label{boundarytreesys}
Let $G$ and $\P$ be as in Theorem~\ref{combination} and let $\Theta_G$ be a tree system of cut pairs as described in Definition~\ref{ThetaG}. Also let, $\bar{M}_T$ be the completion of the space $M_T$ as described in Setup~\ref{compsetup}. Then there is a homeomorphism $\bar{\Psi}\colon \bar{M}_T\to \boundary(G,\P)$ such that $\bar{\Psi}\big|_{M_u}$ is the identity for each $u\in \vertices(T)$.
Moreover, the boundary $\boundary(G,\P)$ is also homeomorphic to the inverse limit $M_{\Theta_G}$ as described in Definition~\ref{invsys}.
\end{thm}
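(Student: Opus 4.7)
The plan is to extend the map $\Psi\colon M_T\to \boundary(G,\P)$ from Lemma~\ref{Psi} by means of Bourbaki's extension theorem (Theorem~\ref{bourbaki}) and then exploit compactness of $\bar{M}_T$ to upgrade the extension to a homeomorphism. The metrizability of $\boundary(G,\P)=\Delta\bar{K}$ together with density of $M_T$ in its completion $\bar{M}_T$ and the neighborhood condition provided by Lemma~\ref{Psi ext} (which specifies $\boundary\Psi(x)\in \Delta\bar{K}\setminus \Image(\Psi)$ for every $x\in \bar{M}_T\setminus M_T$) are exactly the hypotheses of Theorem~\ref{bourbaki}, so we obtain a unique continuous extension $\bar{\Psi}\colon \bar{M}_T\to \boundary(G,\P)$ with $\bar{\Psi}\big|_{\bar{M}_T\setminus M_T}=\boundary\Psi$. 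The identity property of $\bar{\Psi}\big|_{M_u}$ for each $u\in \vertices(T)$ is inherited from Lemma~\ref{Psi} on each $M_v$, $v\in V$, and for $w\in W$ the two points of $M_w$ are identified in $M_T$ with their images $s_e(x),s_e(y)$ in any adjacent $M_v$ under the equivalence defining $M_T$.

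Injectivity of $\bar{\Psi}$ follows at once by combining the injectivity of $\Psi$ on $M_T$ (Lemma~\ref{Psi}) with that of $\boundary\Psi$ on $\bar{M}_T\setminus M_T$ (Lemma~\ref{Psi ext}); the two images are automatically disjoint since $\boundary\Psi$ takes values in the complement $\Delta\bar{K}\setminus \Image(\Psi)$. For surjectivity, I would argue that $\Image(\bar{\Psi})$ is the continuous image of the compact space $\bar{M}_T$ (Proposition~\ref{completioncpt}), hence closed in the Hausdorff space $\boundary(G,\P)$; but $\Image(\bar{\Psi})$ already contains $\Psi(M_T)\supseteq \vertices(\bar{K})=\bar{\Lambda}$, since each $x\in \Lambda_u$ belongs to $M_u\subset M_T$ with $\Psi(x)=x$. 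The set $\bar{\Lambda}$ is dense in $\Delta\bar{K}$ (any basic neighborhood $M_f(y,A)$ of a boundary point $y\in\boundary\bar{K}$ meets $\bar{\Lambda}$ along vertices of a representative $f$--quasigeodesic ray to $y$ lying beyond the finite set $A$), so the closed set $\Image(\bar{\Psi})$ contains a dense subset of $\boundary(G,\P)$ and therefore equals all of $\boundary(G,\P)$. A continuous bijection $\bar{\Psi}\colon \bar{M}_T\to \boundary(G,\P)$ between a compact space and a Hausdorff space is then automatically a homeomorphism.

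The second assertion of the theorem follows immediately from Theorem~\ref{completion is inverse limit}, which supplies a homeomorphism $\bar{\gamma}\colon \bar{M}_T\to M_{\Theta_G}$; the composition $\bar{\gamma}\circ \bar{\Psi}^{-1}$ is then the desired homeomorphism $\boundary(G,\P)\to M_{\Theta_G}$. All the technical work has already been packaged into Lemmas~\ref{Psi} and \ref{Psi ext}, so no single step here is truly difficult; the main point to get right is that compactness of $\bar{M}_T$ together with density of $\bar{\Lambda}$ in $\Delta\bar{K}$ is strong enough to upgrade injectivity and continuity into surjectivity, so that no direct construction of preimages for boundary points of $\bar{K}$ not already lying in some $\boundary K_v$ is required.
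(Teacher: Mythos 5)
Your argument is essentially identical to the paper's: both extend $\Psi$ via Bourbaki's theorem, establish surjectivity by noting that $\Image(\bar{\Psi})$ is closed (compactness of $\bar{M}_T$) and contains the dense set $\bar{\Lambda}$, deduce injectivity from Lemmas~\ref{Psi} and~\ref{Psi ext} plus the disjointness of $\Image(\Psi)$ and $\Image(\boundary\Psi)$, and conclude via the compact-to-Hausdorff criterion, with the inverse limit assertion handed off to Theorem~\ref{completion is inverse limit}. The only additions you make are a brief justification of why $\bar{\Lambda}$ is dense in $\Delta\bar{K}$ (which the paper asserts without comment) and an explicit note on the identity property over $M_w$ for $w\in W$; both are correct and harmless elaborations.
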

\begin{proof}
From Lemmas~\ref{Psi}~and~\ref{Psi ext} above, we have that the map $\Psi$ satisfies the conditions of Theorem~\ref{bourbaki}. Therefore, we have a continuous extension $\bar{\Psi}\colon \bar{M}_T\rightarrow \Delta \bar{K}$ which we now show is an homeomorphism. Recall that $\bar{\Lambda}=\bigcup_{v\in V} \Lambda_v$ from Theorem~\ref{combination}(\ref{combination: Kbar}). Since $\Image(\Psi)$ contains $\Delta_v$ for all $v\in V$, it contains $\Lambda_v$ for all $v\in V$. Therefore $\Image(\Psi)$ contains $\bar{\Lambda}$.  Note that since $\bar{M}_T$ is compact, $\Image(\bar{\Psi})$ is compact and thus closed in $\Delta\bar{K}$. Moreover, since $\bar{\Lambda}=\vertices(\bar{K})$ is dense in $\Delta\bar{K}$ and  $\Image(\bar{\Psi})$ contains $\bar{\Lambda}$ we have that $\Image(\bar{\Psi})=\Delta\bar{K}$ proving that $\bar{\Lambda}$ is surjective.

By Lemma~\ref{Psi} we have that $\bar{\Psi}$ is injective on $M_T$. Since the extension $\bar{\Psi}$ is unique by Theorem~\ref{bourbaki}, from Lemma~\ref{Psi ext} we get $\bar{\Psi}=\boundary\Psi$ on $\bar{M}_T\setminus M_T$. Thus $\Psi$ is an injection on $\bar{M}_T\setminus M_T$ as well. Moreover, from Lemma~\ref{Psi ext} we get that $\bar{\Psi}(M_T)\cap \bar{\Psi}(\bar{M}_T\setminus M_T)=\emptyset$. Therefore $\bar{\Psi}$ is injective.
Since $\bar{\Psi}$ is a continuous bijection between compact Hausdorff spaces, we get that it is a homeomorphism. The last assertion follows from Theorem~\ref{completion is inverse limit}.
\end{proof}
If we impose restrictions on the Bowditch boundaries of the vertex groups, we also get topological information about the Bowditch boundary $\boundary(G,\P)$.

\begin{cor}
\label{bowditchconn}
Given the setup in Theorem~\ref{combination}, if for all $v\in V$, the Bowditch boundaries $\boundary(G_v,\P_v)$ are connected without cut points and cut pairs then $\boundary(G,\P)$ is connected without cut points and only has parabolic cut pairs.

Moreover, if $W_I$ is the collection of all inseparable cut pairs on $\boundary(G,\P)$ and $T_I$ is the inseparable cut pair tree dual to $W_I$, then we have a $G$--equivariant graph isomorphism $\phi_I\colon T\rightarrow T_I$. 
\end{cor}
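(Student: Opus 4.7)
The plan is to combine Theorem~\ref{boundarytreesys} with the structural results of Section~\ref{sec:completion} applied to the tree system $\Theta_G$ from Definition~\ref{ThetaG}. First I would invoke Theorem~\ref{boundarytreesys} to obtain a homeomorphism $\bar{\Psi}\colon \bar{M}_T \to \boundary(G,\P)$ that restricts to the identity on each vertex space $M_u = \boundary(G_u,\P_u)$. Under this identification, the hypothesis that each $\boundary(G_v,\P_v)$ (for $v\in V$) is connected without cut points and cut pairs is precisely the hypothesis on constituent spaces required by Proposition~\ref{insepstr} (the $M_w$ for $w\in W$ are already discrete two-point spaces by Definition~\ref{ThetaG}).

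Proposition~\ref{insepstr}(\ref{insepstr:no cut points}) then immediately gives that $\bar{M}_T$, and hence $\boundary(G,\P)$, is connected without cut points. Proposition~\ref{insepstr}(\ref{insepstr: insep cutpair}) identifies the cut pairs of $\boundary(G,\P)$ with the family $\{M_w\}_{w\in W}$ and asserts that each is inseparable. Since every $M_w = \Lambda_w$ lies inside the parabolic set $\bar{\Lambda}$ produced by Theorem~\ref{combination}, every cut pair of $\boundary(G,\P)$ is both inseparable and consists of parabolic points, completing the first claim.

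For the second claim I would apply Proposition~\ref{cutpairtree iso} directly to $\Theta_G$, which produces a graph isomorphism $\phi_I\colon T \to T_I$ sending $w \in W$ to $M_w$ and $v\in V$ to the star $\{\,M_w \mid w \text{ adjacent to } v \,\}$. The remaining ingredient is $G$-equivariance of $\phi_I$. Since the signature $\mathcal{S}$ underlying $\Theta_G$ is $G$-equivariant by Definition~\ref{signature}(\ref{signature: equivaraint}), the tree system $\Theta_G$ carries a natural $G$-action compatible with the $G$-action on $T$; this action makes $M_T$ and hence $\bar{M}_T$ into $G$-spaces, and the image of an inseparable cut pair $M_w$ under $g\in G$ is $M_{gw}$ by construction.

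The one subtle step is verifying that $\bar\Psi$ is $G$-equivariant, so that the $G$-action on $\{M_w\}_{w\in W}\subset \bar{M}_T$ really does match the $G$-action on $W_I \subset \boundary(G,\P)$. Here I would appeal to the uniqueness clause in Theorem~\ref{bourbaki}: the map $\Psi$ on $M_T$ is $G$-equivariant by its explicit description (the identity on each $M_v$ combined with the equivariant gluings $i_e = s_e$), and the $G$-action on $\bar{M}_T$ is built from the same $G$-equivariant data as the completion itself. Both $\bar\Psi$ and $g^{-1}\of\bar\Psi\of g$ are then continuous extensions of $\Psi$ to $\bar{M}_T$, so they must coincide by uniqueness. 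Once this is in hand, the equivariance of $\phi_I$ is immediate.
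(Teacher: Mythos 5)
Your first two paragraphs match the paper's argument exactly: apply Theorem~\ref{boundarytreesys} to identify $\boundary(G,\P)$ with $\bar M_T$, read off connectedness and absence of cut points from Proposition~\ref{insepstr}(\ref{insepstr:no cut points}), and identify the inseparable cut pairs with $\{\Lambda_w\}_{w\in W}$ from Proposition~\ref{insepstr}(\ref{insepstr: insep cutpair}) and Proposition~\ref{cutpairtree iso}. The observation that these are parabolic because $\Lambda_w\subset\bar\Lambda$ is also correct.

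Where you diverge from the paper --- and introduce a gap --- is in the final step on $G$--equivariance of $\phi_I$. You propose to first equip $\bar M_T$ with a $G$--action and then show $\bar\Psi$ is $G$--equivariant via uniqueness of continuous extensions. The difficulty is that no $G$--action on $\bar M_T$ has been defined anywhere in the construction, and it is not obvious that one exists as an action by homeomorphisms: the metric $d$ on $M_T$ was built from a shrinking family of metrics anchored at a base vertex $v_0$ (Proposition~\ref{compatibleshrink}), and that choice is not $G$--invariant. So while each $g\in G$ gives a set bijection $M_T\to M_T$ via the bijections $M_u\to M_{gu}$, you would still have to verify that this bijection is a homeomorphism with respect to the (non-equivariant) metric topology, and that it extends to the completion. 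That is extra work that your outline treats as automatic. The step ``both $\bar\Psi$ and $g^{-1}\circ\bar\Psi\circ g$ are continuous extensions of $\Psi$'' silently assumes $g$ already acts continuously on $\bar M_T$, which is the very thing you'd need to establish.

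The paper sidesteps all of this by never acting on $\bar M_T$: once $\bar\Psi$ is used to identify the inseparable cut pairs of $\boundary(G,\P)$ with $\{\Lambda_w\}_{w\in W}\subset\bar\Lambda$, the $G$--action on $T_I$ is the one inherited from the $G$--action on $\boundary(G,\P)$, and equivariance of $\phi_I$ reduces to the single identity $g\Lambda_w=\Lambda_{gw}$ inside $\bar\Lambda$. That identity is immediate from the fact that $\Lambda$ is a $G$--set compatible with the action on $T$ (Definition~\ref{2admissible}(\ref{2admissible: equivariance})) and the quotient $\equiv$ is $G$--equivariant. I'd recommend replacing your final paragraph with that direct computation; it is both shorter and doesn't require any new structure on $\bar M_T$.
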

\begin{proof}
From Proposition~\ref{insepstr}(\ref{insepstr:no cut points}) we get that $\boundary(G,\P)$ is connected without cut points. Also, from Proposition~\ref{insepstr}(\ref{insepstr: insep cutpair}), $W_I=\{\Lambda_w\}_{w\in W}$ as $M_w=\Lambda_w$.

For the second assertion, note that we have a graph isomorphism $\phi_I\colon T\rightarrow T_I$ as described in Proposition~\ref{cutpairtree iso}. Since $\phi(w)=M_w=\Lambda_w$ and $g\Lambda_w=\Lambda_{gw}$, we get that $g\phi(w)=\phi(gw)$ for all $w\in W$ and $g\in G$. Since $\phi_I(v)=\{\Lambda_w\mid w\text{ is adjacent to } v\}$, we get
\begin{align*}
g\phi_I(v)&=\{g\Lambda_w\mid w\text{ is adjacent to } v\}=\{\Lambda_{gw}\mid w\text{ is adjacent to } v\} \\
&=\{\Lambda_w\mid w\text{ is adjacent to } gv\}=\phi(gv)
\end{align*}
for all $v\in V$ and $g\in G$. Thus we get that $\phi_I$ is $G$--equivariant.
\end{proof}

Note that we do not need the Bowditch boundaries of the vertex stabilizers to be connected to for $\boundary(G,\P)$ to be connected without cut points. 
One such instance is seen below, where the boundaries of the vertex stabilizers are disconnected but the boundary $\boundary(G,\P)$ is connected without cut points.

\begin{exmp}
\label{CombExample}
Consider the groups $A=B=C=\Z/3\Z$ and $G=A*B*C$. Then there is $\P$ such that $(G,\P)$ is relatively hyperbolic and $\boundary(G,\P)$ is connected without cut points \cite[Proposition~4.4]{HW}. By Theorem~\ref{decomp}, there is a bipartite tree $T$ on which $G$ acts with a $2$--admissible action and a signature $\mathcal{S}$ for this action such that applying Theorem~\ref{combination} with $\mathcal{S}$, we get the relatively hyperbolic pair $(G,\P)$ again. But, the boundary of each vertex stabilizer of $T$ is a discrete space with three points. 
\end{exmp}

\section{Decomposing along parabolic cut pairs}
\label{sec:decomp}

In this section, we give a converse result to Theorem~\ref{combination}. We explore how to decompose a relatively hyperbolic group with a connected boundary without cut points along inseparable parabolic cut pairs. We regularly use the inseparable cut pair tree and stars as in Definition~\ref{cutpairtree}, and $2$--admissible action as in Definition~\ref{2admissible}. We now state the main result of this section.

\begin{thm}
\label{decomp}
Consider a relatively hyperbolic pair $(G,\P)$ with a connected boundary $\boundary(G,\P)$ without cut points and a non-empty $G$--equivariant set of inseparable parabolic cut pairs $W$ on $\boundary(G,\P)$. Let $T$ be the inseparable cut pair tree dual to $W$ with vertex set $V\sqcup W$, where $V$ is the set of stars as defined in Definition~\ref{cutpairtree}.  Then we have the following.
\begin{enumerate}
    \item $G$ acts on $T$ with a $2$--admissible minimal action as in Definition~\ref{2admissible}.\label{Tminimal}
    \item Let $G_v$ be the stabilizer for $v\in V$. Then there is a indexed family $\P_v$ of subgroups of $G$ such that $(G_v,\P_v)$ is a relatively hyperbolic pair. \label{GvRel}
    \item There is a signature $\mathcal{S}$ of the action of $G$ on $T$ such that the relatively hyperbolic pair $(G,\P)$ is obtained by applying Theorem~\ref{combination} with $\mathcal{S}$.\label{Decomp sign}
    \item There is a tree system $\Theta$ of cut pairs on $T$ with constituent spaces $\boundary(G_v,\P_v)$ such that $\boundary(G,\P)$ is homeomorphic to the inverse limit $M_{\Theta}$ and also to the completion $\bar{M}_T$ of the space $M_T$ defined in Setup~\ref{compsetup}.\label{Decomp treesys}
\end{enumerate}
\end{thm}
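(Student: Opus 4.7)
My plan is to proceed in four stages matching the four conclusions. For (\ref{Tminimal}), the action of $G$ on $\boundary(G,\P)$ by homeomorphisms permutes $W$ equivariantly, inducing an action on $T=T_W$. Inversions are impossible because $V$-vertices (stars) and $W$-vertices (cut pairs) are intrinsically distinguished. Finite quotient follows from finiteness of the conjugacy classes of peripheral subgroups (which bounds $G$-orbits on parabolic points and hence on pairs of such) together with the observation that each star contains only finitely many cut pairs modulo its stabilizer. Setting $\Lambda_w:=\{a_w,b_w\}$, finiteness of $G_w$ holds because its pointwise part lies in the intersection of the two distinct peripheral subgroups stabilizing $a_w$ and $b_w$, which is finite, and the setwise stabilizer is a degree-$\leq 2$ extension. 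Minimality follows from connectedness of $T$ together with nontriviality of $W$.

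For (\ref{GvRel}), given $v\in V$, I define the \emph{star block} $B_v\subseteq\boundary(G,\P)$ as the closure of the union of those components of $\boundary(G,\P)\setminus\bigcup_{w\in v}M_w$ naturally associated to $v$ via the tree structure of $T$ (compare Proposition~\ref{insepstr}). Then $B_v$ is a Peano continuum without cut points or cut pairs, and $G_v$ acts on $B_v$. Let $\Lambda_v$ be the union of $\bigcup_{w\in v}\{a_w,b_w\}$ with the set of parabolic points of $(G,\P)$ lying in $B_v$, and set $\P_v:=(\Stab_{G_v}(x))_{x\in\Lambda_v}$. To establish relative hyperbolicity of $(G_v,\P_v)$, I would verify that $G_v$ acts as a geometrically finite convergence group on $B_v$ with bounded parabolic fixed points precisely $\Lambda_v$, then invoke Yaman's convergence-group characterization of relative hyperbolicity. \emph{This is the central obstacle of the proof}: the points $a_w,b_w$ must become parabolic in $(G_v,\P_v)$ even though they were not parabolic for $G_v$ in the ambient structure, so the peripheral family genuinely enlarges, and one must verify properness of the convergence action of $G_v$ on triples in $B_v\setminus\Lambda_v$.

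For (\ref{Decomp sign}), define the signature by letting $s_e\colon\Lambda_w\hookrightarrow\Lambda_v$ for each $e=(v,w)\in\edges(T)$ be the natural inclusion $\{a_w,b_w\}\hookrightarrow\Lambda_v$. The conditions of Definition~\ref{signature} are immediate: $G$-equivariance comes from the boundary action; injectivity is tautological; and $|\Image(s_e)\cap\Image(s_{e'})|\leq 1$ for distinct edges incident on a common $v$ because two distinct cut pairs share at most one point. Applying Theorem~\ref{combination} then produces a relatively hyperbolic pair $(G,\P')$ with $\P'$ indexed by parabolic trees of the associated forest $F$. To identify $\P'$ with $\P$, I would exhibit a $G$-equivariant bijection between parabolic points of $(G,\P)$ and parabolic trees of $F$: non-cut-pair parabolic points yield singleton parabolic trees, while a cut-pair parabolic point yields the parabolic tree formed by all its copies across adjacent vertex spaces; the stabilizers then match because the stabilizer in $G$ of such a parabolic tree equals the peripheral subgroup fixing the corresponding point.

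Finally, for (\ref{Decomp treesys}), Theorem~\ref{boundarytreesys} applies directly to the data assembled above: the tree system $\Theta=\Theta_G$ of Definition~\ref{ThetaG}, with constituent spaces $\boundary(G_v,\P_v)$, yields the required homeomorphisms $\boundary(G,\P)\cong M_\Theta\cong\bar{M}_T$.
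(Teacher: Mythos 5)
Your overall architecture parallels the paper's (define $B_v$ from the cut-pair tree, extract $\Lambda_v$ and $\P_v$, build the signature from the inclusions, then invoke Theorem~\ref{boundarytreesys}), but there are two substantive problems and one genuinely different route.

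First, part~(\ref{Tminimal}): the claim that ``minimality follows from connectedness of $T$ together with nontriviality of $W$'' does not hold up — connectedness of a tree is automatic, and nontriviality of $W$ does not rule out a proper $G$-invariant subtree. The paper instead cites Hruska--Walsh for minimality of the action on the inseparable cut pair tree and then obtains finite quotient \emph{from} minimality via Dicks--Dunwoody; your argument for finite quotient via ``bounding $G$-orbits on parabolic points and pairs'' is in the wrong logical order and in any case would need that stars are picked up coherently, which is exactly what the minimality/Dicks--Dunwoody chain supplies. Also, your assertion that $B_v$ is a ``Peano continuum without cut points or cut pairs'' is unjustified and in general false: the hypothesis only gives a $G$-equivariant set $W$ of inseparable parabolic cut pairs, not all of them, so $B_v$ can certainly contain cut pairs not in $W$. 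Fortunately the paper's proof never needs this.

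Second, part~(\ref{GvRel}) is where you take a genuinely different route and leave a real gap. You propose to realize $(G_v,\P_v)$ as relatively hyperbolic by verifying that $G_v$ is a geometrically finite convergence group on $B_v$ and invoking Yaman, and you explicitly flag the verification of proper discontinuity on triples and bounded-parabolic behavior of the new parabolics $a_w,b_w$ as ``the central obstacle'' — but you do not resolve it, so this step remains an open gap. The paper sidesteps convergence dynamics entirely: since edge stabilizers are finite (hence relatively quasiconvex), Haulmark--Hruska gives relative quasiconvexity of the vertex stabilizer $G_v$; Hruska's theorem then equips $G_v$ with an intrinsic relatively hyperbolic structure whose peripherals are the infinite intersections $G_v\cap P$; and Osin's result allows appending the finitely many $G_v$-conjugacy classes of \emph{finite} stabilizers that arise at the points of $w\in v$. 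This chain of citations is what actually produces $(G_v,\P_v)$ and is the content your argument is missing. The identification of $\P$ with the family produced by Theorem~\ref{combination} (your parabolic-tree bijection) and the application of Theorem~\ref{boundarytreesys} in parts~(\ref{Decomp sign}) and~(\ref{Decomp treesys}) do match the paper's approach, modulo the paper's additional care in Lemmas~\ref{gvinfint} and~\ref{idealnotpara} to show that every parabolic point lies in some $B_v$ and to track which parabolic trees they generate.
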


We now see some properties about the inseparable cut pair tree.

\begin{lem}
\label{T construction and properties}
Let $T,W, V$ and $G$ be as in Theorem~\ref{decomp}.
Then the following hold.
\begin{enumerate}
    \item \label{T construction and properties: G and P fin gen}
    All members in $\P$ are finitely generated.
    \item \label{T construction and properties: G action on T} $G$ acts minimally on $T$ with finite quotient.
    \item \label{T construction and properties: Gw finite}
    For $u\in \vertices(T)$, let $G_u=\Stab(u)$. Then $|G_w|<\infty$ for $w\in W$.     
\end{enumerate}
\end{lem}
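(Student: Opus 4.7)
The plan is to prove the three parts separately, with Parts (1) and (3) following quickly from definitions and general properties of Bowditch boundaries, while the main technical content lies in Part (2).

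For Part (1), I would note that by Definition~\ref{relhyp}, $\P$ is defined as the family of stabilizers of vertices of a $(G,\P)$-graph, and the definition requires these vertex stabilizers to be finitely generated.

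For Part (3), I would first observe that since $\boundary(G,\P)$ is connected, the family $\P$ contains no finite subgroups (otherwise the boundary would have isolated points, as noted in the paragraph preceding Definition~\ref{cutpairtree}). Now consider $w=\{a,b\}\in W$, and let $P_a$ and $P_b$ denote the peripheral subgroups stabilizing $a$ and $b$ respectively. The key claim is that $a$ and $b$ correspond to distinct indices of $\P$: otherwise, a single peripheral $P=P_a=P_b$ would fix both points, so $\Stab_P(b)=P$; but $P$ acts properly discontinuously on $\boundary(G,\P)\setminus\{a\}\ni b$, forcing $\Stab_P(b)$ to be finite, which contradicts the infinitude of $P$. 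Hence $P_a$ and $P_b$ have distinct indices, so $P_a\cap P_b$ is finite. The pointwise stabilizer $G_w^+$ of the pair $\{a,b\}$ lies in $P_a\cap P_b$ and has index at most $2$ in $G_w$, yielding $|G_w|<\infty$.

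Part (2) splits into the finite quotient claim and minimality. For finite quotient, I would argue that $W$ has only finitely many $G$-orbits, by relating $G$-orbits of cut pairs to combinatorial data on peripherals: each $w=\{a,b\}\in W$ is determined, up to $G$-action, by the unordered pair of $G$-conjugacy classes of $P_a$ and $P_b$, together with a refinement that can be encoded in a suitably enlarged $(G,\P)$-graph where, by Lemma~\ref{gattach}, one may insert an edge between each pair in $W$. Since the resulting graph remains a $(G,\P)$-graph, it has finitely many $G$-orbits of edges, so $W$ itself has finitely many $G$-orbits. Finiteness for $V$ then follows because each star is a bounded-size neighborhood in $W$. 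For minimality, I would show that any $G$-invariant subtree $T'\subseteq T$ must be all of $T$: by non-emptiness and $G$-equivariance of $W$, the subtree $T'$ contains some $w\in W$, and the convex hull in $T$ of the infinite orbit $G\cdot w$ exhausts $T$, since $G$ has no global fixed point on $\boundary(G,\P)$ and therefore the translates of $w$ cannot be confined to a proper subtree.

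The hard part will be the finite quotient property in Part (2); enlarging the $(G,\P)$-graph via Lemma~\ref{gattach} presupposes that only finitely many orbits need to be added, which is a genuine piece of bookkeeping that ties together relative hyperbolicity, cocompactness on fine hyperbolic graphs, and the combinatorics of parabolic cut pairs.
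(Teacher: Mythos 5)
Your arguments for Parts (1) and (3) are essentially sound. For (1), the paper actually cites Osin's result that peripheral subgroups of a relatively hyperbolic pair are finitely generated, but your observation that Definition~\ref{relhyp} already builds finite generation of vertex stabilizers into the definition is also a valid route. For (3), the paper's proof is terser (``since every $w\in W$ consists of two parabolic points''), but your argument correctly fills in the reasoning; one small redundancy is that once $a\neq b$ are distinct elements of $\Lambda$, they automatically have distinct indices, so the preliminary step verifying $P_a\neq P_b$ via proper discontinuity is not needed --- the finite-intersection property stated after Definition~\ref{relhyp} applies immediately.

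Part (2) has a genuine gap, and the order of the two claims matters. The paper first establishes \emph{minimality} by citing Hruska--Walsh (Lemma~6.3 of \cite{HW}), and then deduces the \emph{finite quotient} from minimality plus finite generation of $G$ via Dicks--Dunwoody (Proposition~I.4.13 of \cite{DD}). Your proposal reverses this order and attempts a direct argument for each, and both halves break down. For the finite quotient: Lemma~\ref{gattach} adds one $G$-orbit of edges at a time, so invoking it to build a $(G,\P)$-graph whose edges encode all of $W$ presupposes that $W$ already has finitely many $G$-orbits --- which is precisely what you are trying to prove. You flag this circularity yourself (``presupposes that only finitely many orbits need to be added''), but you do not resolve it, and it is not a minor bookkeeping issue; the entire content of the finite-quotient claim is at stake. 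For minimality: the hypotheses of Theorem~\ref{decomp} only assume $W$ is a non-empty $G$-equivariant set, not a single $G$-orbit, so the convex hull of $G\cdot w$ for one $w\in W$ need not exhaust $T$; moreover, ``$G$ has no global fixed point on $\boundary(G,\P)$'' does not by itself force translates of a single cut pair to escape every proper subtree of $T_W$. You should instead cite the minimality result of Hruska--Walsh and then apply Dicks--Dunwoody.
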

\begin{proof}
From \cite[Proposition~2.29]{Osin}, we have that for any relatively hyperbolic pair, the peripherals subgroups are always finitely generated.
This gives (\ref{T construction and properties: G and P fin gen}). Hruska--Walsh \cite[Lemma~6.3]{HW} show that the action of $G$ on the inseparable cut pair tree of the Bowditch boundary is always minimal.
Dicks--Dunwoody \cite[Proposition~I.4.13]{DD} show that any finitely generated acting minimally on a tree has a finite quotient. This proves (\ref{T construction and properties: G action on T}).
Since every $w\in W$ consists of two parabolic points, we get (\ref{T construction and properties: Gw finite}).
\end{proof}

Recall these facts about peripheral subgroups from Section~\ref{subsec: Bowditch boundaries}. For a parabolic point $p\in \boundary(G,\P)$ if $P=\Stab(p)$ then $P$ acts properly discontinuously on $\boundary(G,\P)\setminus \{p\}$.
Moreover, if $\boundary(G,\P)$ is connected then $P$ is infinite.
We now prove properties about the components in $\boundary(G,\P)\setminus w$ for $w\in W$.

\begin{lem}
\label{Uvw}
Let $T,W$ and $V$ be as in Theorem~\ref{decomp}.
Then for $w\in W$, there is a bijection $\mu_w\colon \Comp(T\setminus w)\to \Comp(\boundary(G,\P)\setminus \{w\})$ satisfying the following.
\begin{enumerate}
\item \label{UvW: direction}
For $w'\neq w\in W$ and $S\in \Comp(T\setminus w)$, we have that $w'\setminus w\subset \mu_w(S)$ if and only if $w'\in \vertices(S)$.
\item \label{Uvw: different directions dont intersect}
Consider $w,w'\in W,S\in \Comp(T\setminus w)$ and $S'\in \Comp(T\setminus w')$
such that $S\cap S'=\emptyset$. Then $\mu_w(S)\cap \mu_{w'}(S')=\emptyset$.
\end{enumerate}
\end{lem}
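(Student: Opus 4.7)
The plan is to construct $\mu_w$ by associating to each component $S$ of $T\setminus w$ the component of $\boundary(G,\P)\setminus w$ into which the other cut pairs of $S$ ``point.'' For $S\in \Comp(T\setminus w)$, let $v_S\in V$ denote the unique vertex of $S$ adjacent to $w$ in $T$; as a star, $v_S$ contains $w$ along with, generically, other cut pairs. Choosing any $w''\in v_S\setminus\{w\}$, define $\mu_w(S)$ to be the component of $\boundary(G,\P)\setminus w$ containing $w''\setminus w$. The edge case $v_S=\{w\}$ is either ruled out separately (it would force $v_S$ to be a leaf of $T$, whose exclusion one can extract from connectedness and the absence of cut points of $\boundary(G,\P)$) or handled by an auxiliary argument; either way, my intent is to reduce to the generic situation.

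Well-definedness rests on a single observation: for any $w_1,w_2\in \vertices(S)\cap W$, the unique $T$-path between them lies in $S$ and hence avoids $w$, so by Definition~\ref{cutpairtree} $w$ is not between $w_1$ and $w_2$. This means $w$ separates no point of $w_1$ from any point of $w_2$. Inseparability of each $w_i$ (a defining property of $W$) additionally forces $w_i\setminus w$ into a single component of $\boundary(G,\P)\setminus w$, so $w_1\setminus w$ and $w_2\setminus w$ end up in a common component. This both shows that the definition is independent of the choice of $w''$ and establishes the ``if'' direction of property~(\ref{UvW: direction}).

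Injectivity is the mirror argument: for $S_1\neq S_2$ and representatives $w_i\in \vertices(S_i)\cap W$, the $T$-geodesic from $w_1$ to $w_2$ now must cross $w$, so $w$ lies between them and separates some point of $w_1$ from some point of $w_2$; inseparability again pins each $w_i\setminus w$ to a single component, forcing the two components to differ. This yields the ``only if'' direction of property~(\ref{UvW: direction}) immediately. For property~(\ref{Uvw: different directions dont intersect}), I plan a short case analysis on the position of $w,w'$ relative to the subtrees $S,S'$ in $T$ (with $S\cap S'=\emptyset$ forcing $w'\notin \vertices(S)$ or $w\notin \vertices(S')$, and in any case placing $S$ and $S'$ in separate directions of $T$), reducing the disjointness of $\mu_w(S)\cap \mu_{w'}(S')$ to the same between/separation dictionary.

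The main obstacle I anticipate is surjectivity: every component $C$ of $\boundary(G,\P)\setminus w$ must be of the form $\mu_w(S)$, equivalently must contain $w'\setminus w$ for some $w'\in W$. The natural attack exploits that $G_w=\Stab(w)$ acts properly discontinuously and cocompactly on $\boundary(G,\P)\setminus w$ (Section~\ref{subsec: Bowditch boundaries}), so the components of that complement are permuted by $G_w$ with only finitely many orbits; combining this with $G$-equivariance of $W$ and density of vertices of the $(G,\P)$ graph in $\boundary(G,\P)$ should produce at least one cut pair of $W$ with points inside any given $C$. If a direct argument is not enough, I would fall back on the Papasoglu--Swenson / Hruska--Walsh framework cited in Definition~\ref{cutpairtree}, whose analogous results identify valences in the inseparable cut pair tree of a Peano continuum with the number of complementary components, and adapt it to the present relatively hyperbolic setting.
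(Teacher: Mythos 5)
Your construction of $\mu_w$ and your arguments for well-definedness, for part~(\ref{UvW: direction}), and for injectivity match the paper's proof in substance: both translate the ``between'' relation of Definition~\ref{cutpairtree} into separation in $\boundary(G,\P)$ and use inseparability of each cut pair to force $w_i\setminus w$ into a single component. The paper simply takes an arbitrary $w_S\in W\cap\vertices(S)$ rather than one in the adjacent star $v_S$, and does not need the $v_S=\{w\}$ edge case because minimality of the $G$-action on $T$ (Lemma~\ref{T construction and properties}(\ref{T construction and properties: G action on T})) rules out leaves; your proposed justification via connectedness and absence of cut points for excluding that case is not the mechanism the paper uses.

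The surjectivity argument, however, rests on a false premise. You assert that $G_w=\Stab(w)$ acts properly discontinuously \emph{and cocompactly} on $\boundary(G,\P)\setminus w$, citing Section~\ref{subsec: Bowditch boundaries}. That section asserts this only for the stabilizer of a single parabolic \emph{point} $p$ acting on $\boundary(G,\P)\setminus\{p\}$, not for the stabilizer of a two-point set. In fact $G_w$ is finite (Lemma~\ref{T construction and properties}(\ref{T construction and properties: Gw finite}); the two points of $w$ are parabolic with distinct peripheral indices, so their stabilizers intersect in a finite group), and a finite group cannot act cocompactly on the non-compact space $\boundary(G,\P)\setminus w$: a cocompact action would exhibit $\boundary(G,\P)\setminus w$ as a finite union of translates of a compact set, making it compact, which it is not. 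The paper's actual surjectivity proof goes through a \emph{point} stabilizer: given a component $U$ of $\boundary(G,\P)\setminus w$, density of parabolic points yields a parabolic $p\in U$; then $C:=\boundary(G,\P)\setminus U$ is a compact subset of $\boundary(G,\P)\setminus\{p\}$ containing $w$, and since $\Stab(p)$ is infinite (the boundary is connected) and acts properly discontinuously on $\boundary(G,\P)\setminus\{p\}$, some $g\in\Stab(p)$ satisfies $gC\cap C=\emptyset$, hence $gw\subset U$. Letting $S$ be the component of $T\setminus w$ containing $gw$ gives $U=\mu_w(S)$. Your fallback to the Papasoglu--Swenson/Hruska--Walsh machinery also does not supply this step directly: those results build the tree but do not by themselves plant a member of $W$ in every complementary component, which is exactly what the translation argument accomplishes. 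Finally, for part~(\ref{Uvw: different directions dont intersect}) you gesture at a ``short case analysis'' but leave it unfilled; the paper's concrete argument is that if $x\in\mu_w(S)\cap\mu_{w'}(S')$, then $\mu_w(S)\cup\mu_{w'}(S')$ is connected and avoids $w$ (since $w\not\subset\mu_{w'}(S')$ by part~(\ref{UvW: direction})), so it equals $\mu_w(S)$, whence any $w''\in\vertices(S')\cap W$ with $w''\neq w'$ has $w''\setminus w'\subset\mu_w(S)$, contradicting part~(\ref{UvW: direction}).
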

\begin{proof}
Consider some $w\in W$ and $S\in \Comp(T\setminus w)$. Choose any $w_S\in W\cap S$. Define $\mu_w(S)$ to be the component of $\boundary(G,\P)\setminus w$ containing $w_S\setminus w$. (Such a component exists as $w_S$ is inseparable.) Now consider some other $w'\in W$ such that $w'\in S$. Then $\mu_w(S)$ contains $w'\setminus w$ as well otherwise $w$ is in between $w_S$ and $w'$ in the tree $T$. Now suppose $w'\notin S$. Then $\mu_w(S)$ cannot contain $w'\setminus w$ otherwise $w$ is not in between $w_S$ and $w'$ in $T$. This proves (\ref{UvW: direction}) and that $\mu_w$ is well defined and injective.

Let $U$ be a component of $\boundary(G,\P)\setminus w$. Since parabolic points are dense in the boundary and since $U$ is open, there is a parabolic point $p\in U$. 
Also note that $C=\boundary(G,\P)\setminus U$ is a compact subset in $\boundary(G,\P)\setminus \{p\}$ and $w\subset C$. 
Since $\Stab(p)$ is an infinite group acting properly discontinuously on $\boundary(G,\P)\setminus \{p\}$, there is $g\in \Stab(p)$ such that $gC\cap C=\emptyset$. Therefore, there is $g\in G$ such that $gw\subset U$. Let $S\in \Comp(T\setminus w)$ contain $gw$. Then we must have $U=\mu_w(s)$, which proves that $\mu_w$ is surjective.

For (\ref{Uvw: different directions dont intersect}), we assume that $w\neq w'$ as otherwise we get the result from (\ref{UvW: direction}). Assume for contradiction there is $x\in \mu_w(S)\cap \mu_{w'}(S')$. Then $U=\mu_w(S)\cup \mu_{w'}(S')$ is connected. Since $w\not\subset \mu_{w'}(S')$ by (\ref{UvW: direction}), the space $U$ is connected in $\boundary(G,\P)\setminus\{w\}$. As $\mu_w(S)$ is a connected component of $\boundary(G,\P)\setminus\{w\}$, we have $U=\mu_w(S)$. Now consider some $w''\in S'\cap W$  such that $w''\neq w'$. Then $w''\setminus w'\in \mu_{w'}(S')\subset U=\mu_{w}(S)$, which contradicts (\ref{UvW: direction}). This proves (\ref{Uvw: different directions dont intersect}).
\end{proof}

\begin{prop}
\label{Bvprop}
    Let $T,V, W$ and $G$ be as in Theorem~\ref{decomp}.
    For $v\in V$, and $w\in v$ let $S^w_v\in \Comp(T\setminus w)$ contain $v$. Then define $U^w_v:=\mu_w(S^w_v)$ where $\mu_w$ is as in Lemma~\ref{Uvw}. Now consider the compact subspace $B_v=\bigcap_{w\in v}\text{cl}(U^w_v)$. Then the following hold.
    \begin{enumerate}
        \item For any $w\in W$, we have that $w\subset B_v$ if and only if $w\in v$. \label{pair in B_v}
        \item $\Stab(B_v)=G_v$. \label{StabBv}
        \item For any $g\in G$, we have that $gB_v=B_{gv}$.\label{BvGeq}
        \item Consider any $v'\neq v\in V$. If $x\in B_v\cap B_{v'}$ and $w\in W$ is between $v$ and $v'$ in $T$, then $x\in w$.\label{Bvpath}
    \end{enumerate}
\end{prop}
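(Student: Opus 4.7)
The plan is to prove (\ref{pair in B_v}) directly from the definitions, use equivariance of $\mu_w$ to obtain (\ref{BvGeq}), derive (\ref{StabBv}) as a formal consequence of (\ref{pair in B_v}) and (\ref{BvGeq}), and prove (\ref{Bvpath}) via a key claim that for every $w \in W$ and every $v \in V$, $B_v \subset \text{cl}(\mu_w(S_v))$ where $S_v$ is the component of $T \setminus w$ containing $v$. For the forward direction of (\ref{pair in B_v}), if $w \in v$ then by the star definition no cut pair lies strictly between $w$ and any other $w' \in v$, hence $w \subset S^{w'}_v$ so $w \setminus w' \subset U^{w'}_v$ by Lemma~\ref{Uvw}(\ref{UvW: direction}), with $w \cap w' \subset w' \subset \text{cl}(U^{w'}_v)$; thus $w \subset \text{cl}(U^{w'}_v)$ for every $w' \in v$. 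For the reverse direction, if $w \notin v$ the path from $v$ to $w$ in $T$ first enters some adjacent $w_0 \in v$, placing $w$ in a component of $T \setminus w_0$ different from $S^{w_0}_v$, so at least one point of $w \setminus w_0$ lies in $\mu_{w_0}(S)$ for some $S \neq S^{w_0}_v$, which is disjoint from $\text{cl}(U^{w_0}_v) \supset B_v$.

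For (\ref{BvGeq}), the $G$--action on $\boundary(G,\P)$ permutes cut pairs and components equivariantly, giving $g\mu_w(S) = \mu_{gw}(gS)$ and hence $gU^w_v = U^{gw}_{gv}$; applying the homeomorphism $g$ to the intersection defining $B_v$ yields $gB_v = B_{gv}$. For (\ref{StabBv}), the inclusion $G_v \subset \Stab(B_v)$ is immediate from (\ref{BvGeq}); conversely if $gB_v = B_v$ then $B_{gv} = B_v$, and by (\ref{pair in B_v}) the set of cut pairs in $W$ contained in $B_v$ is precisely the star $v$, so $gv$ and $v$ have the same underlying set of cut pairs and hence coincide as vertices of $T$.

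The main obstacle is establishing the general claim needed for (\ref{Bvpath}). When $w \in v$ it is immediate from the definition of $B_v$; otherwise, let $w_0 \in v$ be the first cut pair on the path from $v$ to $w$ in $T$. One verifies $S^{w_0}_v \subset S_v$, so any $S' \in \Comp(T \setminus w)$ with $S' \neq S_v$ satisfies $S' \cap S^{w_0}_v = \emptyset$, and Lemma~\ref{Uvw}(\ref{Uvw: different directions dont intersect}) gives $\mu_{w_0}(S^{w_0}_v) \cap \mu_w(S') = \emptyset$; consequently $\mu_{w_0}(S^{w_0}_v) \subset w \cup \mu_w(S_v) \subset \text{cl}(\mu_w(S_v))$, where the last inclusion uses that both points of $w$ lie in $\text{cl}(\mu_w(S_v))$ (otherwise a single point of $w$ would separate $\boundary(G,\P)$, contradicting the no--cut--point hypothesis). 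Taking closures gives $B_v \subset \text{cl}(U^{w_0}_v) \subset \text{cl}(\mu_w(S_v))$. Finally, for (\ref{Bvpath}), if $w$ is between distinct $v, v' \in V$ then $w$ separates $v$ from $v'$ in $T$, so $S_v \neq S_{v'}$; the closures of two distinct components of $\boundary(G,\P) \setminus w$ meet only in $w$, so any $x \in B_v \cap B_{v'} \subset \text{cl}(\mu_w(S_v)) \cap \text{cl}(\mu_w(S_{v'}))$ lies in $w$.
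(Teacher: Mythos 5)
Your proof is correct, and while the overall structure (prove (1), (3), then (2), then (4)) matches the paper, your arguments for (2) and (4) take genuinely different routes. For (2), the paper does not invoke (1): instead it shows $g\notin G_v$ forces $gv\neq v$, uses the tree structure to get $|v\cap gv|\leq 1$, and uses minimality of the action to ensure $|v|\geq 2$, so some $w\in v$ has $gw\notin v$, contradicting (1). Your route — observe from (3) that $B_{gv}=B_v$, then use (1) to read off $v$ and $gv$ as the sets of cut pairs contained in $B_v$ and $B_{gv}$, forcing $gv=v$ — is cleaner and sidesteps the minimality hypothesis and the cardinality bound on $v\cap gv$ entirely. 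For (4), the paper picks the vertices $w_v\in v$ and $w_{v'}\in v'$ nearest to the opposite star, deduces $x\in w_v\cap w_{v'}$ from Lemma~\ref{Uvw}(\ref{Uvw: different directions dont intersect}), and then runs a contradiction argument by choosing $v_w$ with $x\in U^w_{v_w}$ and showing $x\notin w_v$. Your approach isolates the useful general fact that $B_v\subset\text{cl}\bigl(\mu_w(S_v)\bigr)$ for every $w\in W$, proved by routing through the first cut pair $w_0\in v$ on the path to $w$, and then (4) falls out immediately since the closures of distinct components of $\boundary(G,\P)\setminus w$ meet only in $w$. This is a more conceptual organization and also makes explicit (via the no–cut–point hypothesis) why both points of a cut pair lie in the closure of each adjacent component, a fact the paper uses implicitly and states with a typo ($\cap$ instead of $\cup$). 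One small notational slip: in your argument for (\ref{pair in B_v}) you write $w\subset S^{w'}_v$ where you mean $w\in\vertices(S^{w'}_v)$ — $w$ is a vertex of $T$, not a subset of it.
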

\begin{proof}
Consider some $w\notin v$. Let $w'\in v$ be closest to $w$ in $T$. Applying Lemma~\ref{Uvw}(\ref{UvW: direction}), we get $w\setminus w'\notin U^{w'}_v$ as $w$ and $v$ are in different components of $T\setminus \{w'\}$. Thus $w\not\subset B_v$. Therefore, $w\subset B_v$ implies that $w\in v$. The definition of $B_v$ gives that $w\in v$ implies $w\subset B_v$. This proves (\ref{pair in B_v}).

Note that $G_v\subset\Stab(G_v)$ by definition. Assume for contradiction $gv\neq v$ for some $g\in \Stab(B_v)$. Since $T$ is a tree we have that $|v\cap gv|\leq 1$. Moreover since $G$ acts minimally on $T$ by Lemma~\ref{T construction and properties}(\ref{T construction and properties: G action on T}), every vertex in $T$ has valence more than one. Therefore both $v$ and $gv$ have at least two elements. Thus there is some $w\in v$ such that $gw\notin v$. From (\ref{pair in B_v}) we get that $w\subset B_v$ but $gw\not\subset B_v$ which implies that $g\notin \Stab(B_v)$. Thus $\Stab(G_v)\subset G_v$ which proves (\ref{StabBv}). As the collection $\{\mu_w\}_{w\in W}$ is $G$--equivariant we get (\ref{BvGeq}).

For (\ref{Bvpath}), choose $w_v\in v$ to be the vertex in $v$ closest to $v'$. Similarly, choose $w_{v'}\in v'$. From $\text{cl}(U^{w_v}_{v})=w_v\cap U^{w_v}_{v}$ and $\text{cl}(U^{w'_{v'}}_{v})=w_{v'}\cap U^{w_{v'}}_{v'}$,
combined with Lemma~\ref{Uvw}(\ref{Uvw: different directions dont intersect}), we get that $x\in w_v\cap w_{v'}$. Now suppose for contradiction there is $w$ between $v$ and $v'$ such that $x\notin w$. Then since $\mu_w$ is surjective by Lemma~\ref{Uvw}, there is $v_w\ni w$ such that $x\in U^w_{v_w}$. Observe that at least one of $v,v'$ are not in the component of $T\setminus \{w\}$ containing $v_w$. First assume $v$ and $v_w$ are not in the same component of $T\setminus\{w\}$. Let $w_v\in v$ be the vertex in $v$ closest to $w$. Then by Lemma~\ref{Uvw}(\ref{UvW: direction}), we get $w_v\setminus w\cap U^w_{w_w}$. As $x\in U^w_{v_w}$, we get $x\notin w_v$ which is a contradiction. The case when $v'$ and $v_w$ are not in the same component of $T\setminus\{w\}$ follows similarly.
\end{proof}

We now characterize all parabolic points in $\boundary (G,\P)$. 

\begin{lem}
\label{gvinfint}
Let $T,V, W$ and $G$ be as in Theorem~\ref{decomp}. Also assume $G_u$ for $u\in \vertices(T)$ is as in Lemma~\ref{T construction and properties}(\ref{T construction and properties: Gw finite}).
Consider a parabolic point $p\in \boundary(G,\P)$ with $\Stab(p)=P$. If $|G_v\cap P|=\infty$ for $v\in V$ then $p\in B_v$. Moreover if $p\in B_v$, then either $p\in w$ for some $w\in v$ or $P\subset G_v$.
\end{lem}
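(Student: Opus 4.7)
The plan is to prove the two assertions separately, both via short contradiction arguments that exploit proper discontinuity and equivariance.

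For the first assertion, I would assume $|G_v\cap P|=\infty$ and suppose for contradiction that $p\notin B_v$. By definition of $B_v$, there exists some $w_0\in v$ with $p\notin \text{cl}(U^{w_0}_v)$, so $K:=\text{cl}(U^{w_0}_v)$ is a compact subset of $\boundary(G,\P)\setminus\{p\}$. Since $P$ acts properly discontinuously on $\boundary(G,\P)\setminus\{p\}$, the same is true of the subgroup $H:=G_v\cap P$, so only finitely many $h\in H$ satisfy $hK\cap K\neq\emptyset$. The decisive observation is that every $h\in H\subset G_v$ fixes the star $v$ setwise and hence permutes its cut pairs, so $hw_0\in v$; by the $G$--equivariance of the assignment $(v,w)\mapsto U^w_v$ (inherited from that of $\mu_w$ in Lemma~\ref{Uvw}) we have $hK=\text{cl}(U^{hw_0}_{hv})=\text{cl}(U^{hw_0}_v)$, which still appears in the intersection defining $B_v$. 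Thus $hK\cap K\supset B_v\supset w_0\neq\emptyset$ for every $h\in H$, forcing $H$ to be finite and contradicting the hypothesis.

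For the second assertion, suppose $p\in B_v$ and $p\notin w$ for any $w\in v$, and let $g\in P$ be arbitrary; the goal is to show $g\in G_v$. Applying $g$ to $p\in B_v$ and invoking the equivariance $gB_v=B_{gv}$ from Proposition~\ref{Bvprop}(\ref{BvGeq}) yields $p=gp\in B_{gv}$, so $p\in B_v\cap B_{gv}$. If $gv\neq v$, then the geodesic in the bipartite tree $T$ from $v$ to $gv$ starts with an edge to some $w_1\in W$ adjacent to $v$, i.e.\ $w_1\in v$, and this $w_1$ lies between $v$ and $gv$. Proposition~\ref{Bvprop}(\ref{Bvpath}) then forces $p\in w_1$, contradicting the assumption that $p$ lies in no element of $v$. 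Hence $gv=v$, so $g\in G_v$, and consequently $P\subset G_v$.

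The main technical point is the equivariance relation $hU^{w_0}_v=U^{hw_0}_v$ for $h\in G_v$ used in the first assertion, together with the fact that $B_v$ is automatically contained in every $\text{cl}(U^w_v)$ for $w\in v$; once those are recognized the argument collapses to a single application of proper discontinuity. I expect no further serious obstacle, since the second assertion is purely combinatorial once the tree-path structure and the equivariance of $B_\cdot$ are available.
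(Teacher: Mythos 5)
Your proof is correct and follows essentially the same strategy as the paper: the first assertion comes from proper discontinuity of $P$ on $\boundary(G,\P)\setminus\{p\}$ applied to a compact set that $H=P\cap G_v$ keeps from drifting, and the second from $p\in B_v\cap B_{gv}$ together with Proposition~\ref{Bvprop}(\ref{Bvpath}). The only difference is cosmetic: the paper takes the compact set to be $B_v$ itself, which by Proposition~\ref{Bvprop}(\ref{StabBv}) is literally $G_v$--invariant so that $hB_v\cap B_v=B_v\neq\emptyset$ immediately, whereas you use a single $\text{cl}(U^{w_0}_v)$ and must invoke $G_v$--equivariance of the $U^w_v$'s to see translates still meet in $B_v$.
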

\begin{proof}
If $p\notin B_v$, then since $P$ acts properly discontinuously on $\boundary(G,\P)\setminus \{p\}$ and since $B_v$ is a compact subset in this subspace, Proposition~\ref{Bvprop}(\ref{StabBv}) gives us that $P\cap G_v$ is finite.
Now if $p\in B_v$, suppose there is $g\in P$ such that $g\notin G_v$. Since $gp=p$ and since $gB_v=B_{gv}$ by Proposition~\ref{Bvprop}(\ref{BvGeq}), we have that $p\in B_{v}\cap B_{gv}$. The result then follows from Proposition~\ref{Bvprop}(\ref{Bvpath}).
\end{proof}
\begin{lem}
\label{idealnotpara}
Let $T,V, W$ and $G$ be as in Theorem~\ref{decomp}.
Given a parabolic point $p\in \boundary(G,\P)$, there is $v\in V$ such that $p\in B_v$. Such a $v$ is unique if and only if there is no $w\in W$ such that $p\in w$.
\end{lem}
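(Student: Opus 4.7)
The plan is to analyze the action of $P := \Stab(p)$ on the tree $T$, using that $P$ is infinite (since $p$ is parabolic and $\boundary(G,\P)$ is connected) while $G_w$ is finite for every $w \in W$ by Lemma~\ref{T construction and properties}(\ref{T construction and properties: Gw finite}). The easy case is when $p \in w$ for some $w \in W$: every cut pair in $W$ sits in at least two stars of $V$ (since $w$ separates $\boundary(G,\P)$ into at least two components, each contributing a distinct adjacent star in $T$), so for each such $v \ni w$ we get $p \in w \subset B_v$ by Proposition~\ref{Bvprop}(\ref{pair in B_v}). This simultaneously delivers existence and non-uniqueness of $v$ in this case.

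For the main case, assume $p \notin w$ for every $w \in W$; the plan is to show $P$ fixes a vertex of $T$. By Serre's trichotomy for group actions on trees, either $P$ fixes a vertex, fixes a unique end of $T$ without fixing any vertex, or contains an element acting hyperbolically on $T$. The ``fixed end'' subcase is easily dispatched: writing the end as a ray $(u_0, u_1, \ldots)$ in $T$ and setting $F_n := \Fix_P(u_n)$, every $g \in P$ fixes a tail of the ray since $g$ fixes the end, so the $F_n$ form an ascending chain with union $P$. Because $P$ is finitely generated (Lemma~\ref{T construction and properties}(\ref{T construction and properties: G and P fin gen})), $P = F_n$ for some $n$, so $P$ fixes $u_n$---contradicting the assumption.

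The crux of the argument, and the main obstacle, is the remaining subcase: $P$ contains a hyperbolic element $g$ with axis $A \subset T$. Fix any cut pair vertex $w_0 \in A \cap W$ and choose a point $x \in w_0 \setminus \{p\}$, which is possible since $p \notin w_0$. Because $g$ lies in a peripheral subgroup of $(G,\P)$, it is a parabolic element of $G$ with unique boundary fixed point $p$; hence $g^n x \to p$ and $g^{-n} x \to p$ as $n \to \infty$. However, for $n \geq 1$ the cut pairs $g^n w_0$ and $g^{-n} w_0$ lie on opposite sides of $w_0$ along $A$, hence in distinct components $\mu_{w_0}(S_+)$ and $\mu_{w_0}(S_-)$ of $\boundary(G,\P) \setminus w_0$ under the bijection of Lemma~\ref{Uvw}. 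Therefore $p$ lies in the closures of both components, whose intersection is contained in $w_0$, forcing $p \in w_0$ and contradicting the standing hypothesis. Thus $P$ must fix some $u \in \vertices(T)$; the option $u \in W$ is impossible (it would force $P \subset G_w$ finite), so $u \in V$, yielding $P \subset G_v$ and then $p \in B_v$ via Lemma~\ref{gvinfint}.

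For uniqueness under the hypothesis that $p \notin w$ for every $w \in W$: if $p \in B_v \cap B_{v'}$ with $v \neq v'$, then the $T$-path from $v$ to $v'$ contains some $w \in W$ (as $T$ is bipartite with $V$ and $W$ alternating), so Proposition~\ref{Bvprop}(\ref{Bvpath}) gives $p \in w$, a contradiction. Combined with the first paragraph, this establishes that $v$ is unique precisely when $p$ lies in no cut pair of $W$.
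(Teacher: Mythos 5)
Your proof is correct and follows essentially the same approach as the paper's: both reduce to finding a hyperbolic element $g\in P$ acting on $T$ via Serre's theory of finitely generated groups acting on trees (your separate treatment of the fixed-end case is subsumed into the cited Serre result), and both use proper discontinuity of $P$ on $\boundary(G,\P)\setminus\{p\}$ together with Lemma~\ref{Uvw} to place $p$ in the intersection of the closures of two distinct components of $\boundary(G,\P)\setminus w_0$, which lies in $w_0$. One small detail you gloss over: you should observe that $g^nx\notin w_0$ for all large $n$ before concluding $g^nx\in \mu_{w_0}(S_+)$, but this is immediate since otherwise some positive power $g^m$ would fix a point of $w_0$, and a parabolic's only fixed point is $p$, which by hypothesis is not in $w_0$.
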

\begin{proof}
Assume for contradiction that for all $v\in V$ we have that $p\notin B_v$. We first observe that $P$ acting on $T$ has no global fixed point. Lemma~\ref{gvinfint} gives us $P\not\subset G_v$ as $p\notin B_v$ and $P$ is infinite. From Lemma~\ref{T construction and properties}(\ref{T construction and properties: Gw finite}) we get $P\not\subset G_w$ for $w\in W$. Also note that $P$ is finitely generated from Lemma~\ref{T construction and properties}(\ref{T construction and properties: G and P fin gen}).

From the above two observations along with \cite[\S\S I.6.4--I.6.5]{SerreTrees} we can find a $g\in P$ which stabilizes and translates along a line $\ell$ in $T$. Consider some $w\in W\cap \ell$. Let $v$ be the vertex adjacent to $w$ which is closest to $gw$ and let $v'$ be the vertex adjacent to $w$ which is closest to $g^{-1}w$. Then both $v$ and $v'$ lie in $\ell$.

We claim that $p\in \text{cl}(U^w_v)$. Otherwise since $\text{cl}(U^w_v)$ is compact in $\boundary(G,\P)\setminus \{p\}$ and since $g^nw\setminus w\subset \text{cl}(U^w_v)$ for all $n$ by Lemma~\ref{Uvw}(\ref{UvW: direction}), this contradicts that $P$ acts properly discontinuously on $\boundary(G,\P)$. Similarly, $p\in \text{cl}(U^w_{v'})$. Since $\text{cl}(U^w_v)\cap \text{cl}(U^w_{v'})=w$, we have that $p\in w$ which is a contradiction as $w\subset B_v$. 
The second statement follows from Proposition~\ref{Bvprop}(\ref{pair in B_v}) and (\ref{Bvpath}).
\end{proof}

\begin{lem}
\label{Lambda's and P's}
Let $T,V, W$ and $G$ be as in Theorem~\ref{decomp}. Also assume $G_u$ for $u\in \vertices(T)$ is as in Lemma~\ref{T construction and properties}(\ref{T construction and properties: Gw finite}).
For all $w\in W$, let $\Lambda_w=w$ and for all $v\in V$, let 
$\Lambda_v=\{p\in \boundary(G,\P)\mid p\text{ parabolic, }p\in B_v\}$. Moreover for each $u\in \vertices(T)$, let $\P_u=(\Stab(p))_{p\in \Lambda_u}$. Then the following hold.
\begin{enumerate}
    \item \label{Lambda's and P's: P rel hyp}
    For each $u\in U$, the pair $(G_u,\P_u)$ is relatively hyperbolic.
    \item \label{Lambda's and P's: 2-admissible}
    $G$ acts on $T$ with a $2$--admissible action.
    \item \label{Lambda's and P's: s_e map}
    For each $e=(v,w)\in \edges(T)$, there is an inclusion $s_e\colon\Lambda_w\hookrightarrow \Lambda_v$. Moreover the indexed collection $\mathcal{S}=\{s_e\}_{e\in \edges(T)}$ is a signature of the admissible action of $G$ on $T$.
\end{enumerate}
\end{lem}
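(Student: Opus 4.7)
My plan is to handle parts (2) and (3) first, as these are essentially bookkeeping from the earlier results of this section, and to focus the real work on (1).

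For (2), the hypotheses of Definition 3.1 are available from the prior results: the action of $G$ on $T$ without inversions and with finite quotient is Lemma 6.5(2); $T$ is bipartite by construction, with each vertex of valence at least two (a standard feature of the inseparable cut pair tree combined with minimality); the $G$-equivariance of $u \mapsto \Lambda_u$ is Proposition 6.4(3) for $v \in V$ and tautological for $w \in W$; the conditions $|\Lambda_w|=2$ and $|G_w|<\infty$ for $w \in W$ are Lemma 6.5(3), and the uniform value $|\Lambda_w|=2$ makes the action specifically $2$-admissible. The only substantive hypothesis, the relative hyperbolicity of each $(G_u,\P_u)$, is statement (1). For (3), I define $s_e\colon \Lambda_w \hookrightarrow \Lambda_v$ as the set inclusion $w \subset \Lambda_v$, which is valid because Proposition 6.4(1) gives $w \subset B_v$ and the points of $w$ are parabolic. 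The three signature conditions of Definition 3.3 then follow immediately: the first, since $s_e$ is an inclusion; the second, by noting that for two distinct edges $e=(v,w)$ and $e'=(v,w')$ sharing a common $V$-vertex the images $w,w'$ are distinct $2$-element subsets of $\Lambda_v$ sharing at most one point, while for edges not sharing a $V$-vertex the images lie in disjoint copies $\Lambda_v \sqcup \Lambda_{v'}$; and the third, from the $G$-equivariance of $u \mapsto \Lambda_u$.

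The heart of the proof is (1). For $w \in W$, both $G_w$ and every member of $\P_w$ are finite by Lemma 6.5(3), and the pair $(G_w,\P_w)$ fits the degenerate relatively hyperbolic construction described after Definition 2.2. For $v \in V$, my strategy is to apply the Bowditch--Yaman dynamical characterization of relative hyperbolicity, exhibiting $G_v$ as a geometrically finite convergence group on the perfect compactum $B_v$ with parabolic fixed points $\Lambda_v$. The convergence group property for $G_v \curvearrowright B_v$ restricts from $G \curvearrowright \boundary(G,\P)$ via the closed $G_v$-invariant subspace $B_v$ by Proposition 6.4(2),(3). The parabolic fixed points of $G_v$ on $B_v$ coincide with $\Lambda_v$, with stabilizers $\Stab_{G_v}(p)$ identified via Lemma 6.7.

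The main obstacle is verifying the two geometric finiteness conditions on $B_v$. Bounded parabolicity for $p \in \Lambda_v \setminus \bigcup_{w \in v} w$ is essentially automatic: by Lemma 6.7 we have $\Stab_G(p) \subset G_v$, so cocompactness of $\Stab_G(p)$ on $\boundary(G,\P) \setminus \{p\}$ restricts to cocompactness on the closed saturated subset $B_v \setminus \{p\}$. For $p \in w$ with $w \in v$, the stabilizer $\Stab_{G_v}(p)$ has finite index in $\Stab_G(p)$ because $w$ has only finitely many $G$-translates lying in $v$, and cocompactness on $B_v \setminus \{p\}$ follows similarly. The delicate remaining case is showing every non-parabolic $x \in B_v$ is a conical limit point for $G_v$. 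Starting from a conical sequence $(g_n) \subset G$ approaching $x$ for $G \curvearrowright \boundary(G,\P)$, the plan is to argue that $g_n v$ cannot escape to infinity in $T$, since otherwise the compacta $g_n B_v = B_{g_n v}$ would contract in Hausdorff distance to a single boundary point, contradicting conical convergence of $g_n x$ to a point of $B_v$; passing to a subsequence with $g_n v$ constant and premultiplying by a fixed coset representative then produces the required conical sequence inside $G_v$.
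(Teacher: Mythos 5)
Your approach to part (1) is genuinely different from the paper's, and unfortunately it has a real gap. The paper takes a combinatorial route: it first shows $G_v$ is relatively quasiconvex in $(G,\P)$ via Haulmark--Hruska's theorem on vertex stabilizers of tree actions with relatively quasiconvex edge stabilizers, then applies Hruska's theorem that a relatively quasiconvex subgroup $H$ is hyperbolic relative to the family $\mathcal{Q}(H)$ of \emph{infinite} intersections $H\cap P$, and finally invokes Osin's extension result to append the remaining finitely many $G_v$--conjugacy classes of \emph{finite} members of $\P_v$ (which Lemma~\ref{gvinfint} locates at points of the cut pairs $w\in v$). Your route via Bowditch--Yaman and the action of $G_v$ on $B_v$ is a natural alternative but runs into the following problems.

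First, and most seriously, $B_v$ need not be a perfect compactum, so the Bowditch--Yaman dynamical characterization cannot be applied as stated. This happens precisely when $\P_v$ contains finite groups, i.e., when $\Stab_{G_v}(p)=\Stab_G(p)\cap G_v$ is finite for some $p\in\Lambda_v$ (necessarily $p\in w$ for some $w\in v$, by Lemma~\ref{gvinfint}). In that case $p$ is an isolated point of $B_v$. The paper's Example~\ref{CombExample} with $G=\Z/3\Z*\Z/3\Z*\Z/3\Z$ is exactly this phenomenon: each $G_v$ is finite and $B_v=\boundary(G_v,\P_v)$ is a discrete three-point set, a case which the paper's indexed-family convention and Osin's extension theorem are designed to absorb and which your argument never addresses.

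Second, your justification of bounded parabolicity at $p\in w$, $w\in v$, asserts that $\Stab_{G_v}(p)$ has finite index in $\Stab_G(p)$ ``because $w$ has only finitely many $G$-translates lying in $v$.'' This does not follow. The index $[\Stab_G(p):\Stab_{G_v}(p)]$ equals the size of the $\Stab_G(p)$-orbit of $v$, i.e., the number of translates $v'=gv$ with $p\in B_{v'}$, which is controlled by the size of the parabolic tree $F_p$ and can be infinite. In fact, when $\Stab_{G_v}(p)$ is finite, that index is infinite outright. Your remaining conical-limit-point argument (that $g_n v$ cannot escape to infinity because the translates $B_{g_n v}$ would shrink to a point) is a reasonable sketch but would still need to be completed and reconciled with identifying $B_v$ as the limit set of $G_v$; the paper avoids all of this by getting the relatively hyperbolic structure algebraically from relative quasiconvexity rather than dynamically from the action on $B_v$.

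Your treatment of parts (2) and (3) is correct and matches the paper's in substance.
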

\begin{proof}

Since the edge stabilizers in $T$ are finite, they are relatively quasiconvex and finitely generated.
Haulmark--Hruska \cite[Proposition~5.2]{HaulmarkHruska_JSJ} prove that if a relatively hyperbolic group acts on a tree such that the edge stabilizers are relatively quasiconvex and finitely generated then so are the vertex stabilizers. Thus $G_v$ is relatively quasiconvex for all $v\in V$.
Moreover Hruska \cite[Theorem~9.1]{Hruska_Relative_Quasiconvex} shows that any relatively quasiconvex subgroup $H$ is hyperbolic relative to the family $\mathcal{Q}(H)$ consisting of all infinite subgroups of the form $H\cap P$ for any $P\in \P$. Furthermore Osin \cite{Osin_Elemenrary_Subgroups} shows that if $(H,\mathcal{Q})$ is relatively hyperbolic and $\mathcal{Q}'$ consists of $\mathcal{Q}$ and finitely many conjugacy classes of finite groups, then $(H,\mathcal{Q}')$ is also relatively hyperbolic.

Now for $v\in V$ let $\P_v=(\Stab(p)\cap G_v)_{p\in \Lambda_v}$. By Lemma~\ref{gvinfint}, it follows that $\mathcal{Q}(G_v)$ is equal to the collection of all infinite members of $\P_v$. Since $\boundary(G,\P)$ is connected, all members of $\P$ are infinite. Then it follows from Lemma~7.5 that if $\Stab(p)\cap G_v$ is finite for some $p\in \Lambda_v$ then $p\in w$ for some $w\in v$. Since $G$ acts by finite quotient on $T$ we have that the finite members of $\P_v$ are in finitely many $G_v$--conjugacy classes. Therefore, $(G_v,\P_v)$ is relatively hyperbolic for all $v\in V$.
Since $G_w$ is finite for all $w\in W$, this proves (\ref{Lambda's and P's: P rel hyp}). 

Various parts of the definition of $G$ acting via a $2$--admissible action on $T$ are satisfied using Lemma~\ref{T construction and properties} and Proposition~\ref{Bvprop}(\ref{BvGeq}), which proves $(\ref{Lambda's and P's: 2-admissible})$. For each $e=(v,w)\in \edges(T)$, Proposition~\ref{Bvprop}(\ref{pair in B_v}) gives us the required inclusion $s_e$. We then get (\ref{Lambda's and P's: s_e map}) from Proposition~\ref{Bvprop}(\ref{BvGeq}).
\end{proof}

Now we can define the parabolic forest $F$ (Definition~\ref{forest}).
We now characterize all parabolic trees in $F$, which were connected components of $F$.

\begin{lem}
\label{paratree}
Let $\Lambda_u$ for $u\in \vertices(T)$ and $\mathcal{S}$ be as in Lemma~\ref{Lambda's and P's}.
Also let $F$ be the parabolic forest for $\mathcal{S}$.
Consider two vertices $u,u'\in \vertices(T)$ along with some $p\in \Lambda_{u}$ and $p'\in \Lambda_{u'}$. Then the vertices $(u,p)$ and $(u',p')$ lie in the same parabolic tree in $F$ if and only if $p=p'$.
\end{lem}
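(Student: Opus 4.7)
The plan is to handle the two directions of the biconditional separately, reducing the non-trivial one to showing a certain subset of $\vertices(T)$ is a connected subtree.

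First, for the ``only if'' direction, I would read off the definition of $F$: every edge joins some $(w,x)$ to $(v,s_e(x))$ where $e=(v,w)\in \edges(T)$ and $x\in \Lambda_w=w$. By Lemma~\ref{Lambda's and P's}(\ref{Lambda's and P's: s_e map}), which in turn uses Proposition~\ref{Bvprop}(\ref{pair in B_v}), the map $s_e$ is simply the honest set-inclusion of the two points of the cut pair $w$ into $\Lambda_v\subset \boundary(G,\P)$, so $s_e(x)=x$ as points of the boundary. Induction along a path in $F$ from $(u,p)$ to $(u',p')$ then shows the second coordinate is constant, giving $p=p'$.

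For the ``if'' direction, I would fix a parabolic point $p\in \boundary(G,\P)$ and study
\[
  T_p:=\bigset{u\in \vertices(T)}{p\in \Lambda_u}.
\]
It is enough to prove $T_p$ is a non-empty connected subgraph of $T$: once we know this, every edge $e=(v,w)$ of $T$ with both endpoints in $T_p$ lifts to an edge of $F$ between $(w,p)$ and $(v,p)$ (since $s_e(p)=p$), so $\bigset{(u,p)}{u\in T_p}$ is a connected subgraph of $F$ witnessing that $(u,p)$ and $(u',p)$ lie in a common parabolic tree for any $u,u'\in T_p$. Non-emptiness of $T_p$ is immediate from Lemma~\ref{idealnotpara}.

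For the connectivity of $T_p$, the main tool will be Proposition~\ref{Bvprop}(\ref{Bvpath}). Given $v,v'\in T_p\cap V$, the point $p$ lies in $B_v\cap B_{v'}$, so that Proposition forces $p\in w$ for every $w\in W$ on the $T$-geodesic from $v$ to $v'$; each such $w$ therefore belongs to $T_p$. On the other side, any $w\in T_p\cap W$ is $T$-adjacent to some $v\in V$ (since $T$ has no leaves), and $w\in v$ yields $p\in w\subset B_v$, so $v\in T_p$. Together these statements show $T_p$ is connected. I do not expect a substantial obstacle: Proposition~\ref{Bvprop}(\ref{Bvpath}) was built precisely to control parabolic points in the overlap of two different $B_v$'s, which is exactly the situation that arises here.
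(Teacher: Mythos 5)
Your proof is correct and takes essentially the same approach as the paper: the ``only if'' direction reads the constancy of the second coordinate off the definition of the edges of $F$ (using that each $s_e$ is set-theoretic inclusion), and the ``if'' direction walks the $T$-geodesic between $u$ and $u'$, using Proposition~\ref{Bvprop}(\ref{Bvpath}) to place $p$ in every intermediate $w\in W$ and Proposition~\ref{Bvprop}(\ref{pair in B_v}) to place $p$ in every intermediate $B_v$. Your reformulation via the subset $T_p$ is a mild repackaging of the same argument (and is if anything slightly more explicit about where Proposition~\ref{Bvprop}(\ref{pair in B_v}) is used for the intermediate $V$-vertices, a step the paper states but does not spell out); the only phrasing to tighten is ``adjacent to some $v$'': what you need, and what Proposition~\ref{Bvprop}(\ref{pair in B_v}) actually gives, is that \emph{every} $v\in V$ adjacent to a $w\in T_p\cap W$ lies in $T_p$.
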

\begin{proof}

If $(u,p)$ and $(u',p')$ lie in the same parabolic subtree they must be joined by a sequence of edges in $F$. Note that by the definition of the signature $\mathcal{S}$ in Lemma~\ref{Lambda's and P's}(\ref{Lambda's and P's: s_e map}), there is an edge in between two vertices $(v,p_1)$ and $(w,p_2)$ in $F$ if and only if there is an edge $e=(v,w)\in T$ and $p_1=p_2$. Thus we must have that $p=p'$.

Now suppose $p=p'$.
If $u\in W$ then there is some $v$ adjacent to $u$ such that $(v,p)$ and $(u,p)$ are joined by an edge.
Therefore we can assume that both $u,u'\in V$ which implies $p\in B_{u}\cap B_{u'}$.
Consider the shortest path $(u=v_1,w_1,v_2,w_2,\ldots, w_{n-1},v_n=u')$ between $u$ and $u'$ in $T$. Then by Proposition~\ref{Bvprop}(\ref{Bvpath}), we have that $p\in w_i$ for $i=1,2,\ldots, n-1$ and $p\in B_{v_i}$ for $i=1,2,\ldots, n$. Therefore, we have edges in $F$ in between the vertices $(v_i,p)$ and $(w_i,p)$ and in between the vertices $(w_i,p)$ and $(v_{i+1},p)$ for $i=1,2,\ldots,n-1$. This gives us a sequence of edges in $F$ connecting $(u,p)$ to $(u',p)$ which proves that they lie in the same parabolic tree in $F$.
\end{proof}

\begin{lem}
\label{paraforest stabs}
Let $F$ be as described in Lemma~\ref{paratree} and $\P$ be the indexed family as described in Theorem~\ref{decomp}. Then $\P$ consists of the stabilizers of parabolic forests in $F$.
\end{lem}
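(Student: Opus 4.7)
The plan is to construct a $G$--equivariant bijection between parabolic trees in $F$ and parabolic points of $\boundary(G,\P)$, under which the stabilizer of a parabolic tree is identified with the stabilizer of the corresponding parabolic point. First, by Lemma~\ref{paratree}, every parabolic tree $\tau$ in $F$ has the property that all its vertices share a common second coordinate $p$; call this parabolic point the \emph{label} of $\tau$. Assigning to each parabolic tree its label defines a map $\lambda$ from the set of parabolic trees of $F$ to the set of parabolic points of $\boundary(G,\P)$.

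Next, I would verify that $\lambda$ is a bijection. Injectivity follows immediately, since by Lemma~\ref{paratree} two vertices of $F$ lie in the same parabolic tree precisely when their second coordinates agree, so the label determines $\tau$. For surjectivity, let $p$ be any parabolic point of $\boundary(G,\P)$. By Lemma~\ref{idealnotpara} there exists $v\in V$ with $p\in B_v$, and by the definition in Lemma~\ref{Lambda's and P's} this means $p\in \Lambda_v$. Hence $(v,p)$ is a vertex of $F$, and the parabolic tree containing it has label $p$.

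Then I would check $G$--equivariance and compute stabilizers. The equivariance of the indexed family $\{\Lambda_u\}$ (Definition~\ref{2admissible}(\ref{2admissible: equivariance})) gives that $g\in G$ acts on the vertices of $F$ by $(u,p)\mapsto (gu,gp)$, so $g$ carries the parabolic tree labeled $p$ to the parabolic tree labeled $gp$. In particular, a group element $g$ stabilizes the parabolic tree labeled $p$ if and only if $gp=p$, which shows that the stabilizer of this tree is exactly $\Stab_G(p)$. Thus $\lambda$ is $G$--equivariant, and conjugate parabolic trees correspond to $G$--conjugate stabilizers.

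Finally, since the indexed family $\P$ in the relatively hyperbolic structure $(G,\P)$ is, by Definition~\ref{relhyp}, the family of stabilizers of points in $\Lambda\subset \boundary(G,\P)$ (the parabolic points), the $G$--equivariant bijection $\lambda$ identifies $\P$ with the indexed family of stabilizers of parabolic trees in $F$. This step is essentially bookkeeping built on the preceding lemmas; I do not anticipate a serious obstacle, though some care is required to confirm that the indexing conventions for $\P$ (as stabilizers of parabolic points) match the indexing by parabolic trees under $\lambda$.
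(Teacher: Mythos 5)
Your proposal is correct and takes essentially the same approach as the paper. The paper packages the same argument by writing $F_p$ for the parabolic tree labeled $p$ and directly verifying $gF_p = F_{gp}$ via Proposition~\ref{Bvprop}(\ref{BvGeq}) rather than abstractly citing the admissibility of the action, but the bijection between parabolic trees and parabolic points, via Lemmas~\ref{paratree} and~\ref{idealnotpara}, and the equivariance and stabilizer comparison are the same.
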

\begin{proof}
By Lemma~\ref{idealnotpara}, for all parabolic points $p\in \boundary(G,\P)$, there is $u\in \vertices(T)$ such that the vertex $(u,p)$ is contained in a parabolic subtree $F_p\subset F$. By Lemma~\ref{paratree}, all vertices in $F_p$ have $p$ as their second coordinate. Moreover every parabolic subtree in $F$ is of this form.
For some $g\in G$, if $p\in w=\Lambda_w$ we have $gp\in gw=\Lambda_{gw}$. By Lemma~\ref{Lambda's and P's}, if $p\in \Lambda_v$ then $p\in B_v$ and for $g\in G$ by Proposition~\ref{Bvprop}(\ref{BvGeq}) we have $gp\in B_{gv}$. Therefore, if $p\in \Lambda_u$ for $u\in \vertices(T)$ then $gp\in \Lambda_{gv}$ for $g\in G$. Thus, by the definition of $F$ we get that $gF_p=F_{gp}$. This implies that $\Stab(F_p)=\Stab(p)$.
\end{proof}

We are now ready to prove Theorem~\ref{decomp}.

\begin{proof}[Proof of Theorem~\ref{decomp}]
Lemma~\ref{T construction and properties}(\ref{T construction and properties: G action on T}) and Lemma~\ref{Lambda's and P's}(\ref{Lambda's and P's: 2-admissible}) give us (\ref{Tminimal}), and  Lemma~\ref{Lambda's and P's}(\ref{Lambda's and P's: P rel hyp}) gives us (\ref{GvRel}).
Lemma~\ref{Lambda's and P's}(\ref{Lambda's and P's: s_e map}) gives us the required signature $\mathcal{S}$. Let $F$ be the parabolic forest for $\mathcal{S}$. Applying Theorem~\ref{combination} with $\mathcal{S}$ we get that there is an indexed family $\mathcal{Q}$ such that $(G,\mathcal{Q})$ is relatively hyperbolic. It follows from Theorem~\ref{combination}(\ref{combination: paraforest}) that $\mathcal{Q}$ is the collection of stabilizers of parabolic forests in $F$. Therefore Lemma~\ref{paraforest stabs} then implies $\mathcal{Q}=\P$, which gives (\ref{Decomp sign}). Finally, Theorem~\ref{boundarytreesys} gives us (\ref{Decomp treesys}).
\end{proof}

\begin{cor}
\label{decompboun}
For each $v\in V$, the boundary $\boundary(G_v,\P_v)$ is the space $B_v$, where $V$ is as in Theorem~\ref{decomp} and $B_v$ is as in Proposition~\ref{Bvprop}.
\end{cor}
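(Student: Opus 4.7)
The plan is to leverage the homeomorphism $\boundary(G,\P)\cong \bar{M}_T$ from Theorem~\ref{boundarytreesys} and identify $B_v$ with the image of the constituent space $M_v = \boundary(G_v, \P_v)$ under this homeomorphism. The main structural tool is Proposition~\ref{insepstr}, which describes how the cut pair structure of $\bar{M}_T$ matches that of $\boundary(G,\P)$.

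First, I would translate the definition of $B_v$ into the language of the tree system. By Proposition~\ref{insepstr}(\ref{insepstr: tree sys comp}), for each $w \in v$, the components of $\bar{M}_T \setminus M_w$ are exactly $\omega_w(S) = \bar{M}_S \setminus M_w$ for $S \in \Comp(T \setminus w)$. Under the $G$--equivariant identification $\bar{M}_T \cong \boundary(G,\P)$ (which sends $M_w$ to $w$ by construction), I would show that $\omega_w(S^w_v)$ corresponds to $U^w_v = \mu_w(S^w_v)$. This follows because both components contain $M_{w'} \setminus M_w$ for every $w' \in W \cap S^w_v$, by Proposition~\ref{insepstr}(\ref{insepstr: tree sys comp}) and Lemma~\ref{Uvw}(\ref{UvW: direction}) respectively. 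Taking closures then yields $\text{cl}(U^w_v) = \bar{M}_{S^w_v}$, and hence
\begin{equation*}
B_v = \bigcap_{w\in v} \text{cl}(U^w_v) = \bigcap_{w\in v} \bar{M}_{S^w_v}
\end{equation*}
under the identification.

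The key remaining step is to verify that $\bigcap_{w\in v} \bar{M}_{S^w_v} = M_v$. The inclusion $M_v \subset \bigcap_{w\in v} \bar{M}_{S^w_v}$ is immediate since $v \in \vertices(S^w_v)$ for every $w \in v$. For the reverse inclusion, I would argue by case analysis on where a point $x$ in the intersection can live inside $\bar{M}_T$. If $x \in M_u$ for some $u \in V \setminus \{v\}$, then the first $w \in v$ encountered on the path from $v$ to $u$ in $T$ satisfies $u \notin \vertices(S^w_v)$, and since $x$ is not on any edge image, $x \notin \bar{M}_{S^w_v}$. If $x \in M_{w'} \setminus M_v$ for some $w' \in W \setminus v$, a similar argument using the separating vertex $w \in v$ closest to $w'$ yields $x \notin \bar{M}_{S^w_v}$. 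Finally, if $x \in \bar{M}_T \setminus M_T$ corresponds by Proposition~\ref{completion} to a non-redundant end of $T$, the representing ray eventually exits $v$ through some $w \in v$, so $x \notin \bar{M}_{S^w_v}$.

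The main obstacle I anticipate is the bookkeeping in matching the component $\omega_w(S^w_v)$ with $U^w_v$, since the definition of $\mu_w$ in Lemma~\ref{Uvw} is through the slightly indirect device of a chosen $w_S \in W \cap S$; one must verify the identification is independent of this choice and fully $G$--equivariant. Once this is settled, the intersection calculation proceeds by straightforward case analysis, using that the identifications in the tree system only glue adjacent vertex spaces along the relevant cut pair spaces.
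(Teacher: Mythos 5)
Your proposal takes essentially the same approach as the paper: invoke the homeomorphism $\bar\Psi\colon \bar{M}_T\to\boundary(G,\P)$ from Theorem~\ref{boundarytreesys}, match $\omega_w(S^w_v)$ with $U^w_v$ via the commutative diagram coming from Lemma~\ref{Uvw} and Proposition~\ref{insepstr}(\ref{insepstr: tree sys comp}), and conclude by computing $\bigcap_{w\in v}\text{cl}(\omega_w(S^w_v)) = \bigcap_{w\in v}\bar{M}_{S^w_v} = M_v$. The only difference is that you supply a case analysis for the last intersection identity, which the paper asserts without detail.
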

\begin{proof}
Recall the tree system $\Theta_G$ (as in Definition~\ref{ThetaG}) with vertex spaces $M_v=\boundary(G_v,\P_v)$ for $v\in V$ and $M_w=w$ for $w\in W$. Then if $\bar{M}_T$ is the completion of the total space $M_T$ as in Setup~\ref{compsetup}, by Theorem~\ref{boundarytreesys} we have a homeomorphism $\bar\Psi\colon \bar{M}_T\rightarrow \boundary(G,\P)$ such that $\bar\Psi\big|_{M_u}$ is the identity for $u\in \vertices(T)$.
Then by Lemma~\ref{Uvw} and Lemma~\ref{insepstr}(\ref{insepstr: tree sys comp}), for $w\in W$ we have the following commutative diagram.
\begin{center}
\begin{tikzcd}
	{\text{Comp}(T\setminus w)} && {\text{Comp}(\partial(G,\mathcal{P})\setminus w)} \\
	{\text{Comp}(\overline{M}_T\setminus M_w)}
	\arrow["{\mu_w}", from=1-1, to=1-3]
	\arrow["{\omega_w}"', from=1-1, to=2-1]
	\arrow["{\overline{\Psi}}"', from=2-1, to=1-3]
\end{tikzcd}
\end{center}
For $v\in V$ and $w\in v$, let $S^w_v$ be as in Proposition~\ref{Bvprop}. Then we get $\bigcap_{w\in v}\omega_w(S^w_v)=M_v$. Thus $\bar\Psi\big|_{M_v}=B_v$ which completes the proof.
\end{proof}



\bibliographystyle{alpha} 
\bibliography{refs} 

\end{document}